\documentclass[a4paper,10pt]{amsart}
\usepackage{amsthm,amsmath,amsfonts,amssymb}
\usepackage{bm}
\usepackage{mathtools,mathrsfs,braket}
\usepackage[pdfdisplaydoctitle,colorlinks,breaklinks,urlcolor=blue,linkcolor=blue,citecolor=blue]{hyperref} 
\usepackage[nameinlink,capitalise]{cleveref}
\AddToHook{env/theorem/begin}
  {\crefalias{section}{theorem}}

\AddToHook{env/lemma/begin}
  {\crefalias{theorem}{lemma}}

\AddToHook{env/proposition/begin}
  {\crefalias{theorem}{proposition}}

\AddToHook{env/corollary/begin}
  {\crefalias{theorem}{corollary}}

\AddToHook{env/definition/begin}
  {\crefalias{theorem}{definition}}
  \AddToHook{env/example/begin}
  {\crefalias{theorem}{example}}

\newcommand{\C}{\mathbb{C}}

\newcommand{\HH}{\mathcal{H}}

\newcommand{\N}{\mathbb{N}}

\newcommand{\PP}{\mathbb{P}}

\newcommand{\Q}{\mathbb{Q}}
\newcommand{\R}{\mathbb{R}}

\DeclareMathOperator{\trace}{Tr}

\renewcommand{\epsilon}{\varepsilon}

\newcommand{\one}{\bm{1}}
  
\newcommand{\pa}[1]{\left(#1\right)}

\newcommand{\norm}[1]{\left\|#1\right\|}
\newcommand{\brak}[1]{\left\langle#1\right\rangle}
\newcommand{\wick}[1]{:\mathrel{#1}:}
\newcommand{\expt}[1]{\mathbb{E}\left[#1\right]}

\newcommand{\Bk}{\color{black}}

\newcommand{\Bl}{\color{blue}}

\newtheorem{theorem}{Theorem}[section]
\newtheorem{definition}[theorem]{Definition}
\newtheorem{hypothesis}[theorem]{Hypothesis}
\newtheorem{corollary}[theorem]{Corollary}
\newtheorem{lemma}[theorem]{Lemma}
\newtheorem{proposition}[theorem]{Proposition}
\newtheorem{example}[theorem]{Example}

\theoremstyle{remark}
\newtheorem{remark}[theorem]{Remark}
\numberwithin{equation}{section}

\allowdisplaybreaks

\newenvironment{acknowledgements}{%
  \section*{Acknowledgements}
}{}

\title[Effective Models for Schr\"odinger Equations]{An Effective SPDE Model for a Schr\"odinger Equation with a Fluctuating Magnetic Potential}
\author[F. Flandoli]{Franco Flandoli}
\address{Scuola Normale Superiore, Piazza dei Cavalieri, 7, 56126 Pisa, Italia}
\email{\href{mailto:franco.flandoli at sns.it}{franco.flandoli at sns.it}}
\author[R. Fukuizumi]{Reika Fukuizumi}
\address{Department of Mathematics, Faculty of Fundamental Science and Engineering,\linebreak Waseda University, 169-8555 Tokyo, Japan}
\email{\href{mailto:fukuizumi at waseda.jp}{fukuizumi at waseda.jp}}
\author[F. Grotto]{Francesco Grotto}
\address{Università di Pisa, Dipartimento di Matematica, 5 Largo Bruno Pontecorvo, 56127 Pisa, Italia}
\email{\href{mailto:francesco.grotto at unipi.it}{francesco.grotto at unipi.it}}
\author[E. Luongo]{Eliseo Luongo}
\address{Fakultät für Mathematik, Universität Bielefeld, 33501 Bielefeld, Germany}
\email{\href{mailto:eluongo at math.uni-bielefeld.de}{eluongo at math.uni-bielefeld.de}}
\keywords{stochastic Schr\"odinger equations, averaging, Gaussian fluctuations, fluctuating magnetic potentials}
\subjclass[2020]{60H15, 35Q41, 60F05}
\date\today

\begin{document}
\begin{abstract}
    We consider the Schr\"odinger equation for a charged particle in a randomly fluctuating external magnetic field and establish a singular perturbation limit in which the solution converges to a deterministic Schr\"odinger equation with Gaussian fluctuations. We propose a stochastic Schr\"odinger equation incorporating the behavior of both the deterministic average and the fluctuations and show that it has the same limiting behavior as the original model, while providing a simpler way to study the distribution of relevant observables.
\end{abstract}

\maketitle

\section{Introduction}\label{sec:introduction}
Quantum particles in time-varying magnetic fields are described for $t \geq 0$ by the Schr\"{o}dinger equation on $\R^d$
\begin{equation*}
i\partial_{t}\psi^{\tau}=(-i\nabla-A^{\tau})^{2}\psi^{\tau},\tag{RMSE}%
\label{eq:rmse}%
\end{equation*}
where $\psi^{\tau}\left(  t,x\right)  $ is the particle wave function and $A^{\tau}\left(  t,x\right)  $ is the vector magnetic potential (for the subscript $\tau$ see below). 
It is an interesting topic due to special phenomena that may occur, especially the emergence of the so-called ponderomotive force and the particle trapping by effective potentials, both generated by fast varying electro-magnetic fields, see for instance the pioneering works \cite{Kap,KugPau,LovMeh,Pri} and the analysis of the averaging effective potential for classical and quantum systems in \cite{RhaGil}. Effective potential traps and ponderomotive force find applications in Bose-Einstein condensation, quantum information and plasma particle acceleration\ \cite{Leg,MonCam,RibPer}. The effective potential is however only a first approximation of the rapidly varying electro-magnetic field;\ corrections and additional phenomena have been investigated in the Physics literature, see for instance \cite{BesPen}, \cite{RidDav}, \cite{MulMor}. We propose here a new conservative stochastic model which incorporates random corrections and may throw light on the modifications of the tunneling properties due to fluctuations around the effective potential. 

Here we consider a stochastic magnetic potential $A^{\tau}\left(  t,x\right) $, fast varying in time, given by a centered Gaussian random field with a small time decorrelation rate $\tau>0$,
\begin{equation*}
    \mathbb{E}[A^{\tau}(t,x)\otimes A^{\tau}(s,y)]=e^{-|t-s|/\tau}Q(x,y),\quad
Q(x,y)=\sum_{k}\sigma_{k}(x)\otimes\sigma_{k}(y),
\end{equation*}
where $\sigma_{k}\in C^{\infty}(\mathbb{R}^{d})$ are vector fields satisfying suitable assumptions, cf. \cref{hp:noise} below. Equivalently, $(A^{\tau}_t)_{t\geq 0}$ is the stationary Ornstein-Uhlenbeck process solving
\begin{equation}
dA^{\tau}=-\frac{1}{\tau}A^{\tau}dt+\sqrt{\frac{2}{\tau}}\sum_{k\in I}%
\sigma_{k}dW^{k},\label{eq:OU}%
\end{equation}
where the $W^{k}$'s are independent 1D Brownian motions. The system
\eqref{eq:rmse}-\eqref{eq:OU} is an \textit{open} quantum system.

Inspired also by Klaus Hasselmann proposal developed for climate sciences
\cite{Has}, we look for a \textit{closed} quantum system, namely a stochastic
Schr\"{o}dinger equation only for the wave function $\psi\left(  t,x\right)  $
(also in the spirit of model reduction), with delta-correlated noise in
multiplicative (Stratonovich) form, which represents as close as possible the
system $\left(  \psi^{\tau}\left(  t,x\right)  ,A^{\tau}\left(  t,x\right)
\right)  $. Among the requirements, we want $L^{2}$-norm preservation for
$\psi\left(  t,x\right)  $. The new Stochastic Partial Differential Equation
(SPDE) should incorporate at the same time the averaging behavior of the fast
fluctuating field $A^{\tau}\left(  t,x\right)  $ and its fluctuations. The
philosophy is somewhat also similar to Dean-Kawasaki approach to particle
systems \cite{CorFis}, \cite{DirFeh}, \cite{DjuKre} where an SPDE is found
which closely represents the particle system, but the formulation and
techniques are very different. What we take from the known research on Hasselmann proposal \cite{Arn} and from Dean-Kawasaki research is a criterion for deciding when a model is close to another:\ they must have the same deterministic averaging limit and the same Gaussian fluctuations. Large deviations have also been proposed as a further selection criterion \cite{Arn,fehrman2023non} (see also \cite{BerDes}). We do not pursue this point of view further here, apart from the discussion in \autoref{ssec:ldpsse} relating the large deviation principle of \autoref{sec_ldp} to the original model; further comments are provided at the end of this introduction.

In order to devise the correct closed SPDE representing the system
\eqref{eq:rmse}-\eqref{eq:OU}, we first analyze the averaging principle and the Gaussian fluctuations of such a system. Our first result (averaging principle, \autoref{thm_averaging}) claims that in the fast oscillation limit $\tau\rightarrow0$ solutions converge to those of the deterministic Schr\"{o}dinger equation
\begin{equation*}
i\partial_{t}\bar{\psi}=(-\Delta+V)\bar{\psi},\tag{ASE}\label{eq:averaging}%
\end{equation*}
where the ``effective potential'' $V$, given by%
\begin{align}\label{def_potential}
    V(x)=\sum_{k\in I}|\sigma_{k}(x)|^{2} =\mathbb{E} \left[|A_t^{\tau}|^2\right] ,
\end{align}
is the local mean-square intensity of the magnetic vector potential. This is a rigorous formalization of an information known in the Physics literature listed above. The object defined by \eqref{def_potential} is the confining potential which should trap the particle. Due to our assumptions, $V$ has to decay at infinity, hence the confinement (in this rigorous setting) is only partial. A concrete class of divergence-free magnetic fields producing a radial, compactly supported effective potential is given in \cref{ex:magnetic}; depending on the choice of its profile, the resulting potential can model an annular barrier and thus provides a simple setting in which partial confinement and tunneling may occur.

The limit $\tau\to0$ presents a basic modeling tension. On the one hand, the fast field should disappear from the leading-order dynamics; on the other hand, for small but finite $\tau$, its cumulative effect remains visible in the fluctuations of the wavefunction. We shall then consider the fluctuation field
\begin{equation}
\xi^{\tau}=\frac{\psi^{\tau}-\bar{\psi}}{\sqrt{\tau}}%
\label{def:fluctuations1}%
\end{equation}
and establish (Gaussian fluctuations, \autoref{thm_fluctuations}) its
convergence to the Gaussian process solving the SPDE
\begin{equation*}
id\xi=\left(  -\Delta+V\right)  \xi dt+\sqrt{8}i\sum_{k\in I}\sigma_{k}%
\cdot\nabla\bar{\psi}dW_{t}^{k}+\bar{\psi}dB_{t}.\tag{GFSE}\label{eq:gaussianlimit}%
\end{equation*}
Let us stress the presence of a further stochastic forcing $B$, a Gaussian field independent of $W$ with covariance
\begin{equation}
\mathbb{E}\left[  B(t,x)B(s,y)\right]  =2(t\wedge s)R(x,y),\quad\text{with
}R(x,y)=\trace\left[  Q(x,y)Q(y,x)\right]  ,\label{second_covariance}%
\end{equation}
corresponding to the limiting law of 
\begin{align*}
 \sqrt{\frac{1}{\tau}}%
\int_0^{\cdot}:\mathrel{|A^\tau_s|^2}:ds \,\,  \text{where}\,\,       :\mathrel{|A^\tau_s|^2}:=|A^{\tau}_s|^{2}-V.
\end{align*}
In terms of Karhunen-Lo\`{e}ve decomposition,
\begin{equation*}
    B_{t}(x)=\sqrt{2}\sum_{k,j\in I}\sigma_{k}(x)\cdot\sigma_{j}(x)B_{t}^{k,j},
\end{equation*}
where $\{B_{t}^{k,j}\}_{k,j\in I}$ are real independent Brownian motions. The transport term $\sqrt{8}i\sum_{k\in I}\sigma_{k}\cdot\nabla\bar{\psi}%
dW_{t}^{k}$ obviously emerges from any formal computation, but the term $\bar{\psi}dB_{t}$ is less trivial. The two noises are
generated respectively by the linear and centered quadratic parts of the magnetic Hamiltonian, which belong to the first and second Wiener chaoses. The mechanism can be read from the equation for the fluctuations. Set
\begin{equation}\label{eq:WNapprox1}
    Z^{\tau}=\frac{A^{\tau}}{\sqrt{\tau}}.
\end{equation}
Expanding \eqref{eq:rmse} about $\bar\psi$ gives the exact identity
\begin{equation}\label{eq:fluctuations}
    i\partial_t\xi^{\tau}=-\Delta\xi^{\tau}+ 2iA^{\tau}\cdot\nabla \xi^{\tau}+ 2iZ^{\tau}\cdot\nabla \bar{\psi}+|A^\tau|^2\xi^{\tau}+\frac{\left(|A^\tau|^2-V\right)}{\sqrt{\tau}}\bar{\psi}.
\end{equation}
After integration in time, the two rescaled forcing terms in
\eqref{eq:fluctuations} converge jointly to the independent noises displayed in \eqref{eq:gaussianlimit}.  Thus the fluctuation theorem determines not only the size of the residual noise, but also its covariance and the differential and multiplicative forms through which it acts.

Based on these\ results, a good approximation of $\psi^{\tau}\left(
t,x\right)  $ is the complex-valued function
\begin{equation*}
    \psi^{\tau}\left(  t,x\right)  \sim\bar{\psi}\left(  t,x\right)  +\sqrt{\tau
}\xi^{\tau}\left(  t,x\right)  .
\end{equation*}
For certain practical purposes this may be satisfactory, but $\bar{\psi}\left(  t,x\right)  +\sqrt{\tau
}\xi^{\tau}\left(  t,x\right) $ is no longer a wave function and it does not satisfy any Schr\"{o}dinger type equation. 

It is here that the motivation of this work arises, following the intuitions of Hasselmann program. We introduce the new (closed) SPDE of Schr\"{o}dinger type
\begin{equation*}
id\Psi^{\tau}=(-\Delta+V)\Psi^{\tau}dt+i\sqrt{8\tau}\sum_{k\in I}\sigma
_{k}\cdot\nabla\Psi^{\tau}\circ dW^{k}+\sqrt{\tau}\Psi^{\tau}\circ
dB_{t}\tag{SSE}\label{eq:simplified_model}%
\end{equation*}
and prove, with quantitative rates (\autoref{thm_effective_model}), that in the fast-oscillation limit $\tau\rightarrow0$ its solutions converge to those of \eqref{eq:averaging} and its fluctuations
\begin{equation}
\Xi^{\tau}=\frac{\Psi^{\tau}-\bar{\psi}}{\sqrt{\tau}},%
\label{def:fluctuations2}%
\end{equation}
governed by the equation
\begin{multline}\label{eq:fluctuations2}
    i d \Xi^{\tau}=(-\Delta+V)\Xi^{\tau}dt+i\sqrt{8\tau}\sum_{k\in I}\sigma_k\cdot\nabla \Xi^{\tau}\circ dW^k +2\sqrt{2}i\sum_{k\in I}\sigma_k\cdot\nabla \bar{\psi} dW^k\\ +\sqrt{\tau} \Xi^{\tau}\circ dB_t+\bar{\psi}dB_t,
\end{multline}
converge to the Gaussian process \eqref{eq:gaussianlimit}. Moreover, the Stratonovich formulation in \eqref{eq:simplified_model} respects the skew-adjoint structure of the random generators.  Consequently,
\eqref{eq:simplified_model} preserves the $L^2$ norm exactly, cf.
\cref{prop:well_effective}, and hence evolves normalized initial data on the unit sphere.

These are the criteria mentioned at the beginning of the Introduction,
that we invoked\ to claim that equation \eqref{eq:simplified_model} is a good
model for system \eqref{eq:rmse}-\eqref{eq:OU}. Heuristically, notice that,
since also
\begin{equation*}
    \Psi^{\tau}\left(  t,x\right)  \sim\bar{\psi}\left(  t,x\right)  +\sqrt{\tau
}\xi^{\tau}\left(  t,x\right)
\end{equation*}
the difference $\psi^{\tau}-\Psi^{\tau}$ is infinitesimal in $\tau$ of higher order than $\sqrt{\tau}$, hence $\Psi^{\tau}$ is a better approximation of $\psi^{\tau}$ than just the deterministic function $\bar{\psi}$ (which differs from $\psi^{\tau}$ by order $\sqrt{\tau}$). Therefore the stochastic model \eqref{eq:simplified_model} it is more precise and informative than just the deterministic one \eqref{eq:averaging}. Finally, the structural properties of \eqref{eq:simplified_model} connect the reduction to a familiar construction in open quantum dynamics \cite{breuer2002theory,barchielli2009quantum}.  As explained formally in \cref{ssec:lindbladian}, it is a diffusive random-unitary
unraveling: the ensemble density operator associated with its solutions satisfies a Lindblad-type master equation.  We use this only as a structural interpretation, not as a microscopic system--bath derivation of the model.

Let us also give a brief overview of related averaging and fluctuation results in the literature. Averaging principles and their Gaussian fluctuation counterparts have a long history in the theory of multiscale stochastic dynamics, going back to the foundational work of Khasminskii \cite{khasminskii1968} on finite-dimensional stochastic differential equations, with the classical treatment of the diffusive fluctuation limit developed by Freidlin and Wentzell \cite[Chapter 7]{freidlinwentzell3}. We also refer to the seminal works of Pardoux and Veretennikov on Poisson equation techniques \cite{pardoux_1, pardoux2003poisson, pardoux_3}. In the infinite-dimensional setting of stochastic partial differential equations, the averaging principle and the structure of the associated Gaussian fluctuations have been systematically investigated over the last two decades; we refer, among many others, to the works of Cerrai and Freidlin \cite{cerrai1, cerrai2009normal, cerrai2009averaging, cerrai3} for parabolic fast-slow systems. More recently, fluctuation limits driven by non-Markovian or long-range correlated noise have been treated by Hairer and Li \cite{Hairer_1, li2022slow, hairer2022generating}. Averaging principles have also been studied for Schrödinger-type equations \cite{gao2017averaging, gao2018averaging}. All of these works, however, deal with either parabolic dynamics or a fast oscillating term entering as a zeroth-order, multiplicative coupling; in both cases, some regularizing structure is available and exploited in the analysis. The noise in \eqref{eq:rmse} and \eqref{eq:fluctuations} is instead of transport type: it enters through a first-order term that is singular for the Schrödinger dynamics, as it does not regularize the solution. Handling this term at low regularity requires commutator-type estimates in the spirit of Kato and Ponce \cite{kato1988commutator}, cf. \cref{lem:commutator}, together with renormalization arguments à la DiPerna–Lions \cite{diperna1989ordinary}, cf. \cref{lem_uniqueness_fluct_eq}.

The paper is organized as follows. In \autoref{preliminaries} we collect the well-posedness results
for \eqref{eq:rmse}, \eqref{eq:simplified_model} and the main results \autoref{thm_averaging}, \autoref{thm_fluctuations}, \autoref{thm_effective_model} for the two models. The analysis of the averaging and the Gaussian fluctuations for \eqref{eq:rmse} is presented in \autoref{sec:proof_main_original}, culminating in the proofs of \autoref{thm_averaging}, \autoref{thm_fluctuations}. The corresponding analysis for the proposed effective model \eqref{eq:simplified_model} is carried out in \autoref{sec:proof_effective}. In \autoref{sec_ldp} we prove a path-space large deviation principle for solutions of \eqref{eq:simplified_model} with Laplace scale $\tau$, extending the effective description from $O(\sqrt{\tau})$ Gaussian fluctuations to $O(1)$ atypical trajectories of the the proposed effective model. 
We conclude the paper with two appendices. In \autoref{sec:appendix_formal} some further formal justifications for \eqref{eq:simplified_model} as an effective model for \eqref{eq:rmse} are provided. More precisely, a formal calculation in \autoref{ssec:ldpsse} shows that the rate function of \eqref{eq:simplified_model} given in \autoref{sec_ldp} is the quadratic expansion, around the averaged minimizer, of the action associated with \eqref{eq:rmse}. Links with Lindbladian unravelings are also discussed in \autoref{ssec:lindbladian}. 
Finally, proofs of the well-posedness results mentioned in \autoref{preliminaries} are provided in \autoref{app:well-posed}.

\subsection*{Notation}
Throughout the paper, $T > 0$ denotes an arbitrary, fixed, positive time horizon and complex conjugates are denoted by the superscript $^*$. We use the convention
\begin{align*}
    \langle f,g\rangle=\int_{\R^d}f(x)g(x)^* dx.
\end{align*}
Thus the complex inner product is linear in its first argument.\\
For each $R>0$ we denote by $B^{L^2}_R$ the closed ball of $L^2(\R^d;\C)$ centered at $0$ with radius $R$ endowed with the strong topology and by $B^{L^2}_{R,w}$ the same closed ball endowed with the weak topology. Both are Polish: the first is closed in $L^2$, while the second is compact and metrizable in the weak topology.
We also denote by $\tilde{H}^s$, $s \in \R$, the weighted Sobolev space on $\R^d$, that is the closure of compactly supported smooth functions with values in $\C$ under the norm $\| \phi \|_{\tilde{H}^s} := \| \phi w \|_{H^s}$, where the weight $w$ is defined as $w(x) := (1+|x|^2)^{-d/2-1}$. 
Moreover, we introduce, for $s\in \R$, the Polish spaces
\begin{align*}
\tilde{H}^{s-} := \bigcap_{n \in \N} \tilde{H}^{-1/n},
\quad
\mbox{with distance }\,
d_{\tilde{H}^{s-}}(u,v):=\sum_{n\geq 1}\frac{1}{2^n}\left(1\wedge \norm{u-v}_{\tilde{H}^{s-1/n}}\right).
\end{align*}
In case of $s=0$ we simply write $\tilde{H}^{-}$ in place of $\tilde{H}^{0-}$.\\
If $H$ is a separable Hilbert space we denote by $C_w([0,T];H)$ the space of weakly continuous functions with values in $H.$ \\ Finally, let $(\Omega,\mathcal{F},(\mathcal{F}_t)_{t\ge0},\mathbb{P})$ be a filtered probability space. Throughout the paper, we assume that the filtration $(\mathcal{F}_t)_{t\ge0}$ satisfies the usual conditions, i.e., it is right-continuous and $\mathcal{F}_0$ contains all $\mathbb{P}$-null sets of $\mathcal{F}$. 
\begin{acknowledgements}
The research of F.F. is funded by the European Union (ERC, NoisyFluid, No. 101053472). E.L. has received funding from the European Research Council (ERC) under the European Union’s Horizon 2020 research and innovation programme (grant agreement No. 949981).
\end{acknowledgements}

\section{Preliminaries}\label{preliminaries}
 In this section we state some well-posedness results for the equations that are the main object of our analysis, namely \eqref{eq:rmse}, \eqref{eq:simplified_model},\ \eqref{eq:averaging}, and \eqref{eq:gaussianlimit}.

Let $\{W^k\}_{k\in I}$ and $\{B^{k,j}\}_{k,j\in I}$ be mutually independent families of standard real Brownian motions adapted to $(\mathcal{F}_t)_{t\ge0}$, we shall assume from now on the following regularity on the noise coefficients. 
\begin{hypothesis}\label[hypothesis]{hp:noise}
The index set $I$ is countable and $d\geq 2$ is an integer. The coefficients
$\{\sigma_k\}_{k\in I}$ are real-valued, divergence-free vector fields and
satisfy
\begin{align*}
    \sum_{k\in I}\|\sigma_k\|^2_{H^{5+\frac{d+\theta}{2}}}<+\infty
\end{align*}
for some $\theta>0.$ For each $\tau\in (0,1)$, the initial data of \eqref{eq:OU}, $A^{\tau}_0$,  is a centered
Gaussian $\mathcal{F}_0$-measurable random variable on
$H^{5+\frac{d+\theta}{2}}(\R^d;\R^d)$, independent of the future Brownian
increments, such that
\begin{align*}
    \expt{A^{\tau}_0(x)\otimes A^{\tau}_0(y)}=Q(x,y).
\end{align*}
\end{hypothesis}
The following example shows that \cref{hp:noise} accommodates magnetic fields whose mean-square intensity generates the localized confining potential discussed in the Introduction.
\begin{example}\label{ex:magnetic}
Let $g:[0,\infty)\rightarrow\mathbb{R}$ be a smooth function, with $g\left(
r\right)  =0$ for $r\in\left[  0,r_{0}\right]  $ and for $r\geq r_{1}$, with
$r_{1}>r_{0}>0$, and $g\left(  r\right)  >0$ in $\left(  r_{0},r_{1}\right)
$.
Set
\begin{equation*}
    I=\{(i,j):1\leq i<j\leq d\}.
\end{equation*}
For $(i,j)\in I$, define the real-valued vector field
\begin{align*}
    \sigma_{ij}(x)
  =\frac{g(|x|^2)}{\sqrt{d-1}}
    \bigl(-x_j e_i+x_i e_j\bigr),
  \qquad x=(x_1,\ldots,x_d)\in\R^d,
\end{align*}
where $e_1,\ldots,e_d$ is the canonical basis of $\R^d$. Every $\sigma_{ij}$ is divergence-free. Indeed, since $i\neq j$,
\begin{align*}
  \operatorname{div}\sigma_{ij}(x)
  &=\frac{1}{\sqrt{d-1}}
    \left(
      \partial_i\left[-x_jg(|x|^2)\right]
      +\partial_j\left[x_i g(|x|^2)\right]
    \right)
   =0.
\end{align*}
Moreover, the total intensity is radial:
\begin{align*}
  V(x)
  :=\sum_{1\leq i<j\leq d}|\sigma_{ij}(x)|^2
  &=\frac{g^2(|x|^2)}{d-1}
    \sum_{1\leq i<j\leq d}(x_i^2+x_j^2) \\
  &=|x|^2g^2(|x|^2)
   =:W(|x|^2).
\end{align*}
Consequently,
\begin{equation*}
    W(r)=r g^2(r),
\end{equation*}
so that $W=0$ on $[0,r_0]$ and on $[r_1,\infty)$, while $W>0$ on
$(r_0,r_1)$.    \\
Since $g(|x|^2)$ is smooth and compactly supported and the set $I$ is finite, this is an example of confining
potential satisfying our assumptions. We may modulate $W$ as we want in $\left(
r_{0},r_{1}\right)  $, by choosing $g$. Tunneling is of course possible, hence
the confinement is only partial. Choosing $r_{0}$ large, it may also be
considered as an approximation of a rectangular barrier as the one of
\cite{Moh}, and choosing $r_{0}$ very small as an approximation of the
circular barrier of \cite{RakYus}.
 
\end{example}

We use the shorthand
\begin{align*}
    Q(x)&:=Q(x,x)=\sum_{k\in I}\sigma_k(x)\otimes\sigma_k(x),\\
    R(x)&:=R(x,x)=\sum_{k,j\in I}(\sigma_k(x)\cdot\sigma_j(x))^2
\end{align*}
whenever the diagonal kernels occur as coefficients of differential or
multiplication operators.
Under the above assumptions the process
\begin{align}\label{eq:OU_prel}
       A^{\tau}_t=e^{-\frac{t}{\tau}}A^{\tau}_0+\sqrt{\frac{2}{{\tau}}}\sum_{k\in I}\int_{0}^t e^{-\frac{(t-s)}{\tau}}\sigma_k dW^k_s
   \end{align}
   is a stationary Gaussian process with paths in
   $C([0,T];H^{5+\frac{d+\theta}{2}})$ $\mathbb{P}-a.s.$ We refer to
   \cref{lem:stochastic_conv} for additional standard properties.
   We construct analytically weak $L^2$ solutions of the original model
   \eqref{eq:rmse} and the effective model \eqref{eq:simplified_model}
   according to the following definitions.
\begin{definition}\label[definition]{def:weaksoloriginal}
   Given $\psi_0\in L^2(\R^d)$, we say that a progressively measurable
   process $\psi^{\tau}$ with paths in $C_w([0,T];L^2)$
   $\mathbb{P}-a.s.$ is a weak solution of \eqref{eq:rmse}  with $\psi^{\tau}_0=\psi_0$ if the following
   identity holds for each $\phi\in \mathscr{S}(\R^d)$ and $t\in [0,T]$:
   \begin{align}\label{eq:weak:original}
       i\langle \psi^{\tau}_t,\phi\rangle-i\langle \psi_0,\phi\rangle&=-\int_0^t \langle\psi^{\tau}_s, \Delta\phi\rangle ds\notag\\ &-2i\int_0^t\langle \psi^{\tau}_s,A^{\tau}_s\cdot\nabla\phi\rangle ds+\int_0^t \langle|A^{\tau}_s|^2 \psi^\tau_s, \phi\rangle ds\quad \mathbb{P}-a.s.,
   \end{align}
   where $A^{\tau}_t$ is the stationary Ornstein–Uhlenbeck process defined by \eqref{eq:OU_prel}.
\end{definition}
\begin{definition}\label[definition]{def:weak_sol_effective}
    Given $\psi_0\in L^2(\R^d)$, we say that a progressively measurable process $\Psi^{\tau}$ with paths in $C_{w}([0,T];L^2)\ \mathbb{P}-a.s.$ is a weak solution of \eqref{eq:simplified_model} with $\Psi^{\tau}_0=\psi_0$ if there exists $\bar{R}>0$ such that
    \begin{align}\label{boundedness_spde}
        \sup_{t\in [0,T]}\|\Psi^{\tau}_t\|\leq \bar{R}\quad \mathbb{P}-a.s.
    \end{align} 
    and the following identity holds for each
    $\phi\in \mathscr{S}(\R^d)$ and $t\in [0,T]$:
   \begin{align}
       i\langle \Psi^{\tau}_t,\phi\rangle&=i\langle \psi_0,\phi\rangle+\int_0^t \langle\Psi^{\tau}_s, (-\Delta+V)\phi\rangle ds\notag\\
       &\quad+4i\tau \int_0^t\langle \Psi^{\tau}_s, \operatorname{div}\left(Q\nabla\phi\right)\rangle ds-i\tau \int_0^t\langle R\Psi^{\tau}_s,\phi\rangle ds\notag\\
       &\quad-i\sqrt{8\tau}\sum_{k\in I }\int_0^t
       \langle \Psi^{\tau}_s,\sigma_k\cdot\nabla\phi\rangle dW^k_s\notag\\
       &\quad+\sqrt{2\tau}\sum_{k,j\in I }\int_0^t
       \langle \Psi^{\tau}_s\sigma_k\cdot\sigma_j,\phi\rangle dB^{k,j}_s
       \quad \mathbb{P}-a.s.\label{eq:weak_effective}
   \end{align}
\end{definition}
\begin{remark}
  Assuming \cref{hp:noise}, it is easy to check that a weak solution of \eqref{eq:simplified_model} in the sense of \cref{def:weak_sol_effective} has paths in $C([0,T];H^{-2})\ \mathbb{P}-a.s.$
\end{remark}
\begin{remark}
    By density of Schwartz functions in $H^2(\R^d)$ and the regularity of $\psi^{\tau}$ (resp. $\Psi^{\tau}$) in \cref{def:weaksoloriginal} (resp. \cref{def:weak_sol_effective}) identity \eqref{eq:weak:original} (resp. \eqref{eq:weak_effective}) holds for each $\phi\in H^2(\R^d).$
\end{remark}
Existence and uniqueness of solutions for \eqref{eq:rmse} and
\eqref{eq:simplified_model} are provided by the following propositions.
\begin{proposition}\label[proposition]{prop:well_confinement}
    Assuming \cref{hp:noise} and $\psi_0\in L^2(\R^d)$, there exists a unique solution of \eqref{eq:rmse} in the sense of \cref{def:weaksoloriginal}. Moreover, 
    \begin{align}\label{eq:bound_rmse}
        \|\psi^{\tau}_t\|^2=\|\psi_0\|^2\quad \text{for all }t\geq 0, \quad  \mathbb{P}-a.s. 
    \end{align}
    In particular the solution $\psi^{\tau}$ has paths in $C([0,T];L^2)\ \mathbb{P}-a.s.$
\end{proposition}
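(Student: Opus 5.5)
Since the equation is linear and the potential $A^\tau$ enters only as a time-dependent coefficient, I argue pathwise: fix $\omega$ outside a null set so that $A:=A^\tau(\omega)\in C([0,T];H^{5+\frac{d+\theta}{2}})$. By Sobolev embedding, $A$, $\nabla A$ and $|A|^2$ are then bounded and continuous in $(t,x)$, together with a few more derivatives. The operator
\[
H(t)=(-i\nabla-A_t)^2=-\Delta+2iA_t\cdot\nabla+|A_t|^2
\]
(here $\operatorname{div}A_t=0$, because each $\sigma_k$ is divergence-free and so is the Ornstein--Uhlenbeck process) is self-adjoint on $H^2$ for every $t$. Its difference from $-\Delta$ is a first-order perturbation with bounded smooth coefficients.

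\emph{Existence.} I regularize, e.g. by a Galerkin or Fourier cutoff, or by replacing $H(t)$ with the Yosida-type approximation $H_n(t)=H(t)(1+H(t)/n)^{-1}$. The approximating problems are linear ODEs, and their solutions $\psi^n$ preserve the $L^2$ norm because the generators are skew-adjoint; this uses $\operatorname{div}A=0$ and the fact that $A$ is real. The family $\psi^n$ is therefore bounded in $L^\infty(0,T;L^2)$. Testing against $\phi\in\mathscr S$ shows that $\langle\psi^n_t,\phi\rangle$ is equicontinuous in $t$. Arzel\`a--Ascoli in the weak topology then gives a subsequence converging in $C_w([0,T];L^2)$, and the limit satisfies \eqref{eq:weak:original}, since every term is linear and the coefficients are fixed. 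Measurability in $\omega$ (progressive measurability) comes from uniqueness, so the whole sequence converges and limits of measurable maps are measurable. Alternatively, I would construct the unitary propagator directly via Kato's theorem for time-dependent self-adjoint generators with common domain $H^2$ and $C^1$-in-time dependence. However, $A$ is only continuous (H\"older) in time, so the approximation route is safer.

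\emph{Uniqueness and norm conservation.} This is the main obstacle, since weak $L^2$ solutions cannot be tested against themselves. I would use a duality argument. Let $\psi$ be the difference of two weak solutions, so $\psi_0=0$. For a fixed $t_0$ and $\chi\in\mathscr S$, solve the backward adjoint equation $i\partial_s\phi=H(s)\phi$ with $\phi_{t_0}=\chi$ in $C([0,t_0];H^2)\cap C^1([0,t_0];L^2)$. This requires an $H^2$ (or $H^k$) energy estimate for the smooth-coefficient equation: commutators $[\partial^\alpha,A\cdot\nabla]$ are of order $|\alpha|$ with bounded coefficients, so Gronwall closes the estimate. This is where the regularity in \cref{hp:noise} is used. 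The weak formulation extends to time-dependent test functions in this class, so $\frac{d}{ds}\langle\psi_s,\phi_s\rangle=0$, giving $\langle\psi_{t_0},\chi\rangle=0$ and hence $\psi=0$.

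Norm conservation follows similarly. For smooth $\psi_0\in H^2$, the same energy estimates yield a strong solution, and for it $\frac{d}{dt}\|\psi_t\|^2=2\,\mathrm{Re}\,\langle -iH\psi,\psi\rangle=0$ by symmetry of $H(t)$. By linearity and uniqueness, the solution map $\psi_0\mapsto\psi_t$ is therefore an isometry on a dense set, and it extends to all $\psi_0\in L^2$, which gives \eqref{eq:bound_rmse}. Finally, weak continuity combined with constancy of the norm upgrades $C_w([0,T];L^2)$ to $C([0,T];L^2)$, since $\|\psi_t-\psi_s\|^2=2\|\psi_0\|^2-2\,\mathrm{Re}\langle\psi_t,\psi_s\rangle\to0$.
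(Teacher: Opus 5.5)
Your argument is correct, but it takes a genuinely different route from the paper's. The paper treats \eqref{eq:rmse} with the same machinery it uses for \eqref{eq:simplified_model}. It adds a hyperviscous regularization $-i\nu\Delta^2$, proves uniform $L^2$ and time-increment bounds, and gets tightness of the laws in $C([0,T];B^{L^2}_{R,w})\cap C([0,T];\tilde H^-)$; Skorokhod representation then gives weak existence. Pathwise uniqueness comes from a DiPerna--Lions renormalization: it mollifies an arbitrary weak solution and shows via the commutator bound of \cite{butori2026background} that $[A\cdot\nabla,\chi_\epsilon\ast]\psi\to0$ in $L^2$. This gives $\|\psi_t\|=\|\psi_0\|$ for every weak solution, hence uniqueness by linearity, and Yamada--Watanabe concludes.

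You instead use that \eqref{eq:rmse} is a PDE with random coefficients rather than a genuine SPDE, and work $\omega$ by $\omega$. Existence comes from bounded skew-adjoint approximations and Arzel\`a--Ascoli in the weak topology, and progressive measurability from deterministic uniqueness. Uniqueness comes from duality against smooth backward solutions, and norm conservation from density of strong solutions. This is more elementary and dispenses with Skorokhod and Yamada--Watanabe altogether.

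The price is a strong theory for the adjoint problem.
\begin{itemize}
\item The $H^2$ energy estimate needs $W^{2,\infty}$-type regularity of $A$. If you insist on $\phi\in C([0,t_0];H^2)$ you need a bit more, e.g.\ an $H^4$ bound plus interpolation; weak continuity in $H^2$ would also suffice for the product-rule step.
\item These estimates are most naturally run on a Fourier-cutoff Galerkin scheme, since the cutoff commutes with $\partial^\alpha$ while your Yosida approximation does not.
\end{itemize}
The renormalization argument, by contrast, only needs the first-order commutator bound (roughly $A\in H^{1+\frac{d+\theta}{2}}$). It also yields conservation for an arbitrary weak solution without a density step. Finally, it carries over to the Stratonovich transport noise of \cref{prop:well_effective}, where a pathwise duality argument is not available.

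In your density step, state explicitly that the $C([0,T];L^2)$-limit of the strong solutions satisfies \eqref{eq:weak:original}, and therefore coincides with the weak solution by uniqueness.
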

\begin{proposition}\label[proposition]{prop:well_effective}
    Assuming \cref{hp:noise} and $\psi_0\in L^2(\R^d)$, there exists a unique solution of \eqref{eq:simplified_model} in the sense of \cref{def:weak_sol_effective}. Moreover, 
    \begin{align}\label{bounds_effective}
        \|\Psi^{\tau}_t\|^2=\|\psi_0\|^2 \quad \text{for all }t\geq 0, \quad  \mathbb{P}-a.s.
    \end{align}
   In particular, $\Psi^{\tau}$ has paths in $C([0,T];L^2)\ \mathbb{P}-a.s.$
\end{proposition}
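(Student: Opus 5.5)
We prove \cref{prop:well_effective}, which is the last statement above. The plan is a Galerkin/regularization scheme for existence and a renormalization argument for uniqueness, with the norm identity \eqref{bounds_effective} coming from the skew-adjoint Stratonovich structure.

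\emph{Existence.} First rewrite \eqref{eq:simplified_model} in Itô form. The Stratonovich corrections are
\begin{equation*}
\tfrac12\sum_k (\sqrt{8\tau}\,\sigma_k\cdot\nabla)^2 = 4\tau\operatorname{div}(Q\nabla\cdot),
\end{equation*}
using $\operatorname{div}\sigma_k=0$, and $-\tfrac12\cdot 2\tau R$ from the multiplicative $B$-noise. These are exactly the drift terms in \eqref{eq:weak_effective}.

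Next regularize. Mollify the initial datum, $\psi_0^n=\rho_n*\psi_0$. Either add a vanishing viscosity or truncate in Fourier to frequencies $\le n$ while conjugating the noise operators with the projection $\Pi_n$. The Galerkin operators $\Pi_n(\sigma_k\cdot\nabla)\Pi_n$ are still skew-adjoint, and $\Pi_n(\sigma_k\cdot\sigma_j)\Pi_n$ is still self-adjoint. Hence the finite-dimensional linear SDE
\begin{equation*}
 d\Psi^n = -i(-\Delta+V)\Psi^n\,dt + \sqrt{8\tau}\sum_k \Pi_n\sigma_k\cdot\nabla\Psi^n\circ dW^k - i\sqrt{2\tau}\sum_{k,j}\Pi_n(\sigma_k\cdot\sigma_j)\Psi^n\circ dB^{k,j}
\end{equation*}
has a unique global solution. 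Apply Itô's formula to $\|\Psi^n\|^2$ in Stratonovich form. Every generator is skew-adjoint, so $d\|\Psi^n_t\|^2=0$ pathwise, which gives $\|\Psi^n_t\|=\|\Pi_n\psi_0\|\le\|\psi_0\|$.

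With this uniform bound, pass to the limit $n\to\infty$. Either extract a weak-$*$ limit in $L^\infty(\Omega\times[0,T];L^2)$, or obtain tightness in $C_w([0,T];L^2)\cap C([0,T];H^{-2})$ via Aldous/Kolmogorov bounds on $\langle\Psi^n,\phi\rangle$ and then apply Skorokhod; pathwise uniqueness plus Gyöngy–Krylov then return us to the original probability space. The equation is linear, so weak convergence suffices to identify every term in \eqref{eq:weak_effective}, including the stochastic integrals, which are linear functionals continuous in $L^2(\Omega\times[0,T])$. Lower semicontinuity gives $\|\Psi_t\|\le\|\psi_0\|$, which is \eqref{boundedness_spde}.

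\emph{Norm conservation and uniqueness.} This is the main obstacle. A weak $L^2$ solution is not regular enough to apply Itô's formula to $\|\Psi_t\|^2$ directly, because the transport noise loses one derivative. We would take an arbitrary weak solution $\Psi$ (by linearity, the difference of two solutions, with zero initial datum) and a mollifier $\rho_\epsilon$. Write the equation for $\Psi^\epsilon=\rho_\epsilon*\Psi$ and apply Itô's formula to $\|\Psi^\epsilon_t\|^2$.

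All terms cancel by skew-adjointness except commutator terms: in the drift, $[\rho_\epsilon*,\sigma_k\cdot\nabla]$, in the martingale, and the combination $\|[\rho_\epsilon*,\sigma_k\cdot\nabla]\Psi\|^2$ together with the cross terms from the Itô correction. By the commutator estimate \cref{lem:commutator} (Kato–Ponce/DiPerna–Lions type), $[\rho_\epsilon*,\sigma_k\cdot\nabla]f\to0$ in $L^2$ for $f\in L^2$. It is bounded uniformly by $C\|\nabla\sigma_k\|_{L^\infty}\|f\|$, and summability follows from \cref{hp:noise} via Sobolev embedding. The second-order Itô terms need care: one must write them as $\|[\rho_\epsilon,\sigma_k\cdot\nabla]\Psi\|^2$ plus the pairing of $[\rho_\epsilon,[\rho_\epsilon,\sigma_k\cdot\nabla]]$-type terms, and the double commutator must be controlled through a second-order commutator bound. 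This step is where the real work lies.

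The terms involving $\Delta$ and the multiplicative $B$-noise are harmless: $\Delta$ commutes with convolution, and the multiplication commutators vanish in $L^2$. Letting $\epsilon\to0$, with dominated convergence justified by \eqref{boundedness_spde}, yields $\|\Psi_t\|^2=\|\Psi_0\|^2$. For the difference of two solutions this is $0$, which gives uniqueness; for the solution itself it gives \eqref{bounds_effective}. Finally, norm conservation combined with weak continuity gives strong continuity, so $\Psi^\tau\in C([0,T];L^2)$.
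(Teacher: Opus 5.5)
Your architecture is the paper's: a regularized problem on which the $L^2$ norm is conserved by skew-adjointness, a limit by linearity, then mollification and commutators for \eqref{bounds_effective} and uniqueness. Applying It\^o's formula to $\|\Psi^\epsilon_t\|^2$ rather than to $\langle\Psi_t,\Psi^\epsilon_t\rangle$ is equivalent. Indeed $\|\chi_\epsilon\ast f\|^2=\langle f,\eta_\epsilon\ast f\rangle$ with $\eta_\epsilon:=\chi_\epsilon\ast\chi_\epsilon$, which is again a radial mollifier. Your weak-$*$ existence route is a legitimate shortcut around Skorokhod and Yamada--Watanabe, because linearity lets you identify the stochastic integrals as weak limits on the original space. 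There are three small slips in that part:
\begin{itemize}
\item Fourier truncation on $\R^d$ is not finite-dimensional. The truncated operators are bounded, which is enough, but $V$ must be projected too.
\item Your alternative, tightness in $C([0,T];H^{-2}(\R^d))$, fails because $L^2\hookrightarrow H^{-2}$ is not compact on $\R^d$. This is why the paper works in $\tilde H^{-}$.
\item \cref{lem:commutator} is a Kato--Ponce estimate for $(I-\Delta)^{-k/2}$, not the DiPerna--Lions mollifier commutator lemma you actually need.
\end{itemize}
The genuine gap is the step you yourself defer, and that step is exactly where the paper's proof does its work.

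Write $\ell_k:=\sigma_k\cdot\nabla$ and $E_\epsilon f:=\eta_\epsilon\ast f$. The It\^o correction $4\tau\operatorname{div}(Q\nabla\cdot)$ and the quadratic variation of the transport noise combine exactly into
\begin{equation*}
4\tau\sum_{k\in I}\int_0^t\left\langle \Psi_s,\left(\ell_k^2E_\epsilon+E_\epsilon\ell_k^2-2\ell_kE_\epsilon\ell_k\right)\Psi_s\right\rangle ds .
\end{equation*}
This is a double commutator $[\ell_k,[\ell_k,E_\epsilon]]$ with two transport operators and one mollifier; it is the paper's $I^\epsilon$ with $\chi_\epsilon$ replaced by $\eta_\epsilon$. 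It is not a $[\rho_\epsilon,[\rho_\epsilon,\sigma_k\cdot\nabla]]$-type term. The single-commutator pieces you list are not separately controllable: for instance $\langle[\chi_\epsilon\ast,\ell_k]\Psi,\ell_k\Psi^\epsilon\rangle$ pairs an $o(1)$ factor with an $O(\epsilon^{-1})$ one.

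More importantly, a uniform bound on this term only supplies the dominating function. Dominated convergence also needs the integrand to tend to $0$ for each $s$, and you never establish that cancellation.

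The paper obtains it from the kernel
\[
(\sigma_k(x)-\sigma_k(y))^{\otimes2}:\nabla^2\chi_\epsilon(x-y)+(\sigma_k(x)\cdot\nabla\sigma_k(x)-\sigma_k(y)\cdot\nabla\sigma_k(y))\cdot\nabla\chi_\epsilon(x-y).
\]
Taylor expansion and the moment identities of the radial mollifier reduce the leading part to multiplication by $(\operatorname{div}\sigma_k)^2+\trace(\nabla\sigma_k\nabla\sigma_k)-\operatorname{div}(\sigma_k\cdot\nabla\sigma_k)$. This vanishes because $\operatorname{div}\sigma_k=0$, and the remainder tends to $0$ by continuity of translations.

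In your operator language you could close the step as follows.
\begin{enumerate}
\item Bound that kernel by Schur's test, uniformly in $\epsilon$, by $C(\|\nabla\sigma_k\|_{L^\infty}^2+\|\nabla(\sigma_k\cdot\nabla\sigma_k)\|_{L^\infty})$, which is summable in $k$ under \cref{hp:noise}.
\item For $\Phi\in H^2$, the pairing tends to $-\|\ell_k\Phi\|^2-\|\ell_k\Phi\|^2+2\|\ell_k\Phi\|^2=0$, because $\ell_k^*=-\ell_k$.
\item Extend to $\Psi_s\in L^2$ by density, using the uniform bound from the first step.
\end{enumerate}
Without either the paper's computation or this argument, neither \eqref{bounds_effective} nor uniqueness is proved.
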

The claims of \cref{prop:well_confinement}, \cref{prop:well_effective} follow by standard arguments via vanishing viscosity approximations and commutator estimates in the spirit of \cite{diperna1989ordinary}. For completeness, we provide a proof in \autoref{app:well-posed} below.

\begin{remark}
Equation \eqref{eq:simplified_model} preserves the $L^2$ norm. Thus normalized
initial data remain normalized. 
\end{remark}

The averaging equation \eqref{eq:averaging} and the fluctuation equation
\eqref{eq:gaussianlimit} are also well posed, as detailed by
\cref{prop:well_posed_averaging_limit,prop:well_posed_fluctuations}.
These results are direct consequences of semigroup theory for $e^{it\Delta}$ and the equivalence between weak and mild formulation \cite{pazy2012semigroups, da2014stochastic}, with $V$ treated as a linear perturbation. In particular, under \cref{hp:noise}, $V\in H^{5+\frac{d}{2}}\cap W^{5,\infty}$.   
\begin{proposition}\label[proposition]{prop:well_posed_averaging_limit}
   Assume \cref{hp:noise}. For each $\epsilon\in [-5,5]$ and
   $\psi_0\in H^\epsilon$, there exists a unique
   $\bar{\psi}\in C([0,T];H^{\epsilon})$ solving \eqref{eq:averaging} with $\bar{\psi}_0 =\psi_0 $ in the sense that
   the following identity holds for each $\phi\in \mathscr{S}(\R^d)$ and
   $t\in [0,T]$:
   \begin{align*}
       i\langle \bar\psi_t,\phi\rangle&=i\langle \psi_0,\phi\rangle+\int_0^t \langle\bar\psi_s, (-\Delta+V)\phi\rangle ds.
   \end{align*}
   Moreover, if $\epsilon\geq 0$, 
   \begin{align*}
       \|\bar{\psi}_t\|=\|\psi_0\|\quad \text{for all }t\geq 0.
   \end{align*}
\end{proposition}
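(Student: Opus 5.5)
The plan is to treat \eqref{eq:averaging} as a bounded perturbation of the free Schr\"odinger group on each Sobolev space $H^\epsilon$, $\epsilon\in[-5,5]$, and then pass between the mild and weak formulations. The free group $S(t)=e^{it\Delta}$ is unitary on every $H^\epsilon$, since it is a Fourier multiplier of modulus one. Its generator is $i\Delta$ with domain $H^{\epsilon+2}$.

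The first step is to check that multiplication by $V$ is bounded on $H^\epsilon$ for $|\epsilon|\le 5$.
\begin{itemize}
\item For $0\le\epsilon\le 5$ this follows from $V\in W^{5,\infty}$ by the Leibniz rule and interpolation between integer orders.
\item For $-5\le\epsilon<0$ it follows by duality, since $V$ is real and multiplication by $V$ is symmetric, so its bound on $H^{-\epsilon}$ transfers to $H^{\epsilon}$.
\end{itemize}
The regularity $V\in H^{5+d/2}\cap W^{5,\infty}$ is itself obtained from \cref{hp:noise}. We write $V=\sum_k|\sigma_k|^2$ and use that $H^{s}$ is an algebra for $s>d/2$, together with the embedding $H^{5+\frac{d+\theta}{2}}\subset W^{5,\infty}$. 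The series converges absolutely because $\sum_k\|\sigma_k\|^2_{H^{5+\frac{d+\theta}{2}}}<\infty$.

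By the bounded perturbation theorem \cite{pazy2012semigroups}, $-i(-\Delta+V)=i\Delta-iV$ then generates a $C_0$-group $U(t)$ on $H^\epsilon$ with the same domain $H^{\epsilon+2}$. Alternatively, one solves the Duhamel equation
\[
\bar\psi_t=S(t)\psi_0-i\int_0^tS(t-s)V\bar\psi_s\,ds
\]
by a Picard contraction in $C([0,T];H^\epsilon)$, extended to all of $[0,T]$ by iteration or a weighted-in-time norm. Either way we obtain a unique mild solution $\bar\psi\in C([0,T];H^\epsilon)$.

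Next we show that mild solutions are weak solutions. For $\psi_0\in H^{\epsilon+2}$ the mild solution is classical, so $\bar\psi\in C^1([0,T];H^\epsilon)$, and pairing with $\phi\in\mathscr S$ gives the weak identity. Since $-\Delta+V$ is symmetric (formally self-adjoint), the operator can be moved onto $\phi$. For general $\psi_0\in H^\epsilon$ we approximate by smoother data and pass to the limit, using continuity of $U(t)$ on $H^\epsilon$ and the fact that $\phi,(-\Delta+V)\phi\in H^{-\epsilon}$.

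The main point is uniqueness of weak solutions, i.e. showing that weak solutions are mild. I would use the standard argument of Ball / \cite{da2014stochastic}. Let $\bar\psi\in C([0,T];H^\epsilon)$ be a weak solution with zero initial datum. Test the weak identity against time-dependent test functions $\phi(s)=U(t-s)^{*}\phi_0$ with $\phi_0\in\mathscr S$ and differentiate in $s$. This requires that $U(r)^*$ preserves enough regularity, say $H^{5}$ or $H^{-\epsilon+2}$, which holds because the same perturbation argument works on all $H^{s}$, $|s|\le 5$. The identity extends from $\mathscr S$ to $H^{-\epsilon+2}$ by density and continuity. The conclusion is $\langle\bar\psi_t,\phi_0\rangle=0$ for all $\phi_0$, hence $\bar\psi\equiv0$. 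Uniqueness among weak solutions is the step I expect to need the most care, because the weak formulation only allows Schwartz test functions and the duality pairing must be tracked across the full range $|\epsilon|\le5$.

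Finally, for $\epsilon\ge0$ we prove norm conservation. For data in $H^2$ we compute
\[
\frac{d}{dt}\|\bar\psi_t\|^2=2\,\mathrm{Re}\,\langle -i(-\Delta+V)\bar\psi_t,\bar\psi_t\rangle=0,
\]
using self-adjointness of $-\Delta+V$ on $L^2$ with real $V$. Equivalently, $U(t)$ is unitary on $L^2$. The identity then extends to general data in $H^\epsilon\subset L^2$ by density and $L^2$-continuity of $U(t)$.
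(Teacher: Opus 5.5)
Your proposal is correct and follows the paper's route: the paper likewise treats multiplication by $V\in W^{5,\infty}$ as a bounded perturbation of the unitary group $e^{it\Delta}$ on $H^\epsilon$ and invokes the equivalence of weak and mild formulations, citing Pazy and Da Prato--Zabczyk, so your write-up supplies the details the paper leaves implicit. One detail in the uniqueness step: for $\epsilon=-5$ the time-dependent test functions $U(s-t)\phi_0$ must lie in $C([0,t];H^{7})$, and this holds because the group on $H^{5}$ leaves its generator domain $H^{7}$ invariant, not because of the perturbation argument on $H^s$ with $|s|\le 5$.
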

\begin{proposition}\label[proposition]{prop:well_posed_fluctuations}
    Assume \cref{hp:noise} and $\psi_0\in L^2$, and let $\bar{\psi}$ be the
    unique solution in $C([0,T];L^2)$ given by
    \cref{prop:well_posed_averaging_limit}. For each
    $\epsilon\in [-5,-1]$, there exists a unique progressively measurable
    process ${\xi}$ with paths in
    $C([0,T];H^{\epsilon})\ \mathbb{P}$-a.s. solving
    \eqref{eq:gaussianlimit} with $\xi_0=0$ in the sense that
   the following identity holds for each $\phi\in \mathscr{S}(\R^d)$ and
   $t\in [0,T]$:
   \begin{align*}
       i\langle \xi_t,\phi\rangle&=\int_0^t \langle\xi_s, (-\Delta+V)\phi\rangle ds\\ &+2\sqrt{2}i\sum_{k\in I}\int_0^t \langle \sigma_k\cdot\nabla\bar{\psi}_s,\phi\rangle dW^k_s+\sqrt{2}\sum_{k,j\in I}\int_0^t \langle \sigma_k\cdot \sigma_j\bar{\psi}_s,\phi\rangle dB^{k,j}_s\quad \mathbb{P}-a.s.
   \end{align*} 
\end{proposition}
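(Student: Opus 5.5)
The equation is linear with additive noise, so I will construct $\xi$ as a stochastic convolution and then prove that every weak solution coincides with it. Let $S(t)=e^{-it(-\Delta+V)}$ be the unitary group on $L^2$ generated by $-i(-\Delta+V)$. Since $V\in W^{5,\infty}$, multiplication by $V$ is bounded on $H^s$ for $|s|\le 5$. By a Neumann-series / Duhamel argument, or by equivalence of the graph norms of $(-\Delta+V+c)^{s/2}$ and $(1-\Delta)^{s/2}$, $S(t)$ is then a strongly continuous group on $H^\epsilon$ for every $\epsilon\in[-5,5]$, with $\sup_{t\le T}\|S(t)\|_{\mathcal L(H^\epsilon)}\le C_T$. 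This is the input already used in \cref{prop:well_posed_averaging_limit}, so I take it for granted. The candidate solution is
\begin{align*}
\xi_t=2\sqrt2\sum_{k\in I}\int_0^t S(t-s)\,(\sigma_k\cdot\nabla\bar\psi_s)\,dW^k_s-i\sqrt2\sum_{k,j\in I}\int_0^t S(t-s)\,(\sigma_k\cdot\sigma_j\,\bar\psi_s)\,dB^{k,j}_s .
\end{align*}
This formula comes from dividing the equation by $i$.

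The first step is to check that these stochastic integrals make sense in $H^\epsilon$ for $\epsilon\le -1$. Since $\bar\psi\in C([0,T];L^2)$ is deterministic, it suffices to bound the Hilbert–Schmidt norms of the two noise coefficients.

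For the first noise: $\sigma_k$ is divergence free, so $\sigma_k\cdot\nabla\bar\psi=\operatorname{div}(\sigma_k\bar\psi)$. Hence $\|\sigma_k\cdot\nabla\bar\psi_s\|_{H^{-1}}\le \|\sigma_k\bar\psi_s\|_{L^2}\le\|\sigma_k\|_{L^\infty}\|\psi_0\|$. By Sobolev embedding and \cref{hp:noise}, $\sum_k\|\sigma_k\|_{L^\infty}^2<\infty$.

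For the second noise: $\|\sigma_k\cdot\sigma_j\bar\psi_s\|_{L^2}\le\|\sigma_k\|_{L^\infty}\|\sigma_j\|_{L^\infty}\|\psi_0\|$. This is square-summable over the pairs $(k,j)$ for the same reason.

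The stochastic convolutions are therefore well defined, Gaussian and adapted in $H^{-1}$, and hence in $H^\epsilon$ for every $\epsilon\in[-5,-1]$.

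The next step is path continuity. I would use the factorization method of Da Prato–Kwapień–Zabczyk, or more simply the group property: write $\int_0^tS(t-s)f_s\,dW_s=S(t)\int_0^tS(-s)f_s\,dW_s$. The inner integral is a continuous $H^\epsilon$-valued martingale, and $S(\cdot)$ is strongly continuous and uniformly bounded, so $\xi\in C([0,T];H^\epsilon)$ almost surely. This trick is available because $S$ is a group, and it avoids any Kolmogorov estimate.

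It remains to show that this mild solution satisfies the weak identity. For $\phi\in\mathscr S$, the function $\langle S(t-s)f,\phi\rangle$ is differentiable in $t$ with derivative $\langle S(t-s)f,\,i(-\Delta+V)\phi\rangle$; the sign follows from the convention that $\langle\cdot,\cdot\rangle$ is antilinear in the second slot and that $-\Delta+V$ is self-adjoint. Applying a stochastic Fubini theorem, justified by the Hilbert–Schmidt bounds above, gives the weak formulation. This is the standard equivalence between weak and mild formulations of \cite{da2014stochastic}.

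For uniqueness, let $\eta$ be the difference of two solutions. Then $\eta$ is a progressively measurable process in $C([0,T];H^\epsilon)$ satisfying $i\langle\eta_t,\phi\rangle=\int_0^t\langle\eta_s,(-\Delta+V)\phi\rangle\,ds$ for all Schwartz $\phi$, pathwise. I would test against $\phi_s=S(t-s)^*\phi$, which is not Schwartz but lies in $H^{5}$, and hence is admissible after a density argument in the duality between $H^\epsilon$ and $H^{-\epsilon}$ with $-\epsilon\le5$. Differentiating $s\mapsto\langle\eta_s,\phi_s\rangle$ then shows it is constant, equal to its value $0$ at $s=0$, so $\eta_t=0$. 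Alternatively, the deterministic uniqueness part of \cref{prop:well_posed_averaging_limit}, which covers initial data in $H^\epsilon$ down to $\epsilon=-5$, can be applied pathwise to $\eta$ with $\eta_0=0$.

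The only delicate point is this time-dependent test-function argument at negative regularity. I expect the main obstacle to be justifying the product rule for $\langle\eta_s,\phi_s\rangle$ when $\eta$ is only continuous in $H^\epsilon$, using that $\phi\in C^1([0,T];H^{-\epsilon-2})\cap C([0,T];H^{-\epsilon})$. I would handle it by mollifying $\eta$ in space, or by time discretization. The restriction $\epsilon\ge-5$ is exactly what keeps the test functions within the $H^5$ range where $V$ acts boundedly.
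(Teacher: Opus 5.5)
Your proposal is correct and is essentially the paper's argument. The paper only invokes semigroup theory for the Schr\"odinger group, with $V\in W^{5,\infty}$ treated as a bounded perturbation, together with the equivalence of mild and weak formulations \cite{pazy2012semigroups,da2014stochastic}; you carry this out, and using the perturbed group $e^{-it(-\Delta+V)}$ instead of $e^{it\Delta}$ plus a Duhamel term in $V$ is immaterial. One small correction: for the weak identity to apply, the dual test function $S(t-s)^*\phi$ must lie in $H^{2-\epsilon}$, the domain of the adjoint generator on $H^{-\epsilon}$, not merely in $H^5$ (so $H^7$ when $\epsilon=-5$); this holds because $V$ acts boundedly on $H^{-\epsilon}$, and your fallback to the pathwise deterministic uniqueness of \cref{prop:well_posed_averaging_limit} closes the argument in any case.
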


\subsection{Main Results}\label{sub:main_results}
We are now ready to prove our main results. The first two theorems  identify the averaging profile and the Gaussian fluctuations of $\psi^{\tau}$ solving \eqref{eq:rmse}.
\begin{theorem}[Averaging]\label{thm_averaging}
    Given $R>0$ and assuming \cref{hp:noise}, for each $\psi_0\in L^2$ such that $\|\psi_0\|\leq R$, the unique solution $\psi^{\tau}$ of \eqref{eq:rmse} given by \cref{prop:well_confinement} converges in probability in $C([0,T];B^{L^2}_{R,w})
    \cap C([0,T];\tilde{H}^-)$\footnote{Recall that $B^{L^2}_{R,w}$ is the closed ball of $L^2(\R^d;\C)$ centered at $0$ with radius $R$ endowed with the weak topology.} to the unique weak solution $\bar{\psi}$ of \eqref{eq:averaging} given by \cref{prop:well_posed_averaging_limit}.
\end{theorem}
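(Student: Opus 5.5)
The plan is a compactness–identification argument. Once every limit point is identified with the deterministic $\bar\psi$, convergence in law upgrades to convergence in probability.

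\emph{Tightness.} By \eqref{eq:bound_rmse}, the paths of $\psi^\tau$ lie in $B^{L^2}_{R}$. So it suffices to control the modulus of continuity of $t\mapsto\langle\psi^\tau_t,\phi\rangle$ for $\phi$ in a countable dense subset of $\mathscr S$. From \eqref{eq:weak:original}, the terms with $\Delta\phi$ and with $|A^\tau|^2$ are controlled by $R\|\phi\|_{H^2}|t-s|$ and by $R\|\phi\|_{L^\infty}\int_s^t|A^\tau_r|^2_{L^\infty}dr$. The stationarity of $A^\tau$ and the Sobolev embedding under \cref{hp:noise} give $\mathbb E\sup_r|A^\tau_r|^{p}_{W^{1,\infty}}$-type moment bounds (via \cref{lem:stochastic_conv}), but only with a $\log(1/\tau)$ growth of the supremum. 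For this reason I would instead bound $\mathbb E|\int_s^t|A^\tau_r|^2dr|^p\lesssim|t-s|^p$, which follows from stationarity and Hölder in time.

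The first-order term is the delicate one, since $A^\tau$ is of size $O(1)$ but oscillates. Writing $A^\tau_r\,dr=-\tau\,dA^\tau_r+\sqrt{2\tau}\sum_k\sigma_k\,dW^k_r$ from \eqref{eq:OU} and integrating by parts in time, $\int_s^t\langle\psi^\tau_r,A^\tau_r\cdot\nabla\phi\rangle dr$ becomes
\[
-\tau\bigl[\langle\psi^\tau,A^\tau\cdot\nabla\phi\rangle\bigr]_s^t+\tau\int_s^t\langle d\psi^\tau_r,A^\tau_r\cdot\nabla\phi\rangle+\sqrt{2\tau}\sum_k\int_s^t\langle\psi^\tau_r,\sigma_k\cdot\nabla\phi\rangle dW^k_r .
\]
The boundary term is $O(\tau\sup|A^\tau|)$, which is $o(1)$ uniformly. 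The stochastic integral is $O(\sqrt\tau)$ in $L^p$ by BDG. The middle term is expanded by reinserting \eqref{eq:weak:original}. This produces $\tau\int\langle\psi^\tau,\Delta(A^\tau\cdot\nabla\phi)\rangle$ and $\tau\int\langle\psi^\tau,|A^\tau|^2A\cdot\nabla\phi\rangle$, both small, and a crucial term $-2i\tau\int_s^t\langle\psi^\tau_r,A^\tau_r\cdot\nabla(A^\tau_r\cdot\nabla\phi)\rangle dr$, which is $O(1)\cdot$… wait, it is of order $\tau\cdot O(1)$, hence negligible as well. This yields Kolmogorov/Aldous tightness in $C([0,T];B^{L^2}_{R,w})$. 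Tightness in $C([0,T];\tilde H^-)$ follows because bounded sets of $L^2$ are compact in $\tilde H^{-1/n}$ thanks to the weight $w$, and weak-$L^2$ equicontinuity plus uniform $L^2$ bounds give equicontinuity in $\tilde H^{-1/n}$ by interpolation.

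\emph{Identification.} With the same decomposition, the first-order term vanishes in $L^1(\Omega)$ uniformly in $t$. For the potential term I write $|A^\tau|^2=V+\wick{|A^\tau|^2}$ and show that $\mathbb E\sup_t|\int_0^t\langle\wick{|A^\tau_r|^2}\psi^\tau_r,\phi\rangle dr|\to0$. Since $\psi^\tau$ is not independent of $A^\tau$, a bare ergodic argument is insufficient. The approach I would take is a Poisson-equation corrector: $\wick{|A|^2}$ lies in the second chaos of the OU process with generator $\tau^{-1}\mathcal L$, so $\mathcal L F=\wick{|A|^2}$ is solved by $F=-\tfrac12\wick{|A|^2}$. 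Itô's formula for $\tau F(A^\tau_r)\langle\psi^\tau_r,\phi\rangle$ then rewrites the integral as boundary terms of order $\tau$, a martingale of order $\sqrt\tau$, and $\tau\int F(A^\tau)\,d\langle\psi^\tau,\phi\rangle$. Using the bound on $d\langle\psi^\tau,\phi\rangle$ from above, this last term is $O(\tau\cdot\tau^{-1/2})$ at worst, hence vanishes. Hence every limit point solves the weak form of \eqref{eq:averaging} with initial datum $\psi_0$, and uniqueness in \cref{prop:well_posed_averaging_limit} (case $\epsilon=0$) identifies it with $\bar\psi$.

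\emph{Conclusion and main obstacle.} By Prokhorov, the laws of $\psi^\tau$ converge to $\delta_{\bar\psi}$. Since the limit is deterministic, this gives convergence in probability in both topologies. The main obstacle is controlling the products of $A^\tau$, which is rough and oscillatory, with $\psi^\tau$, which is only in $L^2$ and correlated with $A^\tau$, in the correctors. In particular, the term $\tau\int\langle\psi^\tau, A^\tau\cdot\nabla\phi\rangle\langle|A^\tau|^2\cdot\rangle$ and the quadratic-variation cross terms in Itô's formula require moment bounds like $\mathbb E\sup_{t\le T}\|A^\tau_t\|^p_{W^{3,\infty}}\lesssim(\log(1/\tau))^{p/2}$, which hold by Gaussian maximal inequalities under \cref{hp:noise}. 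These suffice, since all such terms carry an extra positive power of $\tau$.
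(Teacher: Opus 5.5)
Your proposal is correct and is essentially the paper's proof: tightness from the conserved $L^2$ norm plus time-increment bounds, then identification of limit points through exactly the paper's perturbed test function $i\langle\psi^\tau,\phi\rangle+\frac{\tau}{2}\langle\psi^\tau,|A^\tau|^2\phi\rangle-2i\tau\langle\psi^\tau,A^\tau\cdot\nabla\phi\rangle$, which is your pair of first- and second-chaos Poisson correctors (with the field placed inside the pairing, i.e.\ $\tau\langle\psi^\tau_r,F(A^\tau_r)\phi\rangle$), and finally uniqueness of the deterministic limit to upgrade convergence in law to convergence in probability. Two small adjustments: the first-order term needs no corrector for tightness, since $R\|\nabla\phi\|\int_s^t\|A^\tau_r\|_{L^\infty}dr$ is controlled by stationarity just like your $|A^\tau|^2$ term (this is the paper's $H^{-2}$ increment lemma); and uniqueness should be invoked in the class $C([0,T];H^{-2})$, as the paper does, rather than in the case $\epsilon=0$, because limit points are a priori only weakly continuous in $L^2$.
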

\begin{remark}
    Let $d_{\psi_0}$ be a bounded metric inducing the topology of $C([0,T];B^{L^2}_{R,w})
    \cap C([0,T];\tilde{H}^-)$, Then, by \autoref{thm_averaging}, we also obtain, for each $p\geq 1$,
    \begin{align*}
       \lim_{\tau\rightarrow 0} \mathbb{E}\left[d_{\psi_0}(\psi^\tau,\bar{\psi})^p\right]=0.
    \end{align*}
\end{remark}
The proof of \cref{thm_averaging} is the content of \cref{sec:averaging}. Concerning the Gaussian fluctuations the following holds.
\begin{theorem}[Gaussian Fluctuations]\label{thm_fluctuations}
    Given $\psi_0\in L^2$ and assuming \cref{hp:noise}, let $\psi^{\tau}$ be the unique solution of \eqref{eq:rmse} given by \cref{prop:well_confinement} and let $\bar{\psi}$ be the unique weak solution of \eqref{eq:averaging} given by \cref{prop:well_posed_averaging_limit}. Then $\xi^{\tau}=\frac{\psi^\tau-\bar{\psi}}{\sqrt{\tau}}$ converges in law in the space $\bar Z_T$ defined in \eqref{topology_fluctuations} to the unique weak solution of \eqref{eq:gaussianlimit} given by \cref{prop:well_posed_fluctuations}.
\end{theorem}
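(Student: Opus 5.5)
The plan is the classical tightness–identification–uniqueness scheme, applied to the exact fluctuation identity \eqref{eq:fluctuations} in weak form. For $\phi\in\mathscr{S}$ one has
\begin{align*}
i\langle\xi^\tau_t,\phi\rangle&=-\int_0^t\langle\xi^\tau_s,\Delta\phi\rangle ds-2i\int_0^t\langle\xi^\tau_s,A^\tau_s\cdot\nabla\phi\rangle ds+\int_0^t\langle|A^\tau_s|^2\xi^\tau_s,\phi\rangle ds\\
&\quad+2i\int_0^t\langle Z^\tau_s\cdot\nabla\bar\psi_s,\phi\rangle ds+\int_0^t\Big\langle\frac{|A^\tau_s|^2-V}{\sqrt\tau}\bar\psi_s,\phi\rangle ds.
\end{align*}
The two forcing terms are the drivers. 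I will treat them first as a pair of random distributions, independently of $\xi^\tau$.

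\textbf{Step 1: joint convergence of the driving noises.} Set $M^\tau_t=\int_0^t Z^\tau_sds=\sqrt\tau(\int_0^t dA/(-1/\tau)\ \text{terms})$. By \eqref{eq:OU}, $\int_0^tA^\tau_sds=\tau(A^\tau_0-A^\tau_t)+\sqrt{2\tau}\sum_k\sigma_kW^k_t$. Hence
\[
M^\tau_t=\sqrt2\sum_k\sigma_kW^k_t+O(\sqrt\tau)\ \text{ in } C([0,T];H^{5+\frac{d+\theta}2}).
\]
This gives strong convergence to the $W$-noise, with coefficient $2\sqrt2$ after the factor $2$. For the Wick square, I will use Itô's formula on $A^\tau_k$-coordinates, which is the Poisson-equation trick. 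With $N^\tau_t=\tau^{-1/2}\int_0^t(|A^\tau_s|^2-V)ds$, apply Itô to $|A^\tau|^2$:
\[
d|A^\tau|^2=-\tfrac2\tau(|A^\tau|^2-V)dt+\tfrac{2\sqrt2}{\sqrt\tau}\sum_kA^\tau\cdot\sigma_kdW^k .
\]
So $N^\tau_t=\tfrac{\sqrt\tau}{2}(|A_0|^2-|A_t|^2)+\sqrt2\sum_k\int_0^tA^\tau_s\cdot\sigma_k\,dW^k_s$. The martingale part has quadratic variation $2\sum_k\int_0^t(A^\tau_s\cdot\sigma_k)(x)(A^\tau_s\cdot\sigma_k)(y)ds$, which by ergodicity of the OU process (fast mixing at rate $1/\tau$) converges in $L^2$ to $2t\,\trace[Q(x,y)Q(y,x)]=2tR(x,y)$. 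Its cross variation with $M^\tau$ is $\propto\int_0^tA^\tau_sds\to0$. A martingale CLT in Hilbert space (e.g. in $H^{s}$ with $s$ large enough that the $\sigma$-regularity in \cref{hp:noise} gives traceclass covariances) then yields joint convergence in law of $(M^\tau,N^\tau)$ to $(\sqrt2\sum\sigma_kW^k,B)$, with $B$ independent of $W$.

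\textbf{Step 2: uniform bounds and tightness of $\xi^\tau$.} This is the main obstacle. The term $2iA^\tau\cdot\nabla\xi^\tau$ is an unbounded transport with a coefficient of size $\tau^{-1/2}$ in white-noise scaling, and $\xi^\tau$ is not a priori bounded in $L^2$. I would estimate $\xi^\tau$ in a negative Sobolev space $H^{\epsilon}$, $\epsilon\in[-5,-1]$, matching \cref{prop:well_posed_fluctuations}. First I rewrite the singular forcing terms through $M^\tau,N^\tau$ via integration by parts in time, i.e. $\int_0^t Z^\tau_s\cdot\nabla\bar\psi_sds=M^\tau_t\cdot\nabla\bar\psi_t-\int_0^tM^\tau_s\cdot\nabla\partial_s\bar\psi_sds$, using $\partial_s\bar\psi\in H^{-2}$. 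Then I control $\|\xi^\tau_t\|_{H^\epsilon}^2$ by a Grönwall argument. The transport term $A^\tau\cdot\nabla$ is handled by the Kato–Ponce commutator \cref{lem:commutator}: since $\operatorname{div}\sigma_k=0$, $\mathrm{Re}\langle\Lambda^\epsilon(A\cdot\nabla\xi),\Lambda^\epsilon\xi\rangle$ only involves $[\Lambda^\epsilon,A\cdot\nabla]$, bounded by $\|A\|_{W^{|\epsilon|+1,\infty}}\|\xi\|^2_{H^\epsilon}$. The issue is that $\int_0^T\|A^\tau_s\|ds$ is $O(1)$ but not small, so Grönwall gives an $O(1)$ bound, which suffices. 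The same works for $|A^\tau|^2\xi$. Uniform moments of $\sup_t\|\xi^\tau_t\|_{H^\epsilon}$ then follow, and equicontinuity in a weaker space comes from the equation. Aldous/Kolmogorov criteria give tightness in $\bar Z_T$. Jointly, the law of $(\xi^\tau,M^\tau,N^\tau,A^\tau\text{-integrals})$ is tight.

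\textbf{Step 3: identification and uniqueness.} By Skorokhod, I pass to a.s. convergent subsequences. I will pass to the limit in the weak formulation. The transport term $\int\langle\xi^\tau_s,A^\tau_s\cdot\nabla\phi\rangle ds$ tends to $0$ since $\int_0^\cdot A^\tau ds\to0$ while $\xi^\tau$ converges; this needs a second time integration by parts and the equation for $\xi^\tau$, whose time increments are controlled. The term $|A^\tau|^2\xi^\tau$ converges to $V\xi$ because $\int_0^\cdot|A^\tau|^2ds\to\int_0^\cdot V ds$. The forcing terms converge to the stochastic integrals against $W,B$ by Step 1, using the time integration by parts. The limit therefore satisfies the weak form of \eqref{eq:gaussianlimit} with $\xi_0=0$. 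Uniqueness from \cref{prop:well_posed_fluctuations}, used pathwise given the noises and identified through a martingale characterization, gives convergence of the whole family in law.
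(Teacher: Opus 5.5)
\textbf{The gap is in Step 2: your uniform bounds do not reach the regularity that the statement's topology requires.} Tightness in $\bar Z_T$ needs $\sup_t\|\xi^\tau_t\|_{H^{-3}}$ to be bounded in probability, uniformly in $\tau$.

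Write $\xi^\tau=\gamma^\tau+v^\tau$, where
\[
v^\tau_t=2\int_0^tZ^\tau_r\cdot\nabla\bar\psi_r\,dr-i\int_0^t\tau^{-1/2}\big(|A^\tau_r|^2-V\big)\bar\psi_r\,dr .
\]
The equation for $\gamma^\tau$ is forced by $-\Delta v^\tau+2iA^\tau\cdot\nabla v^\tau+|A^\tau|^2v^\tau$. A Gr\"onwall estimate in $H^{-3}$ therefore closes only if $v^\tau$ is controlled in $C([0,T];H^{-1})$.

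Your time integration by parts through $(M^\tau,N^\tau)$ does not give this. Since $\psi_0\in L^2$ only, $\partial_s\bar\psi_s=-i(-\Delta+V)\bar\psi_s$ lies only in $H^{-2}$. So $\int_0^tM^\tau_s\cdot\nabla\partial_s\bar\psi_s\,ds$, and hence $v^\tau$, is controlled only in $H^{-3}$. The energy estimate for $\gamma^\tau$ would then have to be run in $H^{-5}$. There, with the commutator bound you quote, you need $A^\tau\in W^{6,\infty}$, which is more than \cref{hp:noise} guarantees. In any case you would get tightness only in a $C_w([0,T];H^{-5})$-type space, not in $\bar Z_T$.

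The missing ingredient is a direct, $\tau$-uniform estimate of the integrated forcing at the right regularity:
\begin{itemize}
\item Since $\operatorname{div}\sigma_k=0$, you can write $Z^\tau\cdot\nabla\bar\psi=\operatorname{div}(Z^\tau\bar\psi)$. Together with the covariance $\tau^{-1}e^{-|r-r'|/\tau}Q$ of $Z^\tau$, this gives
\[
\mathbb{E}\Big\|\int_s^tZ^\tau_r\cdot\nabla\bar\psi_r\,dr\Big\|^2_{H^{-1}}\lesssim\frac1\tau\int_s^t\int_s^te^{-|r-r'|/\tau}\,dr\,dr'\lesssim|t-s| .
\]
\item Wick's formula gives the analogous $L^2$ bound for the Wick-square term, with kernel $e^{-2|r-r'|/\tau}$.
\item Both objects live in a fixed Wiener chaos, so Nelson's hypercontractivity upgrades these bounds to all moments.
\item Garsia--Rodemich--Rumsey then gives control of the supremum and of H\"older norms.
\end{itemize}
This is \cref{prop:increments_stochastic}. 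With it, $\gamma^\tau$ can be estimated in $H^{-3}$, which is exactly the level at which \cref{lem:commutator} is available under \cref{hp:noise}.

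\textbf{Outside Step 2, your route genuinely differs from the paper's and is sound in outline.}
\begin{itemize}
\item \emph{Noise identification.} The exact identity $\int_0^tZ^\tau\,dr=\sqrt\tau(A^\tau_0-A^\tau_t)+\sqrt2\sum_k\sigma_kW^k_t$ gives convergence in probability, not just in law, of the $\Theta$-forcing to $\sqrt2\sum_k\int\sigma_k\cdot\nabla\bar\psi\,dW^k$. The It\^o decomposition of the Wick square plus a martingale FCLT identifies $B$ and its independence from $W$. Since the limit equation is a linear deterministic PDE driven by the limits $(\Theta,\Gamma)$, its pathwise uniqueness then fixes the law of $\xi$. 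This replaces the paper's conditional-moment computations and martingale representation.
\item \emph{Oscillating coefficients.} The paper uses perturbed test functions, with correctors $\frac\tau2\langle\xi,|A|^2\phi\rangle-2i\tau\langle\xi,A\cdot\nabla\phi\rangle$. Your ``primitive tends to zero'' argument also works, but do it directly: for fixed smooth $h$, $\langle\xi^{\tau_n}_s,h\rangle\to\langle\xi_s,h\rangle$ uniformly in $s$; $A^{\tau_n}$ is bounded in $L^1_t$ with primitive tending to zero uniformly; and a finite-dimensional truncation of $A^{\tau_n}$ handles the tail.
\item \emph{Second integration by parts.} This is unnecessary. It would pair the $O(\tau^{-1/2})$ forcing in $\partial_t\xi^\tau$ with $\int_0^\cdot A^\tau=O(\sqrt\tau)$, producing $O(1)$ terms that need a further argument.
\item \emph{Skorokhod step.} Since $A^\tau$ is not tight in $C([0,T];\cdot)$, you must carry $(A^\tau_0,W)$, not only time integrals of $A^\tau$, through the Skorokhod representation. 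This is what lets you rebuild $A^{\tau_n}$ and the weak formulation on the new probability space.
\end{itemize}
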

The proof of \cref{thm_fluctuations} is the content of
\cref{sec:fluct_original}, which also specifies the topology of convergence
in law; see \cref{subsec:compactness_fluct} and
\eqref{topology_fluctuations}. This topology roughly corresponds to that of
weakly continuous functions on $[0,T]$ with values in $H^{-3}(\R^d)$.

Next we show quantitatively that the effective model
\eqref{eq:simplified_model} has the same averaging profile and Gaussian
fluctuations. This is the content of our final main result.
\begin{theorem}[Averaging and Fluctuations of the Effective Model]\label{thm_effective_model}
 Assume $\psi_0\in L^2$ and the validity of \cref{hp:noise}. Let $\Psi^{\tau}$ be the unique solution of \eqref{eq:simplified_model} given by \autoref{prop:well_effective} and let  $\Xi^{\tau}=\frac{\Psi^\tau-\bar{\psi}}{\sqrt{\tau}}$. Then for each $\theta_1\in [-2,0),\theta_2\in [-3,-\frac{5}{3})$ and $q\geq 1$
 \begin{align}\label{eq_rate_averaging}
     \mathbb{E}\left[\sup_{t\in [0,T]}\|\Psi_t^{\tau}-\bar{\psi}_t\|_{H^{\theta_1}}^q\right]&\lesssim \tau^{-\frac{q\theta_1}{4}},\\
     \mathbb{E}\left[\sup_{t\in [0,T]}\|\Xi_t^{\tau}-\xi_t\|_{H^{\theta_2}}^q\right]&\lesssim \begin{cases}\tau^{-\frac{q(3\theta_2+5)}{4}}\quad &\text{if }\theta_2\in [-2,-\frac{5}{3}),\\
        \tau^{-\frac{q(\theta_2+1)}{4}}\quad &\text{if }\theta_2\in [-3,-2].
     \end{cases}\label{eq_rate_fluctuctions}
 \end{align}
 Above we denote by $\bar{\psi}$ the unique weak solution of \eqref{eq:averaging} given by \cref{prop:well_posed_averaging_limit} and by $\xi$ the unique weak solution of \eqref{eq:gaussianlimit} given by \cref{prop:well_posed_fluctuations}, driven by the same Brownian motions as $\Psi^\tau$.
\end{theorem}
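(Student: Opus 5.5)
The plan is to reduce everything to one \emph{negative-Sobolev estimate of order $\sqrt{\tau}$} in mild form, and then interpolate against the a priori $L^2$ bounds. Let $S(t)=e^{-it(-\Delta+V)}$ be the group generated by \eqref{eq:averaging}. Since $V\in W^{5,\infty}$, $S(t)$ is bounded on every $H^s$, $|s|\le 5$, uniformly for $t\in[0,T]$. Set $D^\tau=\Psi^\tau-\bar\psi$. By the weak formulation \eqref{eq:weak_effective} and the usual equivalence of weak and mild solutions,
\begin{align*}
D^\tau_t&=\tau\int_0^t S(t-s)\bigl(4\operatorname{div}(Q\nabla\Psi^\tau_s)-R\Psi^\tau_s\bigr)ds
-\sqrt{8\tau}\sum_{k}\int_0^t S(t-s)\,\sigma_k\cdot\nabla\Psi^\tau_s\,dW^k_s\\
&\quad-i\sqrt{2\tau}\sum_{k,j}\int_0^t S(t-s)\,(\sigma_k\cdot\sigma_j)\Psi^\tau_s\,dB^{k,j}_s .
\end{align*}
I will control the suprema in time of the stochastic convolutions by writing $\int_0^tS(t-s)\,dM_s=S(t)\int_0^tS(-s)\,dM_s$. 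Because $S$ is a group, this reduces the supremum to Burkholder--Davis--Gundy for a genuine martingale.

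\textbf{Step 1: averaging rate \eqref{eq_rate_averaging}.} Work in $H^{-2}$. Use $\|\Psi^\tau_s\|=\|\psi_0\|$ from \cref{prop:well_effective} and the smoothness of $\sigma_k$ from \cref{hp:noise}. Then $\|\operatorname{div}(Q\nabla\Psi^\tau)\|_{H^{-2}}+\|R\Psi^\tau\|_{H^{-2}}\lesssim\|\psi_0\|$, and $\sum_k\|\sigma_k\cdot\nabla\Psi^\tau\|^2_{H^{-1}}\lesssim\|\psi_0\|^2$, with the same bound for the $B$-coefficients. The drift therefore contributes $O(\tau)$ and the martingale terms $O(\sqrt\tau)$ in $L^q(\Omega;C([0,T];H^{-2}))$, which gives
\begin{align*}
\mathbb{E}\Bigl[\sup_{t\le T}\|D^\tau_t\|_{H^{-2}}^q\Bigr]\lesssim\tau^{q/2}.
\end{align*}
Combined with $\sup_t\|D^\tau_t\|\le 2\|\psi_0\|$ a.s., the interpolation inequality $\|f\|_{H^{\theta_1}}\le\|f\|^{1+\theta_1/2}\,\|f\|_{H^{-2}}^{-\theta_1/2}$ gives the rate $\tau^{-q\theta_1/4}$ for every $\theta_1\in[-2,0)$.

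\textbf{Step 2: fluctuations, $\theta_2\in[-3,-2]$.} Subtract the mild form of \eqref{eq:gaussianlimit} from that of \eqref{eq:fluctuations2} and write $E^\tau=\Xi^\tau-\xi$. Dividing the Step 1 identity by $\sqrt\tau$, we get
\begin{align*}
E^\tau_t&=\sqrt\tau\int_0^tS(t-s)\bigl(4\operatorname{div}(Q\nabla\Psi^\tau_s)-R\Psi^\tau_s\bigr)ds-2\sqrt2\sum_k\int_0^tS(t-s)\,\sigma_k\cdot\nabla D^\tau_s\,dW^k_s\\
&\quad-i\sqrt2\sum_{k,j}\int_0^tS(t-s)(\sigma_k\cdot\sigma_j)D^\tau_s\,dB^{k,j}_s.
\end{align*}
Here the forcing of $\xi$ has cancelled against the $\bar\psi$-part of the noise. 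In $H^{\theta_2}$ the drift is $O(\sqrt\tau)$ as soon as $\theta_2\le-2$. By BDG, the martingales are bounded by $\mathbb{E}\bigl[\sup_s\|D^\tau_s\|^q_{H^{\theta_2+1}}\bigr]^{1/q}$. Step 1 with $\theta_1=\theta_2+1\in[-2,-1]$ bounds this by $\tau^{-(\theta_2+1)/4}$, which dominates $\sqrt\tau$. This gives the second line of \eqref{eq_rate_fluctuctions}, whose value at $\theta_2=-2$ is $\tau^{q/4}$.

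\textbf{Step 3: fluctuations, $\theta_2\in[-2,-\frac53)$ (main obstacle).} Here the Stratonovich drift $\sqrt\tau\,\operatorname{div}(Q\nabla\Psi^\tau)$ cannot be estimated in $H^{\theta_2}$, because $\Psi^\tau$ is only in $L^2$ and Schr\"odinger groups do not smooth. My plan is therefore to interpolate $E^\tau$ between two bounds:
\begin{itemize}
\item the $H^{-2}$ bound $\tau^{1/4}$ from Step 2;
\item a uniform-in-$\tau$ moment bound on $\sup_t\|E^\tau_t\|_{H^{-5/3}}$, or in $H^{s}$ for $s$ slightly below $-5/3$.
\end{itemize}
The interpolation exponent $(\theta_2+2)/(1/3)$ then yields exactly $\tau^{\frac14(1-3(\theta_2+2))}=\tau^{-(3\theta_2+5)/4}$.

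The uniform bound is where I expect the real work. Bounding $\Xi^\tau$ and $\xi$ separately does not suffice, since $\|\Xi^\tau\|_{H^s}\lesssim\tau^{-s/4-1/2}$ is uniform only for $s\le-2$. Instead I would obtain it from an energy (It\^o) estimate for $\|\Xi^\tau\|_{H^{-5/3}}^2$ directly from \eqref{eq:fluctuations2}. The Stratonovich transport term $i\sqrt{8\tau}\,\sigma_k\cdot\nabla\Xi^\tau\circ dW^k$ is skew-adjoint at leading order, so after combining It\^o correction and quadratic variation only commutator terms survive. Those commutators are controlled via \cref{lem:commutator} (Kato--Ponce type) in negative norms.

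The remaining forcing $2\sqrt2\,\sigma_k\cdot\nabla\bar\psi$ lies only in $H^{-1}$. Its contribution to $\|\Xi^\tau\|_{H^{-5/3}}^2$ is nonetheless uniform, via the quadratic variation $\sum_k\|\sigma_k\cdot\nabla\bar\psi\|^2_{H^{-5/3}}<\infty$. The threshold $-5/3$ should come from balancing the commutator loss against the $\tau^{1/4}$-scale information available from Step 1.

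If this energy estimate fails to close at exactly $-5/3$, I would fall back on proving it for $s<-5/3$ arbitrarily close. This is consistent with the open endpoint in the statement.
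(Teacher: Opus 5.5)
Your Steps 1 and 2 are correct and follow the paper's argument up to inessential choices. The paper uses the free group $e^{it\Delta}$ with $V$ absorbed by Gr\"onwall, and Tubaro's maximal inequality for stochastic convolutions in place of your group/BDG trick. For $\theta_2\in[-3,-2]$ it interpolates between an $H^{-2}$ bound of order $\tau^{q/4}$ and an $H^{-3}$ bound of order $\tau^{q/2}$, instead of running BDG at each level; the exponents coincide.

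The gap is Step 3, i.e.\ the first case of the fluctuation rate. Everything there rests on a uniform-in-$\tau$ moment bound for $\sup_t\|\Xi^\tau_t-\xi_t\|_{H^{-5/3}}$, which you do not prove. You only outline an It\^o estimate: the transport, commutator and $\bar\psi$-cross terms are never actually estimated. Your account of where $-\frac53$ comes from (balancing commutator losses against $\tau^{1/4}$-scale information) is a guess.

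The fallback does not rescue the statement either. A uniform bound only in $H^{s}$ with $s<-\frac53$, interpolated against the $H^{-2}$ bound, gives the exponent $\frac14\bigl(1-\frac{\theta_2+2}{s+2}\bigr)$. For $\theta_2\in(-2,s]$ this is strictly smaller than $-\frac{3\theta_2+5}{4}$, hence a strictly weaker bound since $\tau<1$. It gives nothing at all for $\theta_2\in(s,-\frac53)$.

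The missing observation is that the second interpolation endpoint need not be uniform in $\tau$. Your remark that bounding $\Xi^\tau$ and $\xi$ separately is not uniform is true but beside the point. The paper interpolates the $H^{-2}$ bound $\tau^{q/4}$ against the crude estimate $\mathbb{E}[\sup_t\|\Xi^\tau_t-\xi_t\|_{H^{-1}}^q]\lesssim\tau^{-q/2}$. This follows from $\|\Xi^\tau_t\|\le 2\|\psi_0\|/\sqrt{\tau}$ (conservation of the $L^2$ norm for $\Psi^\tau$ and $\bar\psi$) together with finite moments of $\sup_t\|\xi_t\|_{H^{-1}}$. With $\lambda=\theta_2+2\in[0,\frac13)$, interpolation and H\"older's inequality give
\[
\mathbb{E}\Bigl[\sup_{t\in[0,T]}\|\Xi^\tau_t-\xi_t\|_{H^{\theta_2}}^q\Bigr]\lesssim\tau^{\frac{q(1-\lambda)}{4}}\,\tau^{-\frac{q\lambda}{2}}=\tau^{-\frac{q(3\theta_2+5)}{4}}.
\]
So $-\frac53$ is simply where this exponent vanishes, not a commutator threshold.

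In fact your own Step 1 with $\theta_1=-1$ already gives the sharper bound $\mathbb{E}[\sup_t\|\Xi^\tau_t\|_{H^{-1}}^q]\lesssim\tau^{-q/4}$. Interpolating with it at $\lambda=\frac13$ yields exactly the uniform $H^{-5/3}$ estimate you were after, even with a vanishing factor $\tau^{q/12}$, and needs no energy estimate.
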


\section{Averaging and Fluctuations of \texorpdfstring{\eqref{eq:rmse}}{(RMSE)} }\label{sec:proof_main_original}
The section is split into two parts. We prove \cref{thm_averaging} in
\cref{sec:averaging} and \cref{thm_fluctuations} in
\cref{sec:fluct_original}, respectively.
\subsection{Averaging}\label{sec:averaging}
Since the proof of \cref{thm_averaging} is mainly a simplified version of that of \cref{thm_fluctuations} we only describe the main steps, referring to the arguments developed elsewhere in the paper for the missing details.

In order to show tightness of the laws of $\psi^{\tau}$, $\tau\in (0,1)$, we need a control on time increments. This is provided by the following lemma.
\begin{lemma}\label[lemma]{time:compactness_averaging}
        For each $\tau\in(0,1)$ and $0\leq s\leq t\leq T$,
    \begin{align*}
        \|\psi^{\tau}_t-\psi^{\tau}_s\|_{H^{-2}}&\leq  
        K|t-s|^{1/2}\|\psi_0\|
        \left(\int_0^T
        (1+\|A^{\tau}_r\|^4_{L^{\infty}})\,dr\right)^{1/2}
        \quad \mathbb{P}-a.s.
\end{align*}
for some positive non-random constant $K>0$ independent of $\tau,s,t.$
\end{lemma}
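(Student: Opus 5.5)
We argue by duality, starting from the weak formulation \eqref{eq:weak:original}, which by the remark following \cref{def:weak_sol_effective} holds for every $\phi\in H^2(\R^d)$. Since
\begin{align*}
\|\psi^{\tau}_t-\psi^{\tau}_s\|_{H^{-2}}=\sup_{\|\phi\|_{H^2}\leq 1}|\langle \psi^{\tau}_t-\psi^{\tau}_s,\phi\rangle|,
\end{align*}
it is enough to bound $\langle \psi^{\tau}_t-\psi^{\tau}_s,\phi\rangle$ uniformly over $\phi\in H^2$ with $\|\phi\|_{H^2}\leq 1$. Subtracting \eqref{eq:weak:original} at times $t$ and $s$ gives
\begin{align*}
i\langle \psi^{\tau}_t-\psi^{\tau}_s,\phi\rangle=\int_s^t\Big(-\langle\psi^\tau_r,\Delta\phi\rangle-2i\langle\psi^\tau_r,A^\tau_r\cdot\nabla\phi\rangle+\langle|A^\tau_r|^2\psi^\tau_r,\phi\rangle\Big)dr .
\end{align*}

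We estimate the integrand pointwise in $r$ by Cauchy--Schwarz and the norm conservation \eqref{eq:bound_rmse} of \cref{prop:well_confinement}, namely $\|\psi^\tau_r\|=\|\psi_0\|$:
\begin{align*}
|\langle\psi^\tau_r,\Delta\phi\rangle| &\leq \|\psi_0\|\,\|\phi\|_{H^2},\\
|\langle\psi^\tau_r,A^\tau_r\cdot\nabla\phi\rangle| &\leq \|\psi_0\|\,\|A^\tau_r\|_{L^\infty}\,\|\phi\|_{H^1},\\
|\langle|A^\tau_r|^2\psi^\tau_r,\phi\rangle| &\leq \|\psi_0\|\,\|A^\tau_r\|_{L^\infty}^2\,\|\phi\|.
\end{align*}
The quantity $\|A^\tau_r\|_{L^\infty}$ is finite $\mathbb{P}$-a.s. by Sobolev embedding, because $A^\tau$ has paths in $C([0,T];H^{5+\frac{d+\theta}{2}})$. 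Using $1+a+a^2\leq 3(1+a^4)^{1/2}$, the integrand is therefore bounded by $C\|\psi_0\|(1+\|A^\tau_r\|^4_{L^\infty})^{1/2}$.

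Integrating over $[s,t]$ and applying Cauchy--Schwarz in time yields
\begin{align*}
\int_s^t(1+\|A^\tau_r\|^4_{L^\infty})^{1/2}dr\leq |t-s|^{1/2}\Big(\int_0^T(1+\|A^\tau_r\|^4_{L^\infty})dr\Big)^{1/2}.
\end{align*}
Taking the supremum over $\phi$ gives the claim, with $K$ depending only on the embedding constants. In particular $K$ does not depend on $\tau$, $s$ or $t$.

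The one point needing care is that the identity holds simultaneously for all $\phi$ and all $s,t$ on a single full-measure set. To ensure this, we first restrict to a countable dense subset of $\mathscr{S}$ in $H^2$ and to rational times. We then extend to all times by the weak continuity of $\psi^\tau$ and the continuity of the right-hand side in $s,t$, and to all test functions by density. No stochastic estimate is needed here: the bound is pathwise, and its integrability in $\tau$ will come later from stationarity of $A^\tau$.
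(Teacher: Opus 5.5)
Your proof is correct and is the paper's argument: integrate the weak formulation between $s$ and $t$, bound each term by $\|\psi_0\|$ times powers of $\|A^\tau_r\|_{L^\infty}$ using norm conservation, and apply Cauchy--Schwarz (Hölder) in time. Your duality bookkeeping and your treatment of the null set via countable dense test functions and rational times simply make explicit what the paper's two-line proof leaves implicit.
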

\begin{proof}
    Integrating equation \eqref{eq:rmse} between $s$ and $t$, we obtain by
    H\"older's inequality
    \begin{align*}
        \|\psi^{\tau}_t-\psi^{\tau}_s\|_{H^{-2}}
        &\lesssim |t-s|^{1/2}
        \sup_{r\in[0,T]}\|\psi^{\tau}_r\|\left(\int_0^T
        (1+\|A^{\tau}_r\|^4_{L^\infty})\,dr\right)^{1/2}
        \quad\mathbb{P}-a.s.
    \end{align*}
    The claim then follows from equation \eqref{eq:bound_rmse}.
\end{proof}
By \cref{time:compactness_averaging}, with $R=\|\psi_0\|$, the family of
laws of the processes
\begin{align*}
    \left(\psi^{\tau},\ A_0^\tau,\ W=\{W^k\}_{k\in I}\right)_{\tau\in (0,1)}
\end{align*}
in the Polish space
\begin{align*}
\left(C([0,T];B^{L^2}_{R,w})\cap C([0,T];\tilde{H}^-)\right)
\times H^{5+\frac{d+\theta}{2}}\times C([0,T];\mathbb{R}^I)
\end{align*}
is tight; this follows by arguing as in \cref{lem_compactness_viscous}; see
also
\cref{subsec:compactness_fluct}. Fix an arbitrary sequence
$\tau_n\rightarrow0$. By the Skorokhod representation theorem and standard
arguments (see for example \cite[Chapter 2]{flandoli2023stochastic}), after
passing to a subsequence that is not relabeled, we find an auxiliary
probability space, which for simplicity we continue to call
$(\Omega,\mathcal{F},\mathbb{P})$, and processes
\begin{align*}
(\hat{\psi}^{\tau_n}, {A}^{n}_0,W^n=(W^{k,n})_{k\in I}),\ ( \bar{\psi} ,\bar{A}_0,\bar{W}=(\bar{W}^{k})_{k\in I})
\end{align*}
such that, for each $n\in\N$,
\begin{align*}
(\hat{\psi}^{\tau_n}, {A}^{n}_0,W^n)\sim \left({\psi}^{\tau_n}, A^{\tau_n}_0,W\right),\end{align*}
and it holds
\begin{align}
 \hat{\psi}^{\tau_n}&\rightarrow \bar{\psi}\quad\text{in }C([0,T];B^{L^2}_{R,w})
    \cap C([0,T];\tilde{H}^-) \mathbb{P}-a.s.,\label{convergence_psi_n}\\
 A^{n}_0&\rightarrow \bar{A}_0 \quad\hspace{-0.14cm}\text{in }H^{5+\frac{d+\theta}{2}}\quad \mathbb{P}-a.s.,\notag\\
 W^n&\rightarrow \bar{W}\quad \hspace{-0.1cm}\text{in }C([0,T];\R^I)\quad \mathbb{P}-a.s.\notag
\end{align}
Moreover, $\mathcal{W}^n_t:=\sum_{k\in I}\sigma_k W^{k,n}$ is a Brownian motion with covariance structure $Q$ with respect to auxiliary probability filtration $(\mathcal{F}^n_t)_{t\geq 0}$, which is complete, right-continuous, and generated by $(\hat{\psi}^{\tau_n},A^{n}_0,W^n).$ Similarly, $\mathcal{W}_t:=\sum_{k\in I}\sigma_k \bar{W}^{k}$ is a Brownian motion with covariance structure $Q$ with respect to the complete right-continuous filtration $(\mathcal{F}_t)_{t\geq 0}$ generated by $(\bar{\psi}, \bar{A}_0,\bar{W}).$
Also, calling
\begin{align*}
    A^{n}_t:=e^{-\frac{t}{\tau_n}}A_0^{n}+\sqrt{\frac{2}{\tau_n}}\sum_{k\in I}\int_0^t e^{-\frac{t-s}{\tau_n}}\sigma_k dW^{k,n}_s,
\end{align*}
the following identity holds for each $\phi\in\mathscr{S}(\R^d)$ and
$t\in[0,T]$:
   \begin{align}\label{eq:weak_formulation_changed_space_prelimit_averaging}
       i\langle \hat{\psi}^{\tau_n}_t,\phi\rangle-i\langle\psi_0,\phi\rangle&=-\int_0^t \langle\hat{\psi}^{\tau_n}_s, \Delta\phi\rangle ds\notag\\ &-2i\int_0^t\langle \hat{\psi}^{\tau_n}_s,A^{n}_s\cdot\nabla\phi\rangle ds+\int_0^t \langle|A^{n}_s|^2 \hat{\psi}^{\tau_n}_s, \phi\rangle ds\quad \mathbb{P}-a.s.,
   \end{align}
namely, $\hat\psi^{\tau_n} \in C_{w}([0,T];L^2)$ is the unique adapted weak solution
of \eqref{eq:rmse} on the new probability space. In particular
$\|\hat{\psi}^{\tau_n}_t\|=\|\psi_0\|$ $\mathbb{P}-a.s.$, and in fact the processes
are continuous in $L^2$.

In order to identify the limit process $\bar{\psi}$ as the unique solution of \eqref{eq:averaging}, the following It\^o formula plays a role. 
\begin{lemma}\label{perturbed_test_function_averaging}
For each $n\in\mathbb N$ the following holds $\mathbb{P}-a.s.$: for every
$\phi\in \mathscr{S}(\R^d)$ and $t\in [0,T]$
\begin{align*}
    i\langle \hat{\psi}^{\tau_n}_t, \phi\rangle
    &+\frac{\tau_n}{2}\langle \hat{\psi}^{\tau_n}_t,|A^n_t|^2\phi\rangle
    -2i\tau_n\langle \hat{\psi}^{\tau_n}_t,A^n_t\cdot\nabla\phi\rangle\\
    &=i\langle \psi_0,\phi\rangle
    +\frac{\tau_n}{2}\langle \psi_0,|A^n_0|^2\phi\rangle
    -2i\tau_n\langle \psi_0,A^n_0\cdot\nabla\phi\rangle\\
    &\quad+\int_0^t\langle\hat{\psi}^{\tau_n}_s,(-\Delta+V)\phi\rangle ds+\frac{i\tau_n}{2}\int_0^t
    \langle\hat{\psi}^{\tau_n}_s,\Delta(|A^n_s|^2\phi)\rangle ds\\
    &\quad+2\tau_n\int_0^t
    \langle\hat{\psi}^{\tau_n}_s,\Delta(A^n_s\cdot\nabla\phi)\rangle ds-\tau_n\int_0^t
    \langle\hat{\psi}^{\tau_n}_s,A^n_s\cdot\nabla(|A^n_s|^2\phi)\rangle ds\\
    &\quad-\frac{i\tau_n}{2}\int_0^t
    \langle\hat{\psi}^{\tau_n}_s,|A^n_s|^4\phi\rangle ds+4i\tau_n\int_0^t
    \langle\hat{\psi}^{\tau_n}_s,A^n_s\cdot\nabla(A^n_s\cdot\nabla\phi)\rangle ds\\
    &\quad-2\tau_n\int_0^t
    \langle\hat{\psi}^{\tau_n}_s,|A^n_s|^2A^n_s\cdot\nabla\phi\rangle ds+\sqrt{2\tau_n}\sum_{k\in I}\int_0^t
    \langle\hat{\psi}^{\tau_n}_s,A^n_s\cdot\sigma_k\phi\rangle dW^{k,n}_s\\
    &\quad-i\sqrt{8\tau_n}\sum_{k\in I}\int_0^t
    \langle\hat{\psi}^{\tau_n}_s,\sigma_k\cdot\nabla\phi\rangle dW^{k,n}_s.
\end{align*}
\end{lemma}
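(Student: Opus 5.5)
This is a perturbed test function computation: we apply the It\^o product rule to the pairing of $\hat\psi^{\tau_n}$ with the random test functions
\begin{align*}
F^n_t:=\tfrac{\tau_n}{2}|A^n_t|^2\phi,\qquad G^n_t:=-2i\tau_n A^n_t\cdot\nabla\phi .
\end{align*}
Then we add the resulting identities to the weak formulation \eqref{eq:weak_formulation_changed_space_prelimit_averaging}. The correctors are designed so that the $O(1)$ drift $-2i\langle\hat\psi,A^n\cdot\nabla\phi\rangle+\langle|A^n|^2\hat\psi,\phi\rangle$ is cancelled, up to the averaged term $\langle\hat\psi,V\phi\rangle$, by the $-\frac1{\tau_n}A^n\,dt$ part of \eqref{eq:OU}. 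The leftover terms are either of order $\tau_n$ or martingales.

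\textbf{Step 1: dynamics of the correctors.} Since $A^n$ solves \eqref{eq:OU} with $W^n$, It\^o's formula in $H^{5+\frac{d+\theta}{2}}$ (pointwise in $x$, with enough regularity to pair against $\phi$) gives
\begin{align*}
d|A^n_t|^2=-\tfrac{2}{\tau_n}|A^n_t|^2dt+\tfrac{2}{\tau_n}V\,dt+2\sqrt{\tfrac{2}{\tau_n}}\sum_k A^n_t\cdot\sigma_k\,dW^{k,n}_t .
\end{align*}
Here the It\^o correction is $\frac{2}{\tau_n}\sum_k|\sigma_k|^2=\frac{2}{\tau_n}V$. Similarly,
\begin{align*}
d(A^n_t\cdot\nabla\phi)=-\tfrac1{\tau_n}A^n_t\cdot\nabla\phi\,dt+\sqrt{\tfrac2{\tau_n}}\sum_k\sigma_k\cdot\nabla\phi\,dW^{k,n}_t .
\end{align*}
Multiplying by $\tau_n/2$ and $-2i\tau_n$ respectively, the drifts become $(V-|A^n|^2)\phi\,dt$ and $2iA^n\cdot\nabla\phi\,dt$. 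The martingale parts become $\sqrt{2\tau_n}\sum_k A^n\cdot\sigma_k\phi\,dW^{k,n}$ and $-i\sqrt{8\tau_n}\sum_k\sigma_k\cdot\nabla\phi\,dW^{k,n}$, which are exactly the two stochastic integrals in the claim.

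\textbf{Step 2: product rule.} The process $\hat\psi^{\tau_n}$ is of bounded variation in time: its weak equation has no martingale part, and its time derivative is tested against smooth functions. There is therefore no cross-variation term, and
\begin{align*}
d\langle\hat\psi,F^n\rangle=\langle d\hat\psi,F^n\rangle+\langle\hat\psi,dF^n\rangle .
\end{align*}
Because the inner product is linear in the first argument and conjugate-linear in the second, we must track conjugates carefully: $\phi$ appears conjugated, so the factor $i$ in $G^n$ leaves a conjugated sign. Here $A^n$ is real. For $\langle d\hat\psi,F^n\rangle$ we use the weak equation in the form $\langle\partial_t\hat\psi,\chi\rangle=-i[\langle\hat\psi,-\Delta\chi\rangle-2i\langle\hat\psi,A^n\cdot\nabla\chi\rangle+\langle\hat\psi,|A^n|^2\chi\rangle]$ with $\chi=F^n_t$ or $G^n_t$. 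This step produces $\frac{i\tau_n}{2}\langle\hat\psi,\Delta(|A^n|^2\phi)\rangle$, $2\tau_n\langle\hat\psi,\Delta(A^n\cdot\nabla\phi)\rangle$, $-\tau_n\langle\hat\psi,A^n\cdot\nabla(|A^n|^2\phi)\rangle$, $-\frac{i\tau_n}{2}\langle\hat\psi,|A^n|^4\phi\rangle$, $4i\tau_n\langle\hat\psi,A^n\cdot\nabla(A^n\cdot\nabla\phi)\rangle$, and $-2\tau_n\langle\hat\psi,|A^n|^2A^n\cdot\nabla\phi\rangle$. Summing the identities for $i\langle\hat\psi,\phi\rangle$, $\langle\hat\psi,F^n\rangle$ and $\langle\hat\psi,G^n\rangle$, the Step 1 drifts cancel $-2i\langle\hat\psi,A^n\cdot\nabla\phi\rangle+\langle\hat\psi,|A^n|^2\phi\rangle$ and leave $\langle\hat\psi,V\phi\rangle$. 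Together with $-\langle\hat\psi,\Delta\phi\rangle$, this yields $(-\Delta+V)\phi$.

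\textbf{Main obstacle: rigor.} The real difficulty is justifying the product rule, since $\hat\psi^{\tau_n}$ is only in $C([0,T];L^2)$ and the random test functions $F^n,G^n$ are semimartingales in $H^2$. I would handle this first for $\phi$ fixed, using the remark that \eqref{eq:weak_formulation_changed_space_prelimit_averaging} extends to $H^2$ test functions. Expanding $F^n_t$ in a basis, or discretizing $[0,t]$ into a partition, gives
\begin{align*}
\langle\hat\psi_{t_{j+1}},F_{t_{j+1}}\rangle-\langle\hat\psi_{t_j},F_{t_j}\rangle=\langle\hat\psi_{t_{j+1}}-\hat\psi_{t_j},F_{t_j}\rangle+\langle\hat\psi_{t_{j+1}},F_{t_{j+1}}-F_{t_j}\rangle ,
\end{align*}
and we pass to the limit. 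The first sum converges by the weak equation, using continuity of $F$ in $H^2$ (guaranteed by $H^{5+\frac{d+\theta}2}\hookrightarrow W^{4,\infty}$ regularity of $A^n$). The second converges to the stochastic integral plus the drift, since $\hat\psi$ is continuous and adapted; the cross term vanishes because $\hat\psi$ has finite variation in $H^{-2}$, by \cref{time:compactness_averaging}. Finally, we take a countable dense set of $\phi$ and use continuity in $t$ of all terms to obtain the statement simultaneously for all $\phi$ and $t$, $\mathbb P$-a.s.
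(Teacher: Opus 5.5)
Your proposal is correct. It uses the same correctors as the paper, $\frac{\tau_n}{2}|A^n|^2\phi$ and $-2i\tau_nA^n\cdot\nabla\phi$, and your drift, martingale and $O(\tau_n)$ terms match the statement; what differs is how the product rule is justified. The paper refers to the proof of \cref{perturbed_test_function_fluctuations}, which regularizes in space:
\begin{itemize}
\item it sets $\hat\psi^{\epsilon}=\hat\psi^{\tau_n}\ast\chi_\epsilon$, so the equation holds classically;
\item it applies the scalar It\^o formula pointwise in $x$ to $|A^n_t(x)|^2\hat\psi^\epsilon_t(x)$ and $A^n_t(x)\hat\psi^\epsilon_t(x)$, then tests against $\phi$ and $\nabla\phi$;
\item it removes $\epsilon$ using $\hat\psi^\epsilon\to\hat\psi$ in $C([0,T];L^2)$, dominated convergence, and \cite[Lemma 4.3]{bagnara2025no} for the stochastic integrals.
\end{itemize}
You regularize in time instead, with a Riemann-sum product rule between an $H^{-2}$-valued finite-variation process and an $H^2$-valued continuous semimartingale. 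This avoids mollifying $\hat\psi$ and taking the $\epsilon\to0$ limit in the nonlinear and stochastic terms, and it is really the abstract duality product rule, so it works for any smooth corrector. The price is twofold. First, you insert the random, $\mathcal F^n_{t_j}$-measurable functions $F^n_{t_j}$ into the weak formulation. This is legitimate because \eqref{eq:weak_formulation_changed_space_prelimit_averaging} contains no stochastic integral and every term is continuous in the test function in $H^2$, so it holds for all $\chi\in H^2$ off a single null set. Second, you need convergence of Riemann sums of stochastic integrals with adapted integrands $\langle\hat\psi_{t_j},\cdot\rangle$, which is standard given the $L^2$-continuity of $\hat\psi^{\tau_n}$. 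The paper's route needs neither, since it only ever uses the one-dimensional It\^o formula.

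There are two small corrections. First, with $\langle f,g\rangle=\int fg^*$ you have $\langle\hat\psi,-2i\tau_nA^n\cdot\nabla\phi\rangle=+2i\tau_n\langle\hat\psi,A^n\cdot\nabla\phi\rangle$. So $G^n$ should be $2i\tau_nA^n\cdot\nabla\phi$, or equivalently the scalar $-2i\tau_n$ should stay outside the pairing, which is what your coefficient computations actually do. Taken literally, your $G^n$ would double the $O(1)$ drift $-2i\langle\hat\psi,A^n\cdot\nabla\phi\rangle$ instead of cancelling it.

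Second, the cross term $\sum_j\langle\hat\psi_{t_{j+1}}-\hat\psi_{t_j},F^n_{t_{j+1}}-F^n_{t_j}\rangle$ is not killed by the $|t-s|^{1/2}$ bound of \cref{time:compactness_averaging}. With only that bound, Cauchy--Schwarz gives something of order $\sqrt{t}$ times the square root of the quadratic variation of $F^n$, which does not vanish. What you need is the absolute continuity the weak formulation gives before H\"older is applied: $\|\hat\psi_t-\hat\psi_s\|_{H^{-2}}\lesssim\|\psi_0\|\int_s^t(1+\|A^n_r\|_{L^\infty}^2)\,dr$. The cross term is then bounded by $\max_j\|F^n_{t_{j+1}}-F^n_{t_j}\|_{H^2}\,\|\psi_0\|\int_0^T(1+\|A^n_r\|_{L^\infty}^2)\,dr$, which tends to $0$.
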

\begin{proof}
    The claim and its proof are a simplified version of \cref{perturbed_test_function_fluctuations} below to which we refer for details.
\end{proof}

We are now ready to complete the proof of \cref{thm_averaging}.
\begin{proof}[Proof of \cref{thm_averaging}]
We want to pass to the limit in the equation satisfied by $\hat{\psi}^{\tau_n}$  by \cref{perturbed_test_function_averaging}.\\
Let $\phi $ as in the statement of \cref{perturbed_test_function_averaging} and $t\in [0,T]$. Thanks to \eqref{convergence_psi_n} we easily have
\begin{align*}
    i\langle \hat{\psi}^{\tau_n}_t, \phi\rangle&\rightarrow i\langle \bar{\psi}_t,\phi\rangle\quad\mathbb{P}-a.s.,\\
    \int_0^t \langle \hat{\psi}^{\tau_n}_s, (-\Delta+V)\phi\rangle ds&\rightarrow \int_0^t \langle \bar{\psi}_s, (-\Delta+V)\phi\rangle ds\quad\mathbb{P}-a.s.
\end{align*}
We are left to show that all the other terms go to zero as
$n\rightarrow+\infty$. By conservation of the $L^2$ norm,
\cref{lem:stochastic_conv}, and H\"older's inequality, every boundary or
deterministic remainder in \cref{perturbed_test_function_averaging} converges
to zero in $L^1(\Omega)$. For example,
\begin{align*}
 \mathbb E\left[
 \tau_n\left|\left\langle\hat\psi_t^{\tau_n},|A_t^n|^2\phi\right\rangle\right|
 \right]
 &\lesssim \tau_n\|\psi_0\|\|\phi\|
 \mathbb E\left[\|A_t^n\|_{H^{5+\frac{d+\theta}{2}}}^2\right]\longrightarrow0,\\
 \mathbb E\left[
 \tau_n\left|\int_0^t
 \left\langle\hat\psi_s^{\tau_n},
 \Delta(A_s^n\cdot\nabla\phi)\right\rangle ds\right|\right]
 &\lesssim \tau_n\|\psi_0\|\|\phi\|_{H^3}
 \mathbb E\left[\int_0^T\|A_s^n\|_{W^{2,\infty}}\,ds\right]\longrightarrow0.
\end{align*}
For the stochastic integrals, by \cite[Lemma 4.3]{bagnara2025no} it is
enough to show that
\begin{align*}
    \tau_n\sum_{k\in I }\int_0^T \left|\langle\hat{\psi}^{\tau_n}_s, A^n_s\cdot \sigma_k\phi\rangle\right|^2 ds&\rightarrow 0\quad\mbox{in probability,} \\
    \tau_n\sum_{k\in I }\int_0^T \left|\langle\hat{\psi}^{\tau_n}_s, \sigma_k\cdot\nabla\phi\rangle\right|^2 ds&\rightarrow 0\quad\mbox{in probability.} 
\end{align*}
We just show the first one, the second being analogous and simpler.
Indeed,
\begin{align*}
 &\tau_n\sum_{k\in I }\int_0^T
 \left|\langle\hat{\psi}^{\tau_n}_s,A^n_s\cdot\sigma_k\phi\rangle\right|^2ds\\
 &\qquad\lesssim
 \tau_n\|\psi_0\|^2\|\phi\|^2
 \left(\sum_{k\in I}\|\sigma_k\|_{L^\infty}^2\right)
 \int_0^T\|A_s^n\|_{L^\infty}^2ds,
\end{align*}
which converges to zero in $L^1(\Omega)$ by stationarity and \cref{lem:stochastic_conv}. Computations above
imply that for each $\phi \in \mathscr{S}$ there exists
$N\subset \Omega$, $\mathbb{P}(N)=0$, such that on $N^c$ the following holds
for each $t\in \mathbb Q\cap [0,T]$:
\begin{align}\label{eq_limit_averaging}
    i\langle \bar{\psi}_t, \phi\rangle
    &=i\langle\psi_0,\phi\rangle
      +\int_0^t \langle \bar{\psi}_s, (-\Delta+V)\phi\rangle ds.
\end{align}
By continuity in time of all the terms above and the existence of a
countable dense subset of $H^2$ consisting of functions in $\mathscr{S}$,
there exists a null set $N_0\subset\Omega$ such that, on $N_0^c$,
\eqref{eq_limit_averaging} holds for every $t\in[0,T]$ and $\phi\in H^2$.
In particular,  $\bar{\psi} \in C([0,T];H^{-2})$ $\mathbb{P}-a.s.$ Since
the initial sequence was arbitrary and the limit is deterministic,
uniqueness of the limit equation (cf.
\cref{prop:well_posed_averaging_limit}), gives convergence in probability for the full sequence
and completes the proof.
\end{proof}

\subsection{Fluctuations}\label{sec:fluct_original}
\subsubsection{Uniform Estimates}\label{sec:uniform_estimates_fluct}
In order to prove the required uniform bounds on $\xi^{\tau}$ we rely on the following Kato-Ponce type commutator estimate.
\begin{lemma}\label[lemma]{lem:commutator}
    Let $k\geq 0$ and $f\in H^{-k}(\R^d)$ be a complex valued function and $u\in H^{s}(\R^d;\R^d)$ for $s>\frac{d}{2}+k+2$, be a divergence free real vector field. Then
\begin{align*}
    \|u\cdot(I-\Delta)^{-k/2}\nabla f-(I-\Delta)^{-k/2}\left(u\cdot\nabla f\right)\|& \lesssim \|f\|_{H^{-k}}\|u\|_{H^s}.
\end{align*}
    In particular
    \begin{align*}
        |\langle(I-\Delta)^{-k}f,u\cdot\nabla f\rangle+\langle u\cdot\nabla f ,(I-\Delta)^{-k}f\rangle|&\lesssim \|u\|_{H^s}\|f\|_{H^{-k}}^2.
    \end{align*}
\end{lemma}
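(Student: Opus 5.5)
Write $\Lambda^{-k}:=(I-\Delta)^{-k/2}$, with Fourier symbol $\langle\eta\rangle^{-k}$, $\langle\eta\rangle=(1+|\eta|^2)^{1/2}$. The plan is to prove the first inequality on the Fourier side, as a pseudo-differential commutator estimate. The second inequality will then follow by a short algebraic manipulation that uses the skew-adjointness of $u\cdot\nabla$, which holds because $u$ is real and divergence free. By density we may take $f\in\mathscr S$ and $u$ smooth and compactly supported, and conclude by continuity of both sides.

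For the first inequality, write $g=\Lambda^{-k}f\in L^2$, so that $\|g\|=\|f\|_{H^{-k}}$ and $f=\Lambda^{k}g$. The commutator becomes
\[
C:=u\cdot\nabla g-\Lambda^{-k}\left(u\cdot\nabla\Lambda^{k}g\right).
\]
Since $u$ is divergence free, $u\cdot\nabla h=\nabla\cdot(uh)$, so in Fourier variables
\[
\widehat C(\eta)=\int \widehat u(\eta-\zeta)\cdot i\eta\,\Big(1-\tfrac{\langle\zeta\rangle^{k}}{\langle\eta\rangle^{k}}\Big)\widehat g(\zeta)\,d\zeta .
\]
The key pointwise bound is
\[
|\eta|\,\big|\langle\eta\rangle^{k}-\langle\zeta\rangle^{k}\big|\,\langle\eta\rangle^{-k}\lesssim\langle\eta-\zeta\rangle^{k+1}.
\]
To prove it, apply the mean value theorem to get $|\langle\eta\rangle^{k}-\langle\zeta\rangle^{k}|\lesssim|\eta-\zeta|\,(\langle\eta\rangle^{k-1}+\langle\zeta\rangle^{k-1})$. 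Then use Peetre's inequality $\langle\zeta\rangle^{a}\lesssim\langle\eta\rangle^{a}\langle\eta-\zeta\rangle^{|a|}$ to absorb the powers of $\langle\zeta\rangle$ relative to $\langle\eta\rangle$. Both the case $k\ge1$ and the case $0\le k<1$ reduce to this bound with at most $k+1$ powers of $\langle\eta-\zeta\rangle$.

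Given the pointwise bound, $|\widehat C|\lesssim\big(\langle\cdot\rangle^{k+1}|\widehat u|\big)*|\widehat g|$. Young's inequality then gives
\[
\|C\|\lesssim\|\langle\cdot\rangle^{k+1}\widehat u\|_{L^1}\|g\|\le\|u\|_{H^s}\,\big\|\langle\cdot\rangle^{k+1-s}\big\|_{L^2}\,\|g\|,
\]
and the last $L^2$ norm is finite precisely when $s>\frac d2+k+1$. The hypothesis $s>\frac d2+k+2$ is therefore more than enough. This step, getting the pointwise symbol estimate uniformly for all real $k\ge0$ while keeping the derivative counting correct, is the main technical point. Everything after it is Young plus Cauchy–Schwarz.

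For the second claim, set $g=\Lambda^{-k}f$ as before and $L=u\cdot\nabla$. Since $u$ is real and divergence free, $\langle Lh_1,h_2\rangle=-\langle h_1,Lh_2\rangle$. Because $\Lambda^{-k}$ is self-adjoint,
\[
\langle\Lambda^{-2k}f,Lf\rangle=\langle g,\Lambda^{-k}Lf\rangle=\langle g,Lg\rangle-\langle g,C\rangle .
\]
Similarly $\langle Lf,\Lambda^{-2k}f\rangle=\langle Lg,g\rangle-\langle C,g\rangle$. By skew-adjointness $\langle g,Lg\rangle+\langle Lg,g\rangle=0$, so the sum of the two pairings equals $-2\,\mathrm{Re}\langle g,C\rangle$. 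Its modulus is at most $2\|g\|\,\|C\|\lesssim\|u\|_{H^s}\|f\|_{H^{-k}}^2$, which is the claimed bound.
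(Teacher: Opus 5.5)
Your proof is correct and follows essentially the same route as the paper: the Fourier representation of the commutator, the mean value theorem plus Peetre's inequality on the symbol, Young's convolution inequality with Cauchy--Schwarz, and the same skew-adjointness identity that reduces the second bound to $2|\mathrm{Re}\langle g,C\rangle|$ with $g=(I-\Delta)^{-k/2}f$. The differences are minor: using $\operatorname{div}u=0$ to move the derivative onto the output frequency gives you the weight $\langle\eta-\zeta\rangle^{k+1}$ instead of the paper's $(1+|\eta-\theta|^2)^{(k+2)/2}$, so $s>\frac{d}{2}+k+1$ would suffice; and for $0<k<\frac12$ your Peetre step as written actually yields $\langle\eta-\zeta\rangle^{2-k}$ rather than $\langle\eta-\zeta\rangle^{k+1}$, which is harmless under the assumed $s>\frac{d}{2}+k+2$ and can in any case be repaired by splitting into $|\zeta|\geq|\eta|/2$ and $|\zeta|<|\eta|/2$.
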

\begin{proof}
    Since $u$ is divergence free, the left-hand side can be rewritten as
    \begin{align*}
    &\langle(I-\Delta)^{-k}f,u\cdot\nabla f\rangle
    +\langle u\cdot\nabla f,(I-\Delta)^{-k}f\rangle\\
    &\quad=\langle (I-\Delta)^{-k/2}f,
    [u\cdot,(I-\Delta)^{-k/2}]\nabla f\rangle\\
    &\qquad+\langle [u\cdot,(I-\Delta)^{-k/2}]\nabla f,
    (I-\Delta)^{-k/2}f\rangle,
    \end{align*}
    Here and below we use the convention $[L,M]:=ML-LM$ for commutators.
    Therefore we have
    \begin{align*}
        |\langle(I-\Delta)^{-k}f,u\cdot\nabla f\rangle+\langle u\cdot\nabla f ,(I-\Delta)^{-k}f\rangle&\lesssim \|f\|_{H^{-k}}\|[u\cdot,(I-\Delta)^{-k/2}]\nabla f\|
    \end{align*}
    and we are left to estimate $\|[u\cdot,(I-\Delta)^{-k/2}]\nabla f\|$. By Plancherel's theorem it is enough to study $\|\mathscr{F}\left([u\cdot,(I-\Delta)^{-k/2}]\nabla f\right)\|$. For each $\eta\in \R^d$,
    \begin{align*}
      \mathscr{F}\left([u\cdot,(I-\Delta)^{-k/2}]\nabla f\right)(\eta)=i\int_{\R^d} \mathscr{F}u(\eta-\theta)\cdot\theta \mathscr{F}f(\theta)\left(m(\eta)-m(\theta)\right)d\theta, 
    \end{align*}
    where $m(\eta)=(1+|\eta|^2)^{-k/2}$. The mean-value theorem and
    Peetre's inequality give
   \begin{align*}
        |m(\eta)-m(\theta)|&\leq \frac{k|\zeta|} {(1+|\zeta|^2)^{(k+2)/2}}|\eta-\theta|\\ & \lesssim \frac{1}{(1+|\zeta|^2)^{(k+1)/2}}|\eta-\theta|\\ & \lesssim \frac{1}{(1+|\theta|^2)^{(k+1)/2}}|\eta-\theta|(1+|\theta-\zeta|^2)^{(k+1)/2}\\ &\leq \frac{(1+|\eta-\theta|^2)^{(k+2)/2}}{(1+|\theta|^2)^{(k+1)/2}},
    \end{align*}
    Therefore,
    \begin{align*}
        &\left|\mathscr{F}\left([u\cdot,(I-\Delta)^{-k/2}]
        \nabla f\right)(\eta)\right|\\
        &\quad\lesssim \int_{\R^d}|\mathscr{F}u(\eta-\theta)|
        (1+|\eta-\theta|^2)^{(k+2)/2}
        \frac{|\mathscr{F}f(\theta)|}{(1+|\theta|^2)^{k/2}}d\theta.
    \end{align*}
    Young's convolution inequality and Cauchy--Schwarz now imply
    \begin{align*}
        \|[u\cdot,(I-\Delta)^{-k/2}]\nabla f\|
        &\lesssim
        \left\|(1+|\cdot|^2)^{(k+2)/2}\mathscr{F}[u]\right\|_{L^1}
        \left\|(1+|\cdot|^2)^{-k/2}\mathscr{F}[f]\right\|_{L^2}\\
        &\lesssim \|u\|_{H^s}\|f\|_{H^{-k}}
        \left(\int_{\R^d}
        (1+|\eta|^2)^{k+2-s}\,d\eta\right)^{1/2},
    \end{align*}
    which yields the claim since $s>\frac{d}{2}+k+2$.
\end{proof}
Secondly, we prove some uniform controls on the singular terms $ Z^{\tau}\cdot\nabla\bar{\psi},\ \frac{\wick{|A^\tau|^2}\bar{\psi}}{\sqrt{\tau}} $.
\begin{proposition}\label[proposition]{prop:increments_stochastic}
    For each $\kappa\geq2$ and $0\leq s\leq t\leq T$,
    \begin{align*}
         \sup_{\tau \in (0,1)} \expt{\norm{\int_s^t Z^{\tau}_r\cdot\nabla\bar{\psi}_r dr}_{H^{-1}}^\kappa }&\lesssim |t-s|^{\frac{\kappa}{2}}\\
          \sup_{\tau \in (0,1)} \expt{\norm{\int_s^t \frac{\wick{|A^\tau_r|^2}\bar{\psi}_r}{\sqrt{\tau}}dr}_{L^2}^\kappa}&\lesssim |t-s|^{\frac{\kappa}{2}}.
    \end{align*}
\end{proposition}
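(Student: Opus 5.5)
The plan is to reduce both bounds to second-moment estimates, using Gaussianity and hypercontractivity. For fixed $\tau$, the variable $\int_s^t Z^\tau_r\cdot\nabla\bar\psi_r\,dr$ is linear in the Gaussian field $A^\tau$, so it lies in the first Wiener chaos. The variable $\int_s^t \tau^{-1/2}\wick{|A^\tau_r|^2}\bar\psi_r\,dr$ is a centered quadratic functional, so it lies in the second chaos; here $\bar\psi$ is deterministic. Both take values in a Hilbert space ($H^{-1}$ and $L^2$ respectively). By Gaussian hypercontractivity, valid for Hilbert-valued chaos variables of fixed order (or Kahane–Khintchine for the first chaos), we have $\mathbb E\|X\|^\kappa\lesssim_\kappa(\mathbb E\|X\|^2)^{\kappa/2}$. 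It therefore suffices to prove
\begin{align*}
\mathbb E\Big\|\int_s^t Z^\tau_r\cdot\nabla\bar\psi_r\,dr\Big\|_{H^{-1}}^2+\mathbb E\Big\|\int_s^t \tfrac{\wick{|A^\tau_r|^2}}{\sqrt\tau}\bar\psi_r\,dr\Big\|_{L^2}^2\lesssim |t-s|
\end{align*}
uniformly in $\tau\in(0,1)$.

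For the first term, expand $A^\tau_r=\sum_k \sigma_k a^k_r$, where the $a^k$ are independent stationary scalar OU processes with covariance $\mathbb E[a^k_ra^k_{r'}]=e^{-|r-r'|/\tau}$. Since $\sigma_k$ is divergence-free, $\sigma_k\cdot\nabla\bar\psi_r=\operatorname{div}(\sigma_k\bar\psi_r)$, and hence
\begin{align*}
\|\sigma_k\cdot\nabla\bar\psi_r\|_{H^{-1}}\lesssim\|\sigma_k\|_{L^\infty}\|\psi_0\|,
\end{align*}
using the $L^2$ conservation of \cref{prop:well_posed_averaging_limit}. The second moment then equals
\begin{align*}
\frac1\tau\sum_k\int_s^t\!\int_s^t e^{-|r-r'|/\tau}\,\langle \sigma_k\cdot\nabla\bar\psi_r,\sigma_k\cdot\nabla\bar\psi_{r'}\rangle_{H^{-1}}\,dr\,dr',
\end{align*}
which we bound by $\sum_k\|\sigma_k\|_{L^\infty}^2\|\psi_0\|^2\,\tau^{-1}\int_s^t\!\int_s^t e^{-|r-r'|/\tau}dr\,dr'\le 2|t-s|\sum_k\|\sigma_k\|^2_{L^\infty}\|\psi_0\|^2$. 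The last sum is finite by \cref{hp:noise} and Sobolev embedding.

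For the second term, write
\begin{align*}
\wick{|A^\tau_r|^2}=\sum_{k,j}\sigma_k\cdot\sigma_j\,\big(a^k_ra^j_r-\delta_{kj}\big).
\end{align*}
By Wick's theorem and stationarity, $\mathbb E[(a^k_ra^j_r-\delta_{kj})(a^{k'}_{r'}a^{j'}_{r'}-\delta_{k'j'})]=(\delta_{kk'}\delta_{jj'}+\delta_{kj'}\delta_{jk'})e^{-2|r-r'|/\tau}$. The second moment is therefore
\begin{align*}
\frac{2}{\tau}\sum_{k,j}\int_s^t\!\int_s^t e^{-2|r-r'|/\tau}\langle \sigma_k\cdot\sigma_j\bar\psi_r,\sigma_k\cdot\sigma_j\bar\psi_{r'}\rangle\,dr\,dr',
\end{align*}
which is bounded by $2\sum_{k,j}\|\sigma_k\|_{L^\infty}^2\|\sigma_j\|_{L^\infty}^2\|\psi_0\|^2|t-s|$. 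This is finite, and again gives the bound $\lesssim|t-s|$.

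I expect the only delicate points to be the following. First, one must justify the interchange of sums, integrals and expectations; this follows from absolute convergence, which holds by the summability in \cref{hp:noise}. Second, one must confirm that the kernel factor $\tau^{-1}\int\!\int e^{-c|r-r'|/\tau}$ yields a bound linear in $|t-s|$, not $|t-s|^2/\tau$; this is the source of uniformity in $\tau$, and it holds since $\int_{\R}e^{-c|u|/\tau}du=2\tau/c$. Finally, the hypercontractivity step must be stated for Banach- or Hilbert-valued chaoses. If citing that result is awkward, the fallback is to compute the $\kappa$-th moment (for even $\kappa$) directly via Wick/Isserlis expansions, and then use Hölder's inequality in $\kappa$.
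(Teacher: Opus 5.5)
Your proposal is correct and follows essentially the same route as the paper. You reduce to $\kappa=2$ by Nelson's hypercontractivity estimate, compute the second moments from the exponential covariance of the Ornstein--Uhlenbeck field (with Wick's formula for the quadratic term), and bound the kernel by $\tau^{-1}\int_s^t\int_s^t e^{-c|r-r'|/\tau}\,dr\,dr'\lesssim |t-s|$. The Hilbert-valued hypercontractivity you flag follows from the scalar version by expanding in an orthonormal basis and applying Minkowski's inequality in $L^{\kappa/2}(\Omega)$.
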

\begin{proof}
    By Nelson's estimates, \cite{nelson1973free}, \cite[Theorem 3.50]{janson1997gaussian}, it is enough to consider the case $\kappa=2.$ For the first inequality we have, integrating by parts in the third step,  
    \begin{align*}
         &\expt{\norm{\int_s^t Z^{\tau}_r\cdot\nabla\bar{\psi}_r\,dr}_{H^{-1}}^2}\\
         &\quad=-\frac{1}{\tau}\sum_{k\in I}\int_s^t\int_s^t
         e^{-\frac{|r-r'|}{\tau}}
         \left\langle\nabla(I-\Delta)^{-1}
         \operatorname{div}(\sigma_k\bar\psi_r),
         \sigma_k\bar\psi_{r'}\right\rangle dr'\,dr\\
         &\quad=-\frac{1}{\tau}\sum_{k\in I}\int_s^t\int_s^r
         e^{-\frac{r-r'}{\tau}}
         \left\langle\sigma_k\cdot\nabla(I-\Delta)^{-1}
         \operatorname{div}(\sigma_k\bar\psi_r),
         \bar\psi_{r'}\right\rangle dr'\,dr\\
         &\qquad-\frac{1}{\tau}\sum_{k\in I}\int_s^t\int_s^r
         e^{-\frac{r-r'}{\tau}}
         \left\langle\bar\psi_{r'},
         \sigma_k\cdot\nabla(I-\Delta)^{-1}
         \operatorname{div}(\sigma_k\bar\psi_r)\right\rangle dr'\,dr\\
         &\quad\lesssim
         \frac{\|\bar{\psi}\|_{L^{\infty}_tL^2_x}^2}{\tau}
         \sum_{k\in I}\|\sigma_k\|_{L^\infty}^2
         \int_s^t\int_s^r e^{-\frac{r-r'}{\tau}}dr'\,dr
         \lesssim t-s,
    \end{align*}
    thanks to \cref{hp:noise}. For the second one, Wick's formula gives
    \begin{align*}
       &\expt{\norm{\int_s^t
       \frac{\wick{|A^\tau_r|^2}\bar{\psi}_r}{\sqrt{\tau}}\,dr}_{L^2}^2}\\
       &\quad=\frac{2}{\tau}\sum_{k,j\in I}\int_s^t\int_s^t
       e^{-\frac{2|r-r'|}{\tau}}
       \left\langle
       (\sigma_k\cdot\sigma_j)\bar\psi_r,
       (\sigma_k\cdot\sigma_j)\bar\psi_{r'}
       \right\rangle dr'\,dr\\
       &\quad\lesssim
       \|\bar\psi\|_{L^\infty_tL^2_x}^2
       \left(\sum_{k\in I}\|\sigma_k\|_{L^\infty}^2\right)^2
       \frac{1}{\tau}\int_s^t\int_s^t
       e^{-\frac{2|r-r'|}{\tau}}dr'\,dr
       \lesssim t-s.
    \end{align*}
    This proves the claim.
\end{proof}
For brevity, set
\begin{align*}
\mathcal N_T^\tau
&:=\sup_{t\in[0,T]}
\norm{\int_0^t Z^{\tau}_r\cdot\nabla\bar{\psi}_r\,dr}_{H^{-1}}+\sup_{t\in[0,T]}
\norm{\int_0^t
\frac{\wick{|A^\tau_r|^2}\bar{\psi}_r}{\sqrt{\tau}}\,dr}_{H^{-1}},\\[1mm]
\mathcal A_T^\tau
&:=\int_0^T\|A^\tau_r\|_{H^{5+\frac{d+\theta}{2}}}\,dr
+\int_0^T\|A^\tau_r\|_{W^{3,\infty}}^2\,dr.
\end{align*}


Now we are ready to control the behaviour of the $H^{-3}$ norm of $\xi^{\tau}$ uniformly in $\tau$.
\begin{proposition}\label[proposition]{prop:compactness_space_fluct}
    For each $\tau\in(0,1)$,
    \begin{align*}
        \sup_{t\in [0,T]}\|\xi^{\tau}_t\|_{H^{-3}}&\leq
        Ke^{K\mathcal A_T^\tau}
        \left(1+\int_0^T\|A^\tau_r\|^4_{W^{2,\infty}}dr\right)
        \mathcal N_T^\tau
         \quad \mathbb{P}-a.s.
\end{align*}
for some positive non-random constant $K>0$ independent of $\tau.$
\end{proposition}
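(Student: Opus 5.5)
The plan is to work with the fluctuation equation \eqref{eq:fluctuations} and remove the singular forcing by subtracting its time integral. Set
\begin{align*}
F^\tau_t:=\int_0^t\Big(2Z^\tau_r\cdot\nabla\bar\psi_r-i\,\frac{\wick{|A^\tau_r|^2}}{\sqrt\tau}\bar\psi_r\Big)dr,\qquad \eta^\tau:=\xi^\tau-F^\tau .
\end{align*}
Then $\eta^\tau_0=0$, and $\sup_t\|F^\tau_t\|_{H^{-1}}\lesssim\mathcal N_T^\tau$. Since $\|\xi^\tau\|_{H^{-3}}\le\|\eta^\tau\|_{H^{-3}}+\|F^\tau\|_{H^{-1}}$, it suffices to bound $\eta^\tau$ in $H^{-3}$.

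Formally, $\eta^\tau$ solves
\begin{align*}
\partial_t\eta^\tau=i\Delta\eta^\tau-2A^\tau\cdot\nabla\eta^\tau-i|A^\tau|^2\eta^\tau+\big(i\Delta F^\tau-2A^\tau\cdot\nabla F^\tau-i|A^\tau|^2F^\tau\big).
\end{align*}
This is a linear Schrödinger equation with a divergence-free transport term and a source that is no longer singular in $\tau$, but which lies only in $H^{-3}$. We have $\Delta F^\tau\in H^{-3}$ with norm at most $\mathcal N^\tau_T$. Also $\|A^\tau\cdot\nabla F^\tau\|_{H^{-2}}\lesssim\|A^\tau\|_{W^{2,\infty}}\mathcal N^\tau_T$ and $\||A^\tau|^2F^\tau\|_{H^{-1}}\lesssim\|A^\tau\|^2_{W^{1,\infty}}\mathcal N^\tau_T$.

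Next I compute $\frac{d}{dt}\|\eta^\tau_t\|^2_{H^{-3}}=2\mathrm{Re}\langle(I-\Delta)^{-3}\eta^\tau,\partial_t\eta^\tau\rangle$ and treat each term separately.

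\begin{itemize}
\item The Laplacian term vanishes, because $(I-\Delta)^{-3}$ commutes with $\Delta$ and the resulting expression is purely imaginary.
\item The transport term is handled by \cref{lem:commutator} with $k=3$ and $u=A^\tau$, which needs $s=5+\frac{d+\theta}{2}>\frac d2+5$. This gives the bound $\|A^\tau\|_{H^{5+\frac{d+\theta}{2}}}\|\eta^\tau\|^2_{H^{-3}}$.
\item The potential term is bounded by $\|A^\tau\|^2_{W^{3,\infty}}\|\eta^\tau\|^2_{H^{-3}}$, since multiplication by $|A^\tau|^2$ is bounded on $H^{-3}$.
\item The source term is bounded by $\|\eta^\tau\|_{H^{-3}}\,(1+\|A^\tau\|^2_{W^{2,\infty}})\,\mathcal N^\tau_T$.
\end{itemize}

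Putting these together gives
\begin{align*}
\frac{d}{dt}\|\eta^\tau\|^2_{H^{-3}}\lesssim\big(\|A^\tau\|_{H^{5+\frac{d+\theta}2}}+\|A^\tau\|^2_{W^{3,\infty}}\big)\|\eta^\tau\|^2_{H^{-3}}+\|\eta^\tau\|_{H^{-3}}(1+\|A^\tau\|^2_{W^{2,\infty}})\mathcal N^\tau_T.
\end{align*}
I apply Young's inequality $ab\le\frac12a^2+\frac12b^2$ to the last term. Gronwall's lemma then yields
\begin{align*}
\sup_t\|\eta^\tau_t\|^2_{H^{-3}}\lesssim e^{K\mathcal A^\tau_T}\int_0^T(1+\|A^\tau_r\|^4_{W^{2,\infty}})dr\,(\mathcal N^\tau_T)^2 .
\end{align*}
Taking square roots and using $\sqrt{x}\le 1+x$ gives the claimed form, after adjusting $K$ and adding the $F^\tau$ contribution.

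The main obstacle is justifying the energy identity, because $\eta^\tau$ is only in $H^{-3}$ a priori (it is $L^2$ plus $H^{-1}$). I would make the computation rigorous by mollification. Apply a Friedrichs mollifier $\rho_\epsilon$ to the weak formulation \eqref{eq:weak:original} and to the equation for $\bar\psi$, so that $\rho_\epsilon*\eta^\tau$ is a strong $C^1$ solution with an extra commutator $[A^\tau\cdot\nabla,\rho_\epsilon*]\eta^\tau$. This commutator tends to $0$ in $H^{-3}$ by a DiPerna–Lions type argument, using the same Fourier bound as in \cref{lem:commutator}. The estimates above are uniform in $\epsilon$, so letting $\epsilon\to0$ by lower semicontinuity gives the bound. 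All bounds hold pathwise, which is why $K$ is non-random and the estimate holds $\mathbb P$-a.s.
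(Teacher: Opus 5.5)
Your proposal is correct and is essentially the paper's argument: you subtract the same integrated forcing (your $F^\tau$ is the paper's $v^\tau$), run the same $H^{-3}$ energy estimate with \cref{lem:commutator} at $k=3$, and finish with Young, Gr\"onwall and the triangle inequality. The only difference is how the energy identity is justified, and there the mollification is unnecessary: $\xi^\tau\in C([0,T];L^2)$ and $F^\tau\in C([0,T];H^{-1})$, so $\eta^\tau\in C([0,T];H^{-1})$ (not merely $H^{-3}$) with $\partial_t\eta^\tau\in C([0,T];H^{-3})$, and the paper applies the Lions--Magenes lemma directly in the triple $H^{-1}\hookrightarrow H^{-3}\hookrightarrow H^{-5}$ (also, your transport term should read $+2A^\tau\cdot\nabla\eta^\tau$, which does not affect the estimate).
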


\begin{proof}
    Let us introduce the auxiliary processes for each $\tau \in (0,1),$  
    \begin{align*}
        v^{\tau}_t&=2\int_0^t Z^{\tau}_r\cdot\nabla\bar{\psi}_r dr-i\int_0^t \frac{\wick{|A^\tau_r|^2}\bar{\psi}_r}{\sqrt{\tau}} dr\in C([0,T];H^{-1})\ \mathbb{P}-a.s.,\\
        \gamma^{\tau}_t&=\xi^{\tau}_t-v^{\tau}_t\in C([0,T];H^{-1})\ \mathbb{P}-a.s.
    \end{align*}
    Then, $\gamma^{\tau}$ satisfies the equation
    \begin{align*}
        i\partial_t\gamma^{\tau}
        &=-\Delta\gamma^{\tau}+2iA^{\tau}\cdot\nabla\gamma^{\tau}
        +|A^\tau|^2\gamma^{\tau}\\
        &\quad-\Delta v^{\tau}+2iA^{\tau}\cdot\nabla v^{\tau}
        +|A^\tau|^2v^{\tau},
    \end{align*}
    Since the right-hand side belongs to $C([0,T];H^{-3})$
    $\mathbb{P}-a.s.$, we can apply the Lions--Magenes lemma in $H^{-3}$.
    For each $t\in[0,T]$, simple manipulations and
    \cref{lem:commutator} give
    \begin{align*}
        \|\gamma^{\tau}_t\|_{H^{-3}}^2& \lesssim \int_0^t  \|\gamma^{\tau}_r\|_{H^{-3}}^2 \left(\|A^\tau_r\|_{H^{5+\frac{d+\theta}{2}}}+\|A^\tau_r\|^2_{W^{3,\infty}}\right) dr\\ & +\int_0^t \|\gamma^{\tau}_r\|_{H^{-3}}\|v^{\tau}_r\|_{H^{-1}}\left(1+\|A^\tau_r\|_{W^{2,\infty}}+\|A^\tau_r\|_{W^{1,\infty}}^2\right) dr.
    \end{align*}
    Therefore, Young's and Gr\"onwall's inequality yield
    \begin{align*}
        \|\gamma^{\tau}_t\|_{H^{-3}}^2
        &\leq K e^{K\mathcal A_T^\tau}
        \int_0^T \|v^{\tau}_r\|^2_{H^{-1}}
        \left(1+\|A^\tau_r\|^4_{W^{2,\infty}}\right) dr.
    \end{align*}
    The latter implies the claim by triangle inequality.
\end{proof}
Lastly we prove a control on time increments of $\xi^{\tau}_t$.
\begin{proposition}\label[proposition]{prop:compactness_time_fluct}
    For each $\tau\in(0,1)$ and $0\leq s\leq t\leq T$,
    \begin{align*}
        \|\xi^{\tau}_t-\xi^{\tau}_s\|_{H^{-5}}
        &\leq\norm{\int_s^t
        Z^{\tau}_r\cdot\nabla\bar{\psi}_r\,dr}_{H^{-3}}+\norm{\int_s^t
        \frac{\wick{|A^\tau_r|^2}\bar{\psi}_r}{\sqrt{\tau}}\,dr}_{H^{-3}}\\
        &\quad+
        K|t-s|^{1/2}e^{K\mathcal A_T^\tau}
        \left(1+\int_0^T\|A^{\tau}_r\|^6_{W^{3,\infty}}dr\right)
        \mathcal N_T^\tau
         \quad \mathbb{P}-a.s.
\end{align*}
for some positive non-random constant $K>0$ independent of $\tau,s,t.$
\end{proposition}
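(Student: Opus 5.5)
Integrating the exact fluctuation identity \eqref{eq:fluctuations} between $s$ and $t$ gives, in $H^{-5}$,
\begin{align*}
\xi^{\tau}_t-\xi^{\tau}_s&=\int_s^t\Big(i\Delta\xi^\tau_r+2A^\tau_r\cdot\nabla\xi^\tau_r-i|A^\tau_r|^2\xi^\tau_r\Big)dr\\
&\quad+2\int_s^t Z^\tau_r\cdot\nabla\bar\psi_r\,dr-i\int_s^t\frac{\wick{|A^\tau_r|^2}\bar\psi_r}{\sqrt\tau}\,dr.
\end{align*}
This identity is justified weakly, by testing against $\phi\in\mathscr{S}$, using \cref{def:weaksoloriginal} for $\psi^\tau$ and \cref{prop:well_posed_averaging_limit} for $\bar\psi$, and subtracting.

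\textbf{Forcing terms.} For the last two integrals we use $\|\cdot\|_{H^{-5}}\le\|\cdot\|_{H^{-3}}$. This produces exactly the first two terms of the claimed bound. The constant factors $2$ and $1$ can be absorbed, or one states the bound with them; the claim as written suggests they are harmless.

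\textbf{Linear part.} This is the integral involving $\xi^\tau$ itself. We bound the integrand pointwise in time in $H^{-5}$ by the $H^{-3}$ norm of $\xi^\tau_r$, using the following three estimates.
\begin{itemize}
\item[(i)] $\|\Delta\xi\|_{H^{-5}}\le\|\xi\|_{H^{-3}}$.
\item[(ii)] $\|A\cdot\nabla\xi\|_{H^{-5}}=\|\operatorname{div}(A\xi)\|_{H^{-5}}\lesssim\|A\xi\|_{H^{-4}}\lesssim\|A\|_{W^{3,\infty}}\|\xi\|_{H^{-3}}$. Here we use $\operatorname{div}A^\tau=0$ (each $\sigma_k$ is divergence free), and the fact that multiplication by a $W^{3,\infty}$ function is bounded on $H^{3}$, hence by duality on $H^{-3}$.
\item[(iii)] $\||A|^2\xi\|_{H^{-5}}\le\||A|^2\xi\|_{H^{-3}}\lesssim\|A\|_{W^{3,\infty}}^2\|\xi\|_{H^{-3}}$.
\end{itemize}
Hence the linear part is bounded by
\begin{align*}
\sup_{r\le T}\|\xi^\tau_r\|_{H^{-3}}\int_s^t\big(1+\|A^\tau_r\|_{W^{3,\infty}}+\|A^\tau_r\|^2_{W^{3,\infty}}\big)dr.
\end{align*}
Cauchy--Schwarz in time then bounds this by
\begin{align*}
|t-s|^{1/2}\sup_{r}\|\xi^\tau_r\|_{H^{-3}}\Big(1+\int_0^T\|A^\tau_r\|^4_{W^{3,\infty}}dr\Big)^{1/2}.
\end{align*}

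\textbf{Combining.} We now insert \cref{prop:compactness_space_fluct}, which gives
\begin{align*}
\sup_r\|\xi^\tau_r\|_{H^{-3}}\le Ke^{K\mathcal A^\tau_T}\Big(1+\int_0^T\|A^\tau_r\|^4_{W^{2,\infty}}dr\Big)\mathcal N^\tau_T.
\end{align*}
It remains to dominate the product
\begin{align*}
\Big(1+\int_0^T\|A\|^4_{W^{2,\infty}}\Big)\Big(1+\int_0^T\|A\|^4_{W^{3,\infty}}\Big)^{1/2}
\end{align*}
by the factor appearing in the claim, using $e^{K\mathcal A^\tau_T}$ to absorb polynomial factors. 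Young's inequality gives $\|A\|^4\le 1+\|A\|^6$. Since $\mathcal A^\tau_T\ge\int_0^T\|A\|^2_{W^{3,\infty}}$, any power of $\int_0^T\|A\|^{2}_{W^{3,\infty}}$ is bounded by $Ce^{\mathcal A^\tau_T}$. This absorption handles the awkward mismatch of exponents: one factor $(1+\int\|A\|^6_{W^{3,\infty}})$ is kept explicitly and the rest goes into the exponential after enlarging $K$.

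\textbf{Main obstacle.} The step requiring care is this exponent bookkeeping. There is also a technical point: justifying the weak identity for $\xi^\tau$ and the $H^{-5}$-valued integral, because $\xi^\tau$ is only $L^2/\sqrt\tau$-valued and the singular forcing terms only make sense after time integration. We handle this by working at the level of test functions $\phi$ and taking the supremum over $\|\phi\|_{H^5}\le1$.
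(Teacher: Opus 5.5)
Your route is the paper's own: integrate \eqref{eq:fluctuations} over $[s,t]$, bound the $\xi^\tau$-dependent part in $H^{-5}$ by a constant times $|t-s|^{1/2}\sup_r\|\xi^\tau_r\|_{H^{-3}}\bigl(\int_0^T(1+\|A^\tau_r\|^4_{W^{3,\infty}})\,dr\bigr)^{1/2}$, keep the two forcing integrals in $H^{-3}$, and insert \cref{prop:compactness_space_fluct}. Estimates (i)--(iii) and the Cauchy--Schwarz step are fine.

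\textbf{The gap} is the final bookkeeping step, exactly where you locate the difficulty. Write $a_r:=\|A^\tau_r\|_{W^{3,\infty}}\geq\|A^\tau_r\|_{W^{2,\infty}}$ and $X:=\int_0^Ta_r^4\,dr$. The product you must control is then at most a constant times $(1+X)^{3/2}$.

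After you use $a^4\leq 1+a^6$ on one factor, what remains is $(1+X)^{1/2}$. This is a power of $\int_0^T a_r^4\,dr$, not of $\int_0^T a_r^2\,dr$. No function of $\int_0^Ta_r^2\,dr$ dominates $\int_0^Ta_r^4\,dr$ pathwise: if $a$ has height $M$ on an interval of length $M^{-2}$, then $\int a^2=1$ while $\int a^4=M^2$. So this factor cannot be absorbed into $e^{K\mathcal A^\tau_T}$ with a deterministic $K$.

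Applying Young to both factors does not help either. It yields $(1+\int_0^Ta_r^6\,dr)^{3/2}$, which exceeds the claimed factor by a further $(1+\int_0^Ta_r^6\,dr)^{1/2}$, and that is equally non-absorbable.

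\textbf{The fix} is an interpolation in time, which is what the paper's one-word appeal to H\"older's inequality amounts to. By Cauchy--Schwarz, $X^2=\bigl(\int_0^Ta_r\,a_r^3\,dr\bigr)^2\leq\int_0^Ta_r^2\,dr\int_0^Ta_r^6\,dr$. Since $\mathcal A^\tau_T\geq\int_0^Ta_r^2\,dr$, this gives
\[
(1+X)^{3/2}\leq(1+X)^2\leq 2\Bigl(1+\int_0^Ta_r^2\,dr\Bigr)\Bigl(1+\int_0^Ta_r^6\,dr\Bigr)\leq 2e^{\mathcal A^\tau_T}\Bigl(1+\int_0^Ta_r^6\,dr\Bigr).
\]
This splits the excess correctly: the $\int a^2$ part goes into the exponential, and exactly one factor $1+\int_0^Ta_r^6\,dr$ survives. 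With this step, and $K$ enlarged, your argument closes.

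On the factor $2$ in front of $\int_s^tZ^\tau_r\cdot\nabla\bar\psi_r\,dr$: do not try to ``absorb'' it. The statement keeps the forcing integrals separate precisely because they are controlled only in mean (\cref{prop:increments_stochastic}), not pathwise by the third term. The clean version states the bound with the factor $2$. The paper's proof glosses over the same constant, and it is immaterial for \cref{compactness_fluctuations}.
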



\begin{proof}
    Integrating equation \eqref{eq:fluctuations} between $s$ and $t$ we obtain
    \begin{align*}
        \|\xi^{\tau}_t-\xi^{\tau}_s\|_{H^{-5}}
        &\leq |t-s|^{1/2}
        \sup_{r\in[0,T]}\|\xi^{\tau}_r\|_{H^{-3}}
        \left(\int_0^T
        (1+\|A^{\tau}_r\|^4_{W^{3,\infty}})\,dr\right)^{1/2}\\
        &\quad+\norm{\int_s^t
        Z^{\tau}_r\cdot\nabla\bar{\psi}_r\,dr}_{H^{-3}}
        +\norm{\int_s^t
        \frac{\wick{|A^\tau_r|^2}\bar{\psi}_r}{\sqrt{\tau}}\,dr}_{H^{-3}}.
    \end{align*}
    The claim then follows from \cref{prop:compactness_space_fluct} and
    H\"older's inequality.
\end{proof}
\subsubsection{Compactness of the laws}\label{subsec:compactness_fluct}
Let us consider the Gelfand triple
\begin{align*}
    H^{-1}(\R^d)\hookrightarrow H^{-3}(\R^d)\hookrightarrow H^{-5}(\R^d),
\end{align*}
where $H^{-5}$ is identified with the dual of $H^{-1}$ using $H^{-3}$ as
the pivot space.
By \cite[Lemma C.1]{brzezniak2013existence}, there exists a separable Hilbert space $U$ such that the embedding $U\hookrightarrow H^{-1}(\R^d)$ is dense and compact. Therefore, denoting by $U'$ the dual of $U$ with respect to the Hilbert triple structure above, we have
\begin{align*}
    U\stackrel{c}{\hookrightarrow} H^{-1}(\R^d)\hookrightarrow H^{-3}(\R^d)\hookrightarrow H^{-5}(\R^d)\stackrel{c}{\hookrightarrow} U'.
\end{align*}
Let us denote by $C_w([0,T];H^{-3}(\R^d))$ the space of weakly continuous functions $u:[0,T]\rightarrow H^{-3}(\R^d) $ endowed with the weakest topology such that for all $h\in H^{-3}(\R^d)$ the maps
\begin{align*}
    F_h:C_w([0,T];H^{-3}(\R^d))\rightarrow C([0,T];\mathbb{C})\mbox{ defined as } F_h[u](t)=\langle u,h\rangle_{H^{-3}} 
\end{align*}
are continuous\footnote{In particular, $u_n\rightarrow u $ in $C_w([0,T];H^{-3}(\R^d))$ if and only if for each $h\in H^{-3}(\R^d)$
\begin{align*}
    \lim_{n\rightarrow +\infty}\sup_{t\in [0,T]}\|F_h[u_n](t)-F_h[u](t)\|=0.
\end{align*}}. For each $R>0$, let us denote by \begin{align*}
    \mathbb{B}_R=\{x\in H^{-3}(\R^d): \norm{x}_{H^{-3}}\leq R\}
\end{align*}
and $q$ the metric on $\mathbb{B}_R$ compatible with the weak topology and introduce the space
\begin{align*}
    C([0,T];\mathbb{B}_R)=\{f\in C_w([0,T];H^{-3}(\R^d)):\ \norm{f(t)}_{H^{-3}}\leq R\quad \forall t\in [0,T]\}.
\end{align*}
The space $C([0,T];\mathbb{B}_R)$ is metrizable and complete if endowed with the metric
\begin{align*}
d(u,v)=\sup_{t\in [0,T]}q(u(t),v(t))    
\end{align*}
see \cite{Brez_book_f}, 
\cite{brzezniak2013existence}.
Next, for $t\in [0,T]$, set \begin{align}\label{topology_fluctuations}
    \bar{Z}_t:=C([0,t];U')\cap C_w([0,t];H^{-3}(\R^d))
\end{align}
endowed with the supremum of the corresponding topologies, i.e. the coarsest topology on $\bar{Z}_t$ that is finer than both the topology of $C([0,t];U')$ and that of $C_w([0,t];H^{-3}(\R^d)).$
To introduce compact sets in $\bar{Z}_T$, we recall the following result;
see \cite[Lemma 2.1]{brzezniak2014existence}.
\begin{lemma}\label[lemma]{lemma_abstract_convergence_1}
 Let $u_k\in C_w([0,T];H^{-3}(\R^d))$ satisfy
 \begin{align*}
     \sup_k \sup_{t\in [0,T]}\norm{ u_k(t)}_{H^{-3}}\leq R,\quad u_k\rightarrow u\quad\text{in } C([0,T];U').
    \end{align*}
     Then $u_k,u \in C([0,T];\mathbb{B}_R)$ and
     $u_k\rightarrow u $ in $C([0,T];\mathbb{B}_R)$.
\end{lemma}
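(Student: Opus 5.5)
The plan is to reduce everything to the weak topology on the closed ball $\mathbb{B}_R$ of $H^{-3}$, which is metrizable because $H^{-3}$ is separable. Fix a countable set $\{h_m\}$ dense in $H^{-3}$; we may take $h_m\in H^{-1}$, since $H^{-1}$ is dense in $H^{-3}$. On $\mathbb{B}_R$ the weak topology is induced by the metric $q(x,y)=\sum_m 2^{-m}\bigl(1\wedge|\langle x-y,h_m\rangle_{H^{-3}}|\bigr)$. It is therefore enough to prove two things. First, that $u$ takes values in $\mathbb{B}_R$ and is weakly continuous. Second, that $\sup_t|\langle u_k(t)-u(t),h\rangle_{H^{-3}}|\to0$ for every $h\in H^{-3}$; the case $h\in H^{-1}$ already gives the convergence for the metric $q$.

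\emph{Step 1: the case $h\in H^{-1}$, and then $h\in U$.} For $x\in H^{-3}$ and $h\in H^{-1}$, the Gelfand triple structure, with $H^{-5}$ identified with $(H^{-1})'$ through the pivot $H^{-3}$, gives $\langle x,h\rangle_{H^{-3}}={}_{H^{-5}}\langle x,h\rangle_{H^{-1}}$. If moreover $h\in U$, this quantity equals ${}_{U'}\langle x,h\rangle_{U}$, because $U'$ is the dual of $U$ in the same triple and $H^{-5}\hookrightarrow U'$ is the restriction map. Hence for $h\in U$ we have $\sup_t|\langle u_k(t)-u(t),h\rangle|\le \|h\|_U\sup_t\|u_k(t)-u(t)\|_{U'}\to0$. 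Uniform approximation then passes from $U$ to the rest. Since $U$ is dense in $H^{-1}$ and $H^{-1}$ is dense in $H^{-3}$, every $h\in H^{-3}$ is approximated by $h_\epsilon\in U$ with $\|h-h_\epsilon\|_{H^{-3}}<\epsilon$. Using the uniform bound $\|u_k(t)\|_{H^{-3}}\le R$ we get $\sup_t|\langle u_k(t),h-h_\epsilon\rangle|\le R\epsilon$, uniformly in $k$.

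\emph{Step 2: the limit $u$.} This is the main point. We must show that $u(t)$, which a priori lies only in $U'$, actually belongs to $\mathbb{B}_R$. Fix $t$. The sequence $u_k(t)$ is bounded in $H^{-3}$, so a subsequence converges weakly in $H^{-3}$ to some $v$ with $\|v\|_{H^{-3}}\le R$. The embedding $H^{-3}\hookrightarrow U'$ is continuous, so $u_k(t)\rightharpoonup v$ weakly in $U'$. But $u_k(t)\to u(t)$ strongly in $U'$, so $u(t)=v\in\mathbb{B}_R$. This identification uses injectivity of $H^{-3}\hookrightarrow U'$, which holds because $U$ is dense in $H^{-3}$. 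Since the limit is independent of the subsequence, the whole sequence satisfies $u_k(t)\rightharpoonup u(t)$ in $H^{-3}$.

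\emph{Step 3: weak continuity and conclusion.} For $h\in H^{-3}$ the functions $t\mapsto\langle u_k(t),h\rangle$ are continuous, because each $u_k\in C_w$. Combining Steps 1 and 2, they converge to $t\mapsto\langle u(t),h\rangle$ uniformly in $t$: the $U'$-convergence handles $h_\epsilon$, and the uniform bound $R$ (which holds for $u$ as well, by Step 2) controls the error $2R\epsilon$. A uniform limit of continuous functions is continuous, so $u\in C([0,T];\mathbb{B}_R)$, and the uniform convergence for each $h_m$ gives $d(u_k,u)\to0$.
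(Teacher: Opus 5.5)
Your argument is correct. The paper gives no proof of its own; it cites \cite[Lemma 2.1]{brzezniak2014existence}, and your proof is essentially the standard one. You use the density of $U$ in $H^{-3}$ together with the uniform $H^{-3}$ bound to pass from $h_\epsilon\in U$ to arbitrary $h\in H^{-3}$. You identify $u(t)\in\mathbb{B}_R$ through weak compactness of the ball and the injectivity of $H^{-3}\hookrightarrow U'$.

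One cosmetic point: in Step 1, read $\langle u_k(t)-u(t),h\rangle$ as the $U'$--$U$ pairing until Step 2 has placed $u(t)$ in $H^{-3}$. Your Step 3 already handles this correctly.
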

As a corollary of the result above, we obtain the following compactness criteria.
\begin{corollary}\label[corollary]{corollary_compactness}
Let $\mathcal{K}\subset \bar{Z}_T$. Then $\mathcal K$ is relatively compact in $\bar{Z}_T$ if
\begin{enumerate}
    \item $\sup_{u\in \mathcal{K}}\sup_{t\in [0,T]}\|u(t)\|_{H^{-3}}<+\infty$,
    \item $\lim_{\delta\rightarrow 0}\sup_{u\in \mathcal{K}}\sup_{s,t\in [0,T], |s-t|\leq \delta}\|u(t)-u(s)\|_{U'}=0.$
\end{enumerate}
In particular, for each $R>0$, $s\in(0,1)$, and $p\in(1,\infty)$ such
that $sp>1$, the set
\begin{align*}
    X_{R,s,p}:=\big\{f\in &C_w([0,T];H^{-3}(\R^d))
    \cap W^{s,p}(0,T;H^{-5}(\R^d)): \\
     &\sup_{t\in [0,T]}\|f(t)\|_{H^{-3}}
     +\|f\|_{W^{s,p}(0,T;H^{-5})}\leq R\big\},
\end{align*}
is relatively compact in $\bar{Z}_T.$
\end{corollary}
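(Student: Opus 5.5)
The statement to prove is \cref{corollary_compactness}. I will first show that conditions (1)--(2) give relative compactness, and then check that $X_{R,s,p}$ satisfies both conditions.

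\textbf{Step 1: conditions (1)--(2) imply relative compactness.} Let $(u_k)\subset\mathcal K$ and set $R:=\sup_{u\in\mathcal K}\sup_t\|u(t)\|_{H^{-3}}$. The idea is to apply Arzel\`a--Ascoli in $C([0,T];U')$.
\begin{itemize}
\item Equicontinuity in $U'$ is exactly condition (2).
\item For pointwise relative compactness, note that for each $t$ the set $\{u_k(t)\}$ lies in the ball of radius $R$ in $H^{-3}$. That ball is bounded in $H^{-5}$, and $H^{-5}\stackrel{c}{\hookrightarrow}U'$ is compact, so the set is relatively compact in $U'$.
\end{itemize}
Arzel\`a--Ascoli then gives a subsequence with $u_k\to u$ in $C([0,T];U')$. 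By \cref{lemma_abstract_convergence_1}, applied with the uniform bound $R$, we get $u\in C([0,T];\mathbb B_R)$ and $u_k\to u$ in $C([0,T];\mathbb B_R)$, hence in $C_w([0,T];H^{-3})$.

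The one delicate point is that this weak-metric convergence on the ball implies convergence in the $C_w$ topology. Fix $h\in H^{-3}$. The maps $x\mapsto\langle x,h\rangle_{H^{-3}}$ are uniformly continuous on the compact metric space $(\mathbb B_R,q)$. Therefore $\sup_t q(u_k(t),u(t))\to0$ implies $\sup_t|\langle u_k(t)-u(t),h\rangle|\to0$.

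Since the topology of $\bar Z_T$ is the supremum of the two topologies, the subsequence converges in $\bar Z_T$. To conclude relative compactness I also need metrizability, so that sequential compactness suffices. On sets bounded in $H^{-3}$, the topology of $\bar Z_T$ is induced by the metric $\sup_t\|\cdot\|_{U'}+d$. Alternatively, the closure of $\mathcal K$ can be checked to be sequentially compact directly, arguing as in \cite{brzezniak2013existence}.

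\textbf{Step 2: $X_{R,s,p}$ satisfies (1) and (2).} Condition (1) holds by definition of $X_{R,s,p}$. For (2), use the Sobolev embedding in time: since $sp>1$, $W^{s,p}(0,T;H^{-5})\hookrightarrow C^{0,s-1/p}([0,T];H^{-5})$, with
\[
\|f(t)-f(r)\|_{H^{-5}}\lesssim |t-r|^{s-1/p}\|f\|_{W^{s,p}(0,T;H^{-5})}.
\]
This can be proved, for instance, via the Garsia--Rodemich--Rumsey inequality, which works for Banach-valued functions. Finally, $\|\cdot\|_{U'}\lesssim\|\cdot\|_{H^{-5}}$ by the continuous embedding $H^{-5}\hookrightarrow U'$. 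Hence $\sup_{f\in X_{R,s,p}}\|f(t)-f(r)\|_{U'}\lesssim R\,\delta^{s-1/p}\to0$, which is (2), and Step 1 applies.

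\textbf{Main obstacle.} The only genuinely nontrivial point is topological: passing from sequential compactness to relative compactness in the non-metrizable space $\bar Z_T$, as discussed in Step 1. Once restricted to $H^{-3}$-bounded sets, where the weak topology is metrizable, this is routine.
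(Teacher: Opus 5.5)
Your proposal is correct and follows essentially the same route as the paper: Arzel\`a--Ascoli in $C([0,T];U')$ via the compact embedding into $U'$, then \cref{lemma_abstract_convergence_1} to upgrade to convergence in $C([0,T];\mathbb{B}_R)$, with metrizability on $H^{-3}$-bounded sets reducing relative compactness to sequential compactness, and the fractional Sobolev embedding $W^{s,p}(0,T;H^{-5})\hookrightarrow C^{s-1/p}([0,T];U')$ handling $X_{R,s,p}$. The only difference is that you spell out points the paper states in one line, namely the uniform continuity of $x\mapsto\langle x,h\rangle_{H^{-3}}$ on $(\mathbb{B}_R,q)$ and why the topology of $\bar Z_T$ is metrizable on bounded sets.
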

\begin{proof}
Let $R=\sup_{u\in \mathcal{K}}\sup_{t\in [0,T]}\|u(t)\|_{H^{-3}}.$
Therefore, compactness in $\bar{Z}_T$ is equivalent to compactness in the
Polish space
$\tilde{Z}_T=C([0,T];U')\cap C([0,T];\mathbb{B}_R).$
In particular, compactness is equivalent to sequential compactness. Let
$(f_k)_{k\in \N}\subset \mathcal{K}$. Since the embedding
$H^{-3}(\R^d)\hookrightarrow U'$ is compact, the Ascoli--Arzelà theorem implies
the existence of a non-relabeled subsequence and
$f\in C([0,T];U')$ such that
\begin{align*}
 f_k\rightarrow f\quad \mbox{ in }   C([0,T];U').
\end{align*}
Then \cref{lemma_abstract_convergence_1} implies that also 
\begin{align*}
    f_k\rightarrow f\quad \text{in }C([0,T];\mathbb{B}_R).
\end{align*}
This completes the proof of the first claim. The second one then follows by
the Sobolev embedding
$W^{s,p}(0,T;H^{-5})\hookrightarrow C^{s-\frac1p}(0,T;U')$.
\end{proof}
In order to move forward we need to show tightness of the laws of $\xi^{\tau}.$ This is the content of the following lemma.
\begin{lemma}\label[lemma]{compactness_fluctuations}
    Let $s\in(0,1/2)$ and $p\in(2,\infty)$ satisfy $sp>1$. Then for each
    $\epsilon>0$ there is $R:=R(s,p,\epsilon,T,\psi_0)$ such that, for
    each $\tau\in(0,1)$,
    \begin{align*}
        \mathbb{P}\left(\xi^{\tau}\notin \overline{X_{R,s,p}}\right)\leq \epsilon,
    \end{align*}
    where we denote by $\overline{X_{R,s,p}}$ the closure of ${X_{R,s,p}}$ with respect to the topology in $\bar{Z}_T.$
\end{lemma}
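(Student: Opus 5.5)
We prove tightness by a Chebyshev/Markov argument. The main inputs are \cref{prop:compactness_space_fluct}, \cref{prop:compactness_time_fluct} and \cref{prop:increments_stochastic}. Since $X_{R,s,p}\subset\overline{X_{R,s,p}}$, it suffices to show
\begin{align*}
\mathbb{P}\Big(\sup_{t\in[0,T]}\|\xi^\tau_t\|_{H^{-3}}>R/2\Big)+\mathbb{P}\Big(\|\xi^\tau\|_{W^{s,p}(0,T;H^{-5})}>R/2\Big)\le\epsilon
\end{align*}
uniformly in $\tau\in(0,1)$. Note that $\xi^\tau\in C_w([0,T];H^{-3})$ a.s., since $\psi^\tau\in C([0,T];L^2)$ by \cref{prop:well_confinement} and $\bar\psi\in C([0,T];L^2)$.

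\textbf{Step 1: uniform bounds on the random factors.} We first control the two factors appearing in the pathwise bounds.

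For $\mathcal N_T^\tau$, we show $\sup_\tau\mathbb{E}[(\mathcal N^\tau_T)^\kappa]<\infty$ for some $\kappa$ large. By \cref{prop:increments_stochastic}, the processes $t\mapsto\int_0^t Z^\tau_r\cdot\nabla\bar\psi_r\,dr$ and $t\mapsto\int_0^t\tau^{-1/2}\wick{|A^\tau_r|^2}\bar\psi_r\,dr$ have increments bounded in $L^\kappa(\Omega;H^{-1})$ by $|t-s|^{1/2}$, uniformly in $\tau$, for every $\kappa\ge2$. Kolmogorov's continuity criterion, or the Garsia--Rodemich--Rumsey inequality, then gives uniform moment bounds on their suprema. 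The same argument gives uniform moment bounds on their $W^{s,p}(0,T;H^{-3})$ seminorms for $s<1/2$; these terms enter through \cref{prop:compactness_time_fluct}.

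For the factors $e^{K\mathcal A^\tau_T}\big(1+\int_0^T\|A^\tau_r\|_{W^{3,\infty}}^6dr\big)$, we need tightness in $\tau$, not moments. By stationarity and \cref{lem:stochastic_conv} (Gaussian moments of $A^\tau_t$ in $H^{5+\frac{d+\theta}{2}}\hookrightarrow W^{3,\infty}$, uniform in $\tau$), Markov's inequality shows that $\mathcal A^\tau_T$ and $\int_0^T\|A^\tau_r\|^6_{W^{3,\infty}}dr$ are bounded in $L^1(\Omega)$ uniformly in $\tau$. Hence for each $\epsilon$ there is $M$ with $\mathbb{P}(\mathcal A^\tau_T+\int_0^T\|A^\tau_r\|_{W^{3,\infty}}^6dr>M)\le\epsilon/3$ for all $\tau$. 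This avoids needing exponential moments, which is the delicate point: $\mathbb{E}[e^{K\mathcal A^\tau_T}]$ may fail to be uniformly bounded because of the quadratic term $\int\|A^\tau\|^2_{W^{3,\infty}}$.

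\textbf{Step 2: the two estimates.} Work on the event $\{\mathcal A^\tau_T+\int_0^T\|A^\tau\|^6\le M\}$.

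For the space bound, \cref{prop:compactness_space_fluct} gives $\sup_t\|\xi^\tau_t\|_{H^{-3}}\le C_M\mathcal N^\tau_T$.

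For the time regularity, \cref{prop:compactness_time_fluct} gives
\begin{align*}
\|\xi^\tau_t-\xi^\tau_s\|_{H^{-5}}\le \|I^\tau_{s,t}\|_{H^{-3}}+C_M|t-s|^{1/2}\mathcal N^\tau_T,
\end{align*}
where $I^\tau_{s,t}$ denotes the two stochastic integrals over $[s,t]$. Insert this into the Gagliardo seminorm
\begin{align*}
\int_0^T\int_0^T\frac{\|\xi^\tau_t-\xi^\tau_s\|^p_{H^{-5}}}{|t-s|^{1+sp}}\,ds\,dt .
\end{align*}
The deterministic part is finite because $p/2-sp>0$, i.e. $s<1/2$. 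For the stochastic part, we take expectations and use $\mathbb{E}\|I^\tau_{s,t}\|^p_{H^{-3}}\lesssim|t-s|^{p/2}$, which follows from \cref{prop:increments_stochastic} since $H^{-1}\hookrightarrow H^{-3}$. Adding the $L^p(0,T;H^{-5})$ norm, which is bounded by the space bound, we obtain
\begin{align*}
\|\xi^\tau\|_{W^{s,p}(0,T;H^{-5})}\le Y^\tau+C_M\mathcal N^\tau_T,\qquad \sup_\tau\mathbb{E}[|Y^\tau|^p]<\infty .
\end{align*}

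\textbf{Step 3: conclusion.} Markov's inequality on $\mathcal N^\tau_T$ and $Y^\tau$ lets us choose $R$ large so that the remaining probabilities are each at most $\epsilon/3$, uniformly in $\tau$. Combined with Step 1, this yields the claim.

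The main obstacle is the exponential factor $e^{K\mathcal A^\tau_T}$. We handle it by localization, i.e. tightness plus Markov on the complementary event, rather than by moment bounds. This is legitimate because the conclusion is only probabilistic.
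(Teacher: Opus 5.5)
Your proposal is correct and follows the paper's argument. Both split the event into the $H^{-3}$-supremum and the $W^{s,p}(0,T;H^{-5})$ norm, and both feed \cref{prop:compactness_space_fluct}, \cref{prop:compactness_time_fluct}, \cref{prop:increments_stochastic} and Garsia--Rodemich--Rumsey for the integrated forcing terms into a Chebyshev bound. The only difference is how you neutralize the factor $e^{K\mathcal A^\tau_T}$: the paper applies Chebyshev to $\log(1+\cdot)$, which turns the exponential into the linear quantity $K\mathcal A^\tau_T$ with uniformly bounded moments, whereas you localize on $\{\mathcal A^\tau_T+\int_0^T\|A^\tau_r\|_{W^{3,\infty}}^6dr\le M\}$; this is an equally valid way of using only tightness of $\mathcal A^\tau_T$.
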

\begin{proof}
    The following chain of inequalities trivially holds by Chebyshev’s inequality
    \begin{align}\label{tightness_0_fluct}
        \mathbb{P}\big(\xi^{\tau}&\notin \overline{X_{R,s,p}}\big)\leq \mathbb{P}\left(\xi^{\tau}\notin {X_{R,s,p}}\right)\notag\\ & \leq \mathbb{P}\left(\sup_{t\in [0,T]}\|\xi^{\tau}_t\|_{H^{-3}}>\frac{R}{2} \right)+\mathbb{P}\left(\|\xi^{\tau}\|_{W^{s,p}(0,T;H^{-5})}>\frac{R}{2} \right)\notag\\ & \leq \frac{\expt{\log\left(1+\sup_{t\in [0,T]}\|\xi^{\tau}_t\|_{H^{-3}}\right)}}{\log(1+\frac{R}{2})} + \frac{\expt{\log\left(1+\|\xi^{\tau}\|_{W^{s,p}(0,T;H^{-5})}\right)}}{\log(1+\frac{R}{2})}.
    \end{align}
    By \cref{prop:increments_stochastic}, hypercontractivity, and the
    Garsia--Rodemich--Rumsey inequality, the two integrated forcing processes
    have uniformly bounded moments of their suprema. Hence
    \cref{prop:compactness_space_fluct}, basic properties of the logarithm,
    and Young's inequality give
    \begin{align}\label{tightness_1_fluct}
        \expt{\log\left(1+\sup_{t\in [0,T]}\|\xi^{\tau}_t\|_{H^{-3}}\right)}& \lesssim\expt{1+\int_0^T\|A^\tau_t\|^8_{H^{5+\frac{d+\theta}{2}}}dt}\notag\\ & +\expt{\sup_{t\in [0,T]}\norm{\int_0^t Z^{\tau}_r\cdot\nabla\bar{\psi}_r dr}_{H^{-1}}^2}\notag\\ &+\expt{\sup_{t\in [0,T]}\norm{\int_0^t \frac{\wick{|A^\tau_r|^2}\bar{\psi}_r}{\sqrt{\tau}} dr}^2_{H^{-1}}}
        \notag\\ & \lesssim 1 
    \end{align}
    uniformly in $\tau\in (0,1)$ due to \cref{lem:stochastic_conv} and \cref{prop:increments_stochastic}. In order to bound the second term uniformly in $\tau\in (0,1)$ we argue similarly. Indeed, by \cref{prop:compactness_time_fluct} 
    \begin{align*}
        \|\xi^{\tau}\|_{W^{s,p}(0,T;H^{-5})}^p&\lesssim\int_0^T\int_0^t \frac{\sup_{r\in [0,T]}\|\xi^{\tau}_r\|_{H^{-3}}^p\left(1+\int_0^T\|A^{\tau}_r\|^{2p}_{W^{3,\infty}}dr\right)}{|t-t'|^{1+p(s-1/2)}}dt' dt\\
        &\quad+\int_0^T\int_0^t \frac{\norm{\int_{t'}^t Z^{\tau}_r\cdot\nabla\bar{\psi}_r dr}_{H^{-3}}^p+\norm{\int_{t'}^t \frac{\wick{|A^\tau_r|^2}\bar{\psi}_r}{\sqrt{\tau}} dr}_{H^{-3}}^p}{|t-t'|^{1+ps}}dt' dt\\
        &\lesssim \sup_{r\in [0,T]}\|\xi^{\tau}_r\|_{H^{-3}}^p\left(1+\int_0^T\|A^{\tau}_r\|^{2p}_{W^{3,\infty}}dr\right)\\
        &\quad+\int_0^T\int_0^t \frac{\norm{\int_{t'}^t Z^{\tau}_r\cdot\nabla\bar{\psi}_r dr}_{H^{-3}}^p+\norm{\int_{t'}^t \frac{\wick{|A^\tau_r|^2}\bar{\psi}_r}{\sqrt{\tau}} dr}_{H^{-3}}^p}{|t-t'|^{1+ps}}dt' dt.
    \end{align*}
    Therefore, by basic properties of logarithm,
    \begin{align*}
        \expt{\log\left(1+\|\xi^{\tau}\|_{W^{s,p}(0,T;H^{-5})}\right)}& \lesssim 1+ \expt{\log\left(1+\sup_{r\in [0,T]}\|\xi^{\tau}_r\|_{H^{-3}}\right)}\\ & +\expt{\int_0^T\|A^{\tau}_r\|^{2p}_{W^{3,\infty}}dr}^{1/p}\\ & + \expt{\int_0^T\int_0^t \frac{\norm{\int_{t'}^t Z^{\tau}_r\cdot\nabla\bar{\psi}_r dr}_{H^{-3}}^p}{|t-t'|^{1+ps}}dt' dt}^{1/p}\\ &
        +\expt{\int_0^T\int_0^t
        \frac{\norm{\int_{t'}^t
        \frac{\wick{|A^\tau_r|^2}\bar{\psi}_r}{\sqrt{\tau}}\,dr}_{H^{-3}}^p}
        {|t-t'|^{1+ps}}dt' dt}^{1/p}
    \end{align*}
    and the first term can be bounded by \eqref{tightness_1_fluct}, the second one by \cref{lem:stochastic_conv}, while for the last two, by \cref{prop:increments_stochastic}, it holds
    \begin{align*}
        \expt{\int_0^T\int_0^t
        \frac{\norm{\int_{t'}^t
        Z^{\tau}_r\cdot\nabla\bar{\psi}_r\,dr}_{H^{-3}}^p}
        {|t-t'|^{1+ps}}dt'\,dt}
        &\lesssim1,\\
        \expt{\int_0^T\int_0^t
        \frac{\norm{\int_{t'}^t
        \frac{\wick{|A^\tau_r|^2}\bar{\psi}_r}{\sqrt{\tau}}\,dr}_{H^{-3}}^p}
        {|t-t'|^{1+ps}}dt'\,dt}
        &\lesssim1.
    \end{align*}
    In conclusion we also showed
    \begin{align}\label{tightness_2_fluct}
         \expt{\log\left(1+\|\xi^{\tau}\|_{W^{s,p}(0,T;H^{-5})}\right)}&\lesssim 1.
    \end{align}
    Combining \eqref{tightness_0_fluct}, \eqref{tightness_1_fluct}, \eqref{tightness_2_fluct} the claim follows up to choosing $R$ sufficiently large.
\end{proof}
Thanks to \cref{compactness_fluctuations}, \cref{prop:increments_stochastic} and \cite[Theorem 23.7]{kallenberg2002}
the family of laws 
\begin{align*}
    \left(\xi^{\tau},\int_0^\cdot Z^{\tau}_r\cdot\nabla\bar{\psi}_r dr,\int_0^\cdot \frac{\wick{|A^\tau_r|^2}\bar{\psi}_r}{\sqrt{\tau}}dr,A^\tau_0,W=(W^{k})_{k\in I}\right)
\end{align*}
is tight in the space
\begin{align*}
    \mathcal{Z}:=\bar{Z}_T\times  C([0,T];H^{-1})\times C([0,T];L^{2})\times H^{5+\frac{d+\theta}{2}}\times  C([0,T];\R^I).
\end{align*}
Therefore, by the Jakubowski version of the Skorokhod representation theorem
\cite{jakubowski1998almost,Brzezniak_skoro}, and standard arguments (see, for
example, \cite[Chapter 2]{flandoli2023stochastic} or
\cite[Section 5.3]{brzezniak2013existence}), we find a sequence
$\tau_n\rightarrow 0$,
an auxiliary probability space, which for simplicity we continue to call
$(\Omega,\mathcal{F},\mathbb{P})$, and processes
\begin{align*}
(\xi^{\tau_n},\Theta^n, \Gamma^n, A^{n}_0,W^n=(W^{k,n})_{k\in I}),\ (\xi,\Theta, \Gamma, A_0,\bar{W}=(\bar{W}^{k})_{k\in I})
\end{align*}
such that, for each $n\in\N$,
\begin{align*}
(\xi^{\tau_n},\Theta^n, \Gamma^n, A^{n}_0,W^n)\sim \left(\xi^{\tau_n},\int_0^\cdot Z^{\tau_n}_r\cdot\nabla\bar{\psi}_r dr,\int_0^\cdot \frac{\wick{|A^{\tau_n}_r|^2}\bar{\psi}_r}{\sqrt{\tau_n}}dr,A^{\tau_n}_0,W\right),\end{align*}
and it holds
\begin{align}
 \xi^{\tau_n}&\rightarrow \xi\quad\text{ in }\bar{Z}_T\quad \mathbb{P}-a.s.,\label{convergence_xi}\\
 \Theta^n&\rightarrow \Theta\quad\hspace{-0.1cm}\text{ in } C([0,T];H^{-1})\quad \mathbb{P}-a.s.,\label{convergence_theta}\\
 \Gamma^n&\rightarrow \Gamma\quad\hspace{-0.05cm}\text{ in } C([0,T];L^2)\quad \mathbb{P}-a.s.,\label{convergence_gamma}\\
 A^{n}_0&\rightarrow A_0 \quad\hspace{-0.23cm}\text{ in }H^{5+\frac{d+\theta}{2}}\quad \mathbb{P}-a.s.,\notag\\
 W^n&\rightarrow \bar{W} \quad\hspace{-0.2cm}\text{ in }C([0,T];\R^I)\quad \mathbb{P}-a.s.\notag
\end{align}
Moreover calling $\mathcal{W}^n_t:=\sum_{k\in I}\sigma_k W^{k,n}$, it is a Brownian motion with covariance structure $Q$ with respect to the complete right-continuous filtration $(\mathcal{F}^n_t)_{t\geq 0}$ generated by $(\xi^{\tau_n},\Theta^n, \Gamma^n, A^{n}_0,W^n),$ while $\mathcal{W}_t:=\sum_{k\in I}\sigma_k \bar{W}^{k}$ is a Brownian motion with covariance structure $Q$ with respect to the complete right-continuous filtration $(\mathcal{F}_t)_{t\geq 0}$ generated by $(\xi,\Theta, \Gamma, A_0,\bar{W}).$
Also, calling
\begin{align*}
    A^{n}_t:=e^{-\frac{t}{\tau_n}}A_0^{n}+\sqrt{\frac{2}{\tau_n}}\sum_{k\in I}\int_0^t e^{-\frac{t-s}{\tau_n}}\sigma_k dW^{k,n}_s,\ Z^n_t:=\frac{A^{n}_t}{\sqrt{\tau_n}},
\end{align*}
it holds
\begin{align} \label{eq:TG_n}
    \Theta^n_t=\int_0^t  Z^n_s\cdot\nabla\bar{\psi}_s ds,\quad \Gamma^n_t=\int_0^t  \frac{\wick{|A^n_s|^2}\bar{\psi}_s}{\sqrt{\tau_n}} ds\quad \mathbb{P}-a.s.
\end{align}
and for each $\phi\in \mathscr{S}(\R^d),\ t\in [0,T]$ 
   \begin{align}\label{eq:weak_formulation_changed_space_prelimit}
       i\langle \xi^{\tau_n}_t,\phi\rangle&=-\int_0^t \langle\xi^{\tau_n}_s, \Delta\phi\rangle ds\notag\\ &-2i\int_0^t\langle \xi^{\tau_n}_s,A^{n}_s\cdot\nabla\phi\rangle ds+2i\int_0^t\langle Z^{n}_s\cdot\nabla\bar{\psi}_s,\phi\rangle ds\notag\\ &+\int_0^t \langle|A^{n}_s|^2 \xi^{\tau_n}_s, \phi\rangle ds+\int_0^t \left\langle\frac{\wick{|A^n_s|^2}\bar{\psi}_s}{\sqrt{\tau_n}}, \phi\right\rangle ds\quad \mathbb{P}-a.s.,
   \end{align}
namely, the $\mathcal{F}^n_t$-adapted process $\xi^{\tau_n}$, with paths in
$C_w([0,T];H^{-3})$, is a weak solution of
\eqref{eq:fluctuations}.

\subsubsection{Passage to the limit}\label{subsec:limit_fluct}
We first show uniqueness of weak solutions of \eqref{eq:fluctuations} in the class of adapted processes with paths in $C_w([0,T];H^{-3})$. Therefore, also in the auxiliary probability space obtained by Skorokhod's representation theorem,
$\xi^{\tau_n}=\frac{\psi^{\tau_n}-\bar{\psi}}{\sqrt{\tau_n}}$ where $\psi^{\tau_n}$ is the unique solution of \eqref{eq:rmse} in the auxiliary probability space given by \cref{prop:well_confinement} and $\bar{\psi}$ is the unique solution of \eqref{eq:averaging} given by \cref{prop:well_posed_averaging_limit}. 
In particular, for every $n\in\mathbb N$,
$\xi^{\tau_n}\in C([0,T];L^2)$ $\mathbb{P}-a.s.$, and the estimates of
\cref{prop:compactness_space_fluct} continue to hold.
\begin{lemma}\label[lemma]{lem_uniqueness_fluct_eq}
	   There exists a unique solution of \eqref{eq:fluctuations} in the
	   class of adapted processes with paths in
	   $C_w([0,T];H^{-3})\ \mathbb{P}$-a.s.
\end{lemma}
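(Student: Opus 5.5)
Existence is already available: $\xi^{\tau}=(\psi^\tau-\bar\psi)/\sqrt{\tau}$, with $\psi^\tau$ from \cref{prop:well_confinement} and $\bar\psi$ from \cref{prop:well_posed_averaging_limit}, is adapted, lies in $C([0,T];L^2)\subset C_w([0,T];H^{-3})$, and solves \eqref{eq:fluctuations} weakly. So the plan is to prove uniqueness. By linearity, the difference $\gamma=\xi^1-\xi^2$ of two adapted weak solutions in $C_w([0,T];H^{-3})$ solves the homogeneous equation
\begin{align*}
i\partial_t\gamma=-\Delta\gamma+2iA^\tau\cdot\nabla\gamma+|A^\tau|^2\gamma,\qquad \gamma_0=0,
\end{align*}
in the weak sense against Schwartz test functions. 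This is a pathwise linear problem: for a.e. $\omega$, $A^\tau(\omega)\in C([0,T];H^{5+\frac{d+\theta}{2}})$ is a fixed divergence-free coefficient. The goal is therefore to show $\gamma\equiv0$ for every fixed such path.

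The natural route is an energy estimate in $H^{-3}$, following the proof of \cref{prop:compactness_space_fluct}. The difficulty is that $\gamma$ is only in $H^{-3}$, so the right-hand side lies in $H^{-5}$ and the Lions--Magenes lemma does not apply directly to $\|\gamma_t\|_{H^{-3}}^2$. I will handle this with a DiPerna--Lions type regularization. Let $J_\epsilon=(I-\epsilon\Delta)^{-1}$ (or a Fourier mollifier) and set $\gamma^\epsilon=J_\epsilon\gamma\in C([0,T];H^{-1})$. Testing the weak formulation with $J_\epsilon$ applied to test functions gives
\begin{align*}
i\partial_t\gamma^\epsilon=-\Delta\gamma^\epsilon+2iA\cdot\nabla\gamma^\epsilon+|A|^2\gamma^\epsilon+2i[A\cdot\nabla,J_\epsilon]\gamma+[|A|^2,J_\epsilon]\gamma .
\end{align*}
The right-hand side now lies in $L^2(0,T;H^{-3})$, so Lions--Magenes applies and we may compute $\frac{d}{dt}\|\gamma^\epsilon\|_{H^{-3}}^2$. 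The Laplacian contributes nothing because it is self-adjoint and commutes with $(I-\Delta)^{-3}$. The transport term is controlled by \cref{lem:commutator} with $k=3$, which requires $s>\frac d2+5$ and is satisfied by the regularity in \cref{hp:noise}; it gives a bound $\lesssim\|A\|_{H^{s}}\|\gamma^\epsilon\|_{H^{-3}}^2$. The potential term is bounded by $\|A\|_{W^{3,\infty}}^2\|\gamma^\epsilon\|_{H^{-3}}^2$.

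The main obstacle is the pair of commutator remainders: I need $\|[A\cdot\nabla,J_\epsilon]\gamma\|_{H^{-3}}\lesssim \|A\|_{H^s}\|\gamma\|_{H^{-3}}$ uniformly in $\epsilon$, and I need these remainders to tend to $0$ in $H^{-3}$ as $\epsilon\to0$ for each fixed $t$. To prove this I will redo the Fourier argument of \cref{lem:commutator}. With symbol $m_\epsilon(\eta)=(1+\epsilon|\eta|^2)^{-1}$, one has $|m_\epsilon(\eta)-m_\epsilon(\theta)||\theta|\lesssim(1+|\eta-\theta|)^{2}$ uniformly in $\epsilon$. Combined with Peetre's inequality for the $H^{-3}$ weights, this yields the uniform bound, since $A$ has enough derivatives that $(1+|\cdot|)^{5}\mathscr F A\in L^1$. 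Convergence to zero then follows from pointwise convergence of the integrand together with dominated convergence, or alternatively from density of smooth $\gamma$ plus the uniform bound. The zeroth-order remainder $[|A|^2,J_\epsilon]\gamma$ is simpler and handled the same way.

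Integrating in time, the commutator contributions are bounded by $\int_0^t\|\gamma^\epsilon_r\|_{H^{-3}}\,\|\mathrm{rem}_\epsilon(r)\|_{H^{-3}}\,dr$. This tends to $0$ by dominated convergence, using $\sup_t\|\gamma_t\|_{H^{-3}}<\infty$ (weak continuity plus uniform boundedness) and $A\in L^1(0,T;H^s)\cap L^2(0,T;W^{3,\infty})$. Letting $\epsilon\to0$ in the energy inequality gives
\begin{align*}
\|\gamma_t\|_{H^{-3}}^2\lesssim\int_0^t\big(\|A_r\|_{H^s}+\|A_r\|^2_{W^{3,\infty}}\big)\|\gamma_r\|_{H^{-3}}^2\,dr .
\end{align*}
Grönwall's inequality with $\gamma_0=0$ then gives $\gamma\equiv0$ a.s., which proves uniqueness.
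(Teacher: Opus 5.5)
Your proof is correct. It runs the same DiPerna--Lions scheme as the paper (regularize, bound the commutators and let them vanish, close with Gr\"onwall), but in a different space and with a different regularization.

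\textbf{The paper's route.} It changes unknown to $v=(I-\Delta)^{-3/2}\xi\in C_w([0,T];L^2)$. The extra terms $2i[A\cdot,(I-\Delta)^{-3/2}]\nabla(I-\Delta)^{3/2}v$ and $(I-\Delta)^{-3/2}(|A|^2(I-\Delta)^{3/2}v)$ are then bounded on $L^2$, the first by the first estimate of \cref{lem:commutator}. It mollifies by spatial convolution, and the transport term drops out of the $L^2$ energy exactly because $A$ is divergence-free. The DiPerna--Lions commutator $[A\cdot\nabla,\chi_\epsilon\ast]v$ is handled by citing \cite[Lemma 4.2]{butori2026background}.

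\textbf{Your route.} You stay in $H^{-3}$ and regularize with $J_\epsilon=(I-\epsilon\Delta)^{-1}$, which commutes with $\Delta$ and with the $H^{-3}$ weight, so no conjugated equation is needed. The transport term no longer vanishes, but it is controlled by the symmetrized estimate of \cref{lem:commutator}. The Friedrichs commutator $[A\cdot\nabla,J_\epsilon]$ is estimated directly on the Fourier side, with convergence to zero by dominated convergence. In fact $|m_\epsilon(\eta)-m_\epsilon(\theta)|\,|\theta|\lesssim|\eta-\theta|$ uniformly in $\epsilon$, so after Peetre's inequality $\langle\cdot\rangle^{4}\mathscr{F}A\in L^1$ already suffices.

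\textbf{Comparison.} Your argument is self-contained and avoids the external commutator lemma. The paper's argument reuses the same $L^2$ convolution framework as the uniqueness proofs in \cref{app:well-posed} and gets exact cancellation of the transport term. Both need $s>\frac{d}{2}+5$ through \cref{lem:commutator} with $k=3$.

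\textbf{A minor precision.} $J_\epsilon\gamma$ is a priori only in $C_w([0,T];H^{-1})\cap W^{1,\infty}(0,T;H^{-3})$, not in $C([0,T];H^{-1})$. That is all you actually use, and the $H^{-3}$ energy identity is then just the chain rule in a Hilbert space.
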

\begin{proof}
Since we are working at $n$ fixed, we drop the dependence on $n$ to save notation.
    By linearity it is enough to show that every adapted process with paths
    in $C_w([0,T];H^{-3})\ \mathbb{P}$-a.s. that solves, in the analytically
    weak sense, the PDE with random coefficients
    \begin{align*}
        \begin{cases}
            i\partial_t\xi&=-\Delta\xi+ 2iA\cdot\nabla \xi+|A|^2\xi\\
            \xi_0&=0
        \end{cases}
    \end{align*}
    is identically $0$. Also, setting
    $v=(I-\Delta)^{-3/2}\xi\in C_w([0,T];L^2)$ $\mathbb{P}-a.s.$, it is
    enough to show uniqueness in this class for the random PDE
    \begin{align}\label{eq:aux_fluct}
        \begin{cases}
            i\partial_t v=-\Delta v+2iA\cdot\nabla v
            +2i[A\cdot,(I-\Delta)^{-3/2}]
            \nabla(I-\Delta)^{3/2}v\\
            \hspace{27mm}
            +(I-\Delta)^{-3/2}
            \left(|A|^2(I-\Delta)^{3/2}v\right),\\
            v_0=0.
        \end{cases}
    \end{align}
     Let $\chi$ be a standard smooth mollifier and let, for each $\epsilon>0$, $\chi_{\epsilon}(x)=\frac{\chi(x/\epsilon)}{\epsilon^d}$. Then choosing $\chi_\epsilon(x-\cdot)$ as a test function in \eqref{eq:aux_fluct} we obtain
\begin{align*}
	     \begin{cases}
	            i\partial_t v^{\epsilon}=-\Delta v^{\epsilon}
	            +2iA\cdot\nabla v^{\epsilon}
	            +2i[A\cdot\nabla,\chi_{\epsilon}\ast]v\\
	            \hspace{20mm}
	            +2i\left([A\cdot,(I-\Delta)^{-3/2}]
	            \nabla(I-\Delta)^{3/2}v\right)\ast\chi_{\epsilon}\\
	            \hspace{20mm}
	            +\left((I-\Delta)^{-3/2}
	            \left(|A|^2(I-\Delta)^{3/2}v\right)\right)
	            \ast\chi_{\epsilon},\\
	            v^{\epsilon}_0=0.
	        \end{cases}
\end{align*}
    where $v^{\epsilon}=v\ast \chi_{\epsilon}$ is smooth, the notation $[A\cdot\nabla,\chi_{\epsilon}\ast]$ stands for the commutator and the equation is satisfied in classical sense. Therefore, calling 
    \begin{align*}
        g^{1,\epsilon}&=2\left([A\cdot,(I-\Delta)^{-3/2}]\nabla(I-\Delta)^{3/2}v\right)\ast \chi_{\epsilon},\\
        g^{2,\epsilon}&=\left((I-\Delta)^{-3/2}\left(|A|^2(I-\Delta)^{3/2}v\right)\right)\ast\chi_{\epsilon},
    \end{align*}
    for every $t\in[0,T]$, $\mathbb{P}-a.s.$,
    \begin{align}\label{eq:mollified_estimate}
        \|v^{\epsilon}_t\|^2&\leq
        2\int_0^t\left|
        \langle [A_s\cdot\nabla,\chi_{\epsilon}\ast]v_s,v^{\epsilon}_s\rangle
        +\langle v^{\epsilon}_s,[A_s\cdot\nabla,\chi_{\epsilon}\ast]v_s\rangle
        \right|ds\\
        &\quad+2\int_0^t \|v^{\epsilon}_s\|
        \left(\|g^{1,\epsilon}_s\|+\|g^{2,\epsilon}_s\|\right)ds.
    \end{align}
    By \cref{lem:commutator}, the regularity of $A$, and standard properties of convolutions
    \begin{align*}
        \int_0^t \|v^{\epsilon}_s\|\left(\|g^{1,\epsilon}_s\|+\|g^{2,\epsilon}_s\|\right)ds& \lesssim \int_0^t \|v_s\|^2\left(\|A_s\|_{H^{5+\frac{d+\theta}{2}}}+\|A_s\|_{W^{3,\infty}}^2\right)ds
    \end{align*}
    Secondly, by \cite[Lemma 4.2]{butori2026background}, for almost every
    $s\in[0,T]$,
   \begin{align*}
       \lim_{\epsilon\rightarrow 0}\|[A_s\cdot\nabla,\chi_{\epsilon}\ast]v_s\|&=0,\\
       \sup_{\epsilon\in (0,1)}\|[A_s\cdot\nabla,\chi_{\epsilon}\ast]v_s\|&\lesssim \|A_s\|_{H^{1+\frac{d+\theta}{2}}}\|v_s\|.
   \end{align*}
	   Therefore, by the dominated convergence theorem and standard properties
	   of convolutions, letting $\epsilon\rightarrow 0$ in
	   \eqref{eq:mollified_estimate}, there exists a null set
	   $N\subset\Omega$ such that, on $N^c$, for each $t\in[0,T]$,
   \begin{align*}
       \|v_t\|^2&\lesssim \int_0^t
       \left(\|A_s\|_{H^{5+\frac{d+\theta}{2}}}
       +\|A_s\|_{W^{3,\infty}}^2\right)\|v_s\|^2ds.
   \end{align*}
   The latter implies the claim by Gr\"onwall's lemma.
\end{proof}
In order to identify the limit $\xi$ as the unique solution of \eqref{eq:gaussianlimit}, the following It\^o formula plays a role.
\begin{lemma}\label[lemma]{perturbed_test_function_fluctuations}
For each $n\in \N$ the following holds $\mathbb{P}-a.s.$: for every $\phi\in \mathscr{S}(\R^d)$ and $ t\in [0,T]$ 
\begin{align*}
    i\langle \xi^{\tau_n}_t,\phi\rangle
    &+\frac{\tau_n}{2}\langle \xi^{\tau_n}_t,|A^{n}_t|^2\phi\rangle
    -2i\tau_n\langle \xi^{\tau_n}_t,A^{n}_t\cdot\nabla\phi\rangle\\
    &=\int_0^t\langle \xi^{\tau_n}_s,(-\Delta+V)\phi\rangle ds
    +2i\int_0^t\langle Z^{n}_s\cdot\nabla\bar{\psi}_s,\phi\rangle ds\\
    &\quad+\int_0^t
    \left\langle\frac{\wick{|A^n_s|^2}}{\sqrt{\tau_n}}\bar{\psi}_s,
    \phi\right\rangle ds+\frac{i\tau_n}{2}\int_0^t
    \langle\xi^{\tau_n}_s,\Delta(|A^{n}_s|^2\phi)\rangle ds\\
    &\quad+2\tau_n\int_0^t
    \langle\xi^{\tau_n}_s,\Delta(A^{n}_s\cdot\nabla\phi)\rangle ds-\tau_n\int_0^t
    \langle\xi^{\tau_n}_s,A^n_s\cdot\nabla(|A^{n}_s|^2\phi)\rangle ds\\
    &\quad-\frac{i\tau_n}{2}\int_0^t
    \langle\xi^{\tau_n}_s,|A^{n}_s|^4\phi\rangle ds+4i\tau_n\int_0^t
    \langle\xi^{\tau_n}_s,
    A^{n}_s\cdot\nabla(A^{n}_s\cdot\nabla\phi)\rangle ds\\
    &\quad-2\tau_n\int_0^t
    \langle\xi^{\tau_n}_s,|A^{n}_s|^2A^{n}_s\cdot\nabla\phi\rangle ds-\sqrt{\tau_n}\int_0^t
    \langle\bar{\psi}_s,A^{n}_s\cdot\nabla(|A^{n}_s|^2\phi)\rangle ds\\
    &\quad-\frac{i\sqrt{\tau_n}}{2}\int_0^t
    \langle |A^{n}_s|^2(|A^{n}_s|^2-V)\bar{\psi}_s,\phi\rangle ds\\ &\quad +\sqrt{2\tau_n}\sum_{k\in I}\int_0^t
    \langle\xi^{\tau_n}_s,A^{n}_s\cdot\sigma_k\phi\rangle dW^{k,n}_s-i\sqrt{8\tau_n}\sum_{k\in I}\int_0^t
    \langle\xi^{\tau_n}_s,\sigma_k\cdot\nabla\phi\rangle dW^{k,n}_s\\ &\quad +4i\sqrt{\tau_n}\int_0^t
    \langle\bar{\psi}_s,A^{n}_s\cdot\nabla
    (A^{n}_s\cdot\nabla\phi)\rangle ds-2\sqrt{\tau_n}\int_0^t
    \langle\wick{|A^n_s|^2}\bar{\psi}_s,
    A^{n}_s\cdot\nabla\phi\rangle ds.
\end{align*}
\end{lemma}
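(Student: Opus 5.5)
This identity is a perturbed test function (corrector) computation. I would apply Itô's formula to the process $t\mapsto \langle \xi^{\tau_n}_t,\Phi^n_t\rangle$, where the random test function is
\[
\Phi^n_t=\phi-\frac{i\tau_n}{2}|A^n_t|^2\phi+2\tau_n A^n_t\cdot\nabla\phi .
\]
With the paper's convention (linear in the first slot, antilinear in the second), $i\langle\xi,\Phi\rangle$ reproduces exactly the left-hand side of the claim. The correctors are designed so that the $dt$ part of $d\Phi^n$ cancels the two $O(1)$-size drift terms coming from the fast field in \eqref{eq:weak_formulation_changed_space_prelimit}. Indeed, $dA^n=-\tau_n^{-1}A^n\,dt+\sqrt{2/\tau_n}\sum_k\sigma_k\,dW^{k,n}$, so $\tau_n\,d(A^n\cdot\nabla\phi)$ has drift $-A^n\cdot\nabla\phi\,dt$. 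This drift cancels the transport term $-2i\langle\xi,A^n\cdot\nabla\phi\rangle$ and leaves the martingale $-i\sqrt{8\tau_n}\sum_k\langle\xi,\sigma_k\cdot\nabla\phi\rangle dW^{k,n}$.

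For the quadratic corrector I would write
\[
d|A^n|^2=-\tfrac{2}{\tau_n}|A^n|^2\,dt+\tfrac{2}{\tau_n}V\,dt+2\sqrt{2/\tau_n}\sum_k A^n\cdot\sigma_k\,dW^{k,n}.
\]
Then the drift of $\frac{\tau_n}{2}\langle\xi,|A^n|^2\phi\rangle$ is $-\langle\xi,\wick{|A^n|^2}\phi\rangle$. This converts $\langle|A^n|^2\xi,\phi\rangle$ into $\langle V\xi,\phi\rangle$, and the martingale $\sqrt{2\tau_n}\sum_k\langle\xi,A^n\cdot\sigma_k\phi\rangle dW^{k,n}$ appears. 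There is no Itô cross-variation term, because $\xi^{\tau_n}$ is of bounded variation in time.

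The remaining terms come from $\langle d\xi^{\tau_n},\Phi^n-\phi\rangle$: the full right-hand side of \eqref{eq:fluctuations} is paired against the $\tau_n$-small correctors. Pairing $-\Delta\xi$ gives the $\Delta(|A^n|^2\phi)$ and $\Delta(A^n\cdot\nabla\phi)$ terms. Pairing $2iA^n\cdot\nabla\xi$ and $|A^n|^2\xi$ gives the cubic and quartic terms in $A^n$, integrating by parts with $\operatorname{div}A^n=0$. Pairing the forcings $2iZ^n\cdot\nabla\bar\psi$ and $\tau_n^{-1/2}\wick{|A^n|^2}\bar\psi$ with the correctors produces the $\sqrt{\tau_n}$-terms involving $\bar\psi$. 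I would track the signs and the factors of $i$ term by term against the statement.

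To make this rigorous, I note that $\xi^{\tau_n}\in C([0,T];L^2)$ and satisfies \eqref{eq:fluctuations} weakly against smooth deterministic test functions, while $\Phi^n$ is a semimartingale with values in $H^2$. First I would mollify $\xi^{\tau_n}$ in space (or use a Galerkin or finite-dimensional projection of the test function). Then I would apply the product rule to $\langle\xi^{\epsilon},\Phi^n\rangle$ and pass to the limit using the regularity of $A^n$ from \cref{hp:noise}, in particular $A^n\in C([0,T];W^{5,\infty})$, together with \cref{lem:stochastic_conv}. The Stratonovich/Itô issue does not arise, since $\xi$ has no martingale part. Finally, I would obtain the identity simultaneously for all $\phi\in\mathscr S$ and $t$ from a countable dense set, using continuity in time.

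The main obstacle is bookkeeping rather than analysis. The delicate points are checking the exact cancellation of the $\tau_n^{-1}$ drifts against the $O(1)$ terms, including the $+\frac{2}{\tau_n}V$ piece that produces $V$ in the limit operator, and checking the sign conventions coming from antilinearity in the second slot.
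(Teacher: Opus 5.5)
Your approach is essentially the paper's. The paper also mollifies $\xi^{\tau_n}$ in space and applies the classical It\^o product rule, pointwise in $x$, to $|A^n_t|^2\xi^\epsilon_t$ and $iA^n_t\xi^\epsilon_t$. It then tests these against $\phi$ and $\nabla\phi$, lets $\epsilon\to0$ using $\xi^{\tau_n}\in C([0,T];L^2)$, and adds the result to the weak formulation. That is your product rule for $\langle\xi^\epsilon,\Phi^n\rangle$ written out coordinatewise.

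One correction is needed: the corrector you display has both signs reversed. Because the pairing is antilinear in the second slot, your $\Phi^n$ gives
\[
i\langle\xi^{\tau_n},\Phi^n\rangle=i\langle\xi^{\tau_n},\phi\rangle-\frac{\tau_n}{2}\langle\xi^{\tau_n},|A^n|^2\phi\rangle+2i\tau_n\langle\xi^{\tau_n},A^n\cdot\nabla\phi\rangle ,
\]
which is not the left-hand side. With these signs the $\tau_n^{-1}$-drifts of $A^n$ would reinforce $\langle|A^n|^2\xi^{\tau_n},\phi\rangle$ and $-2i\langle\xi^{\tau_n},A^n\cdot\nabla\phi\rangle$ instead of cancelling them. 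The right choice is $\Phi^n=\phi+\frac{i\tau_n}{2}|A^n|^2\phi-2\tau_nA^n\cdot\nabla\phi$.

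Your drift and martingale computations are in fact carried out for the statement's correctors $\frac{\tau_n}{2}\langle\xi^{\tau_n},|A^n|^2\phi\rangle$ and $-2i\tau_n\langle\xi^{\tau_n},A^n\cdot\nabla\phi\rangle$, and they are correct. With the corrected $\Phi^n$, let $F$ denote the right-hand side of \eqref{eq:fluctuations}. The term $\langle F,\Phi^n-\phi\rangle$ then yields exactly the remaining $\tau_n$- and $\sqrt{\tau_n}$-terms of the statement.
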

\begin{proof}
   Since we are working at $n$ fixed we drop the superscript $n$ to save notation, the $\xi$ appearing in this proof is never the one of \eqref{eq:gaussianlimit}. Let $\chi$ be a standard smooth mollifier and let, for each $\epsilon>0$, $\chi_{\epsilon}(x)=\frac{\chi(x/\epsilon)}{\epsilon^d}$. Then choosing $\chi_\epsilon(x-\cdot)$ as a test function in \eqref{eq:weak_formulation_changed_space_prelimit}, we obtain
   \begin{align*}
       i\partial_t\xi^{\epsilon}=-\Delta\xi^{\epsilon}+ 2i\left(A\cdot\nabla \xi\right)\ast \chi_{\epsilon}+ 2i\left(Z \cdot\nabla \bar{\psi}\right)\ast \chi_{\epsilon}+\left(|A|^2\xi\right)\ast \chi_{\epsilon}+\frac{\wick{|A|^2}\bar{\psi}}{\sqrt{\tau}}\ast \chi_{\epsilon},
   \end{align*}
   where $\xi^{\epsilon}=\xi\ast \chi_{\epsilon}$ is smooth and the equation above is now satisfied pointwise. For each $n\in \N,\ \xi^{\tau_n}\in C([0,T];L^2)\ \mathbb{P}-a.s.$, (cf. the discussion before \cref{lem_uniqueness_fluct_eq}). Therefore,
   as $\epsilon \to 0$ 
   \begin{align}\label{convergence_convolution}
       \xi^{\epsilon}\rightarrow \xi \in C([0,T];L^2)\quad\mathbb{P}-a.s.
   \end{align}
   Also the equation for $A$ is satisfied pointwise, since it belongs to $H^{d/2+}.$ Therefore, for each $x\in \R^d$ we can apply the classical It\^o formula, obtaining the pointwise relations
   \begin{align}\label{ito_step_1}
     |A_t|^2\xi^{\epsilon}_t&=\int_0^t|A_s|^2\left(i\Delta\xi^{\epsilon}_s+ 2\left(A_s\cdot\nabla \xi_s\right)\ast \chi_{\epsilon}+ 2\left(Z_s \cdot\nabla \bar{\psi}_s\right)\ast \chi_{\epsilon}\right.\notag\\ & \left.\quad \quad \quad\quad \quad -i\left(|A_s|^2\xi_s\right)\ast \chi_{\epsilon}-i\frac{\wick{|A_s|^2}\bar{\psi}_s}{\sqrt{\tau}}\ast \chi_{\epsilon}\right)ds\notag\\ & -\frac{2}{\tau}\int_0^t \wick{|A_s|^2}\xi^{\epsilon}_s ds+\frac{2\sqrt{2}}{\sqrt{\tau}}\sum_{k\in I }\int_0^t A_s\cdot \sigma_k \xi^{\epsilon}_s dW^k_s,
   \end{align}
   \begin{align}\label{ito_step_2}
     iA_t  \xi^{\epsilon}_t&=\int_0^t A_s\left(-\Delta\xi^{\epsilon}_s+ 2i\left(A_s\cdot\nabla \xi_s\right)\ast \chi_{\epsilon}+ 2i\left(Z_s \cdot\nabla \bar{\psi}_s\right)\ast \chi_{\epsilon}\right.\notag\\ & \left.\quad \quad \quad\quad \quad +\left(|A_s|^2\xi_s\right)\ast \chi_{\epsilon}+\frac{\wick{|A_s|^2}\bar{\psi}_s}{\sqrt{\tau}}\ast \chi_{\epsilon}\right)ds\notag\\ & -\frac{i}{\tau}\int_0^t A_s\xi^{\epsilon}_s ds+\frac{\sqrt{2}i}{\sqrt{\tau}}\sum_{k\in I }\int_0^t  \sigma_k \xi^{\epsilon}_s dW^k_s.
   \end{align}
   Now let $\phi \in \mathscr{S}(\R^d)$. Testing \eqref{ito_step_1} against $\phi$ and integrating by parts, we get
   \begin{align}\label{ito_step_3}
     \langle |A_t|^2\xi^{\epsilon}_t,\phi\rangle&=i\int_0^t\langle\xi^{\epsilon}_s,\Delta\left(|A_s|^2 \phi\right)\rangle ds  +2\int_0^t\left\langle A_s\cdot\nabla\xi_s+Z_s \cdot\nabla \bar{\psi}_s, (|A_s|^2\phi)^{\epsilon}\right\rangle ds \notag
     \\& -i\int_0^t\left\langle |A_s|^2\xi_s+\frac{\wick{|A_s|^2}\bar{\psi}_s}{\sqrt{\tau}}, (|A_s|^2\phi)^{\epsilon}\right\rangle ds\notag\\ & -\frac{2}{\tau}\int_0^t \langle\wick{|A_s|^2}\xi^{\epsilon}_s,\phi\rangle ds+\frac{2\sqrt{2}}{\sqrt{\tau}}\sum_{k\in I }\int_0^t  \langle A_s\cdot \sigma_k\xi^{\epsilon}_s, \phi\rangle dW^k_s,
   \end{align}
   where $(|A_t|^2\phi)^{\epsilon}=(|A_t|^2\phi)\ast \chi_{\epsilon}.$ Due to \eqref{convergence_convolution} and the regularity of the test function $\phi$, $A$ and $\bar{\psi}$, cf. \cref{lem:stochastic_conv} and \cref{prop:well_posed_averaging_limit}, it is standard to pass to the limit in the deterministic integrals as $\epsilon\rightarrow 0$ via the dominated convergence theorem.
   Moreover, similarly, it holds
   \begin{align*}
      \sum_{k\in I }\int_0^T\langle A_s\cdot \sigma_k(\xi^{\epsilon}_s-\xi_s), \phi\rangle^2ds \rightarrow 0\quad\mbox{in probability.}
   \end{align*}
   Therefore, \cite[Lemma 4.3]{bagnara2025no} implies 
   \begin{align*}
       \frac{2\sqrt{2}}{\sqrt{\tau}}\sum_{k\in I }\int_0^t  \langle A_s\cdot \sigma_k\xi^{\epsilon}_s, \phi\rangle dW^k_s\rightarrow \frac{2\sqrt{2}}{\sqrt{\tau}}\sum_{k\in I }\int_0^t  \langle A_s\cdot \sigma_k\xi_s, \phi\rangle dW^k_s \quad\mbox{in probability.}
   \end{align*}
   After passing to a further, non-relabeled subsequence, the stochastic
   integrals converge $\mathbb{P}-a.s.$ In conclusion, we have shown
    \begin{align}\label{ito_step_4}
     \langle |A_t|^2\xi_t,\phi\rangle&=i\int_0^t\langle\xi_s,\Delta\left(|A_s|^2 \phi\right)\rangle ds+2\int_0^t\left\langle |A_s|^2 \left(A_s\cdot\nabla\xi_s+Z_s \cdot\nabla \bar{\psi}_s\right), \phi\right\rangle ds \notag
     \\& -i\int_0^t\left\langle |A_s|^2\left( |A_s|^2\xi_s+\frac{\wick{|A_s|^2}\bar{\psi}_s}{\sqrt{\tau}}\right), \phi\right\rangle ds\notag\\ & -\frac{2}{\tau}\int_0^t \langle\wick{|A_s|^2}\xi_s,\phi\rangle ds+\frac{2\sqrt{2}}{\sqrt{\tau}}\sum_{k\in I }\int_0^t  \langle A_s\cdot \sigma_k\xi_s, \phi\rangle dW^k_s\quad \mathbb{P}-a.s.
   \end{align}
   Testing \eqref{ito_step_2} against $\nabla\phi$ and integrating by parts we get
	   \begin{align*}
	       i\langle\xi^{\epsilon}_t,A_t\cdot\nabla\phi\rangle
	       &=-\int_0^t
	       \langle\xi^{\epsilon}_s,\Delta(A_s\cdot\nabla\phi)\rangle ds\\
	       &\quad+2i\int_0^t
	       \left\langle A_s\cdot\nabla\xi_s+Z_s\cdot\nabla\bar{\psi}_s,
	       (A_s\cdot\nabla\phi)^{\epsilon}\right\rangle ds\\
	       &\quad+\int_0^t\left\langle |A_s|^2\xi_s
	       +\frac{\wick{|A_s|^2}\bar{\psi}_s}{\sqrt{\tau}},
	       (A_s\cdot\nabla\phi)^{\epsilon}\right\rangle ds\\
	       &\quad-\frac{i}{\tau}\int_0^t
	       \langle \xi^{\epsilon}_s,A_s\cdot\nabla\phi\rangle ds+\frac{\sqrt{2}i}{\sqrt{\tau}}\sum_{k\in I}\int_0^t
	       \langle \xi^{\epsilon}_s,\sigma_k\cdot\nabla\phi\rangle dW^k_s,
   \end{align*}
    where $(A_t\cdot\nabla\phi)^{\epsilon}=(A_t\cdot\nabla\phi)\ast \chi_{\epsilon}.$
Arguing similarly as for the proof of \eqref{ito_step_4}, we can let $\epsilon\rightarrow 0$ in the relation above obtaining
\begin{align}\label{ito_step_5}
    i\langle   \xi_t, A_t\cdot\nabla \phi\rangle&=-\int_0^t\langle\xi_s,\Delta\left(A_s\cdot\nabla \phi\right)\rangle ds  +2i\int_0^t\left\langle A_s\cdot\nabla\xi_s+Z_s \cdot\nabla \bar{\psi}_s, A_s\cdot\nabla\phi\right\rangle ds \notag
     \\& +\int_0^t\left\langle |A_s|^2\xi_s+\frac{\wick{|A_s|^2}\bar{\psi}_s}{\sqrt{\tau}}, A_s\cdot\nabla\phi\right\rangle ds\notag\\ & -\frac{i}{\tau}\int_0^t \langle \xi_s,A_s\cdot\nabla\phi\rangle ds+\frac{\sqrt{2}i}{\sqrt{\tau}}\sum_{k\in I }\int_0^t  \langle \xi_s,  \sigma_k\cdot\nabla\phi\rangle dW^k_s. 
\end{align}
Summing \eqref{ito_step_4}, \eqref{ito_step_5}, and the weak formulation
satisfied by $\xi$, we obtain, for every $\phi\in\mathscr{S}$ and
$t\in[0,T]$,
\begin{align*}
    i\langle \xi_t, \phi\rangle&+\frac{\tau}{2}\langle \xi_t,|A_t|^2\phi\rangle-2i\tau\langle \xi_t, A_t\cdot\nabla \phi\rangle\\ & =\int_0^t \langle \xi_s, (-\Delta+V)\phi\rangle ds+2i\int_0^t \langle Z_s\cdot\nabla\bar{\psi}_s,\phi\rangle ds +\int_0^t \left\langle \frac{\wick{|A_s|^2}}{\sqrt{\tau}}\bar{\psi}_s, \phi\right\rangle ds \\ & +\frac{\tau i}{2}\int_0^t\langle\xi_s,\Delta\left(|A_s|^2 \phi\right)\rangle ds+2\tau\int_0^t\langle\xi_s,\Delta\left(A_s\cdot\nabla \phi\right)\rangle ds\\& -\tau\int_0^t \langle \xi_s, A_s\cdot\nabla\left(|A_s|^2\phi\right)\rangle ds -\frac{i\tau}{2}\int_0^t\left\langle \xi_s,  |A_s|^4\phi\right\rangle ds\\ & +4i\tau\int_0^t\left\langle \xi_s, A_s\cdot\nabla\left(A_s\cdot\nabla\phi\right)\right\rangle ds -2\tau\int_0^t\left\langle \xi_s, |A_s|^2 A_s\cdot\nabla\phi\right\rangle ds\notag\\ 
     &  -\sqrt{\tau}\int_0^t\left\langle \bar{\psi}_s, A_s \cdot\nabla\left(|A_s|^2\phi\right)\right\rangle ds-\frac{i\sqrt{\tau}}{2}\int_0^t\left\langle |A_s|^2 \wick{|A_s|^2}\bar{\psi}_s, \phi\right\rangle ds \notag
     \\ &+\sqrt{2\tau}\sum_{k\in I }\int_0^t  \langle \xi_s, A_s\cdot \sigma_k \phi\rangle dW^k_s -\sqrt{8\tau}i\sum_{k\in I }\int_0^t  \langle \xi_s,  \sigma_k\cdot\nabla\phi\rangle dW^k_s\\ 
     &  +4i\sqrt{\tau}\int_0^t\left\langle  \bar{\psi}_s,A_s \cdot\nabla\left( A_s\cdot\nabla\phi\right)\right\rangle ds  -2\sqrt{\tau}\int_0^t\left\langle \wick{|A_s|^2}\bar{\psi}_s, A_s\cdot\nabla\phi\right\rangle ds. 
\end{align*}
  By continuity in time of the terms and integrals on the right-hand side,
  and by density of smooth functions, there is a null set $N$ such that the
  claim holds on $N^c$ for every $t\in[0,T]$ and
  $\phi\in\mathscr{S}$, completing the proof.
  \end{proof}
   To study the limit behaviour of $\Theta^n,\ \Gamma^n $, defined in \eqref{eq:TG_n}, the following lemma will be useful.
\begin{lemma}\label[lemma]{lemma_ito_limit_martingales}
Let $v,v'\in H^1(\R^d)$ and $0\leq s\leq t\leq T$. Set
\begin{align*}
 a_k^v(r)&:=\langle \sigma_k\bar\psi_r,\nabla v\rangle,
 &b_{k,j}^v(r)&:=\langle (\sigma_k\cdot\sigma_j)\bar\psi_r,v\rangle,
 &c_{k}^v(r,r')&:=\langle A_r^n\cdot\sigma_k\bar{\psi}_{r'},v\rangle.
\end{align*}
Then
\begin{align}
    \langle\mathbb{E}\left[\Theta^n_t-\Theta^n_s\mid\mathcal{F}^n_s\right],v\rangle&=- \frac{1}{\sqrt{\tau_n}}\int_s^t e^{-r/\tau_n}\langle A^n_0 \bar{\psi}_r,\nabla v\rangle dr\notag\\ &- \frac{\sqrt{2}}{{\tau_n}}\sum_{k\in I}\int_s^t \int_0^s e^{-(r-u)/\tau_n} a_k^v(r) dW^{k,n}_u dr,\label{identity_one}\\
        \langle\mathbb{E}\left[\Gamma^n_t-\Gamma^n_s\mid\mathcal{F}^n_s\right],v\rangle&=\int_s^t e^{-2r/\tau_{n}} \left\langle\frac{\wick{|A^n_0|^2}}{\sqrt{\tau_n}}\bar{\psi}_r,v\right\rangle dr\notag\\ & +\frac{2\sqrt{2}}{\tau_n}\sum_{k\in I }\int_s^t\int_0^s e^{-2(r-u)/\tau_n} c_k^v(u,r) dW^{k,n}_u   dr,\label{identity_two}
\end{align}
and
\begin{align}
 \mathbb E\!\left[
  \left|\left\langle
   \mathbb E[\Theta_t^n-\Theta_s^n\mid\mathcal F_s^n],v
  \right\rangle\right|\right]
 &\lesssim \sqrt{\tau_n}\,
  \sup_{r\in[0,T]}\|\bar\psi_r\|\,\|v\|_{H^1},
 \label{asymptotic_martingales_1}\\
 \mathbb E\!\left[
  \left|\left\langle
   \mathbb E[\Gamma_t^n-\Gamma_s^n\mid\mathcal F_s^n],v
  \right\rangle\right|\right]
 &\lesssim \sqrt{\tau_n}\,
  \sup_{r\in[0,T]}\|\bar\psi_r\|\,\|v\|_{H^1}.
 \label{asymptotic_martingales_2}
\end{align}
Moreover,
\begin{align}
 &\mathbb E\!\left[
  \langle\Theta_t^n-\Theta_s^n,v\rangle
  \langle\Theta_t^n-\Theta_s^n,v'\rangle
  \,\middle|\,\mathcal F_s^n\right]\notag\\
 &\qquad=\frac1{\tau_n}\sum_{k\in I}
  \int_s^t\!\!\int_s^t e^{-|r-r'|/\tau_n}
  a_k^v(r)a_k^{v'}(r')\,dr\,dr'
  +\rho_{s,t}^{1,n}(v,v'),\label{second_moment_1}\\
 &\mathbb E\!\left[
  \langle\Gamma_t^n-\Gamma_s^n,v\rangle
  \langle\Gamma_t^n-\Gamma_s^n,v'\rangle
  \,\middle|\,\mathcal F_s^n\right]\notag\\
 &\qquad=\frac2{\tau_n}\sum_{k,j\in I}
  \int_s^t\!\!\int_s^t e^{-2|r-r'|/\tau_n}
  b_{k,j}^v(r)b_{k,j}^{v'}(r')\,dr\,dr'
  +\rho_{s,t}^{2,n}(v,v'),\label{second_moment_2}\\
 &\mathbb E\!\left[
  \langle\Theta_t^n-\Theta_s^n,v\rangle
  \langle\Gamma_t^n-\Gamma_s^n,v'\rangle
  \,\middle|\,\mathcal F_s^n\right]
  =\rho_{s,t}^{3,n}(v,v'),\label{second_moment_3}
\end{align}
where
\begin{align*}
    \rho^{1,n}_{s,t}(v,v')&=\frac{1}{\tau_n}\int_s^t\int_s^t e^{-(r+r')/\tau_n} \langle A^n_0 \bar{\psi}_r,\nabla v\rangle \langle A^n_0 \bar{\psi}_{r'},\nabla v'\rangle dr dr'\\ & +\frac{\sqrt{2}}{\tau_n^{3/2}}\sum_{k\in I}\int_s^t\int_s^t\int_0^{s} e^{-(r+r'-u)/\tau_n}\langle A^n_0 \bar{\psi}_r,\nabla v\rangle a_k^{v'}(r') dW^{k,n}_u dr dr'\\ & +\frac{\sqrt{2}}{\tau_n^{3/2}}\sum_{k\in I}\int_s^t\int_s^t\int_0^{s} e^{-(r+r'-u)/\tau_n} a_k^v(r) \langle A^n_0 \bar{\psi}_{r'},\nabla v'\rangle  dW^{k,n}_u dr dr'\\ &+\frac{2}{\tau_n^2}\sum_{k,k'\in I}\int_s^t\int_s^t\int_0^s \int_0^s e^{-(r+r'-u-u')/\tau_n}a_k^v(r)  a_k^{v'}(r') dW^{k,n}_u dW^{k',n}_{u'}dr dr'\\ & -\frac{1}{\tau_n}\sum_{k\in I}\int_s^t \int_s^t  e^{-(r+r'-2s)/\tau_n} a_k^v(r)a_k^{v'}(r') dr dr',
    \\ \rho^{2,n}_{s,t}(v,v')&=\frac{1}{\tau_n}\int_s^t\int_s^t e^{-2(r+r')/\tau_n} \langle \wick{|A^n_0|^2} \bar{\psi}_r, v\rangle \langle \wick{|A^n_0|^2} \bar{\psi}_{r'}, v'\rangle dr dr'\\ & +\frac{\sqrt{8}}{\tau_n^{3/2}}\sum_{k\in I}\int_s^t\int_s^t\int_0^{s} e^{-2(r+r'-u)/\tau_n}\langle \wick{|A^n_0|^2} \bar{\psi}_r, v\rangle c_k^{v'}(u,r') dW^{k,n}_u dr dr'\\ & +\frac{\sqrt{8}}{\tau_n^{3/2}}\sum_{k\in I}\int_s^t\int_s^t\int_0^{s} e^{-2(r+r'-u)/\tau_n} c_k^{v}(u,r) \langle \wick{|A^n_0|^2} \bar{\psi}_{r'}, v'\rangle  dW^{k,n}_u dr dr'\\ &+\frac{8}{\tau_n^2}\sum_{k,k'\in I}\int_s^t\int_s^t\int_0^s \int_0^s e^{-2(r+r'-u-u')/\tau_n} c_k^v(u,r) c_{k'}^{v'}(u',r') dW^{k,n}_u dW^{k',n}_{u'}dr dr'\\ & -\frac{2}{\tau_n}\sum_{k,k'\in I}\int_s^t \int_s^t  e^{-2(r+r'-2s)/\tau_n} b_{k,k'}^v(r) b_{k,k'}^{v'}(r') dr dr'   \\
    \rho^{3,n}_{s,t}(v,v')&=-\frac{1}{\tau_n}\int_s^t\int_s^t e^{-(r+2r')/\tau_n} \langle A_0 \bar{\psi}_r, v\rangle \langle \wick{|A^n_0|^2} \bar{\psi}_{r'}, v'\rangle dr dr'\\ & -\frac{\sqrt{8}}{\tau_n^{3/2}}\sum_{k\in I}\int_s^t\int_s^t\int_0^{s} e^{-(r+2r'-2u)/\tau_n}\langle \wick{A_0} \bar{\psi}_r, \nabla v\rangle c_k^{v'}(u,r') dW^{k,n}_u dr dr'\\ & -\frac{\sqrt{2}}{\tau_n^{3/2}}\sum_{k\in I}\int_s^t\int_s^t\int_0^{s} e^{-(r+2r'-u)/\tau_n} a_{k}^v(r) \langle \wick{|A^n_0|^2} \bar{\psi}_{r'}, v'\rangle  dW^{k,n}_u dr dr'\\ &-\frac{4}{\tau_n^2}\sum_{k,k'\in I}\int_s^t\int_s^t\int_0^s \int_0^s e^{-(r+2r'-u-2u')/\tau_n} a_k^v(r) c_{k'}^{v'}(u',r') dW^{k,n}_u dW^{k',n}_{u'}dr dr',
\end{align*}
and
\begin{align}\label{bound_reminders}
 \mathbb E\!\left[
  |\rho_{s,t}^{1,n}(v,v')|
  +|\rho_{s,t}^{2,n}(v,v')|
  +|\rho_{s,t}^{3,n}(v,v')|
 \right]
 \lesssim \tau_n\,
 \sup_{r\in[0,T]}\|\bar\psi_r\|^2
 \|v\|_{H^1}\|v'\|_{H^1}.
\end{align}
The implicit constants are independent of $n,s,t$.
\end{lemma}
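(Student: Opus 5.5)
The approach is to use the explicit Ornstein--Uhlenbeck representation of $A^n$ and decompose at time $s$. For $r\ge s$ write
\begin{align*}
A^n_r=e^{-r/\tau_n}A^n_0+\sqrt{\tfrac{2}{\tau_n}}\sum_{k}\int_0^s e^{-(r-u)/\tau_n}\sigma_k\,dW^{k,n}_u+\sqrt{\tfrac{2}{\tau_n}}\sum_k\int_s^r e^{-(r-u)/\tau_n}\sigma_k\,dW^{k,n}_u .
\end{align*}
The first two summands are $\mathcal F^n_s$-measurable. The last one is independent of $\mathcal F^n_s$ and centered Gaussian, with covariance $(1-e^{-2(r-s)/\tau_n})Q$. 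We use that $\bar\psi$ is deterministic.

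\textbf{Conditional means.} Pairing $\Theta^n_t-\Theta^n_s=\int_s^t Z^n_r\cdot\nabla\bar\psi_r\,dr$ with $v$ and integrating by parts, using $\operatorname{div}\sigma_k=0$, gives $-\tau_n^{-1/2}\int_s^t\langle A^n_r\bar\psi_r,\nabla v\rangle dr$. Taking conditional expectation removes the independent part and yields \eqref{identity_one}. For $\Gamma^n$ we expand $|A^n_r|^2$ into the square of the measurable part, the cross term, and the square of the independent part. The cross term has zero conditional mean. Subtracting $V$ and using the covariance of the independent part leaves the $\mathcal F_s$-measurable square minus $e^{-2(r-s)/\tau_n}V$. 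Writing this through the chaos decomposition of the OU process gives \eqref{identity_two}; the double stochastic integral is arranged as $\int_0^s(\cdot)\,dW_u$ with integrand $c_k^v(u,r)$, by It\^o's formula for $|\cdot|^2$ of the $\mathcal F_s$-part.

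The bounds \eqref{asymptotic_martingales_1}--\eqref{asymptotic_martingales_2} then follow from Cauchy--Schwarz and the It\^o isometry. The $A^n_0$ term contributes $\tau_n^{-1/2}\int_s^t e^{-r/\tau_n}dr\lesssim\sqrt{\tau_n}$. The stochastic term has $L^2$ norm bounded by
\begin{align*}
\tau_n^{-1}\int_s^t\Big(\int_0^s e^{-2(r-u)/\tau_n}du\Big)^{1/2}dr\lesssim \tau_n^{-1}\sqrt{\tau_n}\,\tau_n .
\end{align*}
Both estimates use $\sup_k$-summability of $\|\sigma_k\|_{L^\infty}$, $|a^v_k|\le\|\sigma_k\|_\infty\|\bar\psi\|\|v\|_{H^1}$, and the Gaussian moments of $A^n$ from \cref{lem:stochastic_conv}.

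\textbf{Second moments.} We expand the products $\langle\Theta^n_t-\Theta^n_s,v\rangle\langle\Theta^n_t-\Theta^n_s,v'\rangle$ as double time integrals of products of linear functionals of $A^n_r,A^n_{r'}$. Conditioning splits each product into two pieces. The first is the product of the conditional means, which gives the first four terms of $\rho^{1,n}$. The second is the conditional covariance of the independent parts, namely
\begin{align*}
\tau_n^{-1}\big(e^{-|r-r'|/\tau_n}-e^{-(r+r'-2s)/\tau_n}\big)\sum_k a_k^v(r)a_k^{v'}(r').
\end{align*}
This produces the main term together with the last correction in $\rho^{1,n}$. For $\Gamma^n$ we use Wick's formula for fourth moments of Gaussians, or equivalently second-chaos covariance plus the product of means. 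The independent second-chaos part gives the factor $2\tau_n^{-1}e^{-2|r-r'|/\tau_n}$ times $b\,b$, minus its value at $s$. The remaining contributions (means, and mixed first-chaos covariances involving $c_k$) form $\rho^{2,n}$. For the mixed moment \eqref{second_moment_3}, the independent parts of $\Theta$ and $\Gamma$ lie in odd and even chaoses relative to $\mathcal F_s$, so they are orthogonal. Only the terms built from conditional means and the cross term survive, and these make up $\rho^{3,n}$.

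\textbf{Remainder bound and main obstacle.} The main obstacle is the careful bookkeeping needed for \eqref{bound_reminders}, especially the second-chaos terms of $\rho^{2,n}$ and the iterated stochastic integrals. Each remainder term carries a factor like $e^{-(r-s)/\tau_n}e^{-(r'-s)/\tau_n}$ or $e^{-(r+r')/\tau_n}$. Integrating this over $[s,t]^2$ gives $O(\tau_n^2)$, so after the $\tau_n^{-1}$ prefactor the deterministic pieces are $O(\tau_n)$. For the stochastic pieces we take expectations of absolute values and apply Cauchy--Schwarz with the It\^o isometry. Each $\int_0^s e^{-(s-u)/\tau_n}dW_u$ factor has size $\sqrt{\tau_n}$, so a term with prefactor $\tau_n^{-3/2}$ and one such factor is of order $\tau_n^{-3/2}\cdot\tau_n^2\cdot\sqrt{\tau_n}=\tau_n$. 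Terms with prefactor $\tau_n^{-2}$ and two such factors are of order $\tau_n^{-2}\tau_n^2\tau_n=\tau_n$. Terms involving $c_k$ additionally use moments of $\sup_r\|A^n_r\|_{L^\infty}$, which are uniform by stationarity, to close. Throughout, the implicit constants depend only on $\sum_k\|\sigma_k\|_{L^\infty}^2$, so they are independent of $n,s,t$.
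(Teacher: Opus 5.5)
Your overall route is the paper's. You decompose $A^n_r=e^{-(r-s)/\tau_n}A^n_s+N_r$ with $N_r$ independent of $\mathcal F^n_s$, condition, and use It\^o's isometry and Gaussian moments. The paper treats $\Gamma^n$ through $d\wick{|A|^2}=-\frac{2}{\tau}\wick{|A|^2}dt+\frac{2\sqrt2}{\sqrt\tau}\sum_k\sigma_k\cdot A\,dW^k$, which is equivalent. The conditional means, \eqref{identity_one}--\eqref{asymptotic_martingales_2}, \eqref{second_moment_1} and your exponential-weight counting are all fine.

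\textbf{The gap is in the second moments involving $\Gamma^n$.} Put $\alpha=e^{-|r-r'|/\tau_n}$ and $\beta=e^{-(r+r'-2s)/\tau_n}$, so that $\mathbb E[N_r(x)\otimes N_{r'}(y)]=(\alpha-\beta)Q(x,y)$. Inside the $dr\,dr'$ integrand:
\begin{itemize}
\item The independent second-chaos part contributes $\frac{2}{\tau_n}(\alpha-\beta)^2\sum_{k,j}b^v_{k,j}(r)b^{v'}_{k,j}(r')$. You assert instead $\frac{2}{\tau_n}(\alpha^2-\beta^2)\sum_{k,j}b^v_{k,j}(r)b^{v'}_{k,j}(r')$ (``minus its value at $s$''), which is wrong.
\item The mixed first-chaos part contributes $\frac{4}{\tau_n}\beta(\alpha-\beta)\sum_k c^v_k(s,r)c^{v'}_k(s,r')$. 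This is quadratic in $A^n_s$ and is \emph{not} among the terms of the displayed $\rho^{2,n}$. Those terms are exactly the product of the conditional means plus the deterministic $\beta^2$-correction.
\item The cross covariance you keep in the mixed moment, $-\frac{2}{\tau_n}(\alpha-\beta)e^{-(r'-s)/\tau_n}\sum_k a^v_k(r)c^{v'}_k(s,r')$, does not appear in $\rho^{3,n}$ either.
\end{itemize}
So your claims that the leftovers ``form $\rho^{2,n}$'' and ``make up $\rho^{3,n}$'' are false. No bookkeeping can fix this, because with the displayed remainders \eqref{second_moment_2}--\eqref{second_moment_3} are not exact identities. Already at the level of integrands, a unit-variance scalar OU satisfies $\operatorname{Var}(a_r^2\mid a_s)=2(1-e^{-2(r-s)/\tau})\big(1+(2a_s^2-1)e^{-2(r-s)/\tau}\big)$. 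The displayed formula instead predicts the deterministic value $2(1-e^{-4(r-s)/\tau})$.

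The paper's own proof contains the same slip. It evaluates $\mathbb E[c^v_k(u,r)c^{v'}_k(u,r')\mid\mathcal F^n_s]$, for $u>s$, by its unconditional value. It also uses that $A^n$ is centered inside a conditional expectation given $\mathcal F^n_s$. In fact $\mathbb E[A^n_u\mid\mathcal F^n_s]=e^{-(u-s)/\tau_n}A^n_s$.

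\textbf{What your method does prove, and what should replace the statement.} Using $(\alpha-\beta)^2=\alpha^2-\beta^2-2\beta(\alpha-\beta)$, identity \eqref{second_moment_2} holds with $\rho^{2,n}$ augmented by
\[
\frac{4}{\tau_n}\int_s^t\!\!\int_s^t\Big(e^{-2(r\vee r'-s)/\tau_n}-e^{-2(r+r'-2s)/\tau_n}\Big)\Big(\sum_{k\in I}c^v_k(s,r)c^{v'}_k(s,r')-\sum_{k,j\in I}b^v_{k,j}(r)b^{v'}_{k,j}(r')\Big)dr\,dr' .
\]
Identity \eqref{second_moment_3} holds with $\rho^{3,n}$ augmented by the integral over $[s,t]^2$ of the cross term above.

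Both weights integrate over $[s,t]^2$ to $O(\tau_n^2)$. By your own counting, the extra terms are therefore $O(\tau_n)$ in $L^1(\Omega)$, uniformly in $s,t$. Hence \eqref{bound_reminders} survives, and it is the only property used in the proof of \cref{thm_fluctuations}.

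A minor point: the moments of $\sup_{r\le T}\|A^n_r\|_{L^\infty}$ are not uniform in $n$; they grow logarithmically as $\tau_n\to0$. You only need the fixed-time moments $\sup_r\mathbb E\|A^n_r\|_{L^\infty}^p$, which are uniform by stationarity and are all that It\^o's isometry requires here.
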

\begin{proof}
By definition it holds
    \begin{align}\label{eq:time_difference_theta}
        \Theta_t-\Theta_s&=\frac{1}{\sqrt{\tau}}\int_s^t  \left(e^{-r/\tau}A_0+\frac{\sqrt{2}}{\sqrt{\tau}}\sum_{k\in I}\int_0^r e^{-(r-u)/\tau}\sigma_k dW^k_u \right)\cdot\nabla\bar{\psi}_r dr.
    \end{align}
Therefore
    \begin{align*}
     \mathbb{E}\left[\Theta_t-\Theta_s\mid\mathcal{F}_s\right]&=  \frac{1}{\sqrt{\tau}}\int_s^t  \left(e^{-r/\tau}A_0+\frac{\sqrt{2}}{\sqrt{\tau}}\sum_{k\in I}\int_0^s e^{-(r-u)/\tau}\sigma_k dW^k_u \right)\cdot\nabla\bar{\psi}_r dr
    \end{align*}
    and this implies \eqref{identity_one} testing against $v$ and integrating by parts. 
    The validity of \eqref{second_moment_1} similarly follows from \eqref{eq:time_difference_theta} and independence of the $W^k$'s, we omit the easy details.
    Then, by Stochastic Fubini theorem, H\"older's inequality and It\^o isometry 
    \begin{multline*}
        \expt{\left\lvert \langle\mathbb{E}\left[\Theta_t-\Theta_s\mid\mathcal{F}_s\right],v\rangle\right\rvert}\\ \lesssim \sqrt{\tau}\sup_{t\in [0,T]}\|\bar{\psi}_t\|\|v\|_{H^1}\left(\mathbb{E}[\|A_0\|_{L^\infty}]+\left(\sum_{k\in I}\|\sigma_k\|_{L^{\infty}}^2\right)^{1/2}\right)\\  \lesssim \sqrt{\tau}\sup_{t\in [0,T]}\|\bar{\psi}_t\|\|v\|_{H^1}
    \end{multline*}
    by \cref{hp:noise} and \cref{lem:stochastic_conv}. This completes the proof of \eqref{asymptotic_martingales_1}.
Arguing similarly we also get
\begin{align*}
    \mathbb{E}\left[|\rho^{1,n}_{s,t}(v,v')|\right]& \lesssim \tau\sup_{t\in [0,T]}\|\bar{\psi}_t\|^2\|v\|_{H^1}\|v'\|_{H^1},
\end{align*}
    we omit the easy details. For the second claim, we first observe that by It\^o formula it holds
    \begin{align*}
       d\wick{|A|^2}=-\frac{2}{\tau}\wick{|A|^2}dt+\frac{2\sqrt{2}}{\sqrt{\tau}}\sum_{k\in I}\sigma_k\cdot AdW^k_t.
    \end{align*}
    Therefore 
    \begin{align*}
        \frac{\wick{|A_t|^2}}{\sqrt{\tau}}&=e^{-2t/\tau} \frac{\wick{|A_0|^2}}{\sqrt{\tau}}+\frac{2\sqrt{2}}{\tau}\sum_{k\in I}\int_0^t e^{-2(t-r)/\tau} \sigma_k\cdot A_r dW^k_r.
    \end{align*}
    In particular
    \begin{align}\label{eq:time_difference_gamma}
      \Gamma_t-\Gamma_s&=\int_s^t e^{-2r/\tau} \frac{\wick{|A_0|^2}}{\sqrt{\tau}}\bar{\psi}_r dr+\frac{2\sqrt{2}}{\tau}\sum_{k\in I }\int_s^t\int_0^r e^{-2(r-u)/\tau}A_u\cdot\sigma_k dW^k_u \bar{\psi}_r  dr   
    \end{align}
    and
    \begin{align*}
        \mathbb{E}\left[\Gamma_t-\Gamma_s\mid\mathcal{F}_s\right]&=\int_s^t e^{-2r/\tau} \frac{\wick{|A_0|^2}}{\sqrt{\tau}}\bar{\psi}_r dr\\ &+\frac{2\sqrt{2}}{\tau}\sum_{k\in I }\int_s^t\int_0^s e^{-2(r-u)/\tau}A_u\cdot\sigma_k dW^k_u \bar{\psi}_r  dr.
    \end{align*}
    The latter implies \eqref{identity_two} testing against $v$. Relation \eqref{second_moment_2} similarly follows from \eqref{eq:time_difference_gamma}, the independence of the $W^k$'s and \cref{lem:stochastic_conv}, we omit the easy details. Then, by Stochastic Fubini theorem, H\"older's inequality and It\^o isometry 
    \begin{align*}
        &\expt{\left\lvert \langle\mathbb{E}\left[\Gamma_t-\Gamma_s\mid\mathcal{F}_s\right],v\rangle\right\rvert}\\ & \lesssim \sqrt{\tau}\sup_{t\in [0,T]}\|\bar{\psi}_t\|\|v\|\left(\mathbb{E}[\|A_0\|^2_{L^\infty}]+\sum_{k\in I}\|\sigma_k\|_{L^{\infty}}^2\right)\\ & +\frac{1}{\tau}\int_0^s \left(\mathbb{E}\left[\sum_{k\in I}\left(\int_s^t e^{-2(r-u)/\tau} \langle \sigma_k A_u,v\rangle dr\right)^2\right]\right)^{1/2} du\\ & \lesssim \sqrt{\tau}\sup_{t\in [0,T]}\|\bar{\psi}_t\|\|v\|\left(\mathbb{E}[\|A_0\|^2_{L^\infty}]+\sum_{k\in I}\|\sigma_k\|_{L^{\infty}}^2\right)\\ & +\int_0^s \left(\mathbb{E}\left[\sum_{k\in I}\|\sigma_k\|_{L^{\infty}}^2\|v\|^2\|A_u\|^2 \left(e^{-2(t-u)/\tau}+e^{-2(s-u)/\tau}\right)\right]\right)^{1/2} du \\ & \lesssim \sqrt{\tau}\sup_{t\in [0,T]}\|\bar{\psi}_t\|\|v\|
    \end{align*}
    by \cref{hp:noise} and \cref{lem:stochastic_conv}. This completes the proof of \eqref{asymptotic_martingales_2}. Arguing similarly we also get
\begin{align*}
    \mathbb{E}\left[|\rho^{2,n}_{s,t}(v,v')|\right]& \lesssim \tau\sup_{t\in [0,T]}\|\bar{\psi}_t\|^2\|v\|\|v'\|.
\end{align*}
    Lastly, combining \eqref{eq:time_difference_theta} and \eqref{eq:time_difference_gamma}, we obtain
    \begin{align*}
        &\langle\Theta_t-\Theta_s,v\rangle\langle\Gamma_t-\Gamma_s,v'\rangle\\ &=-\frac{1}{\tau}\int_s^t\int_s^t e^{-(r+2r')/\tau} \langle A_0 \bar{\psi}_r, v\rangle \langle \wick{|A_0|^2} \bar{\psi}_{r'}, v'\rangle dr dr'\\ & -\frac{\sqrt{8}}{\tau^{3/2}}\sum_{k\in I}\int_s^t\int_s^t\int_0^{r'} e^{-(r+2r'-2u)/\tau}\langle \wick{A_0} \bar{\psi}_r, \nabla v\rangle c^{v'}_k(u,r') dW^{k}_u dr dr'\\ & -\frac{\sqrt{2}}{\tau^{3/2}}\sum_{k\in I}\int_s^t\int_s^t\int_0^{r} e^{-(r+2r'-u)/\tau} a_{k}^v(r) \langle \wick{|A_0|^2} \bar{\psi}_{r'}, v'\rangle  dW^{k}_u dr dr'\\ &-\frac{4}{\tau^2}\sum_{k,k'\in I}\int_s^t\int_s^t\int_0^r \int_0^{r'} e^{-(r+2r'-u-2u')/\tau} a_{k}^v(r) c_{k'}^{v'}(u',r') dW^{k}_u dW^{k'}_{u'}dr dr'.
    \end{align*}
    Since $A_t$ is zero mean, cf. \cref{lem:stochastic_conv}, taking the conditional expectation with respect to $\mathcal{F}_s$ of the expression above we obtain \eqref{second_moment_3}. By similar reasoning as for the previous terms,
    \begin{align*}
    \mathbb{E}\left[|\rho^{3,n}_{s,t}(v,v')|\right]& \lesssim \tau\sup_{t\in [0,T]}\|\bar{\psi}_t\|^2\|\nabla v\|\|v'\|.
\end{align*}
\end{proof}

We are now ready to complete the proof of \cref{thm_fluctuations}.
\begin{proof}[Proof of \cref{thm_fluctuations}]
We split the proof into two steps. In the first one, by \cref{perturbed_test_function_fluctuations}, we pass to the limit in the equation satisfied by $\xi^{\tau_n}.$ Secondly we identify the limit processes $\Gamma_t,\ \Theta_t$ as those appearing in \eqref{eq:fluctuations} by a martingale representation theorem, cf. \cite[Theorem 8.2]{da2014stochastic}.\\
\emph{Step 1.} Let $\phi $ as in the statement of \cref{perturbed_test_function_fluctuations} and $t\in [0,T]$. Thanks to \eqref{convergence_xi}, \eqref{convergence_theta} and \eqref{convergence_gamma} we easily have
\begin{align*}
    i\langle \xi^{\tau_n}_t, \phi\rangle&\rightarrow i\langle \xi_t, \phi\rangle\quad\mathbb{P}-a.s.,
    \\
    \int_0^t \langle \xi^{\tau_n}_s, (-\Delta+V)\phi\rangle ds&\rightarrow \int_0^t \langle \xi_s, (-\Delta+V)\phi\rangle ds\quad\mathbb{P}-a.s.,\\
    2i\int_0^t \langle Z^{n}_s\cdot\nabla\bar{\psi}_s,\phi\rangle ds &\rightarrow 2i\langle \Theta_t,\phi\rangle\quad\mathbb{P}-a.s.,\\
    \int_0^t \left\langle
    \frac{\wick{|A^n_s|^2}}{\sqrt{\tau_n}}\bar{\psi}_s,\phi
    \right\rangle ds
    &\rightarrow \langle \Gamma_t,\phi\rangle\quad\mathbb{P}-a.s.
\end{align*}
We are left to show that all the other terms go to zero as $n\rightarrow +\infty$.
For each $\delta>0$, Young's inequality gives
\begin{align*}
    &\mathbb{P}\left(
    \tau_n|\langle \xi^{\tau_n}_t,|A^{n}_t|^2\phi\rangle|>\delta
    \right)\\
    &\quad\leq
    \mathbb{P}\left(\tau_n\|\xi^{\tau_n}_t\|_{H^{-3}}^2>\delta\right)
    +\mathbb{P}\left(
    \tau_n\|\phi\|_{H^3}^2
    \|A^{n}_t\|_{H^{5+\frac{d+\theta}{2}}}^4>\delta\right).
\end{align*}
The first probability tends to zero because, by \eqref{convergence_xi},
\begin{align*}
\tau_n\sup_{t\in[0,T]}\|\xi^{\tau_n}_t\|_{H^{-3}}^2
\longrightarrow0\quad\mathbb{P}-a.s.
\end{align*}
The second tends to zero by Markov's inequality and
\cref{lem:stochastic_conv}. Therefore,
\begin{align*}
    \frac{\tau_n}{2}\langle \xi^{\tau_n}_t,|A^{n}_t|^2\phi\rangle\rightarrow 0\quad\mbox{in probability.}
\end{align*}
An analogous argument proves convergence in probability of
$-2i\tau_n\langle \xi^{\tau_n}_t,A^{n}_t\cdot\nabla\phi\rangle$.
The deterministic integrals can be treated similarly; we give one example:
\begin{align*}
    &\mathbb{P}\left(
    \left\lvert\tau_n\int_0^t
    \langle\xi^{\tau_n}_s,\Delta(A^{n}_s\cdot\nabla\phi)\rangle ds
    \right\rvert>\delta\right)\\
    &\quad\leq\mathbb{P}\left(
    \tau_n\sup_{t\in[0,T]}\|\xi^{\tau_n}_t\|_{H^{-3}}^2>\delta
    \right)+\mathbb{P}\left(
    \tau_n\|\phi\|^2_{H^6}
    \left(\int_0^T
    \|A^{n}_t\|_{H^{5+\frac{d+\theta}{2}}}dt\right)^2>\delta
    \right).
\end{align*}
Both terms tend to zero by the preceding argument. We are left to prove
convergence to zero in probability of the stochastic integrals. By
\cite[Lemma 4.3]{bagnara2025no}, it is enough to show that
\begin{align*}
    \tau_n\sum_{k\in I }\int_0^T
    \left|\langle\xi^{\tau_n}_s,
    A^n_s\cdot \sigma_k\phi\rangle\right|^2 ds
    &\rightarrow 0\quad\mbox{in probability,} \\
    \tau_n\sum_{k\in I }\int_0^T
    \left|\langle\xi^{\tau_n}_s,
    \sigma_k\cdot\nabla\phi\rangle\right|^2 ds
    &\rightarrow 0\quad\mbox{in probability.} 
\end{align*}
We just show the first one, the second being analogous and simpler.
Let $\delta>0$, by Young's inequality it holds
\begin{align*}
    &\mathbb{P}\left(\tau_n\sum_{k\in I }\int_0^T \langle\xi^{\tau_n}_s, A^n_s\cdot \sigma_k\phi\rangle^2 ds>\delta\right)\\ &\leq \mathbb{P}\left(\tau_n\sum_{k\in I }\|\sigma_k\|_{H^{5+\frac{d+\theta}{2}}}^2\|\phi\|_{H^3}^2\sup_{t\in [0,T]}\|\xi^{\tau_n}_t\|_{H^{-3}}^2\int_0^T \|A^n_s\|_{H^{5+\frac{d+\theta}{2}}}^2ds>\delta\right)\\ & \leq \mathbb{P}\left(\tau_n\left(\sum_{k\in I }\|\sigma_k\|_{H^{5+\frac{d+\theta}{2}}}^2\right)^2\|\phi\|_{H^3}^4\sup_{t\in [0,T]}\|\xi^{\tau_n}_t\|_{H^{-3}}^4>\delta\right)\\ &  +
    \mathbb{P}\left(\tau_n\left(\int_0^T \|A^n_s\|_{H^{5+\frac{d+\theta}{2}}}^2ds\right)^2>\delta\right).
\end{align*}
The first term converges to zero by \eqref{convergence_xi}, whereas the second one by \cref{lem:stochastic_conv} and Markov's inequality.  This proves the desired convergence. Computations above imply that for each $\phi \in \mathscr{S}$ there exists
$N\subset \Omega$ with $\mathbb{P}(N)=0$ such that, on $N^c$, the
following holds for each $t\in \Q\cap [0,T]$:
\begin{align}\label{eq_limit}
    i\langle \xi_t, \phi\rangle
    &=\int_0^t \langle \xi_s, (-\Delta+V)\phi\rangle ds
      +2i\langle \Theta_t,\phi\rangle
      +\langle \Gamma_t,\phi\rangle.
\end{align}
By continuity in time of all the terms above and the existence of a countable
dense subset of $H^{5}$ consisting of functions in $\mathscr{S}$, there exists
$N_0\subset \Omega:\ \mathbb{P}(N_0)=0$ and on $N_0^c$ equation \eqref{eq_limit} holds for each $t\in [0,T],\ \phi\in H^5.$
\\ \emph{Step 2.} We are left to identify the limit process $(\Theta_t,\Gamma_t)$ as a martingale with covariance structure consistent with \eqref{eq:fluctuations}. To ease the notation, let us introduce for $t\in [0,T]$ the spaces
\begin{align*}
    \mathcal{Z}_t:=\bar{Z}_t\times  C([0,t];H^{-1})\times C([0,t];L^{2})\times H^{5+\frac{d+\theta}{2}}\times  C([0,t];\R^I)
\end{align*}
In view of the results of \cref{subsec:compactness_fluct} and Vitali theorem, the convergence of \eqref{convergence_gamma}, \eqref{convergence_theta} holds also in $L^p(\Omega)$ for every $p\in [1,+\infty).$
Thus, for every\footnote{Note that, even if $\bar{Z}_t$ is not Polish, the function $h(\xi,\Theta,\Gamma, A_0,W):(\Omega,\mathcal{F})\rightarrow(\R,\mathcal{B}(\R))$ is nevertheless a well-defined real-valued random variable by continuity of $h$; see \cite[Lemma 4.51]{aliprantis2006infinite}. The same reasoning applies to the other expressions appearing in this argument.} 
\begin{align*}
    v,v'\in H^5,\quad h\in C_b(\mathcal{Z}_s)
\end{align*}
it holds
\begin{align}\label{convergence_martin_quadr_var} \hspace{-5cm}\mathbb{E}\left[\langle \Theta^n_t-\Theta^n_s-\Gamma^n_s,v\rangle h(\xi^{\tau_n},\Theta^n,\Gamma^n, A^n_0,W^n)\right] &\rightarrow \mathbb{E}\left[\langle \Theta_t-\Theta_s,v\rangle h(\xi,\Theta,\Gamma, A_0,\bar{W})\right],\\
\hspace{-5cm}\mathbb{E}\left[\langle \Gamma^n_t-\Gamma^n_s,v\rangle h(\xi^{\tau_n},\Theta^n,\Gamma^n, A^n_0,W^n)\right] &\rightarrow \mathbb{E}\left[\langle \Gamma_t-\Gamma_s,v\rangle h(\xi,\Theta,\Gamma, A_0,\bar{W})\right],\\
    \notag\hspace{1cm}\mathbb{E}\left[\langle \Theta^n_t-\Theta^n_s,v\rangle\langle \Theta^n_t-\Theta^n_s,v'\rangle  h(\xi^{\tau_n},\Theta^n,\Gamma^n, A^n_0,W^n)\right]\\ &\hspace{-3cm}\rightarrow \mathbb{E}\left[\langle \Theta_t-\Theta_s,v\rangle\langle \Theta_t-\Theta_s,v'\rangle h(\xi,\Theta,\Gamma, A_0,\bar{W})\right],
    \\ \mathbb{E}\left[\langle \Gamma^n_t-\Gamma^n_s,v\rangle\langle \Gamma^n_t-\Gamma^n_s,v'\rangle  h(\xi^{\tau_n},\Theta^n,\Gamma^n, A^n_0,W^n)\right]\notag\\ &\hspace{-3cm}\rightarrow \mathbb{E}\left[\langle \Gamma_t-\Gamma_s,v\rangle\langle \Gamma_t-\Gamma_s,v'\rangle h(\xi,\Theta,\Gamma, A_0,\bar{W})\right],\\ \mathbb{E}\left[\langle \Theta^n_t-\Theta^n_s,v\rangle\langle \Gamma^n_t-\Gamma^n_s,v'\rangle  h(\xi^{\tau_n},\Theta^n,\Gamma^n, A^n_0,W^n)\right]\notag\\ &\hspace{-3cm}\rightarrow \mathbb{E}\left[\langle \Theta_t-\Theta_s,v\rangle\langle \Gamma_t-\Gamma_s,v'\rangle h(\xi,\Theta,\Gamma, A_0,\bar{W})\right].
\end{align}
We are therefore left to show
\begin{align}\label{property_martingale}
    \mathbb{E}\left[\langle \Theta^n_t-\Theta^n_s,v\rangle h(\xi^{\tau_n},\Theta^n,\Gamma^n, A^n_0,W^n)\right]&\rightarrow 0,\\  \label{property_martingale_2}\mathbb{E}\left[\langle \Gamma^n_t-\Gamma^n_s,v\rangle h(\xi^{\tau_n},\Theta^n,\Gamma^n, A^n_0,W^n)\right]&\rightarrow 0,
\end{align}
and the corresponding property for the quadratic variations. In order to save notation, we define
\begin{align*}
    H_n:&=h(\xi^{\tau_n},\Theta^n,\Gamma^n, A^n_0,W^n),\\
    H:&=h(\xi,\Theta,\Gamma, A_0,\bar{W}).
\end{align*}
Claim \eqref{property_martingale}, immediately follows from \eqref{asymptotic_martingales_1}, \eqref{asymptotic_martingales_2} and the boundedness of $h$.
Indeed, by the tower property, 
\begin{align*}
    \left\lvert\mathbb{E}\left[\langle \Theta^n_t-\Theta^n_s,u\rangle H_n\right]\right\rvert  & =\left\lvert\mathbb{E}\left[\mathbb{E}\left[\langle \Theta^n_t-\Theta^n_s,u\rangle\mid \mathcal{F}^n_s\right] H_n\right]\right\rvert\\& \lesssim \mathbb{E}\left[\left\lvert\mathbb{E}\left[\langle \Theta^n_t-\Theta^n_s,u\rangle\mid \mathcal{F}^n_s\right] \right\rvert\right] \lesssim \sqrt{\tau_n}\sup_{t\in [0,T]}\|\bar{\psi}_t\|\|v\|_{H^1}\rightarrow 0.
\end{align*}
Analogously one can show \eqref{property_martingale_2}.
The study of the quadratic variations also follows by \cref{lemma_ito_limit_martingales}. Indeed, by the tower property we get
\begin{align*}
  &\mathbb{E}\left[\langle \Theta^n_t-\Theta^n_s,v\rangle\langle \Theta^n_t-\Theta^n_s,v'\rangle  H_n\right]\\ &=\mathbb{E}\left[\rho_{s,t}^{1,n}(v,v')  H_n\right] +\frac{\expt{H_n}}{\tau_n}\sum_{k\in I}\int_s^t \int_s^t  e^{-|r-r'|/\tau_n} \langle \sigma_k\bar{\psi}_r,\nabla v\rangle \langle \sigma_k\bar{\psi}_{r'},\nabla v'\rangle dr dr',\\ &\mathbb{E}\left[\langle \Gamma^n_t-\Gamma^n_s,v\rangle\langle \Gamma^n_t-\Gamma^n_s,v'\rangle  H_n\right]\\ &=\mathbb{E}\left[\rho_{s,t}^{2,n}(v,v')  H_n\right] 
  +\frac{2\expt{H_n}}{\tau_n}\sum_{k,j\in I}\int_s^t \int_s^t  e^{-2|r-r'|/\tau_n} \langle \sigma_k\cdot\sigma_j\bar{\psi}_r, v\rangle \langle \sigma_k\cdot\sigma_j\bar{\psi}_{r'}, v'\rangle dr dr',\\
&\mathbb{E}\left[\langle \Theta^n_t-\Theta^n_s,v\rangle\langle \Gamma^n_t-\Gamma^n_s,v'\rangle  H_n\right]=\mathbb{E}\left[\rho_{s,t}^{3,n}(v,v')  H_n\right] .
\end{align*}
From \eqref{bound_reminders} and the boundedness of $h$ it follows that
\begin{align*}
    &\left|\mathbb{E}\left[\sum_{i=1}^3\rho_{s,t}^{i,n}(v,v')  H_n\right]\right|\lesssim \tau_n\|v\|_{H^1}\|v'\|_{H^1}\rightarrow 0,\qquad \expt{H_n}\rightarrow \expt{H}
\end{align*}
as $n\rightarrow+\infty.$ While
\begin{multline}\label{convergence_1_covar}
    \frac{1}{\tau_n}\sum_{k\in I}\int_s^t \int_s^t  e^{-|r-r'|/\tau_n} \langle \sigma_k\bar{\psi}_r,\nabla v\rangle \langle \sigma_k\bar{\psi}_{r'},\nabla v'\rangle dr dr'\\ \rightarrow 2\sum_{k\in I}\int_s^t \langle \sigma_k\bar{\psi}_r,\nabla v\rangle \langle \sigma_k\bar{\psi}_{r},\nabla v'\rangle dr,\end{multline}
    \begin{multline}\label{convergence_2_covar}
    \frac{2}{\tau_n}\sum_{k,j\in I}\int_s^t \int_s^t  e^{-2|r-r'|/\tau_n} \langle \sigma_k\cdot\sigma_j\bar{\psi}_r, v\rangle \langle \sigma_k\cdot\sigma_j\bar{\psi}_{r'}, v'\rangle dr dr'\\ \rightarrow  2\sum_{k,j\in I}\int_s^t \langle \sigma_k\cdot\sigma_j\bar{\psi}_r, v\rangle \langle \sigma_k\cdot\sigma_j\bar{\psi}_{r}, v'\rangle dr. 
\end{multline}
Combining \eqref{convergence_1_covar} and \eqref{convergence_2_covar} with \eqref{convergence_martin_quadr_var}, we obtain
\begin{align*}
\mathbb{E}\left[\langle \Theta_t-\Theta_s,v\rangle h(\xi,\Theta,\Gamma, A_0,\bar{W})\right]=0,\quad \mathbb{E}\left[\langle \Gamma_t-\Gamma_s,v\rangle h(\xi,\Theta,\Gamma, A_0,\bar{W})\right]=0,
    \end{align*}
    \begin{align}
    \label{quadratic_cov_2}
\mathbb{E}\left[\langle \Theta_t-\Theta_s,v\rangle\langle \Theta_t-\Theta_s,v'\rangle h(\xi,\Theta,\Gamma, A_0,\bar{W})\right]  &=2\sum_{k\in I}\int_s^t \langle \sigma_k\bar{\psi}_r,\nabla v\rangle \langle \sigma_k\bar{\psi}_{r},\nabla v'\rangle dr,\\ \label{quadratic_cov_3}
\mathbb{E}\left[\langle \Gamma_t-\Gamma_s,v\rangle\langle \Gamma_t-\Gamma_s,v'\rangle h(\xi,\Theta,\Gamma, A_0,\bar{W})\right]  &=2\sum_{k,j\in I}\int_s^t \langle \sigma_k\cdot\sigma_j\bar{\psi}_r, v\rangle \langle \sigma_k\cdot\sigma_j\bar{\psi}_{r}, v'\rangle dr,\\ \mathbb{E}\left[\langle \Theta_t-\Theta_s,v\rangle\langle \Gamma_t-\Gamma_s,v'\rangle h(\xi,\Theta,\Gamma, A_0,\bar{W})\right]  &=0
\label{eq:quadratic_variation_1}
    \end{align}
for every 
  $v,v'\in H^5(\R^d),\  h\in C_b(\mathcal{Z}_t)$. Namely $(\Theta,\Gamma)$ is a continuous martingale with quadratic variation given by \eqref{quadratic_cov_2}, \eqref{quadratic_cov_3}, \eqref{eq:quadratic_variation_1}.
Therefore, up to enlarging the probability space, cf. \cite[Theorem 8.2]{da2014stochastic}, there exist two families of real mutually independent Brownian motions, $((W^k_t)_{k\in  I},(B^{k,j}_t)_{k,j\in I})$, such that $(B^{k,j}_t)_{k,j\in I}\mathrel{\perp\!\!\!\perp} (W^{k}_t)_{k\in I}$ and 
\begin{align*}
   ( \Theta_t,\Gamma_t)= \left(\sqrt{2}\sum_{k\in I}\int_0^t \sigma_k\cdot\nabla\bar{\psi}_r dW^k_r,\sqrt{2}\sum_{k,j\in I}\int_0^t \sigma_k\cdot\sigma_j\bar{\psi}_r dB^{k,j}_r\right). 
\end{align*}
This completes the proof by uniqueness of the limit.
\end{proof}
  
\section{Averaging and Fluctuations of the Effective Model}\label{sec:proof_effective}
We split this section into two subsections, proving \eqref{eq_rate_averaging} and \eqref{eq_rate_fluctuctions} respectively.
\subsection{\texorpdfstring{Proof of \eqref{eq_rate_averaging}}{Proof of the averaging rate}}
We first observe that \cref{prop:well_effective} and
\cref{prop:well_posed_averaging_limit} imply, for each $q\geq 1$,
\begin{align}\label{eq:apriori_diff_h1}
    \expt{\sup_{t\in [0,T]}\|\Psi^{\tau}_t-\bar{\psi}_t\|^q}&\lesssim_{q,\psi_0} 1.
\end{align}
Next, by standard arguments, see for example
\cite[Section 5.2]{da2014stochastic}, denoting by
$h^{\tau}_t=\Psi^{\tau}_t-\bar{\psi}_t$ and by $S(t)=e^{it\Delta}$ the Schr\"odinger
group, we have
\begin{align*}
    h^{\tau}_t
    &=-i\int_0^t S(t-s)[Vh^{\tau}_s]ds\\
    &\quad+\tau\int_0^t S(t-s)
    [4\operatorname{div}(Q\nabla\Psi^{\tau}_s)-R\Psi^{\tau}_s]ds
    +\sqrt{\tau}Z^1_t+\sqrt{\tau}Z^2_t,
\end{align*}
    where
    \begin{align*}
    Z^1_t&=2\sqrt{2}\sum_{k\in I}\int_0^t S(t-s)[\sigma_k\cdot\nabla \Psi^{\tau}_s] dW^k_s,\\
    Z^2_t&=-\sqrt{2}i\sum_{k,j\in I}\int_0^t S(t-s)[\sigma_k\cdot\sigma_j\Psi^{\tau}_s] dB^{k,j}_s
\end{align*}
seen as equalities in $H^{-2}(\R^d)$. In particular, evaluating the $H^{-2}$ norm of the equation above we obtain by simple computations
\begin{align*}
   \|h^{\tau}_t \|_{H^{-2}}
   &\lesssim \int_0^t \|h^{\tau}_s \|_{H^{-2}} ds
   +\tau \int_0^T \|\Psi^{\tau}_s\|ds\\
   &\quad+\sqrt{\tau}\sup_{t\in [0,T]}\|Z^1_t\|_{H^{-2}}
   +\sqrt{\tau}\sup_{t\in [0,T]}\|Z^2_t\|_{H^{-2}}.
\end{align*}
Taking the supremum for $t\in [0,r]$, the latter implies by \cref{prop:well_effective} 
\begin{align*}
    \sup_{t\in [0,r]}\|h^{\tau}_t \|_{H^{-2}}
    &\lesssim \int_0^r \|h^{\tau}_s \|_{H^{-2}} ds+\tau+\sqrt{\tau}\sup_{t\in [0,T]}\|Z^1_t\|_{H^{-2}}
    +\sqrt{\tau}\sup_{t\in [0,T]}\|Z^2_t\|_{H^{-2}}\\
    &\lesssim \int_0^r \sup_{t\in [0,s]}
    \|h^{\tau}_t \|_{H^{-2}} ds+\tau+\sqrt{\tau}\sup_{t\in [0,T]}\|Z^1_t\|_{H^{-2}}
    +\sqrt{\tau}\sup_{t\in [0,T]}\|Z^2_t\|_{H^{-2}}.
\end{align*}
Therefore, by Gr\"onwall's inequality, for each $q\geq1$,
\begin{align*}
    \expt{\sup_{t\in [0,r]}\|\Psi^{\tau}_t-\bar{\psi}_t \|_{H^{-2}}^q}
    &\lesssim_{\psi_0,q}\tau^q
    +\tau^{q/2}\expt{\sup_{t\in [0,T]}\|Z^1_t\|_{H^{-2}}^q+\sup_{t\in [0,T]}\|Z^2_t\|_{H^{-2}}^q}.
\end{align*}
The two stochastic integrals can be controlled by \cite[Theorem 1, Remark 2]{tubaro1984estimate} obtaining from \cref{prop:well_effective} for $q\geq 2$
\begin{align*}
    \expt{\sup_{t\in [0,T]}\|Z^1_t\|_{H^{-1}}^q}&\lesssim_{q,T} \expt{\left(\sum_{k\in I}\int_0^T\|\sigma_k\cdot\nabla\Psi^{\tau}_s\|_{H^{-1}}^2 ds\right)^{q/2}}\\ & \lesssim_{q,T}  \|\psi_0\|^q\left(\sum_{k\in I}\|\sigma_k\|_{L^{\infty}}^2 \right)^{q/2},\\
     \expt{\sup_{t\in [0,T]}\|Z^2_t\|_{H^{-1}}^q}&\lesssim_{q,T} \expt{\left(\sum_{k,j\in I}\int_0^T\|\sigma_k\cdot\sigma_j \Psi^{\tau}_s\|_{H^{-1}}^2 ds\right)^{q/2}}\\
     &\lesssim_{q,T} \|\psi_0\|^q\left(\sum_{k,j\in I}\|\sigma_k\|_{L^{\infty}}^2\|\sigma_j\|_{L^{\infty}}^2 \right)^{q/2}.
\end{align*}
The estimates for $1\leq q<2$ follow from the case $q=2$ by Jensen's
inequality.
Therefore we proved
\begin{align}\label{eq:apriori_diff_h-1}
	     \expt{\sup_{t\in [0,r]}
	     \|\Psi^{\tau}_t-\bar{\psi}_t \|_{H^{-2}}^q}
	     \lesssim \tau^{q/2}.
\end{align}
Interpolating between \eqref{eq:apriori_diff_h1} and \eqref{eq:apriori_diff_h-1} we get for each $\theta_1\in [-2,0)$ and $q\geq 1$
\begin{align*}
   \expt{\|\Psi^{\tau}-\bar{\psi} \|_{L^\infty(0,T;H^{\theta_1})}^q}\lesssim_{\theta_1,T,q,\psi_0} \tau^{-q\theta_1 /4}.
\end{align*}
Since $\Psi^{\tau}-\bar{\psi}\in C([0,T];L^2)\hookrightarrow C([0,T];H^{\theta_1})\ \mathbb{P}-a.s.$ the essential supremum in the inequality above can be upgraded to a supremum and the claim follows.
\subsection{\texorpdfstring{Proof of \eqref{eq_rate_fluctuctions}}{Proof of the fluctuation rate}}
First observe that, by definition, $\Xi^{\tau}$ satisfies for each $\phi\in \mathscr{S}(\R^d),\ t\in [0,T]$
\begin{align*}
  i \langle\Xi^{\tau}_t,\phi\rangle
  &=\int_0^t\langle \Xi^{\tau}_s,(-\Delta+V)\phi\rangle ds\\
  &\quad+4i\sqrt{\tau}\int_0^t
  \langle \Psi^{\tau}_s,\operatorname{div}(Q\nabla\phi)\rangle ds
  -i\sqrt{\tau}\int_0^t\langle R\Psi^{\tau}_s,\phi\rangle ds\\
  &\quad-2\sqrt{2}i\sum_{k\in I}\int_0^t
  \langle\sigma_k\Psi^{\tau}_s,\nabla\phi\rangle dW^k_s+\sqrt{2}\sum_{k,j\in I}\int_0^t
  \langle(\sigma_k\cdot\sigma_j)\Psi^{\tau}_s,\phi\rangle dB^{k,j}_s
  \quad\mathbb{P}-a.s.
\end{align*}
By standard arguments, see for example \cite[Section 5.2]{da2014stochastic}, it holds
\begin{align*}
    \Xi^{\tau}_t&=-i\int_0^tS(t-s)[V\Xi^{\tau}_s]ds+\sqrt{\tau}\int_0^t S(t-s)[4\operatorname{div}(Q\nabla\Psi^{\tau}_s)-R\Psi^{\tau}_s]ds\\&+2\sqrt{2}\sum_{k\in I}\int_0^t S(t-s)[\sigma_k\cdot\nabla\Psi^{\tau}_s]dW^k_s-\sqrt{2}i\sum_{k,j\in I}\int_0^t S(t-s)[\sigma_k\cdot\sigma_j \Psi^{\tau}_s]dB^{k,j}_s,\\
    \xi_t&=-i\int_0^tS(t-s)[V\xi_s]ds\\&+2\sqrt{2}\sum_{k\in I}\int_0^t S(t-s)[\sigma_k\cdot\nabla\bar\psi_s]dW^k_s-\sqrt{2}i\sum_{k,j\in I}\int_0^t S(t-s)[\sigma_k\cdot\sigma_j \bar\psi_s]dB^{k,j}_s
\end{align*}
$\mathbb{P}-a.s.$ seen as equalities in $H^{-2}.$ Therefore, calling $\gamma^{\tau}_t=\Xi^{\tau}_t-\xi_t $ and recalling the definition of $h^{\tau}_t$ from the previous section, we have
\begin{align*}
    \gamma^{\tau}_t
    &=-i\int_0^tS(t-s)[V\gamma^{\tau}_s]ds\\
    &\quad+\sqrt{\tau}\int_0^t S(t-s)
    [4\operatorname{div}(Q\nabla\Psi^{\tau}_s)-R\Psi^{\tau}_s]ds
    +\tilde{Z}^{1,\tau}_t+\tilde{Z}^{2,\tau}_t,
\end{align*}
    where
    \begin{align*}
    \tilde{Z}^{1,\tau}_t&=2\sqrt{2}\sum_{k\in I}\int_0^t S(t-s)[\sigma_k\cdot\nabla h^{\tau}_s]dW^k_s,\\ \tilde{Z}^{2,\tau}_t&=-\sqrt{2}i\sum_{k,j\in I}\int_0^t S(t-s)[\sigma_k\cdot\sigma_j h^{\tau}_s]dB^{k,j}_s.
\end{align*}
Evaluating the $H^{-2}$ norm of the equation above we obtain by simple computations
\begin{align*}
   \|\gamma^{\tau}_t \|_{H^{-2}}
   &\lesssim \int_0^t \|\gamma^{\tau}_s \|_{H^{-2}} ds
   +\sqrt{\tau} \int_0^T \|\Psi^{\tau}_s\| ds+\sup_{t\in [0,T]}\|\tilde{Z}^{1,\tau}_t\|_{H^{-2}}
   +\sup_{t\in [0,T]}\|\tilde{Z}^{2,\tau}_t\|_{H^{-2}}.
\end{align*}
By computations analogous to those of the previous subsection, the latter implies that for each $q\geq 1$
\begin{align*}
    \expt{\sup_{t\in [0,T]}\|\Xi^{\tau}_t-\xi_t \|_{H^{-2}}^q}
    &\lesssim_{q,T,\psi_0}\tau^{q/2}
    +\expt{\sup_{t\in [0,T]}
    \|\tilde{Z}^{1,\tau}_t\|_{H^{-2}}^q+\sup_{t\in [0,T]}
    \|\tilde{Z}^{2,\tau}_t\|_{H^{-2}}^q}.
\end{align*}
Repeating the same computations in $H^{-3}$ also gives
\begin{align*}
    \expt{\sup_{t\in [0,T]}\|\Xi^{\tau}_t-\xi_t \|_{H^{-3}}^q}
    &\lesssim_{q,T,\psi_0}\tau^{q/2}
    +\expt{\sup_{t\in [0,T]}
    \|\tilde{Z}^{1,\tau}_t\|_{H^{-3}}^q+\sup_{t\in [0,T]}
    \|\tilde{Z}^{2,\tau}_t\|_{H^{-3}}^q}.
\end{align*}
By \cite[Theorem 1, Remark 2]{tubaro1984estimate} and \eqref{eq_rate_averaging} we get for $q\geq 2$ 
\begin{align*}
    \expt{\sup_{t\in [0,T]}\|\tilde{Z}^{1,\tau}_t\|_{H^{-2}}^q}&\lesssim_{q,T} \expt{\left(\sum_{k\in I}\int_0^T\|\sigma_k\cdot\nabla h^{\tau}_s\|_{H^{-2}}^2 ds\right)^{q/2}}\\ & \lesssim_{q,T}  \left(\sum_{k\in I}\|\sigma_k\|_{W^{1,\infty}}^2 \right)^{q/2}\expt{\sup_{t\in [0,T]}\|\Psi^{\tau}_t-\bar\psi_t\|_{H^{-1}}^q}\\ & \lesssim_{q,T,\psi_0} \tau^{q/4},\\
    \expt{\sup_{t\in [0,T]}\|\tilde{Z}^{1,\tau}_t\|_{H^{-3}}^q}&\lesssim_{q,T} \expt{\left(\sum_{k\in I}\int_0^T\|\sigma_k\cdot\nabla h^{\tau}_s\|_{H^{-3}}^2 ds\right)^{q/2}}\\ &\lesssim \expt{\left(\sum_{k\in I}\int_0^T\|\sigma_k h^{\tau}_s\|_{H^{-2}}^2 ds\right)^{q/2}}\\ & \lesssim_{q,T}  \left(\sum_{k\in I}\|\sigma_k\|_{W^{2,\infty}}^2 \right)^{q/2}\expt{\sup_{t\in [0,T]}\|\Psi^{\tau}_t-\bar\psi_t\|_{H^{-2}}^q}\\ & \lesssim_{q,T,\psi_0} \tau^{q/2},
    \\
     \expt{\sup_{t\in [0,T]}\|\tilde{Z}^{2,\tau}_t\|_{H^{-2}}^q}&\lesssim_{q,T} \expt{\left(\sum_{k,j\in I}\int_0^T\|\sigma_k\cdot\sigma_j h^{\tau}_s\|_{H^{-2}}^2 ds\right)^{q/2}}\\
	     &\lesssim_{q,T} \left(\sum_{k,j\in I}\|\sigma_k\cdot\sigma_j\|_{W^{2,\infty}}^2 \right)^{q/2}\expt{\sup_{t\in [0,T]}\|\Psi^{\tau}_t-\bar\psi_t\|_{H^{-2}}^q}\\ & \lesssim_{q,T,\psi_0} \tau^{q/2}.
\end{align*}
Again, the case $1\leq q<2$ follows by Jensen's inequality.
The relations above imply
\begin{align}\label{estimate_h-1clt}
    \expt{\sup_{t\in [0,T]}\|\Xi^{\tau}_t-\xi_t \|_{H^{-2}}^q}&\lesssim_{q,T,\psi_0} \tau^{q/4},\\ 
    \expt{\sup_{t\in [0,T]}\|\Xi^{\tau}_t-\xi_t \|_{H^{-3}}^q}&\lesssim_{q,T,\psi_0} \tau^{q/2}. \label{estimate_h-2clt}
\end{align}
Interpolating between \eqref{estimate_h-1clt} and \eqref{estimate_h-2clt} the claim follows in the regime $\theta_2\in [-3,-2]$ arguing as in the previous step. In order to reach the regime $\theta_2\in [-2,-\frac{5}{3})$, simply interpolate between \eqref{estimate_h-1clt} and the bound
\begin{align*}
     \expt{\sup_{t\in [0,T]}\|\Xi^{\tau}_t-\xi_t \|_{H^{-1}}^q}&\lesssim_{q,T,\psi_0} \tau^{-q/2}
\end{align*}
Indeed, $\|\Xi^\tau_t\|\leq 2\|\psi_0\|/\sqrt{\tau}$ by
\eqref{bounds_effective} and conservation of the norm for $\bar\psi$.
Moreover, since $\bar{\psi}\in C([0,T];L^2)$, the standard stochastic-convolution
estimate applied to \eqref{eq:gaussianlimit} gives
\(\mathbb E[\sup_{t\in[0,T]}\|\xi_t\|_{H^{-1}}^q]<\infty\).
\section{Large Deviation Principle of the Effective Model}\label{sec_ldp}
Contrary to \eqref{eq:rmse}, it is quite easy to study the probability of rare events for the effective model \eqref{eq:simplified_model} by large-deviation techniques.
This is the content of the main result of this section. In order to rigorously state the result we need to set some notation. Let us recall that
\begin{align*}
    W_t&=\sqrt{8}\sum_{k\in I}\sigma_k W^k,\qquad
    B_t=\sqrt{2}\sum_{k,j\in I}\sigma_k\cdot\sigma_j B^{k,j}
\end{align*}
and, by \cref{hp:noise}, $W_t$ (resp. $B_t$) is a Brownian motion on 
\begin{align*}
    H^{5+\frac{d+\theta}{2}}_{div}:=\left\{u\in H^{5+\frac{d+\theta}{2}}(\R^d;\R^d):\ \operatorname{div}u=0 \right\},
\end{align*}
(resp. $H^{5+\frac{d+\theta}{2}}(\R^d;\R),$) with covariance operator $\mathcal{Q}_1$ (resp. $\mathcal{Q}_2$) which is trace class on $H^{5+\frac{d+\theta}{2}}_{div}$ (resp. $H^{5+\frac{d+\theta}{2}}(\R^d;\R)$).
We denote by
\begin{align*}
\mathcal{H}_{0,1}
=\mathcal{Q}_1^{1/2}H^{5+\frac{d+\theta}{2}}_{div},\quad
\mathcal{H}_{0,2}
=\mathcal{Q}_2^{1/2}H^{5+\frac{d+\theta}{2}},
\end{align*}
the respective Cameron--Martin spaces of the noises, with inner products
\begin{align*}
    \langle u,v\rangle_{\mathcal{H}_{0,1}}
    &=\langle \mathcal{Q}_1^{-1/2}u,\mathcal{Q}_1^{-1/2}v
    \rangle_{H^{5+\frac{d+\theta}{2}}_{div}},\\
    \langle u,v\rangle_{\mathcal{H}_{0,2}}
    &=\langle \mathcal{Q}_2^{-1/2}u,\mathcal{Q}_2^{-1/2}v
    \rangle_{H^{5+\frac{d+\theta}{2}}}.
\end{align*}
where the inverse square roots are understood on the corresponding ranges.
As is well known, cf. \cite[Chapter 2]{da2014stochastic}, we have
\begin{align}\label{embedding_cameron_martin}
  \mathcal{H}_{0,1}\hookrightarrow  H^{5+\frac{d+\theta}{2}}_{div},\quad \mathcal{H}_{0,2}\hookrightarrow H^{5+\frac{d+\theta}{2}}. 
\end{align}
To accommodate the weak convergence approach of \cite{LDPweak_convergence}, we also define the spaces \begin{align*}
  S^N_1&:=\Big\{v\in L^2(0,T;\mathcal{H}_{0,1}):\quad \int_0^T \lVert v_s\rVert_{\mathcal{H}_{0,1}}^2 ds \leq N\Big\},\\
  S^N_2&:=\Big\{v\in L^2(0,T;\mathcal{H}_{0,2}):\quad \int_0^T \lVert v_s\rVert_{\mathcal{H}_{0,2}}^2 ds \leq N\Big\}
\end{align*}
which are Polish spaces when endowed with the weak topology. 
We denote by $\mathcal{P}_{1}$ (resp. $\mathcal{P}_{2}$) the space of $\mathcal{H}_{0,1}$-valued (resp. $\mathcal{H}_{0,2}$-valued), $\mathcal{F}_t$-predictable and $\mathbb{P}-a.s.$ square integrable processes. Next we define 
\begin{align*}
    \mathcal{P}_1^N
    &:=\{\phi\in \mathcal{P}_1:
    \ \phi(\omega)\in S_1^N\quad \mathbb{P}-a.s.\},\\
    \mathcal{P}_2^N
    &:=\{\phi\in \mathcal{P}_2:
    \ \phi(\omega)\in S_2^N\quad \mathbb{P}-a.s.\}.
\end{align*}
Fix $R>0$ and set $\mathcal{E}_0:=B^{L^2}_R$. We aim to show a large deviation principle for \eqref{eq:simplified_model} in the Polish space
\begin{align*}
    \mathcal{E}:=C([0,T];\tilde{H}^{-})\cap C([0,T];B^{L^2}_{R,w}).
\end{align*}
In the following to shorten the notation we will denote by $\Psi^{\tau,\psi_0,f,g}$ (resp. $\psi^{\psi_0,f,g}$) the unique weak solution in $\mathcal{E}$
of 
\begin{equation}\label{eq:simplified_model_ldp}
     \begin{cases}
         i d\Psi&=(-\Delta+V+if\cdot\nabla+g)\Psi dt+i\sqrt{8\tau}\sum_{k\in I}\sigma_k\cdot\nabla \Psi\circ dW^k  +\sqrt{\tau} \Psi\circ dB_t\\ \Psi(0)&=\psi_0
     \end{cases}
\end{equation}
\begin{equation}\label{eq:limit_model_ldp}\mbox{(resp. }
     \begin{cases}
         i \partial_t\psi&=(-\Delta+V+if\cdot\nabla+g)\psi \\ \psi(0)&=\psi_0
     \end{cases}\mbox{)}
\end{equation}
for $(f,g)\in \mathcal{P}_1^N\times \mathcal{P}_2^N$ given by \cref{well_simplified_ldp} (resp. \cref{well_limit_ldp}) below which is a minor adaptation of \cref{prop:well_effective} (resp. \cref{prop:well_confinement}). In case of $f=g=0$ we will drop them from the superscripts.
\begin{proposition}\label[proposition]{prop:LDP}
   Assume \cref{hp:noise} and fix $R>0$. The family of processes $\Psi^{\tau,\psi_0},\ \tau\in (0,1),\ \psi_0\in \mathcal{E}_0$, solving \eqref{eq:simplified_model} with initial condition $\psi_0$, satisfies a Laplace principle on $\mathcal{E}$ uniformly on compact subsets of $\mathcal E_0$, at scale $\tau$ (equivalently, with speed $1/\tau$), with rate function
   \begin{align}\label{rate_function}
       I_{\psi_0}(\psi):=\inf\bigg\{\frac{1}{2}\|f\|_{L^2(0,T;\mathcal{H}_{0,1})}^2+&\frac{1}{2}\|g\|^2_{L^2(0,T;\mathcal{H}_{0,2})}:\notag\\ &\hspace{-1cm} f\in L^2(0,T;\mathcal{H}_{0,1}),\ g\in L^2(0,T;\mathcal{H}_{0,2}),\ \psi=\psi^{\psi_0,f,g} \bigg\}.
   \end{align}
In particular, $\Psi^{\tau,\psi_0}$ also satisfies a large-deviation principle on $\mathcal{E}$ uniformly on compact subsets of $\mathcal E_0$, with speed $1/\tau$ and the same rate function.
\end{proposition}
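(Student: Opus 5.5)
We follow the weak convergence approach of Budhiraja--Dupuis--Maroulas \cite{LDPweak_convergence} for Laplace principles uniform on compacts. Write the solution of \eqref{eq:simplified_model} as $\Psi^{\tau,\psi_0}=\mathcal{G}^\tau(\psi_0,\sqrt{\tau}W,\sqrt{\tau}B)$ for a measurable map $\mathcal{G}^\tau$, which exists by pathwise uniqueness (\cref{prop:well_effective}) and Yamada--Watanabe. Set $\mathcal{G}^0(\psi_0,\int_0^\cdot f,\int_0^\cdot g):=\psi^{\psi_0,f,g}$. The structure of the equation makes the controlled process explicit. By Girsanov, $\mathcal{G}^\tau(\psi_0^\tau,\sqrt\tau W+\int_0^\cdot f^\tau,\sqrt\tau B+\int_0^\cdot g^\tau)=\Psi^{\tau,\psi_0^\tau,f^\tau,g^\tau}$, the solution of \eqref{eq:simplified_model_ldp}. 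Here $f^\tau\cdot\nabla$ and $g^\tau$ enter as real divergence-free transport and real multiplicative potential, that is, through skew-adjoint generators. It then suffices to check the two standard conditions.
\begin{enumerate}
\item[(a)] \emph{Compact level sets and continuity of the skeleton.} If $\psi_0^n\to\psi_0$ in $\mathcal E_0$ and $(f^n,g^n)\to(f,g)$ weakly in $S^N_1\times S^N_2$, then $\psi^{\psi_0^n,f^n,g^n}\to\psi^{\psi_0,f,g}$ in $\mathcal E$.
\item[(b)] \emph{Controlled convergence.} If $(f^\tau,g^\tau)\in\mathcal P^N_1\times\mathcal P^N_2$ converge in law to $(f,g)$ in the weak topology and $\psi_0^\tau\to\psi_0$, then $\Psi^{\tau,\psi_0^\tau,f^\tau,g^\tau}\to\psi^{\psi_0,f,g}$ in law in $\mathcal E$.
\end{enumerate}

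For both steps, the first ingredient is well-posedness of \eqref{eq:simplified_model_ldp} and \eqref{eq:limit_model_ldp} with exact $L^2$ conservation, since the controls add only skew-adjoint terms. This gives paths in $B^{L^2}_{R}$. The second ingredient is an $H^{-2}$ time-increment bound as in \cref{time:compactness_averaging}:
\begin{equation*}
\|\Psi_t-\Psi_s\|_{H^{-2}}\lesssim R\,|t-s|^{1/2}\Big(1+\int_0^T\|f_r\|^2_{W^{1,\infty}}+\|g_r\|^2_{L^\infty}dr\Big)^{1/2}+\text{(martingale terms)}.
\end{equation*}
The control integral is bounded by $N$ via \eqref{embedding_cameron_martin} and Sobolev embedding. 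The martingale terms carry a factor $\sqrt\tau$ and are controlled by BDG and Kolmogorov. Together these give tightness in $\mathcal E$, as in the averaging proof.

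To identify the limit, we pass to the limit in the weak formulation tested against $\phi\in\mathscr S$. The Itô-correction terms $4\tau\operatorname{div}(Q\nabla\cdot)$ and $\tau R$ vanish. The stochastic integrals vanish in $L^2(\Omega)$ since their quadratic variation is $O(\tau)$.

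The main obstacle is the pairing of two weakly converging objects in the control terms,
\begin{equation*}
\int_0^t\langle\Psi^n_s,f^n_s\cdot\nabla\phi\rangle ds \qquad\text{and}\qquad \int_0^t\langle g^n_s\Psi^n_s,\phi\rangle ds,
\end{equation*}
where $\Psi^n$ converges only weakly in $L^2$ in space. The plan is to use the strong convergence of $\Psi^n$ in $C([0,T];\tilde H^{-})$. Since $f^n_s\cdot\nabla\phi$ and $g^n_s\phi$ are compactly weighted by the Schwartz function $\phi$, we can bound them in $L^2(0,T;\tilde H^{\epsilon})$-dual norms by $\|(f^n,g^n)\|_{L^2(\mathcal H_0)}\le\sqrt N$, using the high regularity in \eqref{embedding_cameron_martin}. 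This lets us write
\begin{equation*}
\langle\Psi^n,f^n\cdot\nabla\phi\rangle=\langle\Psi^n-\psi,f^n\cdot\nabla\phi\rangle+\langle\psi,f^n\cdot\nabla\phi\rangle.
\end{equation*}
The first term tends to $0$ by the strong convergence in $\tilde H^{-\epsilon}$. The second converges by weak convergence of $f^n$ in $L^2(0,T;\mathcal H_{0,1})$, because $h\mapsto\int_0^t\langle\psi_s,h_s\cdot\nabla\phi\rangle ds$ is a bounded linear functional. In (b) we first apply Skorokhod so that the controls converge a.s. in the weak topology. Uniqueness of \eqref{eq:limit_model_ldp}, obtained by the DiPerna--Lions mollification argument of \cref{lem_uniqueness_fluct_eq} with the commutator estimate of \cite{butori2026background}, identifies the limit and upgrades subsequential convergence to convergence.

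Finally, uniformity on compact subsets of $\mathcal E_0$ comes from allowing $\psi_0^n\to\psi_0$ in (a) and (b), which the above arguments handle since $\psi_0$ enters only through the initial term. The Laplace principle with rate \eqref{rate_function} then follows from \cite{LDPweak_convergence}. The LDP follows by the equivalence of the Laplace and large deviation principles on Polish spaces, given that $I_{\psi_0}$ is a good rate function by (a).
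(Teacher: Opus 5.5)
Your proposal is correct and follows essentially the paper's route. The paper also uses the Budhiraja--Dupuis--Maroulas weak-convergence criteria with Girsanov for the controlled equation \eqref{eq:simplified_model_ldp}, compactness of the skeleton set via $L^2$ conservation and $H^{-2}$ increment bounds, stability via tightness, Skorokhod and uniqueness, and the same splitting $\langle\Psi^n-\psi,f^n\cdot\nabla\phi\rangle+\langle\psi,f^n\cdot\nabla\phi\rangle$ for the control terms.

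The one hypothesis you do not check is the lower semicontinuity of $\psi_0\mapsto I_{\psi_0}(\psi)$, which the uniform-on-compacts theorem of \cite{LDPweak_convergence} requires separately. It follows from your (a): take near-minimizing controls for $I_{\psi_0^n}(\psi)$, extract a weak limit in $S_1^{2M+1}\times S_2^{2M+1}$, and use weak lower semicontinuity of the norm, as the paper does in its final subsection.
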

Due to the weak convergence approach developed in \cite{LDPweak_convergence}, the validity of \cref{prop:LDP} immediately follows from the verification of the following properties:
\begin{enumerate}
    \item Well-posedness in $\mathcal{E}$ of both \eqref{eq:simplified_model_ldp} and \eqref{eq:limit_model_ldp}.
    \item Compactness for \eqref{eq:limit_model_ldp}, i.e. if $K\subset \mathcal{E}_0$ is compact, then \begin{align*}
        \Gamma_{K,N}:=\{\psi^{\psi_0,f,g}\mbox{ for }\psi_0\in K,\ f\in S_1^N,\ g\in S_2^N\}
    \end{align*} 
    is a compact subset of $\mathcal{E}$.
    \item Stability of \eqref{eq:simplified_model_ldp} and \eqref{eq:limit_model_ldp}, i.e. if $\psi_0^\tau\rightarrow \psi_0$ in $\mathcal{E}_0$ and $(f^\tau,g^\tau)$ converges in law to  $(f,g)$ as $(S_1^N,\ S_2^N)$ random variables, then $\Psi^{\tau,\psi^{\tau}_0,f^\tau,g^\tau}$ converges in law to $\psi^{\psi_0,f,g}$ as $\mathcal{E}$ random variables. 
    \item Lower semi-continuity of the map $\psi_0\rightarrow I_{\psi_0}(\psi)$ from $\mathcal{E}_0$ to $[0,+\infty]$ for $\psi\in \mathcal{E}$ fixed.
\end{enumerate}
We prove each of the items above in the following four subsections.
\subsection{Well-posedness}
We start showing well-posedness for \eqref{eq:simplified_model_ldp}. This is the content of the next proposition.
\begin{proposition}\label[proposition]{well_simplified_ldp}
    Assume \cref{hp:noise}, $\psi_0\in B^{L^2}_R$, and
    $(f,g)\in \mathcal{P}_1^N\times \mathcal{P}_2^N$. Then there exists a
    unique probabilistically strong, analytically weak solution of
    \eqref{eq:simplified_model_ldp} with trajectories in $\mathcal{E}$
    $\mathbb{P}-a.s.$ Moreover, it satisfies
    \begin{align*}
        \|\Psi^{\tau,\psi_0,f,g}_t\|^2=\|\psi_0\|^2 \quad \text{for all }t\geq 0\quad  \mathbb{P}-a.s.
    \end{align*}
   In particular, $\Psi^{\tau,\psi_0,f,g}$ belongs to $C([0,T];L^2)\ \mathbb{P}-a.s.$
\end{proposition}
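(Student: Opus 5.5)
The plan is to follow the scheme used for \cref{prop:well_effective} in \autoref{app:well-posed}: vanishing viscosity approximation, a uniform $L^2$ bound from the skew-adjoint structure, compactness, and then uniqueness by a DiPerna--Lions renormalization.

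The starting observation concerns the controls. Since $f(\omega)\in S^N_1$ and $g(\omega)\in S^N_2$ almost surely, the embedding \eqref{embedding_cameron_martin} gives
\begin{align*}
\int_0^T\|f_s\|_{H^{5+\frac{d+\theta}{2}}}^2ds+\int_0^T\|g_s\|_{H^{5+\frac{d+\theta}{2}}}^2ds\lesssim N\quad \mathbb{P}-a.s.
\end{align*}
Because $f$ is divergence free and real, and $g$ is real, the added operator $if\cdot\nabla+g$ is formally self-adjoint. It therefore enters $i\,d\Psi$ as a skew-adjoint perturbation of the dynamics, exactly like the transport noise. The coefficients $f,g$ are only $L^2$ in time, but they are predictable and have Sobolev norms that are pathwise integrable.

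\emph{Existence.} I would add a viscous term $\epsilon\Delta\Psi\,dt$ and regularize the data. For $\epsilon>0$ the problem is parabolic with smooth-in-space coefficients, so the solution is regular enough to apply It\^o's formula to $\|\Psi^\epsilon_t\|^2$. The Stratonovich corrections cancel against the It\^o--Stratonovich drift ($4\tau\operatorname{div}(Q\nabla\cdot)-\tau R$). The martingale terms vanish because $\operatorname{Re}\langle \sigma_k\cdot\nabla u,u\rangle=0$ and $\operatorname{Im}\langle (\sigma_k\cdot\sigma_j)u,u\rangle=0$. The control terms vanish for the same reason. What remains is $\|\Psi^\epsilon_t\|^2\le\|\psi_0\|^2$, with the dissipation $-2\epsilon\int_0^t\|\nabla\Psi^\epsilon_s\|^2ds$ on the right.

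Next come time-increment estimates in $H^{-2}$, as in \cref{time:compactness_averaging}. The deterministic part is bounded by $|t-s|^{1/2}\|\psi_0\|\big(\int_0^T(1+\|f_r\|_{W^{1,\infty}}^2+\|g_r\|^2_{L^\infty})dr\big)^{1/2}\lesssim |t-s|^{1/2}(1+N)^{1/2}$. The stochastic part is handled by BDG. Together these give tightness in $\mathcal{E}$, using the compactness argument already invoked for \cref{thm_averaging} (cf. \cref{lem_compactness_viscous}). I would then pass to the limit after a Skorokhod/Jakubowski representation, with the controls $(f,g)$ included among the random variables in $S^N_1\times S^N_2$ equipped with the weak topology. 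The product terms $\int_0^t\langle\Psi^\epsilon_s,f_s\cdot\nabla\phi\rangle ds$ converge because $\Psi^\epsilon\to\Psi$ strongly in $C([0,T];\tilde H^-)$ while $f$ is fixed (or weakly convergent in $L^2_t$). Since the representation changes the probability space, this yields a martingale solution. A probabilistically strong solution then follows from pathwise uniqueness via Gy\"ongy--Krylov (or Yamada--Watanabe).

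\emph{Uniqueness and norm conservation.} This step is the main obstacle. The transport term $(f+\sqrt{8\tau}\,\dot W)\cdot\nabla$ acts at the level of $L^2$ solutions and is not regularizing. I would take the difference $v$ of two solutions and mollify it, $v^\epsilon=v\ast\chi_\epsilon$. Applying It\^o's formula to $\|v^\epsilon_t\|^2$ produces commutators $[f_s\cdot\nabla,\chi_\epsilon\ast]v_s$ and $[\sigma_k\cdot\nabla,\chi_\epsilon\ast]v_s$. For almost every $s$ these converge to $0$ by \cite[Lemma 4.2]{butori2026background}, and they are dominated by $\|f_s\|_{H^{1+\frac{d+\theta}{2}}}\|v_s\|$. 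The domination is integrable in time because $\|f\|_{L^2_tH^s}\le C\sqrt N$ and $\sup_t\|v_t\|\le 2R$.

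The delicate point is the Itô correction. The quadratic-variation term coming from the transport noise must cancel exactly with the Stratonovich drift $4\tau\operatorname{div}(Q\nabla v^\epsilon)$, up to a commutator error. This is the same cancellation that was already handled for \cref{prop:well_effective}, and I would reuse it verbatim. The $f$ and $g$ terms contribute only first-order (in time) commutator errors. Letting $\epsilon\to0$ by dominated convergence gives $\|v_t\|^2=0$. Applied to the solution itself, the same computation gives $\|\Psi_t\|^2=\|\psi_0\|^2$ for all $t$. Strong continuity in $L^2$ then follows, since the path is weakly continuous and its norm is constant.
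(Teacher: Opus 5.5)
Your proposal is correct, but it is not the route the paper takes. The paper itself notes that one could follow the proof of \cref{prop:well_effective}, as you do, and deliberately proves \cref{well_simplified_ldp} with Girsanov's theorem instead.

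\emph{The paper's route.} Since $\int_0^T(\|f_s\|^2_{\mathcal H_{0,1}}+\|g_s\|^2_{\mathcal H_{0,2}})\,ds\le 2N$ a.s., Novikov's condition holds. Under the equivalent measure $\mathbb P_{f,g}$, the processes $\hat W_t=W_t+\tau^{-1/2}\int_0^t f_s\,ds$ and $\hat B_t=B_t+\tau^{-1/2}\int_0^t g_s\,ds$ are Wiener processes with the same covariances. A solution of \eqref{eq:simplified_model} driven by $(\hat W,\hat B)$ is exactly a solution of \eqref{eq:simplified_model_ldp} driven by $(W,B)$. Existence, uniqueness (applied to a solution with zero initial datum) and conservation of $\|\Psi_t\|$ are then inherited from \cref{prop:well_effective}, because $\mathbb P$ and $\mathbb P_{f,g}$ have the same null sets. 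No new PDE estimate is needed, and the argument fits the weak-convergence framework. The price is that it works only for $\tau>0$ and only for controls taking values in the Cameron--Martin spaces.

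\emph{Your route.} You redo the full well-posedness theory with the control terms included:
\begin{enumerate}
\item a viscous approximation;
\item tightness, with $(f,g)$ treated as $S^N_1\times S^N_2$-valued variables in the weak topology;
\item a weak--strong passage to the limit in $\int_0^t\langle\Psi_s,f_s\cdot\nabla\phi\rangle\,ds$;
\item DiPerna--Lions uniqueness, with the extra commutator $[f_s\cdot\nabla,\chi_\epsilon\ast]v_s$ dominated in $L^1(0,T)$;
\item Gy\"ongy--Krylov to upgrade to a strong solution.
\end{enumerate}
This is longer, but more robust. It uses only $\operatorname{div}f=0$ and the spatial regularity and time integrability of $f,g$, not membership in the Cameron--Martin spaces, and it needs no Novikov-type condition. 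It also works verbatim at $\tau=0$, so it handles the skeleton equation \eqref{eq:limit_model_ldp} of \cref{well_limit_ldp} at the same time; the paper has to treat that equation separately by a viscous argument anyway.

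One small point on ``reusing verbatim'' the uniqueness proof of \cref{prop:well_effective}. That proof applies It\^o's formula to $\langle\Psi_t,\Psi^\epsilon_t\rangle$, not to $\|v^\epsilon_t\|^2$. You should either switch to that pairing, or note that for a symmetric mollifier $\|v^\epsilon\|^2=\langle v,v\ast(\chi_\epsilon\ast\chi_\epsilon)\rangle$. With the latter, the same double-commutator computation, with $\chi_\epsilon\ast\chi_\epsilon$ in place of $\chi_\epsilon$, gives the cancellation between the It\^o correction and $4\tau\operatorname{div}(Q\nabla\cdot)$.
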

\begin{proof}
One can prove this proposition following closely the arguments developed in \cref{prop:well_effective}. Here we prefer to provide a different proof exploiting Girsanov theorem, which is more in the spirit of the weak convergence approach for large deviations, cf. \cite[Theorem 10]{LDPweak_convergence}, \cite[Proposition 24]{fehrman2023non}.\\
Everything holds at $\tau,\psi_0$ fixed, so we drop their dependence to simplify the notation.
The case $f=g=0$ follows by \cref{prop:well_effective}. In the general
case we use Girsanov's theorem, cf. \cite[Chapter 10]{da2014stochastic}.
Let us consider a filtered probability space $(\Omega,\mathcal{F},\{\mathcal{F}_t\}_{t\geq 0},\mathbb{P})$ carrying $W, B$ independent Wiener processes adapted to $\{\mathcal{F}_t\}_{t \geq 0}$ as described before \cref{prop:LDP} and $f,g$ as in the statement. Also denote \begin{align*}
    \gamma_{f,g}:=\exp\Bigg(-\frac{1}{\sqrt{\tau}}\int_0^T \langle f_s,dW_s\rangle_{\mathcal{H}_{0,1}}&-\frac{1}{\sqrt{\tau}}\int_0^T \langle g_s,dB_s\rangle_{\mathcal{H}_{0,2}}\\ & -\frac{1}{2\tau}\int_0^T \norm{f_s}_{\mathcal{H}_{0,1}}^2 ds-\frac{1}{2\tau}\int_0^T \norm{g_s}_{\mathcal{H}_{0,2}}^2 ds\Bigg).
\end{align*}
The energy bound in the definition of $\mathcal P_1^N\times\mathcal
P_2^N$ ensures Novikov's condition.
By the Girsanov theorem, there exists a probability measure $\mathbb{P}_{f,g}$ on $(\Omega,\mathcal{F})$ such that 
\begin{align*}
    \frac{d \mathbb{P}_{f,g}}{d\mathbb{P}}=\gamma_{f,g}
\end{align*}
and
\begin{equation*}
     \hat{W}_t:=W_t+\frac{1}{\sqrt{\tau}}\int_0^t f_s\,ds,
 \qquad
 \hat{B}_t:=B_t+\frac{1}{\sqrt{\tau}}\int_0^t g_s\,ds
\end{equation*}
are Wiener processes with covariance operators $\mathcal{Q}_1$ and
$\mathcal{Q}_2$, respectively, on the auxiliary filtered probability
space $(\Omega,\mathcal{F},\{\mathcal{F}_t\}_{t\geq 0},\mathbb{P}_{f,g})$.
Moreover, the two probability measures $\mathbb{P}$ and $\mathbb{P}_{f,g}$ are equivalent. By \cref{prop:well_effective}, on the auxiliary probability space $(\Omega,\mathcal{F},\{\mathcal{F}_t\}_{t\geq 0},\mathbb{P}_{f,g})$ there exists a unique $\Psi$ adapted satisfying \begin{align*}
        \|\Psi_t\|^2=\|\psi_0\|^2 \quad \text{for all }t\geq 0\quad  \mathbb{P}_{f,g}-a.s.,
    \end{align*}
   with trajectories in $\mathcal{E}$ $\PP_{f,g}$-almost surely, solving
   \eqref{eq:simplified_model} with initial condition $\psi_0$ and noise
   $\hat{W},\hat{B}$. Therefore, on our original probability space, $\Psi$
   solves \eqref{eq:simplified_model_ldp} with initial condition $\psi_0$,
   Wiener processes $W,B$, and the additional terms $f,g$. Moreover, since
   $\mathbb{P}_{f,g}$ and $\mathbb{P}$ are equivalent,
  it also holds
  \begin{align*}
        \|\Psi_t\|^2=\|\psi_0\|^2 \quad \text{for all }t\geq 0\quad  \mathbb{P}-a.s.
    \end{align*}
  and $\Psi$ has trajectories in $\mathcal{E}$ $\PP$-almost surely. This completes the existence part. Concerning uniqueness, we argue similarly. By linearity it is enough to show uniqueness assuming $\Psi_0=0$ without enforcing \eqref{bounds_effective}. Let $\Psi $ with trajectories in $\mathcal{E}$ $\PP$-almost surely solving \eqref{eq:simplified_model_ldp} on $(\Omega,\mathcal{F},\{\mathcal{F}_t\}_{t\geq 0},\mathbb{P})$ with null initial condition and satisfying \eqref{boundedness_spde}. Therefore $\Psi$ solves \eqref{eq:simplified_model} on the auxiliary probability space $(\Omega,\mathcal{F},\{\mathcal{F}_t\}_{t\geq 0},\mathbb{P}_{f,g})$, has trajectories in $\mathcal{E}$ and satisfies \eqref{boundedness_spde} $\PP_{f,g}$-almost surely. By \cref{prop:well_effective} it follows that $\Psi\equiv 0$, $\mathbb{P}_{f,g}$-almost surely. Since $\PP_{f,g}$ and $\PP$ are equivalent, $\Psi\equiv 0$ almost surely with respect to $\mathbb{P}$ and the result follows.
\end{proof}
Next we show well-posedness of \eqref{eq:limit_model_ldp}.
\begin{proposition}\label[proposition]{well_limit_ldp}
    Assume \cref{hp:noise}, $\psi_0\in B^{L^2}_R$, and
    $(f,g)\in \mathcal{P}_1^N\times \mathcal{P}_2^N$. Then there exists a
    unique probabilistically strong, analytically weak solution of
    \eqref{eq:limit_model_ldp} with trajectories in $\mathcal{E}$
    $\mathbb{P}-a.s.$ Moreover, it satisfies
    \begin{align*}
        \|\psi^{\psi_0,f,g}_t\|^2=\|\psi_0\|^2 \quad \text{for all }t\geq 0\quad  \mathbb{P}-a.s.
    \end{align*}
   In particular, $\psi^{\psi_0,f,g}$ has paths in $C([0,T];L^2)\ \mathbb{P}-a.s.$
\end{proposition}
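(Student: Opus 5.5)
The plan is to treat \eqref{eq:limit_model_ldp} pathwise, for each fixed $\omega$, as a linear Schr\"odinger equation with random coefficients. Since $f(\omega)\in S^N_1$ and $g(\omega)\in S^N_2$, the embeddings \eqref{embedding_cameron_martin} and Sobolev embedding give, $\mathbb{P}$-a.s.,
\begin{align*}
f\in L^2(0,T;H^{5+\frac{d+\theta}{2}}_{div})\subset L^2(0,T;W^{3,\infty}),\qquad g\in L^2(0,T;H^{5+\frac{d+\theta}{2}})\subset L^2(0,T;W^{3,\infty}),
\end{align*}
with norms bounded by $C\sqrt N$. The equation therefore has the same structure as \eqref{eq:rmse}. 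The magnetic field $A^\tau$ is replaced by the divergence-free real drift $\frac12 f$, and $|A^\tau|^2$ is replaced by the real bounded potential $V+g$. The only difference is that time integrability is now $L^2_t$ instead of continuity, and this is harmless because the equation is linear.

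\emph{Existence.} I would follow the vanishing viscosity scheme used for \cref{prop:well_confinement} in \autoref{app:well-posed}. Consider
\begin{align*}
i\partial_t\psi^\nu=(-\Delta+V+if\cdot\nabla+g)\psi^\nu-i\nu\Delta\psi^\nu,
\end{align*}
with mollified initial datum and mollified coefficients. For this regularized problem I would solve for a smooth solution and compute the energy identity
\begin{align*}
\tfrac{d}{dt}\|\psi^\nu\|^2=-2\nu\|\nabla\psi^\nu\|^2 .
\end{align*}
The terms $if\cdot\nabla$ and $V+g$ drop out because $f$ is real and divergence-free, which makes $f\cdot\nabla$ skew-adjoint, and because $V+g$ is real. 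This yields a uniform $L^2$ bound by $R$. Time increments in $H^{-2}$ are controlled, as in \cref{time:compactness_averaging}, by $|t-s|^{1/2}R\,(1+\|f\|_{L^2_tL^\infty}+\|g\|_{L^2_tL^\infty})$. Arzel\`a--Ascoli in $C([0,T];B^{L^2}_{R,w})\cap C([0,T];\tilde H^-)$ then gives a limit point, and the weak formulation passes to the limit since the coefficients enter linearly against smooth test functions. Measurability in $\omega$ and adaptedness follow once uniqueness is known, because the whole sequence then converges and the solution is a measurable function of $(\psi_0,f|_{[0,t]},g|_{[0,t]})$.

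\emph{Uniqueness and norm conservation.} This is the main step, and I would redo the DiPerna--Lions renormalization of \cref{lem_uniqueness_fluct_eq} directly at the $L^2$ level, without the $(I-\Delta)^{-3/2}$ conjugation. Let $\psi\in C_w([0,T];L^2)$ be a weak solution and set $\psi^\epsilon=\psi\ast\chi_\epsilon$. Then
\begin{align*}
\|\psi^\epsilon_t\|^2=\|\psi^\epsilon_0\|^2+2\,\mathrm{Re}\int_0^t\langle [f_s\cdot\nabla,\chi_\epsilon\ast]\psi_s,\psi^\epsilon_s\rangle ds+2\,\mathrm{Im}\int_0^t\langle ((V+g_s)\psi_s)\ast\chi_\epsilon-(V+g_s)\psi^\epsilon_s,\psi^\epsilon_s\rangle ds .
\end{align*}
I would apply \cite[Lemma 4.2]{butori2026background}, which bounds the commutator by $\|f_s\|_{H^{1+\frac{d+\theta}{2}}}\|\psi_s\|$ uniformly in $\epsilon$ and sends it to zero for a.e. $s$. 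The domination required in time is $\|f\|_{L^1_t}\sup_t\|\psi_t\|^2$, which is finite. Dominated convergence then gives $\|\psi_t\|^2=\|\psi_0\|^2$ for every $t$. Applied to the difference of two solutions, which has zero initial datum, this yields uniqueness. Applied to the solution itself, it yields norm conservation. Norm conservation combined with weak continuity upgrades the paths to $C([0,T];L^2)$, by the Radon--Riesz property.

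The delicate point is that only $L^2$-in-time integrability of $f,g$ is available, as opposed to the continuity of $A^\tau$ used before. Every estimate above uses the coefficients only through $L^1_t$ or $L^2_t$ norms, so I expect this to cause no difficulty.
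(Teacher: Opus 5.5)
Your argument is correct apart from one sign, and its overall structure matches the paper's: vanishing-viscosity existence, then the DiPerna--Lions commutator argument at the $L^2$ level for uniqueness and norm conservation. For the latter the paper simply points to \cref{subsec_well_posed_fast_slow}, which is your computation with $2A$ replaced by $f$ and $|A|^2$ by $V+g$. As you say, the $L^2_t$ integrability of $f,g$ enters only through dominated convergence.

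\textbf{The sign.} As written, $i\partial_t\psi^\nu=\dots-i\nu\Delta\psi^\nu$ means $\partial_t\psi^\nu=\dots-\nu\Delta\psi^\nu$. This is a backward heat equation, and it gives $\frac{d}{dt}\|\psi^\nu\|^2=+2\nu\|\nabla\psi^\nu\|^2$. The dissipative identity you state requires $+i\nu\Delta\psi^\nu$. The paper's $-i\nu\Delta^2$ has the opposite sign precisely because $\Delta^2\geq0$ whereas $\Delta\leq0$.

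\textbf{Comparison of routes.} The real difference is how existence is organized. The paper argues ``as in the proof of \cref{prop:well_confinement}'': tightness of laws, Skorokhod representation and Yamada--Watanabe, with the fourth-order regularization inherited from the SPDE setting. You instead solve \eqref{eq:limit_model_ldp} $\omega$ by $\omega$ via deterministic Arzel\`a--Ascoli and recover measurability and adaptedness from uniqueness. Since \eqref{eq:limit_model_ldp} contains no stochastic integral, this is legitimate and more elementary. Uniqueness on each $[0,t]$ gives $\psi_t=\Phi_t(\psi_0,f|_{[0,t]},g|_{[0,t]})$. Here $\Phi_t$ is a pointwise limit of continuous maps, or is continuous for the weak topology on $S^N_1\times S^N_2$ by the argument of \cref{compactness_ldp}. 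Predictability of $f,g$ then yields adaptedness, even if your time mollification of the coefficients is not causal. The paper's route keeps the presentation uniform with the stochastic well-posedness proofs; yours dispenses with the probabilistic compactness machinery entirely.
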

\begin{proof}
   We only sketch the argument, which is analogous to that for
   \cref{prop:well_confinement}. Existence of solutions can be obtained by a
   vanishing-viscosity scheme, considering
   \begin{align}\label{viscous_LDP}
       \begin{cases}
         i \partial_t\psi^{\nu}&=(-i\nu\Delta^2-\Delta+V+if\cdot\nabla+g)\psi^{\nu} \\ \psi^{\nu}(0)&=\psi_0
     \end{cases}
   \end{align}
   for which existence of solutions in $C([0,T];L^2)\cap L^2(0,T;H^2)$ is classical. In particular, by \eqref{embedding_cameron_martin}, the solution satisfies for each $s,t\in [0,T]$
   \begin{align*}
       \|\psi^{\nu}_t\|&\leq \|\psi_0\|\quad \mathbb{P}-a.s.\\
	   \|\psi^\nu_t-\psi^\nu_s\|_{H^{-4}}
	   &\lesssim_N |t-s|^{1/2}\|\psi_0\|
	   \quad \mathbb{P}-a.s.
	   \end{align*}
	   uniformly in $\nu$. Then, arguing as in the proof of
	   \cref{prop:well_confinement}, it is possible to pass to the limit as
	   $\nu\rightarrow0$ and show the existence of a weak solution of
	   \eqref{eq:limit_model_ldp} in $\mathcal{E}$. Uniqueness and the
	   remaining claims follow from commutator arguments in the spirit of
	   \cite{diperna1989ordinary}; see \cref{subsec_well_posed_fast_slow}
	   for details in our setting. In particular, every weak solution of
	   \eqref{eq:limit_model_ldp} preserves its $L^2$ norm.
\end{proof}

\subsection{Compactness}\label{compactness_ldp}
Since $\mathcal{E}$ is a Polish space it is enough to show sequential compactness. Let $\psi^n\in \Gamma_{K,N}$ and $\psi_0^n,\ f^n,g^n$ the corresponding initial condition and additional linear terms. By compactness of $K,\ S_1^N$ and $S_2^N$, up to passing to non-relabeled subsequences, we can assume that \begin{align}\label{convergence_data_compactness_lemma}
        &\psi_0^n\rightarrow \psi_0\in K,
        \quad\\ \label{convergence_data_compactness_lemma_1}
        & f^n\rightharpoonup f\in S_1^N,\qquad 
        g^n\rightharpoonup g\in S_2^N.  
    \end{align}
    Let $\psi^{\psi_0,f,g}\in \mathcal{E}$ be the unique weak solution of \eqref{eq:limit_model_ldp} given by \cref{well_limit_ldp}. We are left to show that
    \begin{align*}
\lim_{n\rightarrow+\infty}d_{\mathcal{E}}\left(\psi^n,\psi^{\psi_0,f,g}\right)=0.
    \end{align*}
    By \cref{well_limit_ldp} we already know that 
    \begin{align}
       \sup_{n\geq 1} \sup_{t\in [0,T]}\|\psi^n_t\|\leq R.
    \end{align}
    Moreover, using also \eqref{embedding_cameron_martin}, it is easy to show, arguing as in \cref{lem_time_increments_viscous},
	    \begin{align*}
	        \|\psi^n_t-\psi^n_s\|_{H^{-2}}
	        &\leq |t-s|^{1/2}\bar{R},
	        \qquad n\geq1,\quad s,t\in[0,T],
	    \end{align*}
    for some $\bar{R}$ depending on $N,\ T$ and $R.$  Let $\mathbb{B}_{\bar{R}}$ denote the closure in $\mathcal{E}$ of the intersection between $\mathcal{E}$ and the ball of radius ${\bar{R}}$ centered at $0$ in $C^{1/3}([0,T];\tilde{H}^{-2})$.
    By \cite[Lemma 2.2]{crippa2025zero}, the set
    $\mathbb{B}_{\bar{R}}$ is compact in $\mathcal{E}$. Therefore, up
    to passing to a non-relabeled subsequence, there exists
    $\psi\in \mathcal{E}$ such that
    \begin{align}\label{convergence_stability}
\lim_{n\rightarrow+\infty}d_{\mathcal{E}}\left(\psi^n,\psi\right)=0.
    \end{align}
    It remains to show that $\psi$ is a weak solution of
    \eqref{eq:limit_model_ldp}. Uniqueness will then imply
    $\psi=\psi^{\psi_0,f,g}$. Let $\phi\in\mathscr{S}$. For every
    $t\in[0,T]$ and $n\geq1$,
	    \begin{align*}
	        &i\langle \psi^n_t,\phi\rangle-i\langle \psi^n_0,\phi\rangle
	        -\int_0^t\langle \psi^n_s,(-\Delta +V)\phi\rangle ds=-\int_0^t
	        \left(i\langle f^n_s\psi^n_s,\nabla \phi\rangle
	        -\langle g^n_s\psi^n_s,\phi\rangle\right)ds.
	    \end{align*}
    From \eqref{convergence_data_compactness_lemma} and
    \eqref{convergence_stability},
	    \begin{align*}
	     &\lim_{n\rightarrow +\infty}\left(
	     i\langle \psi^n_t,\phi\rangle-i\langle \psi^n_0,\phi\rangle
	     -\int_0^t\langle \psi^n_s,(-\Delta +V)\phi\rangle ds\right)\\
	     &\quad=i\langle \psi_t,\phi\rangle-i\langle \psi_0,\phi\rangle
	     -\int_0^t\langle \psi_s,(-\Delta +V)\phi\rangle ds.
	    \end{align*}
    The triangle inequality, the spatial decay of $\phi$ and its derivatives,
    \eqref{convergence_data_compactness_lemma_1}, and
    \eqref{convergence_stability} give
    \begin{align*}
      &\lim_{n\rightarrow+\infty}\left\lvert\int_0^t i\langle f^n_s\psi^n_s-f_s\psi_s,\nabla \phi\rangle-\langle g^n_s\psi^n_s-g_s\psi_s,\phi\rangle ds \right\rvert\\ & \leq \limsup_{n\rightarrow+\infty}\left\lvert\int_0^t \langle \psi^n_s-\psi_s,f^n_s\nabla \phi\rangle ds \right\rvert+\limsup_{n\rightarrow+\infty}\left\lvert\int_0^t \langle \psi^n_s-\psi_s,g^n_s  \phi\rangle ds \right\rvert\\ & +\limsup_{n\rightarrow+\infty}\left\lvert\int_0^t \langle f^n_s-f_s ,\psi_s\nabla \phi\rangle ds \right\rvert+\limsup_{n\rightarrow+\infty}\left\lvert\int_0^t \langle g^n_s-g_s, \psi_s  \phi\rangle ds \right\rvert =0,
    \end{align*}
completing the proof.
\subsection{Stability}  From the uniqueness of the limit in law, it is enough to show that for each sequence $\tau_n\rightarrow 0$, there is a subsequence $\tau_{n_l}\rightarrow 0$ such that the claim holds. To save the notation let us call $\Psi^n=\Psi^{\tau_n,\psi^{\tau_n}_0,f^{\tau_n},g^{\tau_n}}$ and $\psi^0=\psi^{\psi_0,f,g}.$
By \cref{well_simplified_ldp} we know that 
\begin{align}\label{uniform_bound_ldp}
    \|\Psi^n_t\|\leq R \quad \mathbb{P}-a.s.
\end{align}
From the latter and \eqref{embedding_cameron_martin}, arguing as in
\cref{lem_time_increments_viscous}, it is easy to show that for each
$\gamma\geq 4$ there is some $\bar R<+\infty$ such that
	    \begin{align}\label{time_compactness_ldp}
	        \sup_{n\geq 1}\mathbb{E}
	        \left[\|\Psi^n_t-\Psi^n_s\|^{\gamma}_{H^{-2}}\right]
	        &\leq |t-s|^{\gamma/2}\bar{R},
	        \qquad s,t\in[0,T].
    \end{align}
    From the convergence in law of $(f^n,g^n)$ to $(f,g)$ and the uniform bounds \eqref{uniform_bound_ldp}, \eqref{time_compactness_ldp} arguing as in \cref{lem_compactness_viscous}, \cref{subsec:well_spde} it follows that 
    the family of laws
    \begin{align*}
        (\Psi^{n},W^n=(W^{k,n})_{k\in I}, B^n=(B^{k,j,n})_{k,j\in I},f^n,g^n)
    \end{align*}
    is tight in $\mathcal{E}\times C([0,T];\R^I)\times
    C([0,T];\R^{I\times I})\times S_1^N\times S_2^N$. Therefore, by the
    Jakubowski version of the Skorokhod representation theorem, we find a
    subsequence $n_l\rightarrow+\infty$. On an auxiliary probability space,
    which for simplicity we continue to call
    $(\Omega,\mathcal{F},\mathbb{P})$, we construct processes
\begin{align*}
&(\Psi^{n_l},W^{n_l}=(W^{k,n_l})_{k\in I}, B^{n_l}=(B^{k,j,n_l})_{k,j\in I},f^{n_l},g^{n_l}),\\
&(\Psi,W=(W^{k})_{k\in I},B=(B^{k,j})_{k,j\in I},f,g)
\end{align*}
such that
\begin{align*}
 \Psi^{n_l}&\rightarrow \Psi \quad\text{ in }\mathcal{E}\quad \mathbb{P}-a.s.,\\
 W^{n_l}&\rightarrow W \text{ in }C([0,T];\R^I)\quad \mathbb{P}-a.s.\\
  B^{n_l}&\rightarrow B \text{ in }C([0,T];\R^{I\times I})\quad \mathbb{P}-a.s.\\
  f^{n_l}&\rightarrow f \text{ in }S_1^N\quad \mathbb{P}-a.s.\\
  g^{n_l}&\rightarrow g \text{ in }S^N_2\quad \mathbb{P}-a.s.\\
\end{align*}
 Moreover, $\Psi^{n_l}$ is the unique solution of \eqref{eq:simplified_model_ldp} for $\tau=\tau_{n_l}$ and additional terms $f^{n_l},\ g^{n_l}$ given by \cref{well_simplified_ldp}. By linearity of the equations and the above convergence, arguing as in the previous subsection it is easy to check that $\Psi$ satisfies
 \begin{align*}
     \sup_{t\in [0,T]}\|\Psi_t\|\leq \|\psi_0\|\quad \mathbb{P}-a.s.
 \end{align*}
   and that, for every $\phi\in\mathscr{S}(\R^d)$ and $t\in[0,T]$,
   \begin{align*}
      i\langle \Psi_t,\phi\rangle&=i\langle \psi_0,\phi\rangle+\int_0^t \langle\Psi_s, (-\Delta+V)\phi\rangle ds\\
      &\quad+\int_0^t\left(-i\langle f_s\Psi_s,\nabla \phi\rangle+\langle g_s\Psi_s,\phi\rangle\right)ds\quad \mathbb{P}-a.s.
   \end{align*}
   Therefore $\mathcal{L}(\Psi)=\mathcal{L}(\psi^0)$ and the proof is complete.
\subsection{Lower semi-continuity}
First, let us observe that the infimum appearing in \eqref{rate_function} is attained, if finite. 
    Indeed, let $I_{\psi_0}(\psi)=M<+\infty$ and $(f^n,g^n)$ be a minimizing sequence. 
    Without loss of generality, we can assume
    \begin{align*}
        2M\leq\lVert f^n \rVert_{L^2(0,T;\mathcal{H}_{0,1})}^2
        +\lVert g^n \rVert_{L^2(0,T;\mathcal{H}_{0,2})}^2
        \leq 2M+1
    \end{align*}
	    and 
	    \begin{align*}
	        \lVert f^n \rVert_{L^2(0,T;\mathcal{H}_{0,1})}^2
	        +\lVert g^n \rVert_{L^2(0,T;\mathcal{H}_{0,2})}^2
	        \rightarrow 2M.
	    \end{align*}
	    Then, up to passing to a non-relabeled subsequence, there exists
	    \begin{align*}
	    (f,g)\in L^2(0,T;\mathcal{H}_{0,1})
	    \times L^2(0,T;\mathcal{H}_{0,2})
	    \end{align*}
	    such that
	    \begin{align*}
	       f^n&\rightharpoonup f
	       &&\text{in }L^2(0,T;\mathcal{H}_{0,1}),\\
	       g^n&\rightharpoonup g
	       &&\text{in }L^2(0,T;\mathcal{H}_{0,2}).
	    \end{align*}
	    Moreover,
	    \begin{align*}
	       \|f\|_{L^2(0,T;\mathcal H_{0,1})}^2
	       +\|g\|_{L^2(0,T;\mathcal H_{0,2})}^2
	       &\leq\liminf_{n\to\infty}
	       \left(
	       \|f^n\|_{L^2(0,T;\mathcal H_{0,1})}^2
	       +\|g^n\|_{L^2(0,T;\mathcal H_{0,2})}^2
	       \right)\\
	       &=2M.
    \end{align*}
    Due to the convergence of $(f^n,g^n)$ to $(f,g)$,
    \cref{compactness_ldp} shows that $\psi$ is the unique weak solution of
    \eqref{eq:limit_model_ldp} with initial condition $\psi_0$ and
    additional terms $(f,g)$.
	    Therefore $f$ and $g$ realize the infimum in \eqref{rate_function}.
    
    Now we are ready to prove the lower-semicontinuity of the map $\psi_0\rightarrow I_{\psi_0}(\psi)$. 
    Fix $\psi_0\in \mathcal{E}_0$ and a family $\{\psi_0^n\}_{n\in \mathbb{N}}\subset \mathcal{E}_0 $ converging to $\psi_0$. Without loss of generality we assume $\liminf_{n\rightarrow +\infty}I_{\psi_0^n}(\psi)=M<+\infty$, otherwise we have nothing to prove. Therefore, since the infimum in \eqref{rate_function} is attained, there exists a subsequence $n_k$ and family $\{f^{n_k}, g^{n_k}\}_{n_k\in\mathbb{N}}\subset S_1^{2M+1}\times S_2^{2M+1}$ such that, for each $k$, $\psi$ is the unique weak solution of \eqref{eq:limit_model_ldp} with initial condition $\psi_0^{n_k}$ and additional terms $(f^{n_k},g^{n_k})$.
Up to passing to a further subsequence, which we continue to denote by $(f^{n_k},g^{n_k})$ for simplicity of notation, there exists $(f,g)\in S^{2M+1}_1\times S^{2M+1}_2$ such that $f^{n_k}\rightharpoonup f$ in $L^2(0,T; \mathcal{H}_{0,1})$, $g^{n_k}\rightharpoonup g$ in $L^2(0,T; \mathcal{H}_{0,2})$. 

Due to the convergence of $(f^{n_k}, g^{n_k})$ to $(f,g)$ and
$\psi_0^{n_k}\to\psi_0$, the results of \cref{compactness_ldp} show that
$\psi$ is the unique weak solution of \eqref{eq:limit_model_ldp} with
initial condition $\psi_0$ and additional terms $(f,g)$.
Hence, from the lower-semicontinuity of the norm with respect to the weak convergence: 
\begin{align*}
    I_{\psi_0}(\psi)&\leq \frac{1}{2} \lVert f\rVert^2_{L^2(0,T;\mathcal{H}_{0,1})}  +\frac{1}{2} \lVert g\rVert^2_{L^2(0,T;\mathcal{H}_{0,2})}  \\ &
    \leq 
    \liminf_{k\rightarrow +\infty}
    \frac{1}{2}\int_0^T
    \left(\lVert f^{n_k}(t)\rVert^2_{\mathcal{H}_{0,1}}
    +\lVert g^{n_k}(t)\rVert^2_{\mathcal{H}_{0,2}}\right)dt
    \leq M=\liminf_{n\rightarrow +\infty} I_{\psi_0^n}(\psi).
\end{align*}
The proof is complete.

\appendix

\section{Remarks on Related Topics in Statistical and Quantum Mechanics}\label{sec:appendix_formal}

We report here some heuristics supporting our motivation for the present study. We expect that a rigorous mathematical treatment would not be easy, and we are not proposing the latter as an interesting further development of our work, but rather emphasizing the convenience and meaningfulness of the effective SPDE approach we discussed.

\Cref{ssec:ldpsse} outlines a formal derivation of a large-deviation
principle for the starting equation \eqref{eq:rmse} and shows that
\eqref{eq:simplified_model} can be seen as the small-noise dynamics
associated with the quadratic truncation of the large-deviation action.
Even though the most common large-deviation setting is the small-noise
limit, averaging limits of fast processes are also part of the classical
theory \cite[Chapter 7]{freidlinwentzell3}. The computations of rate
functionals in the two settings are analogous (and combinations have also
been considered \cite{veretennikov}), so we follow a paradigm that may be
traced back to the asymptotic expansions of classical statistical mechanics
\cite{touchette2009}.

The last subsection then points out that the effective equation
\eqref{eq:simplified_model} has the same form as the \emph{unraveling
stochastic dynamics} associated with open quantum systems. Our starting
model \eqref{eq:rmse} is not an open quantum system combining an observed
system and a heat bath; instead, it may be considered as a model of the
effect of a random classical magnetic field on a quantum particle (whose
statistics are irrelevant). Here we simply wish to point out that
\eqref{eq:simplified_model} arises as a natural approximation in a
well-established applied context.

\subsection{Large Deviations of Stochastic Schr\"odinger Equation}
\label{ssec:ldpsse}
In this paragraph brackets denote the inner product of $L^2(\R^d;\C)$  as a real Hilbert space with inner product $\brak{\cdot,\cdot}_\R$. The forthcoming computations can be made rigorous for Galerkin truncations of the dynamics under consideration, and we shall speak of ``matrices'' and ``vectors'' as if we were in a finite dimensional context. 
Even basic steps such as manipulating the cumulant generating function for the actual infinite-dimensional model would additionally require exponential tightness and control of the unbounded Schr\"odinger operators, so we emphasize once again that this paragraph must be treated as a heuristic.

Let $a^\tau$ denote the coefficient of $A^\tau$ in the
expansion \(A^\tau=\sum_k a_k^\tau\sigma_k\), and set
\begin{equation*}
    \mathcal F(q,a)
 :=\left(-i\nabla-\sum_k a_k\sigma_k\right)^2q.
\end{equation*}
For slowly varying test paths $p$ and $q$, the relevant scaled cumulant
generating function is the one of the time-additive functional,
\begin{align}\label{eq:lambdacumulant}
 \Lambda(p,q)
 &:=
 \lim_{\tau\to0}\tau\log\mathbb E\!\left[
 \exp\left\{\frac1\tau\int_0^T
 \langle p(t),\mathcal F(q(t),a_t^\tau)\rangle_{\mathbb R}\,dt
 \right\}\right]\notag\\
 &=\int_0^T H(p(t),q(t))\,dt.
\end{align}
Formally, \(H(p,q)\) is the principal eigenvalue of the tilted
Ornstein--Uhlenbeck generator
\begin{equation*}
    \mathcal L_{\mathrm{OU}}
 +\langle p,\mathcal F(q,\cdot)\rangle_{\mathbb R}.
\end{equation*}
In a Galerkin truncation this eigenvalue is obtained with a Gaussian
exponential ansatz.
Explicitly, introducing for brevity the notation
\begin{equation*}
    Q(p,q)_{hk}=\brak{p,q \sigma_h\cdot \sigma_k},
    \quad
    b(p,q)_k=\brak{p,2i\sigma_k\cdot\nabla q},
\end{equation*}
for $p\in L^2$ and $q\in H^1$, and letting
\begin{equation*}
    H(p,q)=\brak{p,-\Delta q}+\frac12 \trace\pa{I-\sqrt{I-4Q(p,q)}}+b(p,q)^t(I-4Q(p,q))^{-1}b(p,q),
\end{equation*}
we obtain \eqref{eq:lambdacumulant} if the largest eigenvalue of the real symmetric matrix $Q(p,q)$ is less than $1/4$; otherwise $H(p,q)=+\infty$.
The Large Deviations rate corresponds to the Legendre transform (integrated in time) of such action, 
\begin{equation*}
    L(y,q)=\sup_p (\brak{p,y}-H(p,q)),
\end{equation*}
which we evaluate formally exploiting the fact that $H(p,q)$ depends on $p$ only through the composition of 
\begin{equation*}
    h(Q,b)=\frac12 \trace\pa{I-\sqrt{I-4Q}}+b^t(I-4Q)^{-1}b
\end{equation*}
and linear functions, and the Legendre transform of $h$ can be explicitly evaluated (optimize first in $b$ and then in $Q$). The result is finite only on a specific constraint: if
\begin{equation*}
    y=-\Delta q
      +2i\sum_{k\in I}u_k\,\sigma_k\cdot\nabla q
      +q\sum_{h,k\in I}(u_hu_k+C_{hk})\,\sigma_h\cdot\sigma_k,
\end{equation*}
for some real vector $u$ and symmetric positive-definite matrix $C$,
then
\begin{equation*}
    L(y,q)
    =\inf_{u,C\text{ as above}}
      \frac14\left(
        |u|^2+\trace(C+C^{-1}-2I)
      \right).
\end{equation*}
Otherwise $L(y,q)=\infty$. Moving to the control form makes the structure of the original dynamics emerge again: for paths with $\psi(0)=\psi_0$, the formal rate functional of \eqref{eq:rmse} is
\begin{align}\label{eq:ldpsse_exact_action}
    I^{\mathrm{RMSE}}_{\psi_0}(\psi)
    =\inf\Bigg\{&\frac14\int_0^T
       \left(|u_t|^2+\trace(C_t+C_t^{-1}-2I)\right)dt:\notag\\[-2mm]
       &i\partial_t\psi
       =-\Delta\psi
        +2i\sum_k u_k\,\sigma_k\cdot\nabla\psi
       +\psi\sum_{h,k}(u_hu_k+C_{hk})\,\sigma_h\cdot\sigma_k
    \Bigg\}.
\end{align}
Here the infimum is over measurable controls $u_t$ and symmetric positive-definite matrices $C_t$ for which the displayed cost is finite. For every positive-definite $C$,
\begin{equation*}
    \trace(C+C^{-1}-2I)
    =\sum_j\left(\sqrt{\lambda_j(C)}-\lambda_j(C)^{-1/2}\right)^2\geq0,
\end{equation*}
thus the unique zero-cost controls are $u=0$ and $C=I$. For fixed initial datum the zero-cost path is therefore the global minimizer, and the constraint in \eqref{eq:ldpsse_exact_action} reduces exactly to the averaged equation \eqref{eq:averaging}.

We now expand the action around this minimum. Since $Q(p,q)$ and $b(p,q)$ are linear in $p$, the Taylor expansion of the Hamiltonian at $p=0$ gives
\begin{align}\label{eq:ldpsse_quadratic_hamiltonian}
    H(p,q)
    ={}&\langle p,(-\Delta+V)q\rangle_{\R}
      +| b(p,q)|^2
      +\trace\bigl( Q(p,q)^2\bigr)
      +O(\|p\|^3),
\end{align}
where we used $\trace Q(p,q)=\langle p,Vq\rangle_{\R}$. There is no mixed quadratic term between $ b$ and $Q$: this is the counterpart of the independence of the two Gaussian noises appearing in \eqref{eq:simplified_model}.
The same conclusion is visible directly in the control representation. Put $C=I+D$. Near $(u,D)=(0,0)$,
\begin{equation*}
    \frac14\left(|u|^2+\trace(C+C^{-1}-2I)\right)
    =\frac14\left(|u|^2+\|D\|_{\mathrm{HS}}^2\right)
      +O(\|D\|_{\mathrm{HS}}^3).
\end{equation*}
Moreover, the term $u\otimes u$ in the constraint of
\eqref{eq:ldpsse_exact_action} is of second order in the controls and
therefore does not enter the constraint for the first variation of the
path. Retaining the quadratic cost and this linearized constraint, and
setting $u=\sqrt2\,v$ and $D=\sqrt2\,Z$, we obtain the quadratic action
\begin{align}\label{eq:ldpsse_quadratic_action}
    I^{(2)}_{\psi_0}(\psi)
    =\inf\Bigg\{&\frac12\int_0^T
       \left(|v_t|^2+\|Z_t\|_{\mathrm{HS}}^2\right)dt:\notag\\[-2mm]
       &i\partial_t\psi
       =(-\Delta+V)\psi
        +i\sqrt8\sum_k v_k\,\sigma_k\cdot\nabla\psi
       +\sqrt2\,\psi\sum_{h,k}Z_{hk}\,\sigma_h\cdot\sigma_k
    \Bigg\}.
\end{align}
The matrix control $Z$ may be taken symmetric. Allowing arbitrary matrices, as in the representation by independent Brownian motions $B^{h,k}$ below, leaves the infimum unchanged because only the symmetric part of $Z$ enters the constraint, and symmetrization cannot increase the Hilbert--Schmidt norm. Using the Karhunen--Lo\`eve representation of $B$ following \eqref{second_covariance}, \eqref{eq:simplified_model} can be written as
\begin{align*}
    i\,d\Psi^\tau
    &=(-\Delta+V)\Psi^\tau\,dt\\
    &\quad+i\sqrt{8\tau}\sum_k
    \sigma_k\cdot\nabla\Psi^\tau\circ dW_t^k+\sqrt{2\tau}\,\Psi^\tau
    \sum_{h,k}(\sigma_h\cdot\sigma_k)\circ dB_t^{h,k}.
\end{align*}
Its Freidlin--Wentzell skeleton is exactly the constraint in \eqref{eq:ldpsse_quadratic_action}, and its control cost is the one displayed there. Equivalently, its local Hamiltonian is
\begin{align*}
    H_{\mathrm{SSE}}(p,q)
    ={}&\langle p,(-\Delta+V)q\rangle_{\R}
       +\frac12\sum_k
          \langle p,i\sqrt8\,\sigma_k\cdot\nabla q\rangle_{\R}^{2}
      +\frac12\sum_{h,k}
          \langle p,\sqrt2\,(\sigma_h\cdot\sigma_k)q\rangle_{\R}^{2}\\
    ={}&\langle p,(-\Delta+V)q\rangle_{\R}
       +| b(p,q)|^2
       +\trace\bigl( Q(p,q)^2\bigr),
\end{align*}
which is precisely the second-order truncation \eqref{eq:ldpsse_quadratic_hamiltonian}. The It\^o--Stratonovich correction is of order $\tau$ and therefore does not change the rate functional at scale $\tau$ (speed $1/\tau$). Under the identifications
\begin{equation*}
    f=2\sqrt2\sum_k v_k\sigma_k,
    \qquad
    g=\sqrt2\sum_{h,k}Z_{hk}(\sigma_h\cdot\sigma_k),
\end{equation*}
the infimum over coordinate representations gives precisely the Cameron--Martin norms of $W$ and $B$. Thus \eqref{eq:ldpsse_quadratic_action} is the rate function \eqref{rate_function} proved above for the effective model, and  \eqref{eq:simplified_model} is naturally interpreted as the Gaussian, or quadratic-action, approximation of the original fast-field dynamics around its averaged minimizer. Linearizing it once more at $\bar\psi$ recovers the fluctuation equation \eqref{eq:gaussianlimit}.

\subsection{Stochastic Unraveling of Lindbladian Dynamics}
\label{ssec:lindbladian}

Let $\HH_S$ be the Hilbert space of a quantum system and $\HH_B$ that of an environment. If the composite state evolves unitarily on $\HH_S\otimes\HH_B$, the reduced state $\rho_S(t)=\trace_B\rho_{SB}(t)$ generally follows a non-unitary dynamics. In regimes in which the reservoir memory can be neglected---for instance, in suitable weak-coupling limits \cite{davies1974markovian}---the reduced evolution is modeled by a quantum dynamical semigroup of completely positive, trace-preserving maps. In finite dimension, and more generally for uniformly continuous semigroups with bounded coefficients, the generator takes the form
\begin{equation}\label{eq:gksl_general}
    \mathcal{L}(\rho)
    =-i[H,\rho]
    +\sum_{\alpha}\left(
        V_{\alpha}\rho V_{\alpha}^*
        -\frac12\{V_{\alpha}^*V_{\alpha},\rho\}
    \right),
\end{equation}
where $H$ is self-adjoint and $\{A,B\}=AB+BA$. We refer to  \cite{gorini1976completely,lindblad1976generators} for this fact and to \cite{breuer2002theory} for a general introduction. A stochastic \emph{unraveling} of \eqref{eq:gksl_general} is a pure-state stochastic evolution $\Phi_t$ whose ensemble density operator $\expt{\ket{\Phi_t}\bra{\Phi_t}}$ solves the corresponding master equation; diffusive unravelings are treated systematically in \cite{barchielli2009quantum}. Such a representation is not unique, and here the term is used only in this ensemble sense, without imposing a measurement interpretation.

The stochastic dynamics \eqref{eq:simplified_model} considered in this paper can in fact be identified with an unraveling of this type. Using the Karhunen--Lo\`eve representation of $B$ displayed after \eqref{second_covariance}, let $M_{k,j}$ denote multiplication by the real-valued function $\sigma_k\cdot\sigma_j$ and set
\begin{equation*}
    H_0=-\Delta+V,\qquad
    L_k=\sqrt{8\tau}\,\sigma_k\cdot\nabla,\qquad
    K_{k,j}=-i\sqrt{2\tau}\,M_{k,j}.
\end{equation*}
Under the real-valued and divergence-free condition on the vector fields $\sigma_k$, $H_0$ is self-adjoint, whereas $L_k$ and $K_{k,j}$ are skew-adjoint on their natural domains after closure. Since $H_0$ and the transport operators $L_k$ are unbounded, the following generator computation is understood formally on a common invariant core. Dividing \eqref{eq:simplified_model} by $i$ gives
\begin{equation*}
    d\Psi^\tau
    =-iH_0\Psi^\tau\,dt
    +\sum_{k\in I}L_k\Psi^\tau\circ dW_t^k
    +\sum_{k,j\in I}K_{k,j}\Psi^\tau\circ dB_t^{k,j}.
\end{equation*}
Equivalently, in It\^o form,
\begin{multline*}
    d\Psi^\tau
    =\left(
        -iH_0
        -\frac12\sum_{k\in I}L_k^*L_k
        -\frac12\sum_{k,j\in I}K_{k,j}^*K_{k,j}
    \right)\Psi^\tau\,dt\\
    +\sum_{k\in I}L_k\Psi^\tau\,dW_t^k
    +\sum_{k,j\in I}K_{k,j}\Psi^\tau\,dB_t^{k,j}.
\end{multline*}
For normalized initial data, define the ensemble density operator $\rho_t=\expt{\ket{\Psi_t^\tau}\bra{\Psi_t^\tau}}$. Applying It\^o's formula to the rank-one operator $\ket{\Psi_t^\tau}\bra{\Psi_t^\tau}$ and taking expectations,
\begin{multline*}
    \partial_t\rho_t
    =-i[H_0,\rho_t]
    +\sum_{k\in I}\left(
        L_k\rho_tL_k^*
        -\frac12\{L_k^*L_k,\rho_t\}
    \right)\\
    +\sum_{k,j\in I}\left(
        K_{k,j}\rho_tK_{k,j}^*
        -\frac12\{K_{k,j}^*K_{k,j},\rho_t\}
    \right),
\end{multline*}
that is precisely in the form \eqref{eq:gksl_general}. 
Thus \eqref{eq:simplified_model} is a diffusive, random unitary unraveling of a Lindbladian dynamics. This is a formal identification of the ensemble evolution, not a microscopic derivation of the coefficients from a particular system--bath Hamiltonian.

\section{Well-Posedness of the Dynamics}\label{app:well-posed}
We begin by recording some standard properties of the magnetic potential $A^{\tau}_t$.
\begin{lemma}\label[lemma]{lem:stochastic_conv}
  Assume \cref{hp:noise}. For each $\tau\in (0,1)$ and $T\geq 0$, the process
  $(A^{\tau}_t)_{t\in [0,T]}$ has paths in
  $C([0,T];H^{5+\frac{d+\theta}{2}})$ $\mathbb{P}-a.s.$
  Moreover, it is stationary and
\begin{align}\label{exponential_estimate}
      A^{\tau}_t\sim \mu \quad \text{for each }t\in [0,T],\ \tau\in (0,1),
  \end{align}
  where $\mu$ is the Gaussian measure with covariance kernel given by $Q(x,y).$
  In particular, there exists $\lambda>0$ such that, for every
  $t\in[0,T]$ and $\tau\in(0,1)$,
  \begin{align*}
\expt{e^{\lambda\|A^{\tau}_t\|_{H^{5+\frac{d+\theta}{2}}}^2}}<+\infty.
  \end{align*}
Analogous statements hold replacing $H^{5+\frac{d+\theta}{2}}(\R^d)$ with $W^{5,\infty}(\R^d)$.
\end{lemma}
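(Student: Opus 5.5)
The argument works directly from the explicit representation \eqref{eq:OU_prel},
\begin{align*}
A^{\tau}_t=e^{-t/\tau}A^{\tau}_0+\sqrt{2/\tau}\sum_{k\in I}\int_0^t e^{-(t-s)/\tau}\sigma_k\,dW^k_s,
\end{align*}
viewed as a Gaussian process in the Hilbert space $H:=H^{5+\frac{d+\theta}{2}}(\R^d;\R^d)$. By \cref{hp:noise}, $\mathcal W_t=\sum_k\sigma_k W^k_t$ is a $Q$-Wiener process on $H$, with trace-class covariance whose trace is $\sum_k\|\sigma_k\|_H^2<\infty$. Moreover, $A^\tau_0$ is an independent centered Gaussian on $H$ with the same covariance. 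So $A^\tau$ is the mild solution of the linear equation $dA=-\tau^{-1}A\,dt+\sqrt{2/\tau}\,d\mathcal W$. The generator $-\tau^{-1}I$ is bounded, and the semigroup $e^{-t/\tau}I$ is a scalar contraction commuting with everything.

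\textbf{Path continuity.} The first term is trivially continuous in $H$. For the stochastic convolution, since the semigroup is scalar, integration by parts (It\^o's formula for $e^{s/\tau}\mathcal W_s$) gives
\begin{align*}
\int_0^t e^{-(t-s)/\tau}\,d\mathcal W_s=\mathcal W_t-\tau^{-1}\int_0^t e^{-(t-s)/\tau}\mathcal W_s\,ds .
\end{align*}
This is continuous in $H$ because $\mathcal W$ has continuous $H$-valued paths. That in turn follows from Kolmogorov's criterion: $\mathbb E\|\mathcal W_t-\mathcal W_s\|_H^{2m}\lesssim |t-s|^m\bigl(\sum_k\|\sigma_k\|_H^2\bigr)^m$ by Gaussianity.

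\textbf{Stationarity.} $A^\tau_t$ is centered Gaussian, being a linear image of the independent Gaussians $A^\tau_0$ and $\mathcal W$. Its covariance is computed by the It\^o isometry:
\begin{align*}
\mathbb E[A^\tau_t(x)\otimes A^\tau_t(y)]=e^{-2t/\tau}Q(x,y)+\tfrac{2}{\tau}\int_0^t e^{-2(t-s)/\tau}ds\,Q(x,y)=Q(x,y).
\end{align*}
The analogous computation for $s<t$ gives $e^{-(t-s)/\tau}Q(x,y)$. A Gaussian process is determined by its mean and covariance, and the Markov property holds, so the process is stationary and $A^\tau_t\sim\mu$, the centered Gaussian measure on $H$ with kernel $Q$, independent of $t$ and $\tau$. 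This proves \eqref{exponential_estimate}.

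\textbf{Exponential moment.} Because $\mu$ does not depend on $(t,\tau)$, it suffices to prove $\int e^{\lambda\|a\|_H^2}\mu(da)<\infty$ for some $\lambda>0$. This is Fernique's theorem. Alternatively, diagonalize the trace-class covariance operator with eigenvalues $\lambda_j$. The norm $\|a\|_H^2$ is then $\sum_j\lambda_j\gamma_j^2$ with i.i.d.\ standard normals $\gamma_j$, so
\begin{align*}
\int e^{\lambda\|a\|_H^2}\,d\mu=\prod_j(1-2\lambda\lambda_j)^{-1/2},
\end{align*}
which is finite whenever $2\lambda\sup_j\lambda_j<1$, since $\sum_j\lambda_j<\infty$. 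The bound is uniform in $t$ and $\tau$ because only $\mu$ enters.

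\textbf{$W^{5,\infty}$ version.} By the Sobolev embedding $H^{5+\frac{d+\theta}{2}}\hookrightarrow W^{5,\infty}$ (valid since $\frac{d+\theta}{2}>\frac d2$), continuity of paths and the exponential moment transfer immediately. In $W^{5,\infty}$, stationarity and the identification of the law hold as statements about the image measure; since this space is nonseparable, they are best read in the closed separable subspace given by the image of $H$.

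\textbf{Main obstacle.} There is no serious one. The only points needing care are continuity of the stochastic convolution, handled by the integration-by-parts trick above, and noting that the constant in the exponential moment depends only on $\mu$, hence is uniform in $(t,\tau)$.
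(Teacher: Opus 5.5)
Your proposal is correct and follows the same route as the paper. The paper cites standard Ornstein--Uhlenbeck and stochastic-convolution theory for path continuity and stationarity, Fernique's theorem for the exponential moment (uniform in $t,\tau$ because the law is always $\mu$), and the Sobolev embedding $H^{5+\frac{d+\theta}{2}}\hookrightarrow W^{5,\infty}$. You supply the details it omits: the integration-by-parts representation of the stochastic convolution, the explicit covariance computation, and the product formula for the Gaussian exponential moment.

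One small correction to your $W^{5,\infty}$ remark: the image of $H$ in $W^{5,\infty}$ is separable but not closed. The separable closed subspace in which to read those statements is the closure of that image.
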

\begin{proof}
The first two claims are a direct consequence of \cref{hp:noise}, cf. \cite[Chapter 5]{da2014stochastic}.
    The exponential estimate, as well as the independence of $\lambda$ from
    $\tau$ and $t$, is a consequence of Fernique's theorem
    \cite[Theorem 2.7]{da2014stochastic}.
    The last claim follows by Sobolev embedding.
\end{proof}
The well-posedness of \eqref{eq:rmse}, \eqref{eq:simplified_model} is proved introducing, for each $\tau>0$, the approximate problems
\begin{align}
    \begin{cases}\label{eq:rmse_viscous}
         i\partial_t \psi^{\tau,\nu}&=-i\nu \Delta^2 \psi^{\tau,\nu}+(-i\nabla- A^{\tau} )^2\psi^{\tau,\nu}\\
         \psi^{\tau,\nu}(0)&=\psi_0,
    \end{cases}
\end{align}
\begin{align}
\begin{cases}\label{eq:effectivese_viscous}
        i d\Psi^{\tau,\nu}
        &=(-i\nu \Delta^2-\Delta+V)\Psi^{\tau,\nu}dt\\
        &\quad+i\sqrt{8\tau}\sum_{k\in I}
        \sigma_k\cdot\nabla \Psi^{\tau,\nu}\circ dW^k
        +\sqrt{\tau} \Psi^{\tau,\nu}\circ dB_t,\\
         \Psi^{\tau,\nu}(0)&=\psi_0,
    \end{cases}
\end{align}
where $\nu>0$.
The hyperviscous operator $\nu\Delta^2$ makes the equations
above parabolic, so existence and uniqueness are classical; see, for example,
\cite[Chapters 5.1--5.2]{flandoli2024regularity}.
Namely the following holds.
\begin{lemma}\label[lemma]{lem:rmse_viscous}
Assume \cref{hp:noise} and $\psi_0\in L^2(\R^d)$. There exists a unique
progressively measurable process $\psi^{\tau,\nu}$ with paths in
$C([0,T];L^2)\cap L^2(0,T;H^2)$ $\mathbb{P}-a.s.$ satisfying, for each
$\phi\in \mathscr{S}(\R^d)$ and $t\in [0,T]$,
\begin{align*}
i\langle \psi^{\tau,\nu}_t,\phi\rangle-i\langle \psi_0,\phi\rangle
&=-i\nu \int_0^t
\langle\psi^{\tau,\nu}_s,\Delta^2\phi\rangle ds
-\int_0^t \langle\psi^{\tau,\nu}_s,\Delta\phi\rangle ds\\
&\quad-2i\int_0^t
\langle \psi^{\tau,\nu}_s,A^{\tau}_s\cdot\nabla\phi\rangle ds+\int_0^t
\langle|A^{\tau}_s|^2\psi^{\tau,\nu}_s,\phi\rangle ds
\quad \mathbb{P}-a.s.,
   \end{align*}
   where $A^{\tau}_t$ is the stationary Ornstein–Uhlenbeck process \eqref{eq:OU_prel}. 
\end{lemma}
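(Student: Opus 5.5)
This is a deterministic linear PDE with random coefficients that are continuous in time. I would therefore argue pathwise, by a Galerkin/energy method, fixing $\omega$ outside a null set on which $A^{\tau}\in C([0,T];H^{5+\frac{d+\theta}{2}})\cap C([0,T];W^{5,\infty})$ (\cref{lem:stochastic_conv}).

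\emph{Reformulation.} On $\R^d$ use the $H^2$ framework. Expanding the magnetic operator with $\operatorname{div}A^\tau=0$ gives
\begin{align*}
(-i\nabla-A^\tau)^2=-\Delta+2iA^\tau\cdot\nabla+|A^\tau|^2 .
\end{align*}
The equation becomes $\partial_t\psi=-\nu\Delta^2\psi+i\Delta\psi+2A^\tau\cdot\nabla\psi-i|A^\tau|^2\psi$. The associated sesquilinear form on $H^2\times H^2$ is $a_t(u,v)=\nu\langle\Delta u,\Delta v\rangle+\dots$, and its lower-order part satisfies
\begin{align*}
|\langle 2A^\tau_t\cdot\nabla u,u\rangle|+|\langle |A^\tau_t|^2u,u\rangle|\lesssim (\|A^\tau_t\|_{L^\infty}+\|A^\tau_t\|_{L^\infty}^2)\|u\|_{H^1}\|u\|.
\end{align*}
The term $\langle i\Delta u,u\rangle$ is purely imaginary, so its real part vanishes. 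Interpolation $\|u\|_{H^1}^2\le\|u\|\,\|u\|_{H^2}$ then yields a Gårding inequality
\begin{align*}
\mathrm{Re}\,a_t(u,u)\ge \tfrac{\nu}{2}\|u\|_{H^2}^2-C(1+\|A^\tau_t\|_{L^\infty}^4)\|u\|^2,
\end{align*}
with a constant that is continuous in $t$.

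\emph{Existence.} I would invoke the J.-L. Lions theorem for nonautonomous parabolic forms with measurable-in-time, uniformly bounded, coercive (up to a shift) coefficients. Concretely, I take Galerkin approximations on a Hermite or Fourier–Hermite basis, derive the energy bound
\begin{align*}
\sup_t\|\psi_n\|^2+\nu\int_0^T\|\psi_n\|_{H^2}^2\le C(\omega)\|\psi_0\|^2
\end{align*}
via Grönwall, and pass to weak limits. This produces a solution in $L^2(0,T;H^2)$ with $\partial_t\psi\in L^2(0,T;H^{-2})$, hence in $C([0,T];L^2)$ by Lions–Magenes. The weak formulation in the statement then follows by testing against $\phi\in\mathscr S$ and integrating by parts, using divergence-freeness.

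\emph{Uniqueness and measurability.} For uniqueness, the difference of two solutions solves the problem with zero data. The Lions–Magenes energy identity together with the Gårding inequality gives $\|\delta_t\|^2\le C\int_0^t\|\delta_s\|^2ds$, so $\delta\equiv 0$. Progressive measurability is the step needing some care, and I expect it to be the main (mild) obstacle. I would handle it by noting that the solution map from $(\psi_0,A^\tau|_{[0,t]})$ to $\psi|_{[0,t]}$ is continuous from $C([0,t];W^{1,\infty})$ into $C([0,t];L^2)$. Continuity comes from the stability estimate for differences, with the forcing term $2(A-A')\cdot\nabla\psi'$ controlled in $L^2(0,T;L^2)$ by the $H^2$ bound. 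Composing this continuous map with the adapted, continuous process $A^\tau$ gives an adapted continuous process, hence a progressively measurable one. Alternatively, each Galerkin approximation is adapted, and uniqueness implies that the whole sequence converges, so the limit inherits adaptedness.
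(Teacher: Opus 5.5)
Your proposal is correct and follows essentially the paper's approach: the paper gives no separate proof, only observing that the hyperviscosity $\nu\Delta^2$ makes the problem parabolic so that existence and uniqueness are classical (citing Flandoli's monograph), and your pathwise J.-L.\ Lions/Galerkin construction, Lions--Magenes uniqueness argument, and adaptedness via continuous dependence on $A^{\tau}$ spell out exactly that standard argument. As a minor simplification, since $A^{\tau}$ is real and divergence-free, the lower-order terms contribute nothing to the real part, so $\mathrm{Re}\,a_t(u,u)=\nu\|\Delta u\|^2$ exactly and the G{\aa}rding inequality needs no interpolation.
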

\begin{lemma}\label[lemma]{lem:effective_viscous}
Assume \cref{hp:noise} and $\psi_0\in L^2(\R^d)$. There exists a unique process \footnote{If $H$ is a separable Hilbert space, here we denote by $C_{\mathcal{F}}\left(  \left[  0,T\right]  ;H\right) $ the space of continuous adapted processes $\left(  X_{t}\right)  _{t\in\left[
0,T\right]  }$ with values in $H$ such that
\begin{align*}
    \mathbb{E} \bigg[ \sup_{t\in\left[  0,T\right]  }\left\Vert X_{t}\right\Vert
_{H}^{2}\bigg]  <\infty
\end{align*}
and by $L_{\mathcal{F}}^{2}\left(  0,T;Z\right),$ the space of progressively
measurable processes $\left(  X_{t}\right)  _{t\in\left[  0,T\right]  }$ with
values in $H$ such that
\begin{align*}
     \mathbb{E} \bigg[ \int_{0}^{T}\left\Vert X_{t}\right\Vert _{H}^{2}dt \bigg]
<\infty.
\end{align*}} $\Psi^{\tau,\nu}$ in
$C_{\mathcal{F}}([0,T];L^2)\cap L^2_{\mathcal{F}}(0,T;H^2)$ satisfying,
for each $\phi\in \mathscr{S}(\R^d)$ and $t\in [0,T]$,
	   \begin{align*}
	       i\langle \Psi^{\tau,\nu}_t,\phi\rangle
	       &=i\langle \psi_0,\phi\rangle
	       +\int_0^t \langle\Psi^{\tau,\nu}_s,
	       (-i\nu\Delta^2-\Delta+V)\phi\rangle ds\\
	       &\quad+4i\tau \int_0^t\langle \Psi^{\tau,\nu}_s,
	       \operatorname{div}(Q\nabla\phi)\rangle ds
	       -i\tau \int_0^t\langle R\Psi^{\tau,\nu}_s,\phi\rangle ds\\
	       &\quad-i\sqrt{8\tau}\sum_{k\in I}\int_0^t
	       \langle \Psi^{\tau,\nu}_s,\sigma_k\cdot\nabla\phi\rangle dW^k_s\\
	       &\quad+\sqrt{2\tau}\sum_{k,j\in I}\int_0^t
	       \langle \Psi^{\tau,\nu}_s\sigma_k\cdot\sigma_j,\phi\rangle
	       dB^{k,j}_s
	       \quad \mathbb{P}-a.s.
   \end{align*}    
\end{lemma}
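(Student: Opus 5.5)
The plan is to write the Itô form of the hyperviscous equation \eqref{eq:effectivese_viscous} as a linear SPDE in variational form on the Gelfand triple $H^2\hookrightarrow L^2\hookrightarrow H^{-2}$, and then invoke the classical monotone (Krylov--Rozovskii / Pardoux) framework. The Stratonovich-to-Itô conversion gives the drift operator
\[
\mathcal A u=-\nu\Delta^2u-i(-\Delta+V)u+4\tau\operatorname{div}(Q\nabla u)-\tau Ru ,
\]
together with the noise operators $\mathcal B_k u=\sqrt{8\tau}\,\sigma_k\cdot\nabla u$ and $\mathcal C_{k,j}u=-i\sqrt{2\tau}\,\sigma_k\cdot\sigma_j\,u$. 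This is exactly what appears in the weak formulation of \cref{lem:effective_viscous}: the Itô corrections are $\tfrac12\sum_k\mathcal B_k^2=4\tau\operatorname{div}(Q\nabla\cdot)$, using divergence-freeness of $\sigma_k$, and $\tfrac12\sum\mathcal C_{k,j}^2=-\tau R$. Since the equation is linear, hemicontinuity is automatic. Boundedness $\|\mathcal A u\|_{H^{-2}}\lesssim\|u\|_{H^2}$ follows from $V,Q,R\in W^{1,\infty}$, which is guaranteed by \cref{hp:noise}.

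The key step is the coercivity estimate, namely
\[
2\,\mathrm{Re}\,{}_{H^{-2}}\langle\mathcal A u,u\rangle_{H^2}+\sum_k\|\mathcal B_k u\|^2+\sum_{k,j}\|\mathcal C_{k,j}u\|^2\le -2\nu\|\Delta u\|^2+C\|u\|^2 .
\]
The terms involving $-i(-\Delta+V)$ are purely imaginary when paired with $u$ and drop out of the real part. Integrating by parts gives $2\mathrm{Re}\langle 4\tau\operatorname{div}(Q\nabla u),u\rangle=-8\tau\sum_k\|\sigma_k\cdot\nabla u\|^2$, which cancels $\sum_k\|\mathcal B_k u\|^2$ exactly. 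Likewise $-2\tau\langle Ru,u\rangle$ cancels $\sum\|\mathcal C_{k,j}u\|^2$. We are left with $-2\nu\|\Delta u\|^2$, which yields coercivity since $\|u\|_{H^2}^2\lesssim\|\Delta u\|^2+\|u\|^2$. The noise operators are bounded from $H^2$ into $L^2$ (indeed from $H^1$), with Hilbert--Schmidt sums controlled by $\sum_k\|\sigma_k\|_{L^\infty}^2$. The main subtlety, and the step I expect to be the obstacle, is that $\mathcal B_k$ is first order. It is therefore not a small perturbation in $L^2$, so the exact cancellation above is essential. The same cancellation has to be checked in the weak monotonicity condition for differences, but linearity makes that identical to coercivity.

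With these verified, the variational theorem (e.g.\ Krylov--Rozovskii, or \cite[Chapters 5.1--5.2]{flandoli2024regularity}) yields a unique solution in $C_{\mathcal F}([0,T];L^2)\cap L^2_{\mathcal F}(0,T;H^2)$. The Itô formula for $\|\Psi^{\tau,\nu}_t\|^2$ then gives $\mathbb E\sup_t\|\Psi^{\tau,\nu}_t\|^2<\infty$; by the cancellation it in fact gives $\|\Psi^{\tau,\nu}_t\|^2+2\nu\int_0^t\|\Delta\Psi^{\tau,\nu}_s\|^2ds=\|\psi_0\|^2$ up to a martingale, which itself vanishes because $\mathrm{Re}\langle\mathcal B_ku,u\rangle=\mathrm{Re}\langle\mathcal C_{k,j}u,u\rangle=0$. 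Finally, testing the variational equation with $\phi\in\mathscr S$ and moving derivatives onto $\phi$ recovers the identity stated in the lemma. Uniqueness in the stated class follows from the same energy identity applied to the difference of two solutions.
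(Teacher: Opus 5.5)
Your proposal is correct and takes the same route as the paper, which only invokes classical well-posedness of the hyperviscous linear SPDE via the variational framework \cite[Chapters 5.1--5.2]{flandoli2024regularity}; your verification of the It\^o corrections, boundedness and coercivity on $H^2\hookrightarrow L^2\hookrightarrow H^{-2}$ spells out what that citation covers. One point of emphasis should be corrected: at fixed $\nu>0$ the exact cancellation is not essential, because the first-order noise and the It\^o corrections contribute at most $C\|u\|_{H^1}^2\le \epsilon\|\Delta u\|^2+C_\epsilon\|u\|^2$, which is absorbed by $-2\nu\|\Delta u\|^2$ (this is why the paper calls the problem classical), and the cancellation is needed only for the $\nu$-uniform norm identity in \cref{lem:uniform_viscousrmse}.
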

\subsection{A priori estimates for the viscous problems}
We first show the following uniform bounds for $\psi^{\tau,\nu},\ \Psi^{\tau,\nu}$.
\begin{lemma}\label[lemma]{lem:uniform_viscousrmse}
    For each $\nu,\tau\in(0,1)$,
    \begin{align}
         \sup_{t \in [0,T]}  \|\psi^{\tau,\nu}_t\|^2&\leq  \|\psi_0\|^2\quad \mathbb{P}-a.s.\label{first_claim_viscous}\\
         \sup_{t \in [0,T]}  \|\Psi^{\tau,\nu}_t\|^2&\leq  \|\psi_0\|^2\quad \mathbb{P}-a.s.\label{first_claim_viscous_sse}
    \end{align}
\end{lemma}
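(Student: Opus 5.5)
The plan is to derive an energy identity for each viscous problem, using the regularity $C([0,T];L^2)\cap L^2(0,T;H^2)$ supplied by \cref{lem:rmse_viscous} and \cref{lem:effective_viscous}, and then to check that every term apart from the hyperviscous one cancels. The hyperviscous term has a sign and gives the inequality.

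\textbf{Deterministic bound \eqref{first_claim_viscous}.} For fixed $\omega$, \eqref{eq:rmse_viscous} is a linear PDE with $\partial_t\psi^{\tau,\nu}\in L^2(0,T;H^{-2})$, since $A^\tau\in C([0,T];W^{5,\infty})$. The Lions--Magenes lemma in the triple $H^2\subset L^2\subset H^{-2}$ then gives
\begin{align*}
\frac{d}{dt}\|\psi^{\tau,\nu}_t\|^2=2\,\mathrm{Re}\,\langle \partial_t\psi^{\tau,\nu}_t,\psi^{\tau,\nu}_t\rangle .
\end{align*}
Writing $\partial_t\psi=-\nu\Delta^2\psi-i(-i\nabla-A)^2\psi$, the first term contributes $-2\nu\|\Delta\psi\|^2\le0$. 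The magnetic operator $(-i\nabla-A)^2=-\Delta+2iA\cdot\nabla+|A|^2$ is symmetric on $H^2$ because $A$ is real and divergence free. Indeed, the expansion uses $\operatorname{div}A=0$, and $iA\cdot\nabla$ is symmetric since $\langle iA\cdot\nabla f,g\rangle=\langle f,iA\cdot\nabla g\rangle$ after integrating by parts. Hence $\mathrm{Re}\,\langle -i(-i\nabla-A)^2\psi,\psi\rangle=0$. Integrating in time gives $\|\psi_t\|^2\le\|\psi_0\|^2$ for all $t$, pathwise.

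\textbf{Stochastic bound \eqref{first_claim_viscous_sse}.} I apply the It\^o formula for $\|\cdot\|^2$ in the Gelfand triple $H^2\subset L^2\subset H^{-2}$, in the Krylov--Rozovskii form, to the It\^o equation of \cref{lem:effective_viscous}. The drift is $-\nu\Delta^2\Psi-i(-\Delta+V)\Psi+4\tau\operatorname{div}(Q\nabla\Psi)-\tau R\Psi$. The diffusion coefficients are $\sqrt{8\tau}\,\sigma_k\cdot\nabla\Psi$ for $dW^k$ and $-i\sqrt{2\tau}(\sigma_k\cdot\sigma_j)\Psi$ for $dB^{k,j}$, and both lie in $L^2$ because $\Psi\in H^2$. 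The contributions are then computed term by term.
\begin{itemize}
\item The hyperviscous term gives $-2\nu\|\Delta\Psi\|^2$.
\item The term $-i(-\Delta+V)$ is skew-adjoint and gives $0$.
\item The martingale terms vanish identically: $\mathrm{Re}\,\langle\sigma_k\cdot\nabla\Psi,\Psi\rangle=\frac12\int\sigma_k\cdot\nabla|\Psi|^2=0$ because $\operatorname{div}\sigma_k=0$, and $\mathrm{Re}\,\langle -i(\sigma_k\cdot\sigma_j)\Psi,\Psi\rangle=0$.
\item The It\^o correction $8\tau\sum_k\|\sigma_k\cdot\nabla\Psi\|^2$ exactly cancels $2\,\mathrm{Re}\,\langle4\tau\operatorname{div}(Q\nabla\Psi),\Psi\rangle=-8\tau\sum_k\|\sigma_k\cdot\nabla\Psi\|^2$, using $Q=\sum_k\sigma_k\otimes\sigma_k$.
\item The correction $2\tau\sum_{k,j}\|(\sigma_k\cdot\sigma_j)\Psi\|^2=2\tau\langle R\Psi,\Psi\rangle$ cancels $-2\tau\langle R\Psi,\Psi\rangle$.
\end{itemize}
The result is $\|\Psi_t\|^2=\|\psi_0\|^2-2\nu\int_0^t\|\Delta\Psi_s\|^2ds$ almost surely, for all $t$ simultaneously by path continuity. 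This gives \eqref{first_claim_viscous_sse}.

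The main point requiring care is justifying the It\^o formula. One must check that the Stratonovich-to-It\^o drift in \cref{lem:effective_viscous}, namely $4\tau\operatorname{div}(Q\nabla\cdot)-\tau R$, is exactly what produces these cancellations. One must also check that the sums over $k$ and $(k,j)$ converge in $L^2(0,T;H^{-2})$ and in Hilbert--Schmidt norm respectively. Both follow from \cref{hp:noise}, since $\sum_k\|\sigma_k\|_{W^{1,\infty}}^2<\infty$. The local martingale then vanishes identically, so no localization is needed.
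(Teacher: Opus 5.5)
Your proposal is correct and follows the same route as the paper: the Lions--Magenes lemma in the triple $H^2\subset L^2\subset H^{-2}$ for $\psi^{\tau,\nu}$, and the It\^o formula for $\|\cdot\|^2$ in $L^2$ (Krylov--Rozovskii, which the paper cites via Rozovskii) for $\Psi^{\tau,\nu}$. Both give the exact identity $\|\psi^{\tau,\nu}_t\|^2+2\nu\int_0^t\|\Delta\psi^{\tau,\nu}_s\|^2ds=\|\psi_0\|^2$ and its analogue for $\Psi^{\tau,\nu}$. The only difference is that you spell out the cancellation between the Stratonovich-to-It\^o drift $4\tau\operatorname{div}(Q\nabla\Psi)-\tau R\Psi$ and the It\^o quadratic-variation terms, which the paper leaves implicit.
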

\begin{proof}
	    By \cref{lem:rmse_viscous},
	    \begin{align*}
	     \psi^{\tau,\nu}&\in L^2(0,T;H^2(\R^d)),\\
	     -i\nu \Delta^2 \psi^{\tau,\nu}
	     +(-i\nabla- A^{\tau} )^2\psi^{\tau,\nu}
	     &\in L^2(0,T;H^{-2}(\R^d))
    \end{align*}
    $\mathbb{P}-a.s.$, we can apply the Lions--Magenes lemma to obtain
    \begin{align*}
\|\psi^{\tau,\nu}_t\|^2+2\nu\int_0^t \|\Delta \psi^{\tau,\nu}_s\|^2 ds &=\|\psi_0\|^2\quad\mathbb{P}-a.s.
    \end{align*}
    for each $t\in [0,T]$, and the first claim trivially follows.\\
    By \cref{hp:noise} and \cref{lem:effective_viscous}, it holds
\begin{align*}
    &(-i\nu\Delta^2-\Delta+V)\Psi^{\tau,\nu}
    +4i\tau\operatorname{div}(Q\nabla\Psi^{\tau,\nu})
    -i\tau R\Psi^{\tau,\nu}\in L^2(0,T;H^{-2})
    \quad\mathbb{P}-a.s 
\end{align*}
and 
\begin{align*}
    &i\sqrt{8\tau}\sum_{k\in I}\int_0^\cdot
    \sigma_k\cdot\nabla\Psi^{\tau,\nu}_s\,dW^k_s+\sqrt{2\tau}\sum_{k,j\in I}\int_0^\cdot
    (\sigma_k\cdot\sigma_j)\Psi^{\tau,\nu}_s\,dB^{k,j}_s
\end{align*}
	 is an $H^1(\R^d)$-valued local martingale. Therefore we can apply It\^o's formula in $L^2(\R^d)$, cf. \cite[Theorem 2.13]{rozovskii2012stochastic}, obtaining
 \begin{align*}
     \|\Psi^{\tau,\nu}_t\|^2+2\nu\int_0^t \|\Delta \Psi^{\tau,\nu}_s\|^2 ds &=\|\psi_0\|^2\quad\mathbb{P}-a.s.
 \end{align*}
 for each $t\in [0,T]$, which implies the second claim.
\end{proof}
Concerning time increments, the following holds true.
\begin{lemma}\label[lemma]{lem_time_increments_viscous}
	    For each $\nu,\tau\in(0,1)$, $\vartheta,\gamma\geq4$, and
	    $0\leq s\leq t\leq T$,
	    \begin{align*}
	        \|\psi^{\tau,\nu}_t-\psi^{\tau,\nu}_s\|_{H^{-\vartheta}}
	        &\lesssim (t-s)^{1/2}\|\psi_0\|
	        \left(\int_0^T(1+\|A^{\tau}_r\|_{L^{\infty}}^4)dr
	        \right)^{1/2}
	        \quad \mathbb{P}-a.s.\\
	        \expt{\|\Psi^{\tau,\nu}_t-\Psi^{\tau,\nu}_s\|_{H^{-\vartheta}}^\gamma}
	        &\lesssim_{\gamma,T,\psi_0} (t-s)^{\gamma/2}.
    \end{align*}
\end{lemma}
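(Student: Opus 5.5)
We prove the two bounds separately. Both rest on the uniform $L^2$ bounds of \cref{lem:uniform_viscousrmse} and on the fact that $H^{-\vartheta}$ with $\vartheta\ge 4$ is weak enough to absorb every differential operator in the equations.

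\emph{Deterministic bound for $\psi^{\tau,\nu}$.} Test the weak formulation of \cref{lem:rmse_viscous} between $s$ and $t$ against $\phi\in\mathscr{S}$ with $\|\phi\|_{H^{\vartheta}}\le 1$ and take the supremum over such $\phi$. This controls $\|\psi^{\tau,\nu}_t-\psi^{\tau,\nu}_s\|_{H^{-\vartheta}}$ by
\begin{align*}
\int_s^t \|\psi^{\tau,\nu}_r\|\left(\nu\|\Delta^2\phi\|+\|\Delta\phi\|+2\|A^\tau_r\|_{L^\infty}\|\nabla\phi\|+\|A^\tau_r\|_{L^\infty}^2\|\phi\|\right)dr .
\end{align*}
Each norm of $\phi$ is bounded by $\|\phi\|_{H^4}\le\|\phi\|_{H^\vartheta}\le 1$, and $\nu<1$. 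Using $\|\psi^{\tau,\nu}_r\|\le\|\psi_0\|$ from \eqref{first_claim_viscous}, the integrand is at most $\|\psi_0\|(1+\|A^\tau_r\|_{L^\infty}^4)^{1/2}$ up to constants. Cauchy--Schwarz in time then gives the factor $(t-s)^{1/2}\big(\int_0^T(1+\|A^\tau_r\|^4_{L^\infty})dr\big)^{1/2}$. This holds pathwise, so the first claim follows.

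\emph{Moment bound for $\Psi^{\tau,\nu}$.} Write the increment as a drift part plus two stochastic integrals, working in $H^{-\vartheta}$ directly, where $\Psi^{\tau,\nu}$ is a continuous semimartingale.

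For the drift, the operators $-i\nu\Delta^2-\Delta+V$, $4i\tau\operatorname{div}(Q\nabla\cdot)$ and $-i\tau R$ map $L^2$ to $H^{-4}$ boundedly, uniformly in $\nu,\tau\in(0,1)$. Here we use that $Q,R$ and their derivatives are bounded by \cref{hp:noise}. Together with \eqref{first_claim_viscous_sse}, the drift contributes at most $C(t-s)\|\psi_0\|$ almost surely, which is bounded by $C(t-s)^{1/2}$ since $t-s\le T$.

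For the stochastic integrals, apply the Burkholder--Davis--Gundy inequality in the Hilbert space $H^{-\vartheta}$. This gives
\begin{align*}
\mathbb{E}\Big[\Big\|\sum_k\int_s^t\sigma_k\cdot\nabla\Psi^{\tau,\nu}_r\,dW^k_r\Big\|_{H^{-\vartheta}}^\gamma\Big]\lesssim_\gamma \mathbb{E}\Big[\Big(\sum_k\int_s^t\|\sigma_k\cdot\nabla\Psi^{\tau,\nu}_r\|^2_{H^{-1}}dr\Big)^{\gamma/2}\Big].
\end{align*}
Since $\operatorname{div}\sigma_k=0$, we have $\sigma_k\cdot\nabla\Psi=\operatorname{div}(\sigma_k\Psi)$, so $\|\sigma_k\cdot\nabla\Psi\|_{H^{-1}}\le\|\sigma_k\|_{L^\infty}\|\Psi\|$. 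The right-hand side is therefore bounded by $\big(\sum_k\|\sigma_k\|^2_{L^\infty}\big)^{\gamma/2}\|\psi_0\|^\gamma(t-s)^{\gamma/2}$. The $dB^{k,j}$ integral is handled the same way, using $\sum_{k,j}\|\sigma_k\cdot\sigma_j\|^2_{L^\infty}<\infty$. The prefactors $\sqrt{8\tau},\sqrt{2\tau}\le 3$ are harmless.

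Combining the drift and martingale bounds yields $\mathbb{E}\|\Psi^{\tau,\nu}_t-\Psi^{\tau,\nu}_s\|^\gamma_{H^{-\vartheta}}\lesssim_{\gamma,T,\psi_0}(t-s)^{\gamma/2}$.

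\emph{Main point of care.} The only delicate step is justifying that the Itô-form drift (the Stratonovich corrections $4i\tau\operatorname{div}(Q\nabla\cdot)$ and $-i\tau R$) and the stochastic integrals make sense in $H^{-\vartheta}$. This holds because $\Psi^{\tau,\nu}\in L^2_{\mathcal F}(0,T;H^2)$ by \cref{lem:effective_viscous}. After that, all estimates use only the uniform $L^2$ bound, which is why the constants are independent of $\nu$ and $\tau$.
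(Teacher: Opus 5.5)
Your proposal is correct and follows essentially the same route as the paper. For $\psi^{\tau,\nu}$ you use a pathwise duality estimate against $H^{\vartheta}$ test functions, combined with the $L^2$ bound \eqref{first_claim_viscous} and Cauchy--Schwarz in time; for $\Psi^{\tau,\nu}$ you use a drift bound plus the Burkholder--Davis--Gundy inequality in $H^{-\vartheta}$, via $\|\operatorname{div}(\sigma_k\Psi)\|_{H^{-1}}\le\|\sigma_k\|_{L^\infty}\|\Psi\|$ and \eqref{first_claim_viscous_sse}. You also explicitly include the It\^o--Stratonovich correction terms $4i\tau\operatorname{div}(Q\nabla\cdot)$ and $-i\tau R$ in the drift, which the paper's displayed estimate omits but which are bounded in the same way.
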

\begin{proof}
    From the weak formulation satisfied by $\psi^{\tau,\nu}$ we easily get the first claim from \eqref{first_claim_viscous}. Indeed, since $\vartheta\geq 4,$
    \begin{align*}
	        \|\psi^{\tau,\nu}_t-\psi^{\tau,\nu}_s\|_{H^{-\vartheta}}
	        &\leq (1+\nu) \int_s^t \|\psi^{\tau,\nu}_r\|dr\\
	        &\quad+2\int_s^t
	        \|\operatorname{div}(A^{\tau}_r\psi^{\tau,\nu}_r)\|_{H^{-\vartheta}}dr+\int_s^t
	        \||A^{\tau}_r|^2\psi^{\tau,\nu}_r\|_{H^{-\vartheta}}dr\\
	        &\lesssim \int_s^t
	        (1+\|A^{\tau}_r\|_{L^{\infty}}
	        +\|A^{\tau}_r\|_{L^{\infty}}^2)
	        \|\psi^{\tau,\nu}_r\|dr\\
	        &\lesssim (t-s)^{1/2}\|\psi_0\|
	        \left(\int_0^T(1+\|A^{\tau}_r\|_{L^{\infty}}^4)dr
	        \right)^{1/2}
	        \quad \mathbb{P}-a.s.
    \end{align*}
    For the second claim we argue similarly, just applying Burkholder-Davis-Gundy inequality. Therefore
    \begin{align*}
	        \expt{\|\Psi^{\tau,\nu}_t-\Psi^{\tau,\nu}_s
	        \|_{H^{-\vartheta}}^\gamma}
	        &\lesssim_{\gamma}
	        (1+\nu+\|V\|_{L^{\infty}})^\gamma
	        \expt{\left(\int_s^t \|\Psi^{\tau,\nu}_r\| dr\right)^{\gamma}}\\
	        &\qquad+\expt{\left(\sum_{k\in I}\int_s^t
	        \|\operatorname{div}(\sigma_k\Psi^{\tau,\nu}_r)
	        \|_{H^{-\vartheta}}^2dr\right)^{\gamma/2}}\\
	        &\qquad+\expt{\left(\sum_{k,j\in I}\int_s^t
	        \|(\sigma_k\cdot\sigma_j)\Psi^{\tau,\nu}_r
	        \|_{H^{-\vartheta}}^2dr\right)^{\gamma/2}}\\
	        &\quad\lesssim_{\gamma,T,\psi_0}(t-s)^{\gamma/2}.
    \end{align*}
\end{proof}

\subsection{Compactness and Passage to the Limit}
Let $\vartheta\geq 4$ be as in \cref{lem_time_increments_viscous} and $R=\|\psi_0\|$, define the Polish space
\begin{align*}
    Z_T:=
    C([0,T];B^{L^2}_{R,w})
    \cap C([0,T];\tilde{H}^-).
\end{align*}
Let us also introduce the spaces
\begin{align*}
    \mathcal{X}_1
    &=Z_T\times H^{5+\frac{d+\theta}{2}}
    \times C([0,T];H^{5+\frac{d+\theta}{2}})
    \times C([0,T];\R^I),\\
    \mathcal{X}_2
    &=Z_T\times C([0,T];\R^I)
    \times C([0,T];\R^{I\times I}).
\end{align*}
Next we show the tightness of the viscous approximations constructed so far.
\begin{lemma}\label[lemma]{lem_compactness_viscous}
    For each $\tau>0$, the family of laws
    $\{\mathscr{L}(\psi^{\tau,\nu},A^{\tau}_0,A^{\tau},
    (W^k)_{k\in I})\}_{\nu\in(0,1)}$ is tight in $\mathcal{X}_1$.
    The family
    $\{\mathscr{L}(\Psi^{\tau,\nu},(W^k)_{k\in I},
    (B^{k,j})_{k,j\in I})\}_{\nu\in(0,1)}$ is tight in $\mathcal{X}_2$.
\end{lemma}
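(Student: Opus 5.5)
We treat each component separately and then combine them. Tightness of a product family follows from tightness of each marginal, so it suffices to handle the components one at a time. The constant-in-$\nu$ components are trivial. The laws of $A^\tau_0$, $A^\tau$, $(W^k)_{k\in I}$ and $(B^{k,j})_{k,j\in I}$ do not depend on $\nu$. Each is therefore a single Borel probability measure on a Polish space, namely $H^{5+\frac{d+\theta}{2}}$, $C([0,T];H^{5+\frac{d+\theta}{2}})$ (by \cref{lem:stochastic_conv}), $C([0,T];\R^I)$ and $C([0,T];\R^{I\times I})$. By Ulam's theorem each of these is tight. The work is in the first component, in $Z_T=C([0,T];B^{L^2}_{R,w})\cap C([0,T];\tilde H^-)$.

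For the first component we use a compactness criterion on $Z_T$ analogous to the one invoked in \cref{compactness_ldp} (\cite[Lemma 2.2]{crippa2025zero}). The relevant sets are of the form
\begin{align*}
\mathbb K_M:=\Big\{u\in Z_T:\ \sup_{t\in[0,T]}\|u_t\|\le R,\ \sup_{s\neq t}\frac{\|u_t-u_s\|_{H^{-\vartheta}}}{|t-s|^{1/3}}\le M\Big\},
\end{align*}
and we take their closures in $Z_T$; these closures are compact. The reason is the compact embedding $L^2\hookrightarrow \tilde H^{-\epsilon}$ for every $\epsilon>0$, which holds thanks to the decaying weight $w$. Interpolating the $L^2$ bound against the H\"older bound in $H^{-\vartheta}$ gives equicontinuity in every $\tilde H^{-\epsilon}$. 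Ascoli--Arzel\`a then gives relative compactness in $C([0,T];\tilde H^-)$. Weak continuity in the ball $B^{L^2}_{R,w}$ follows from the uniform $L^2$ bound together with convergence in $\tilde H^-$, whose test functions are dense in $L^2$. The $L^2$ constraint is satisfied almost surely by \cref{lem:uniform_viscousrmse}, since $R=\|\psi_0\|$.

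It remains to bound $\mathbb P(\text{H\"older seminorm}>M)$ uniformly in $\nu$. For $\psi^{\tau,\nu}$ we use \cref{lem_time_increments_viscous}. It gives the $\tfrac12$-H\"older seminorm in $H^{-\vartheta}$ a pathwise bound by $C\|\psi_0\|\big(\int_0^T(1+\|A^\tau_r\|_{L^\infty}^4)dr\big)^{1/2}$. This random variable is independent of $\nu$ and has finite expectation by \cref{lem:stochastic_conv}, so Markov's inequality gives the uniform bound directly. For $\Psi^{\tau,\nu}$ the bound is only in moments: $\mathbb E\|\Psi^{\tau,\nu}_t-\Psi^{\tau,\nu}_s\|^\gamma_{H^{-\vartheta}}\lesssim (t-s)^{\gamma/2}$, uniformly in $\nu$. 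We apply Kolmogorov's continuity criterion, or equivalently Garsia--Rodemich--Rumsey, with $\gamma$ large, for instance $\gamma=12$. This bounds $\mathbb E[\,[\Psi^{\tau,\nu}]_{C^{1/3}(H^{-\vartheta})}^{\gamma}]$ uniformly in $\nu$, and Chebyshev's inequality then concludes.

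The main obstacle is the compactness lemma in $Z_T$, specifically the step from the uniform $L^2$ bound and the $H^{-\vartheta}$ time regularity to compactness in $C([0,T];\tilde H^{-})$. We would try first to use the cited \cite[Lemma 2.2]{crippa2025zero} as a black box. Failing that, we would prove it via the compact embedding $L^2\hookrightarrow\tilde H^{-\epsilon}$ and the interpolation inequality $\|f\|_{\tilde H^{-\epsilon}}\lesssim \|f\|_{\tilde H^{-\vartheta}}^{\epsilon/\vartheta}\|f\|_{\tilde H^0}^{1-\epsilon/\vartheta}$, together with the bound $\|w f\|_{H^{-\vartheta}}\lesssim\|f\|_{H^{-\vartheta}}$, which holds because $w$ is smooth with bounded derivatives.
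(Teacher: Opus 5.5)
Your proposal is correct and follows essentially the same route as the paper. The paper likewise reduces to the $\psi^{\tau,\nu}$ (resp.\ $\Psi^{\tau,\nu}$) marginal, uses the compact sets of \cite[Lemma 2.2]{crippa2025zero} given by the $L^2$ ball intersected with a $C^{1/3}$ ball in $\tilde H^{-\vartheta}$, and controls the H\"older seminorm in the same two ways: the pathwise increment bound plus Markov for $\psi^{\tau,\nu}$, and Kolmogorov--Chentsov/Garsia--Rodemich--Rumsey plus Chebyshev for $\Psi^{\tau,\nu}$. Your use of the unweighted $H^{-\vartheta}$ seminorm, reduced to the weighted one via the multiplier bound for $w$, is only a cosmetic difference.
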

\begin{proof}
    Since the joint laws of
    $(A^{\tau}_0,A^{\tau},(W^k)_{k\in I})$ and
    $((W^k)_{k\in I},(B^{k,j})_{k,j\in I})$ do not depend on $\nu$, it is
    enough to check tightness of $\mathscr{L}(\psi^{\tau,\nu})$ (resp.
    $\mathscr{L}(\Psi^{\tau,\nu})$) on $Z_T$.
    Let $\mathbb{B}_{\bar{R}}$ denote the closure in $Z_T$ of the intersection between $Z_T$ and the ball of radius ${\bar{R}}$ centered at $0$ in $C^{1/3}([0,T];\tilde{H}^{-\vartheta})$.
    By \cite[Lemma 2.2]{crippa2025zero},  $\mathbb B_{\bar{R}}$ is compact in
    $Z_T$.  The first estimate of
    \cref{lem_time_increments_viscous} and
    \cref{lem:stochastic_conv} control the corresponding random
    H\"older seminorm for $\psi^{\tau,\nu}$.  For
    $\Psi^{\tau,\nu}$, the second estimate of that lemma and the
    Kolmogorov--Chentsov theorem (equivalently, the
    Garsia--Rodemich--Rumsey inequality) give the same control, uniformly
    in $\nu$.  Thus, for some $p\geq1$,
    \begin{align*}
      \sup_{\nu\in(0,1)}
      \mathbb P(\psi^{\tau,\nu}\notin\mathbb B_{\bar{R}})
      +\sup_{\nu\in(0,1)}
      \mathbb P(\Psi^{\tau,\nu}\notin\mathbb B_{\bar{R}})
      \lesssim_{\tau} \bar{R}^{-p}.
    \end{align*} 
    
    Letting $\bar{R}\to\infty$ proves tightness in $Z_T$.
\end{proof}
\subsubsection{Proof of \cref{prop:well_confinement}}\label{subsec_well_posed_fast_slow}
By the Yamada--Watanabe theorem, cf. \cite{kurtzYW}, it is enough to show
weak existence and pathwise uniqueness for solutions of \eqref{eq:rmse} in
$C([0,T];B^{L^2}_{R,w})$.\\
\emph{Weak Existence: }
Thanks to \cref{lem_compactness_viscous}, the
Skorokhod representation theorem, and standard arguments (see, for example,
\cite[Chapter 2]{flandoli2023stochastic} or
\cite[Section 5.3]{brzezniak2013existence}), we find a sequence
$\nu_n\rightarrow0$,
an auxiliary probability space, which for simplicity we continue to call
$(\Omega,\mathcal{F},\mathbb{P})$\footnote{From now on, in case of ambiguity,
we use $(\tilde{\Omega},\tilde{\mathcal{F}},\tilde{\mathbb{P}})$ for the
original probability space.}, and processes
\begin{align*}
(\psi^{\tau,n},A^{\tau,n}_0,A^{\tau,n},
W^n=(W^{k,n})_{k\in I}),\quad (\psi^{\tau},A^{\tau}_0,A^{\tau},W=(W^{k})_{k\in I})
\end{align*}
such that
\begin{align*}
 \psi^{\tau,n}&\rightarrow \psi^{\tau}\quad\text{ in }Z_T\quad \mathbb{P}-a.s.,\\
 A^{\tau,n}_0&\rightarrow A^{\tau}_0 \text{ in }H^{5+\frac{d+\theta}{2}}\quad \mathbb{P}-a.s.\\
 A^{\tau,n}&\rightarrow A^{\tau} \text{ in }C([0,T];H^{5+\frac{d+\theta}{2}})\quad \mathbb{P}-a.s.\\
 W^n&\rightarrow W \text{ in }C([0,T];\R^I)\quad \mathbb{P}-a.s.
\end{align*}
Of course the convergence above between $W^n$ and $W$ can be seen as the uniform convergence of
cylindrical Wiener processes
on a suitable Hilbert space $U_0$. Moreover, $\psi^{\tau,n}$ is the unique solution of \eqref{eq:rmse_viscous} with viscosity $\nu_n$. By linearity of the equations and the above convergence, it is easy to check that $(\psi^\tau,A^{\tau})$ satisfies for each $\phi\in \mathscr{S}(\R^d),\ t\in [0,T]$ 
	   \begin{align*}
	       i\langle \psi^{\tau}_t,\phi\rangle-i\langle \psi_0,\phi\rangle
	       &=-\int_0^t \langle\psi^{\tau}_s,\Delta\phi\rangle ds\\
	       &\quad-2i\int_0^t
	       \langle \psi^{\tau}_s,A^{\tau}_s\cdot\nabla\phi\rangle ds
	       +\int_0^t \langle|A^{\tau}_s|^2\psi^\tau_s,\phi\rangle ds
	       \quad \mathbb{P}-a.s.,\\
	       A^{\tau}_t
	       &=e^{-\frac{t}{\tau}}A^{\tau}_0
	       +\sqrt{\frac{2}{{\tau}}}\sum_{k\in I}\int_{0}^t
	       e^{-\frac{(t-s)}{\tau}}\sigma_k\,dW^k_s
	       \quad \mathbb{P}-a.s.
   \end{align*}
\noindent\emph{Pathwise uniqueness:} By linearity of \eqref{eq:rmse} and standard uniqueness results for \eqref{eq:OU} given by \cite[Chapter 5]{da2014stochastic}, it is enough to show pathwise uniqueness for $\psi_0=0$ and $A^{\tau}_t$ given. In particular, we will show the validity of the conservation of the $L^2$ norm. The latter readily implies the claim. In order to save notation, we drop the dependence on $\tau$ on $A^\tau,\ \psi^\tau$. Let $\chi$ be a standard smooth mollifier and let, for each $\epsilon>0$, $\chi_{\epsilon}(x)=\frac{\chi(x/\epsilon)}{\epsilon^d}$. Then choosing $\chi_\epsilon(x-\cdot)$ as a test function in \eqref{eq:weak:original} we obtain
\begin{align*}
     \begin{cases}
            i\partial_t \psi^{\epsilon}&=-\Delta \psi^{\epsilon}+ 2iA\cdot\nabla \psi^{\epsilon}+2i[A\cdot\nabla,\chi_{\epsilon}\ast]\psi+\left(|A|^2\psi\right)\ast\chi_{\epsilon}\\
            \psi^{\epsilon}_0&=\psi_0\ast \chi_{\epsilon}
        \end{cases}
\end{align*}
    where $\psi^{\epsilon}=\psi\ast \chi_{\epsilon}$ is smooth, the notation $[A\cdot\nabla,\chi_{\epsilon}\ast]$ stands for the commutator and the equation is satisfied in classical sense. 
    Therefore
    for every $t\in[0,T]$, $\mathbb{P}-a.s.$,
\begin{align}\label{eq:mollified_estimate_uniqueness}
        \|\psi^{\epsilon}_t\|^2-\|\psi^{\epsilon}_0\|^2&= 2\int_0^t \langle [A_s\cdot\nabla,\chi_{\epsilon}\ast]\psi_s,\psi^{\epsilon}_s\rangle +\langle\psi^{\epsilon}_s,[A_s\cdot\nabla,\chi_{\epsilon}\ast]\psi_s\rangle ds\notag\\ & -i\int_0^t 
        \langle \left(|A_s|^2\psi_s\right)\ast\chi_{\epsilon},\psi^{\epsilon}_s\rangle-\langle \psi^{\epsilon}_s,\left(|A_s|^2\psi_s\right)\ast\chi_{\epsilon}\rangle
        ds.
    \end{align}
    From the regularity of $\psi$ and $A$ and basic properties of convolutions, it follows that $\mathbb{P}-a.s.$
    \begin{align*}
\lim_{\epsilon\rightarrow 0}\|\psi^\epsilon_t-\psi_t\|=0 \quad &\mbox{ for each } t\in [0,T],\\
 \sup_{\epsilon>0} \|\psi^\epsilon_t\|\leq \|\psi_t\|
\leq\sup_{r\in[0,T]}\|\psi_r\|\quad &\mbox{ for each } t\in [0,T],\\
\lim_{\epsilon\rightarrow 0}\left\|\left(|A_t|^2\psi_t\right)\ast\chi_{\epsilon}-|A_t|^2\psi_t\right\|=0\quad &\mbox{ for each } t\in [0,T],\\
 \sup_{\epsilon>0} \|\left(|A_t|^2\psi_t\right)\ast\chi_{\epsilon}\|
&\leq\sup_{r\in[0,T]}\|\psi_r\|\,
\|A_t\|_{L^\infty}^2.
    \end{align*}
    Therefore, by dominated convergence, it holds
    \begin{align*}
        \sup_{t\in [0,T]}\left\lvert\int_0^t 
        \langle \left(|A_s|^2\psi_s\right)\ast\chi_{\epsilon},\psi^{\epsilon}_s\rangle-\langle \psi^{\epsilon}_s,\left(|A_s|^2\psi_s\right)\ast\chi_{\epsilon}\rangle
        ds\right\rvert \rightarrow 0\quad \mathbb{P}-a.s.
    \end{align*}
    Secondly, by \cite[Lemma 4.2]{butori2026background}, for almost every
    $s\in [0,T]$,
   \begin{align*}
       \lim_{\epsilon\rightarrow 0}\|[A_s\cdot\nabla,\chi_{\epsilon}\ast]\psi_s\|&=0,\\
       \sup_{\epsilon\in (0,1)}\|[A_s\cdot\nabla,\chi_{\epsilon}\ast]\psi_s\|&\lesssim \|A_s\|_{H^{1+\frac{d+\theta}{2}}}\|\psi_s\|.
   \end{align*}
   Therefore, again by the dominated convergence theorem and standard
   properties of convolutions, letting $\epsilon\rightarrow 0$ in
   \eqref{eq:mollified_estimate_uniqueness}, there exists a null set
   $N\subset \Omega$ such that, on $N^c$, for every $t\in [0,T]$,
   \begin{align*}
       \|\psi_t\|^2&=\|\psi_0\|^2
   \end{align*}
   completing the proof.
\subsubsection{Proof of \cref{prop:well_effective}}\label{subsec:well_spde}
By the Yamada--Watanabe theorem, cf. \cite{kurtzYW}, it is enough to show
weak existence and pathwise uniqueness for solutions of
\eqref{eq:simplified_model}. The proofs below closely resemble those of the
previous subsection.\\
\emph{Weak Existence: }
Thanks to \cref{lem_compactness_viscous}, the Skorokhod representation theorem, and standard arguments, we find a sequence
$\nu_n\rightarrow0$,
an auxiliary probability space, which for simplicity we continue to call
$(\Omega,\mathcal{F},\mathbb{P})$\footnote{From now on, in case of ambiguity,
we use $(\tilde{\Omega},\tilde{\mathcal{F}},\tilde{\mathbb{P}})$ for the
original probability space.}, and processes
\begin{align*}
&(\Psi^{\tau,n},W^n=(W^{k,n})_{k\in I},
B^n=(B^{k,j,n})_{k,j\in I}),\\
&(\Psi^{\tau},W=(W^{k})_{k\in I},
B=(B^{k,j})_{k,j\in I})
\end{align*}
such that
\begin{align*}
 \Psi^{\tau,n}&\rightarrow \Psi^{\tau}\quad\text{ in }Z_T\quad \mathbb{P}-a.s.,\\
 W^n&\rightarrow W \text{ in }C([0,T];\R^I)\quad \mathbb{P}-a.s.\\
  B^n&\rightarrow B \text{ in }C([0,T];\R^{I\times I})\quad \mathbb{P}-a.s.
\end{align*}
 Moreover, $\Psi^{\tau,n}$ is the unique solution of \eqref{eq:effectivese_viscous} with viscosity $\nu_n$. By linearity of the equations and the above convergence, it is easy to check that $\Psi^\tau$ satisfies
 \begin{align*}
     \sup_{t\in [0,T]}\|\Psi^\tau_t\|\leq \|\psi_0\|\quad \mathbb{P}-a.s.
 \end{align*}
   The passage to the limit in the stochastic integrals can be justified,
   for example, by \cite[Lemma 4.3]{bagnara2025no}. Thus the following
   identity holds for each $\phi\in\mathscr{S}(\R^d)$ and $t\in[0,T]$:
	   \begin{align*}
	      i\langle \Psi^{\tau}_t,\phi\rangle
	      &=i\langle \psi_0,\phi\rangle
	      +\int_0^t \langle\Psi^{\tau}_s,(-\Delta+V)\phi\rangle ds\\
	      &\quad+4i\tau\int_0^t
	      \langle \Psi^{\tau}_s,\operatorname{div}(Q\nabla\phi)\rangle ds
	      -i\tau\int_0^t\langle R\Psi^{\tau}_s,\phi\rangle ds\\
	      &\quad-i\sqrt{8\tau}\sum_{k\in I}\int_0^t
	      \langle \Psi^{\tau}_s,\sigma_k\cdot\nabla\phi\rangle dW^k_s\\
	      &\quad+\sqrt{2\tau}\sum_{k,j\in I}\int_0^t
	      \langle \Psi^{\tau}_s\sigma_k\cdot\sigma_j,\phi\rangle dB^{k,j}_s
	      \quad \mathbb{P}-a.s.
   \end{align*}
\emph{Pathwise uniqueness:} By linearity of \eqref{eq:simplified_model} it is enough to show pathwise uniqueness for $\psi_0=0$. We will show the validity of the conservation of the $L^2$ norm. The latter readily implies the claim. In order to save notation, we drop the dependence on $\tau$ in $\Psi^\tau$. Let $\chi$ be a standard, radially symmetric, smooth mollifier with compact support in the ball of radius $1$ and let, for each $\epsilon>0$, $\chi_{\epsilon}(x)=\frac{\chi(x/\epsilon)}{\epsilon^d}$ and let
 $\Psi^{\epsilon}_t=\Psi_t\ast \chi_{\epsilon}$. From the regularity of $\Psi$ we know that on a full probability set, for all $t\in [0,T]$
 \begin{align}\label{eq:convolution_properties}
     \lim_{\epsilon\rightarrow0}\|\Psi^{\epsilon}_t-\Psi_t\|=0,\quad \|\Psi^{\epsilon}_t\|\leq \bar{R}.
 \end{align}
 We can apply It\^o's formula, cf. \cite[Theorem 4.32]{da2014stochastic}, to compute the evolution of $\langle\Psi_t,\Psi^{\epsilon}_t\rangle $:
\begin{align}\label{ito_formula_diss_measure}
\langle\Psi_t,\Psi^{\epsilon}_t\rangle
&-\langle\psi_0,\Psi^{\epsilon}_0\rangle\\
&=i\int_0^t\left(
\langle \Psi_s,(\Delta-V)\Psi^{\epsilon}_s\rangle
-\langle(\Delta-V)\Psi^{\epsilon}_s,\Psi_s\rangle\right)ds\notag\\
&\quad+4\tau\int_0^t\Bigg(
\langle \Psi_s,\operatorname{div}(Q\nabla\Psi^{\epsilon}_s)\rangle
+\langle \operatorname{div}(Q\nabla\Psi^{\epsilon}_s),\Psi_s\rangle\notag\\
&\hspace{40mm}
-2\sum_{k\in I}\langle \sigma_k\Psi_s,
(\sigma_k\Psi_s)\ast\nabla^2\chi_{\epsilon}\rangle
\Bigg)ds\notag\\
&\quad-\tau\int_0^t\Bigg(
\langle R\Psi_s,\Psi^{\epsilon}_s\rangle
+\langle \Psi^{\epsilon}_s,R\Psi_s\rangle\notag\\
&\hspace{31mm}
-2\sum_{k,j\in I}\langle(\sigma_k\cdot\sigma_j)\Psi_s,
((\sigma_k\cdot\sigma_j)\Psi_s)\ast\chi_{\epsilon}\rangle
\Bigg)ds\notag\\
&\quad-\sqrt{8\tau}\sum_{k\in I}\int_0^t\left(
\langle \Psi_s,\sigma_k\cdot\nabla\Psi^{\epsilon}_s\rangle
+\langle\sigma_k\cdot\nabla\Psi^{\epsilon}_s,\Psi_s\rangle
\right)dW^k_s\notag\\
&\quad-i\sqrt{2\tau}\sum_{k,j\in I}\int_0^t\left(
\langle(\sigma_k\cdot\sigma_j)\Psi_s,\Psi^{\epsilon}_s\rangle
-\langle\Psi^{\epsilon}_s,(\sigma_k\cdot\sigma_j)\Psi_s\rangle
\right)dB^{k,j}_s.
\end{align}
From \eqref{eq:convolution_properties} it trivially follows 
\begin{align*}
    \lim_{\epsilon\rightarrow 0}\left(
    \langle\Psi_t,\Psi^{\epsilon}_t\rangle
    -\langle\psi_0,\Psi^{\epsilon}_0\rangle\right)
    =\|\Psi_t\|^2-\|\psi_0\|^2\quad \mathbb{P}-a.s.
\end{align*}
We are therefore left to study the convergence to $0$ of all the terms
appearing on the right-hand side of \eqref{ito_formula_diss_measure}. The
terms of order $0$ are fairly simple. Indeed, from
\eqref{eq:convolution_properties}, \cref{hp:noise}, and dominated
convergence, on a full-probability set,

\begin{align}\label{convergence_det_order_0_1}
    &\int_0^T \left|\langle \Psi_s,V\Psi^{\epsilon}_s\rangle
    -\langle V\Psi^{\epsilon}_s,\Psi_s\rangle\right|ds\rightarrow 0,\\
    &\int_0^T \Bigg|\langle R\Psi_s,\Psi^{\epsilon}_s\rangle
    +\langle \Psi^{\epsilon}_s,R\Psi_s\rangle\notag\\
    &\hspace{27mm}
    -2\sum_{k,j\in I}\langle(\sigma_k\cdot\sigma_j)\Psi_s,
    ((\sigma_k\cdot\sigma_j)\Psi_s)\ast\chi_{\epsilon}\rangle
    \Bigg|ds\rightarrow0.
    \label{convergence_det_order_0_2}
\end{align}
as $\epsilon\rightarrow 0$.
Set
\begin{align*}
    D^{\epsilon}_{k,j}(s)
    :=\langle(\sigma_k\cdot\sigma_j)\Psi_s,\Psi^{\epsilon}_s\rangle
    -\langle \Psi^{\epsilon}_s,(\sigma_k\cdot\sigma_j)\Psi_s\rangle.
\end{align*}
By the Burkholder--Davis--Gundy inequality and similar reasoning,
\begin{align*}
    \mathbb{E}\left[\sup_{t\in[0,T]}
    \left|\sum_{k,j\in I}\int_0^t
    D^{\epsilon}_{k,j}(s)\,dB^{k,j}_s\right|^2\right]
    &\lesssim
    \mathbb{E}\left[\sum_{k,j\in I}\int_0^T
    |D^{\epsilon}_{k,j}(s)|^2ds\right]\rightarrow0.
\end{align*}
Therefore, after passing to a non-relabeled subsequence,
\begin{align}\label{convergence_stoch_order_0}
    \sup_{t\in[0,T]}
    \left|\sum_{k,j\in I}\int_0^t
    D^{\epsilon}_{k,j}(s)\,dB^{k,j}_s\right|
    \rightarrow0\quad\mathbb{P}-a.s.
\end{align}
as $\epsilon\rightarrow 0$.
Also, by standard properties of convolution it holds
\begin{align*}
    \langle \Psi_s,\Delta\Psi^{\epsilon}_s\rangle
    =\langle \Psi_s,\Delta\chi_{\epsilon}\ast\Psi_s\rangle
    =\langle \Delta\chi_{\epsilon}\ast\Psi_s,\Psi_s\rangle
    =\langle\Delta\Psi^{\epsilon}_s,\Psi_s\rangle.
\end{align*}
Hence
\begin{align}\label{convergence_det_order_2_1}
    \int_0^T |\langle \Psi_s, \Delta\Psi^{\epsilon}_s\rangle- \langle  \Delta\Psi^{\epsilon}_s,\Psi_s\rangle| ds = 0\quad\mathbb{P}-a.s.
\end{align}
Also, set
\begin{align*}
    C_k^\epsilon(s)
    :=\langle \Psi_s,\sigma_k\cdot\nabla\Psi^{\epsilon}_s\rangle
    +\langle\sigma_k\cdot\nabla\Psi^{\epsilon}_s,\Psi_s\rangle
    =-\langle\Psi_s,
    [\sigma_k\cdot\nabla,\chi_{\epsilon}\ast]\Psi_s\rangle,
\end{align*}
where the last identity uses our commutator convention. By the
Burkholder--Davis--Gundy inequality,
\cite[Lemma 4.2]{butori2026background}, \cref{hp:noise}, and dominated
convergence, we get
\begin{align*}
    \mathbb{E}\left[\sup_{t\in[0,T]}
    \left|\sum_{k\in I}\int_0^t C_k^\epsilon(s)\,dW^k_s\right|^2\right]
    &\lesssim \mathbb{E}\left[\sum_{k\in I}\int_0^T
    |C_k^\epsilon(s)|^2 ds\right] \\
    &\rightarrow 0\quad \mbox{as }\epsilon\rightarrow 0.
\end{align*}
Thus, after passing to a further non-relabeled subsequence,
\begin{align}\label{convergence_stoch_order_1}
    \sup_{t\in[0,T]}
    \left|\sum_{k\in I}\int_0^t C_k^\epsilon(s)\,dW^k_s\right|
    \rightarrow0\quad\mathbb{P}-a.s.
    \quad\mbox{as }\epsilon\rightarrow0.
\end{align}
The last term is the more involved one. Set
\begin{align*}
I^{\epsilon}_t &:=
\langle \Psi_t,\operatorname{div}(Q\nabla\Psi^{\epsilon}_t) \rangle
+\langle \operatorname{div}(Q\nabla\Psi^{\epsilon}_t),\Psi_t \rangle-2\sum_{k\in I}\langle \sigma_k\Psi_t,
\left(\sigma_k\Psi_t\right)\ast \nabla^2\chi_{\epsilon}\rangle.
\end{align*}
Expanding $\operatorname{div}(Q\nabla\Psi^{\epsilon}_t)$ and using the
symmetry of the integration variables and of $\chi$, we obtain
\begin{align*}
    I_t^\epsilon
    &=\sum_{k\in I }\int_{\R^d\times \R^d }
    \left(\sigma_k(x)-\sigma_k(y)\right)\otimes
    \left(\sigma_k(x)-\sigma_k(y)\right)
    :\nabla^2\chi_{\epsilon}(x-y)\\
    &\hspace{22mm}\times
    \Re\left(\Psi_t(x)\overline{\Psi_t(y)}\right)\,dx\,dy\\
    &\qquad+\sum_{k\in I }\int_{\R^d\times \R^d }
    \left(\sigma_k(x)\cdot\nabla\sigma_k(x)
    -\sigma_k(y)\cdot\nabla\sigma_k(y)\right)
    \cdot \nabla\chi_{\epsilon}(x-y)\\
    &\hspace{22mm}\times
    \Re\left(\Psi_t(x)\overline{\Psi_t(y)}\right)\,dx\,dy\\
    &\quad=\sum_{k\in I}\left(J^{1,\epsilon,k}_t+J^{2,\epsilon,k}_t\right).
\end{align*}
Changing variables in the integrals we have, for each $k\in I$, 
\begin{align*}
    J^{1,\epsilon,k}_t
    &=\frac{1}{\epsilon^{2}}\int_{\R^d}\int_{|z|\leq 1}
    \left(\sigma_k(x)-\sigma_k(x-\epsilon z)\right)^{\otimes2}
    :\nabla^2\chi(z)\\
    &\hspace{35mm}\times
    \Re\left(\Psi_t(x)\overline{\Psi_t(x-\epsilon z)}\right) dz\,dx,\\
    J^{2,\epsilon,k}_t
    &=\frac{1}{\epsilon}\int_{\R^d}\int_{|z|\leq 1}
    \left(\sigma_k(x)\cdot\nabla\sigma_k(x)
    -\sigma_k(x-\epsilon z)\cdot\nabla\sigma_k(x-\epsilon z)\right)\\
    &\hspace{25mm}\cdot\nabla\chi(z)
    \Re\left(\Psi_t(x)\overline{\Psi_t(x-\epsilon z)}\right)dz\,dx.
\end{align*}
The behavior of the integrands above as $\epsilon\rightarrow 0$ can be
understood by Taylor's formula:
\begin{align*}
    &\left\lvert
    \left(\sigma_k(x)-\sigma_k(x-\epsilon z)\right)^{\otimes2}
    -\epsilon^2\left(z\cdot\nabla \sigma_k(x)\right)^{\otimes2}
    \right\rvert
    \lesssim \epsilon^3 \|\sigma_k\|_{W^{2,\infty}}^2,\\
    &\left\lvert
    \sigma_k(x)\cdot\nabla\sigma_k(x)
    -\sigma_k(x-\epsilon z)\cdot\nabla\sigma_k(x-\epsilon z)
    -\epsilon z\cdot\nabla\left(\sigma_k(x)\cdot\nabla\sigma_k(x)\right)
    \right\rvert\\
    &\hspace{55mm}\lesssim
    \epsilon^2 \|\sigma_k\|_{W^{3,\infty}}^2.
\end{align*}
Therefore, for each $k$,
\begin{align*}
    J^{1,\epsilon,k}_t
    &=\int_{\R^d}\int_{|z|\leq 1}
    \left(z\cdot\nabla \sigma_k(x)\right)^{\otimes2}
    :\nabla^2\chi(z)|\Psi_t(x)|^2\,dz\,dx\\
    &\quad+\int_{\R^d}\int_{|z|\leq 1}
    \left(z\cdot\nabla \sigma_k(x)\right)^{\otimes2}
    :\nabla^2\chi(z)
    \Re\left(\Psi_t(x)
    \overline{\Psi_t(x-\epsilon z)-\Psi_t(x)}\right)dz\,dx
    +R^{1,\epsilon,k}_t,\\
    J^{2,\epsilon,k}_t
    &=\int_{\R^d}\int_{|z|\leq 1}
    z\cdot\nabla\left(\sigma_k(x)\cdot\nabla\sigma_k(x)\right)
    \cdot\nabla\chi(z)|\Psi_t(x)|^2\,dz\,dx\\
    &\quad+\int_{\R^d}\int_{|z|\leq 1}
    z\cdot\nabla\left(\sigma_k(x)\cdot\nabla\sigma_k(x)\right)
    \cdot\nabla\chi(z)
    \Re\left(\Psi_t(x)
    \overline{\Psi_t(x-\epsilon z)-\Psi_t(x)}\right)dz\,dx
    +R^{2,\epsilon,k}_t,
\end{align*}
where
\begin{align*}
    |R^{1,\epsilon,k}_t|+|R^{2,\epsilon,k}_t|
    &\lesssim \epsilon \|\sigma_k\|_{W^{3,\infty}}^2
    \|\Psi_t\|^2 \quad \mathbb{P}-a.s.
\end{align*}
Integrating by parts, for each $i,j,k,l\in \{1,\dots, d\}$, we also have 
\begin{align*}
    \int_{|z|\leq 1} z_k  z_l\partial_{i,j}\chi(z) dz =\delta_{i,k}\delta_{j,l}+\delta_{i,l}\delta_{j,k},\quad
    \int_{|z|\leq 1} z_k\partial_j \chi(z)dz =-\delta_{j,k}.
\end{align*}
Therefore,
\begin{align*}
     &\int_{\R^d}\int_{|z|\leq 1}
     \left(z\cdot\nabla \sigma_k(x)\right)\otimes
     \left(z\cdot\nabla \sigma_k(x) \right):\nabla^2\chi(z)
     |\Psi_t(x)|^2\,dz\,dx\\
     &\quad+\int_{\R^d}\int_{|z|\leq 1}
     z\cdot\nabla\left(\sigma_k(x)\cdot\nabla\sigma_k(x)\right)
     \cdot\nabla\chi(z)|\Psi_t(x)|^2\,dz\,dx\\
     &=\int_{\R^d}\left(
     \left(\operatorname{div}\sigma_k(x)\right)^2
     +\trace(\nabla\sigma_k(x)\nabla\sigma_k(x))
     -\operatorname{div}\left(\sigma_k(x)\cdot\nabla\sigma_k(x)\right)
     \right)|\Psi_t(x)|^2\,dx=0
\end{align*}
since the $\sigma_k$ are divergence-free.
In conclusion, we have shown
\begin{align*}
|I^\epsilon_t|
&\lesssim
     \sum_{k\in I}\left\lvert \int_{\R^d}\int_{|z|\leq 1}
     \left(z\cdot\nabla \sigma_k(x)\right)^{\otimes2}:\nabla^2\chi(z)\right.\\
&\hspace{36mm}\left.
     {}\times\Re\left(\Psi_t(x)
     \overline{\Psi_t(x-\epsilon z)-\Psi_t(x)}\right)dz\,dx
     \right\rvert\\
&\quad+\sum_{k\in I}\left\lvert \int_{\R^d}\int_{|z|\leq 1}
     z\cdot\nabla\left(\sigma_k(x)\cdot\nabla\sigma_k(x)\right)
     \cdot\nabla\chi(z)\right.\\
&\hspace{36mm}\left.
     {}\times\Re\left(\Psi_t(x)
     \overline{\Psi_t(x-\epsilon z)-\Psi_t(x)}\right)dz\,dx
     \right\rvert\\
&\quad+\epsilon\|\Psi_t\|^2
\left(\sum_{k\in I}\|\sigma_k\|_{W^{3,\infty}}^2\right).
\end{align*}
Since $\Psi\in C_w([0,T];L^2)$ $\mathbb{P}-a.s.$, there exists a
full-probability set on which, for every $t\in[0,T]$, continuity of
translations in $L^2$ shows that each term in the sums above tends to $0$.
Moreover, on the same set,
\begin{align*}
    |I^{\epsilon}_t|&\lesssim \sum_{k\in I}\|\sigma_k\|_{W^{3,\infty}}^2\|\Psi_t\|^2\in L^1(0,T).
\end{align*}
Therefore also
\begin{align}\label{convergence_det_order_2}
\int_0^T |I_t^\epsilon|\,dt\rightarrow0
\end{align}
as $\epsilon\rightarrow 0$. Combining \cref{convergence_det_order_0_1,convergence_det_order_0_2,convergence_stoch_order_0,convergence_det_order_2_1,convergence_stoch_order_1,convergence_det_order_2}
we conclude
\begin{align*}
    \|\Psi_t\|^2&=\|\psi_0\|^2 \quad \mathbb{P}-a.s.
\end{align*}
for each $t\in [0,T]$ and this completes the proof.


\bibliography{biblio.bib}{}

@book{pazy2012semigroups,
  title={Semigroups of linear operators and applications to partial differential equations},
  author={Pazy, Amnon},
  year={2012},
  publisher={Springer Science \& Business Media}
}

@book{da2014stochastic,
  title={Stochastic equations in infinite dimensions},
  author={Da Prato, Giuseppe and Zabczyk, Jerzy},
  year={2014},
  publisher={Cambridge university press}
}

@book{janson1997gaussian,
  title={Gaussian {H}ilbert {S}paces},
  author={Janson, Svante},
  year={1997},
  publisher={Cambridge university press}
}

@article{nelson1973free,
  title={The free {M}arkoff field},
  author={Nelson, Edward},
  journal={Journal of Functional Analysis},
  volume={12},
  number={2},
  pages={211--227},
  year={1973},
  publisher={Academic Press}
}

@incollection{Arn,
  title={Hasselmann’s program revisited: {T}he analysis of stochasticity in deterministic climate models},
  author={Arnold, Ludwig},
  booktitle={Stochastic climate models},
  pages={141--157},
  year={2001},
  publisher={Springer}
}

@article{BerDes,
  title = {Macroscopic fluctuation theory},
  author = {Bertini, Lorenzo and De Sole, Alberto and Gabrielli, Davide and Jona-Lasinio, Giovanni and Landim, Claudio},
  journal = {Rev. Mod. Phys.},
  volume = {87},
  issue = {2},
  pages = {593--636},
  numpages = {44},
  year = {2015},
  month = {Jun},
  publisher = {American Physical Society},
  doi = {10.1103/RevModPhys.87.593},
  url = {https://link.aps.org/doi/10.1103/RevModPhys.87.593}
}

@article{BesPen,
  title={Dynamical confinement and magnetic traps for charges and spins},
  author={Besharat, Afshin and Penin, Alexander A},
  journal={Physical Review A},
  volume={113},
  number={2},
  pages={023107},
  year={2026},
  publisher={APS}
}

@article{CorFis,
  title={The {D}ean--{K}awasaki equation and the structure of density fluctuations in systems of diffusing particles},
  author={Cornalba, Federico and Fischer, Julian},
  journal={Archive for Rational Mechanics and Analysis},
  volume={247},
  number={5},
  pages={76},
  year={2023},
  publisher={Springer}
}

@article{DirFeh,
  title={C{onservative Stochastic PDE and Fluctuations of the Symmetric Simple Exclusion Process}},
  author={Dirr, Nicolas and Fehrman, Benjamin and Gess, Benjamin},
  journal={Communications in Mathematical Physics},
  volume={407},
  number={4},
  pages={74},
  year={2026},
  publisher={Springer}
}

@article{DjuKre,
  title={Weak error analysis for a nonlinear {SPDE} approximation of the {D}ean--{K}awasaki equation},
  author={Djurdjevac, Ana and Kremp, Helena and Perkowski, Nicolas},
  journal={Stochastics and Partial Differential Equations: Analysis and Computations},
  volume={12},
  number={4},
  pages={2330--2355},
  year={2024},
  publisher={Springer}
}

@book{kallenberg2002,
 author = {Kallenberg, Olav},
 title = {Foundations of modern probability.},
 edition = {2nd ed.},
 fseries = {Probability and its Applications},
 series = {Probab. Appl.},
 issn = {2297-0371},
 isbn = {0-387-95313-2},
 year = {2002},
 publisher = {New York, NY: Springer},
 language = {English},
 zbMATH = {1713116},
 Zbl = {0996.60001}
}

@article{touchette2009,
  title={The large deviation approach to statistical mechanics},
  author={Touchette, Hugo},
  journal={Physics Reports},
  volume={478},
  number={1-3},
  pages={1--69},
  year={2009},
  publisher={Elsevier}
}

@book{freidlinwentzell3,
 author = {Freidlin, Mark I and Wentzell, Alexander D},
 title = {Random perturbations of dynamical systems. {Translated} from the {Russian} by {J} {Sz{\"u}cs}},
 edition = {3rd ed.},
 fseries = {Grundlehren der Mathematischen Wissenschaften},
 series = {Grundlehren Math. Wiss.},
 issn = {0072-7830},
 volume = {260},
 isbn = {978-3-642-25846-6; 978-3-642-25847-3},
 year = {2012},
 publisher = {Berlin: Springer},
 language = {English},
 doi = {10.1007/978-3-642-25847-3},
 zbMATH = {6002853},
 Zbl = {1267.60004}
}

@article{veretennikov,
 author = {Veretennikov, Alexander Y},
 title = {On large deviations for {SDEs} with small diffusion and averaging.},
 journal = {Stochastic Processes and their Applications},
 issn = {0304-4149},
 volume = {89},
 number = {1},
 pages = {69--79},
 year = {2000},
 language = {English},
 doi = {10.1016/S0304-4149(00)00013-2},
 zbMATH = {2098341},
 Zbl = {1045.60065}
}

@article {Hairer_1,
    AUTHOR = {Hairer, Martin and Li, Xue-Mei},
     TITLE = {Averaging dynamics driven by fractional {B}rownian motion},
  JOURNAL = {The Annals of Probability},
    VOLUME = {48},
      YEAR = {2020},
    NUMBER = {4},
     PAGES = {1826--1860},
      ISSN = {0091-1798,2168-894X},
   MRCLASS = {60G22 (60H05 60H10 60L20)},
  MRNUMBER = {4124526},
MRREVIEWER = {Mireia\ Besal\'u},
       DOI = {10.1214/19-AOP1408},
       URL = {https://doi.org/10.1214/19-AOP1408},
}

@article{hairer2022generating,
  title={Generating diffusions with fractional {B}rownian motion},
  author={Hairer, Martin and Li, Xue-Mei},
  journal={Communications in Mathematical Physics},
  volume={396},
  number={1},
  pages={91--141},
  year={2022},
  publisher={Springer}
}

@article{li2022slow,
  title={Slow-fast systems with fractional environment and dynamics},
  author={Li, Xue-Mei and Sieber, Julian},
  journal={The Annals of Applied Probability},
  volume={32},
  number={5},
  pages={3964--4003},
  year={2022},
  publisher={JSTOR}
}

@article {cerrai3,
    AUTHOR = {Cerrai, Sandra and Freidlin, Mark},
     TITLE = {Fast flow asymptotics for stochastic incompressible viscous
              fluids in {$\mathbb{R}^2$} and {SPDE}s on graphs},
  JOURNAL = {Probability Theory and Related Fields},
    VOLUME = {173},
      YEAR = {2019},
    NUMBER = {1-2},
     PAGES = {491--535},
      ISSN = {0178-8051,1432-2064},
   MRCLASS = {60H15 (60J25 70K65)},
  MRNUMBER = {3916113},
MRREVIEWER = {Vassili\ N.\ Kolokol\cprime tsov},
       DOI = {10.1007/s00440-018-0839-8},
       URL = {https://doi.org/10.1007/s00440-018-0839-8},
}

@article {cerrai1,
    AUTHOR = {Cerrai, Sandra},
     TITLE = {A {K}hasminskii type averaging principle for stochastic
              reaction-diffusion equations},
  JOURNAL = {The Annals of Applied Probability},
    VOLUME = {19},
      YEAR = {2009},
    NUMBER = {3},
     PAGES = {899--948},
      ISSN = {1050-5164,2168-8737},
   MRCLASS = {60H15 (35R60 37L55)},
  MRNUMBER = {2537194},
       DOI = {10.1214/08-AAP560},
       URL = {https://doi.org/10.1214/08-AAP560},
}

@article{cerrai2009averaging,
  title={Averaging principle for a class of stochastic reaction--diffusion equations},
  author={Cerrai, Sandra and Freidlin, Mark},
  journal={Probability theory and related fields},
  volume={144},
  number={1},
  pages={137--177},
  year={2009},
  publisher={Springer}
}

@article{cerrai2009normal,
  title={Normal deviations from the averaged motion for some reaction--diffusion equations with fast oscillating perturbation},
  author={Cerrai, Sandra},
  journal={Journal de math{\'e}matiques pures et appliqu{\'e}es},
  volume={91},
  number={6},
  pages={614--647},
  year={2009},
  publisher={Elsevier}
}

@article{pardoux2003poisson,
  title={On the {P}oisson equation and diffusion approximation {II}},
  author={Pardoux, Étienne and Veretennikov, Alexander Y},
  journal={The Annals of Probability},
  volume={31},
  number={3},
  pages={1166--1192},
  year={2003},
  publisher={Institute of Mathematical Statistics}
}

@article {pardoux_3,
    AUTHOR = {Pardoux, Étienne and Veretennikov, Alexander Y},
     TITLE = {On the {P}oisson equation and diffusion approximation {III}},
  JOURNAL = {The Annals of Probability},
    VOLUME = {33},
      YEAR = {2005},
    NUMBER = {3},
     PAGES = {1111--1133},
      ISSN = {0091-1798,2168-894X},
   MRCLASS = {60J60 (35J70 60F17)},
  MRNUMBER = {2135314},
MRREVIEWER = {Nikita\ Y.\ Ratanov},
       DOI = {10.1214/009117905000000062},
       URL = {https://doi.org/10.1214/009117905000000062},
}

@article {pardoux_1,
    AUTHOR = {Pardoux, Étienne and Veretennikov, Alexander Y},
     TITLE = {On the {P}oisson equation and diffusion approximation {I}},
  JOURNAL = {The Annals of Probability},
    VOLUME = {29},
      YEAR = {2001},
    NUMBER = {3},
     PAGES = {1061--1085},
      ISSN = {0091-1798,2168-894X},
   MRCLASS = {60H30 (35J15 60J45 60J60)},
  MRNUMBER = {1872736},
MRREVIEWER = {Jean-Claude\ Zambrini},
       DOI = {10.1214/aop/1015345596},
       URL = {https://doi.org/10.1214/aop/1015345596},
}

@article{kato1988commutator,
  title={Commutator estimates and the {E}uler and {N}avier-{S}tokes equations},
  author={Kato, Tosio and Ponce, Gustavo},
  journal={Communications on Pure and Applied Mathematics},
  volume={41},
  number={7},
  pages={891--907},
  year={1988},
  publisher={Wiley Online Library}
}

@article{gao2018averaging,
  title={Averaging principle for the higher order nonlinear {S}chr{\"o}dinger equation with a random fast oscillation},
  author={Gao, Peng},
  journal={Journal of Statistical Physics},
  volume={171},
  number={5},
  pages={897--926},
  year={2018},
  publisher={Springer}
}

@article{gao2017averaging,
  title={AVERAGING PRINCIPLE FOR THE {S}CHR{\"O}DINGER EQUATIONS.},
  author={Gao, Peng and Li, Yong},
  journal={Discrete \& Continuous Dynamical Systems-Series B},
  volume={22},
  number={6},
  year={2017}
}

@article{brzezniak2013existence,
  title={E{xistence of a martingale solution of the stochastic Navier--Stokes equations in unbounded 2D and 3D domains}},
  author={Brze{\'z}niak, Zdzis{\l}aw and Motyl, El{\.z}bieta},
  journal={Journal of Differential Equations},
  volume={254},
  number={4},
  pages={1627--1685},
  year={2013},
  publisher={Elsevier}
}

@article{kap,
  title={Dynamic stability of the pendulum with vibrating suspension point},
  author={Kapitza, Pyotr Leonidovich},
  journal={Soviet Physics JETP},
  volume={21},
  number={5},
  pages={588--597},
  year={1951}
}

@article{crippa2025zero,
  title={Zero-noise selection and {L}arge {D}eviations in $ {L}^{\infty}_t {L}^{p}_x $ for the stochastic transport equation beyond {D}iPerna-{L}ions},
  author={Crippa, Gianluca and Luongo, Eliseo and Pappalettera, Umberto},
  journal={arXiv preprint arXiv:2506.06947},
  year={2025}
}

@article{fehrman2023non,
  title={Non-equilibrium large deviations and parabolic-hyperbolic {PDE} with irregular drift},
  author={Fehrman, Benjamin and Gess, Benjamin},
  journal={Inventiones mathematicae},
  volume={234},
  number={2},
  pages={573--636},
  year={2023},
  publisher={Springer}
}

@article{RhaGil,
  title={Effective {H}amiltonians for periodically driven systems},
  author={Rahav, Saar and Gilary, Ido and Fishman, Shmuel},
  journal={Physical Review A},
  volume={68},
  number={1},
  pages={013820},
  year={2003},
  publisher={APS}
}

@article{RibPer,
  title={Relativistic ponderomotive force in the regime of extreme focusing},
  author={Ribbing, Johan and Perosa, Giovanni and Goryashko, Vitaliy},
  journal={Optics Letters},
  volume={50},
  number={6},
  pages={2093--2096},
  year={2025},
  publisher={Optica Publishing Group}
}

@article{MulMor,
  title={Beyond the usual approximations: {T}ime-dependent magnetic traps revisited},
  author={M{\"u}ller, Jens H and Morsch, Oliver and Ciampini, Donatella and Anderlini, Marco and Mannella, Riccardo and Arimondo, Ennio},
  journal={Comptes Rendus de l'Acad{\'e}mie des Sciences-Series IV-Physics},
  volume={2},
  number={4},
  pages={649--656},
  year={2001},
  publisher={Elsevier}
}

@article{Has,
  title={Stochastic climate models part {I}. {T}heory},
  author={Hasselmann, Klaus},
  journal={tellus},
  volume={28},
  number={6},
  pages={473--485},
  year={1976},
  publisher={Taylor \& Francis}
}

@article{KugPau,
  title={A magnetic storage ring for neutrons},
  author={K{\"u}gler, K-J and Paul, W and Trinks, U},
  journal={Physics Letters B},
  volume={72},
  number={3},
  pages={422--424},
  year={1978},
  publisher={Elsevier}
}

@article{Leg,
  title={Bose-{E}instein condensation in the alkali gases: {S}ome fundamental concepts},
  author={Leggett, Anthony J},
  journal={Reviews of modern physics},
  volume={73},
  number={2},
  pages={307},
  year={2001},
  publisher={APS}
}

@article{LovMeh,
  title={Magnetic confinement of a neutral gas},
  author={Lovelace, Richard V E and Mehanian, Courosh and Tommila, Timo J and Lee, David M},
  journal={Nature},
  volume={318},
  number={6041},
  pages={30--36},
  year={1985},
  publisher={Nature Publishing Group UK London}
}

@article{MonCam,
  title={Programmable quantum simulations of spin systems with trapped ions},
  author={Monroe, Christopher and Campbell, Wes C and Duan, L-M and Gong, Z-X and Gorshkov, Alexey V and Hess, Paul W and Islam, Rajibul and Kim, Kihwan and Linke, Norbert M and Pagano, Guido and others},
  journal={Reviews of Modern Physics},
  volume={93},
  number={2},
  pages={025001},
  year={2021},
  publisher={APS}
}

@article{Pri,
  title={Cooling neutral atoms in a magnetic trap for precision spectroscopy},
  author={Pritchard, David E},
  journal={Physical Review Letters},
  volume={51},
  number={15},
  pages={1336},
  year={1983},
  publisher={APS}
}

@article{RidDav,
  title={Particle motion in rapidly oscillating potentials: {T}he role of the potential’s initial phase},
  author={Ridinger, A and Davidson, Nir},
  journal={Physical Review A—Atomic, Molecular, and Optical Physics},
  volume={76},
  number={1},
  pages={013421},
  year={2007},
  publisher={APS}
}

@article {kurtzYW,
    AUTHOR = {Kurtz, Thomas G},
     TITLE = {The {Y}amada-{W}atanabe-{E}ngelbert theorem for general
              stochastic equations and inequalities},
  JOURNAL = {Electronic Journal of Probability},
    VOLUME = {12},
      YEAR = {2007},
     PAGES = {951--965},
      ISSN = {1083-6489},
   MRCLASS = {60H99 (60H10 60H15 60H20 60H25)},
  MRNUMBER = {2336594},
MRREVIEWER = {Rainer\ Buckdahn},
       DOI = {10.1214/EJP.v12-431},
       URL = {https://doi.org/10.1214/EJP.v12-431},
}

@article{jakubowski1998almost,
  title={The almost sure Skorokhod representation for subsequences in nonmetric spaces},
  author={Jakubowski, Adam},
  journal={Theory of Probability \& Its Applications},
  volume={42},
  number={1},
  pages={167--174},
  year={1998},
  publisher={SIAM}
}

@article{bagnara2025no,
  title={No blow-up by nonlinear {I}t{\^o} noise for the {E}uler equations},
  author={Bagnara, Marco and Maurelli, Mario and Xu, Fanhui},
  journal={Electronic Journal of Probability},
  volume={30},
  pages={1--29},
  year={2025},
  publisher={The Institute of Mathematical Statistics and the Bernoulli Society}
}

@article{tubaro1984estimate,
  title={An estimate of {B}urkholder type for stochastic processes defined by the stochastic integral},
  author={Tubaro, Luciano},
  journal={Stochastic Analysis and Applications},
  volume={2},
  number={2},
  pages={187--192},
  year={1984},
  publisher={Taylor \& Francis}
}

@article{butori2026background,
  title={{Background Vlasov equations and Young measures for passive scalar and vector advection equations under special stochastic scaling limits}},
  author={Butori, Federico and Flandoli, Franco and Luongo, Eliseo and Tahraoui, Yassine},
  journal={Probability Theory and Related Fields},
  pages={1--62},
  year={2026},
  publisher={Springer}
}

@article {Brzezniak_skoro,
    AUTHOR = {Brze{\'z}niak, Zdzis{\l}aw and Ondrej{\'a}t, Martin},
     TITLE = {Stochastic geometric wave equations with values in compact
              {R}iemannian homogeneous spaces},
  JOURNAL = {The Annals of Probability},
    VOLUME = {41},
      YEAR = {2013},
    NUMBER = {3B},
     PAGES = {1938--1977},
      ISSN = {0091-1798,2168-894X},
   MRCLASS = {60H15 (35R60)},
  MRNUMBER = {3098063},
       DOI = {10.1214/11-AOP690},
       URL = {https://doi.org/10.1214/11-AOP690},
}

@book {Brez_book_f,
    AUTHOR = {Brezis, Ha\"im},
     TITLE = {Analyse fonctionnelle},
    SERIES = {Collection Math\'ematiques Appliqu\'ees pour la Ma\^itrise.
              [Collection of Applied Mathematics for the Master's Degree]},
      NOTE = {Th\'eorie et applications. [Theory and applications]},
 PUBLISHER = {Masson, Paris},
      YEAR = {1983},
     PAGES = {xiv+234},
      ISBN = {2-225-77198-7},
   MRCLASS = {46-01 (47-01)},
  MRNUMBER = {697382},
}

@article{brzezniak2014existence,
  title={The existence of martingale solutions to the stochastic {B}oussinesq equations},
  author={Brze{\'z}niak, Zdzis{\l}aw and Motyl, El{\.z}bieta},
  journal={Global and Stochastic Analysis},
  volume={1},
  number={2},
  pages={175--216},
  year={2014}
}

@book{aliprantis2006infinite,
  title={Infinite dimensional analysis: a hitchhiker’s guide},
  author={Aliprantis, Charalambos D and Border, Kim C},
  year={2006},
  publisher={Springer}
}

@article{diperna1989ordinary,
  title={Ordinary differential equations, transport theory and {S}obolev spaces},
  author={DiPerna, Ronald J and Lions, Pierre-Louis},
  journal={Inventiones mathematicae},
  volume={98},
  number={3},
  pages={511--547},
  year={1989},
  publisher={Springer}
}

@article {LDPweak_convergence,
    AUTHOR = {Budhiraja, Amarjit and Dupuis, Paul and Maroulas, Vasileios},
     TITLE = {Large deviations for infinite dimensional stochastic dynamical
              systems},
  JOURNAL = {The Annals of Probability},
    VOLUME = {36},
      YEAR = {2008},
    NUMBER = {4},
     PAGES = {1390--1420},
      ISSN = {0091-1798,2168-894X},
   MRCLASS = {60H15 (35R60 37L55 60F10)},
  MRNUMBER = {2435853},
MRREVIEWER = {Mohamed\ Mellouk},
       DOI = {10.1214/07-AOP362},
       URL = {https://doi.org/10.1214/07-AOP362},
}

@book{flandoli2023stochastic,
  title={Stochastic partial differential equations in fluid mechanics},
  author={Flandoli, Franco and Luongo, Eliseo},
  volume={2330},
  year={2023},
  publisher={Springer}
}

@book{rozovskii2012stochastic,
  title={Stochastic evolution systems: linear theory and applications to non-linear filtering},
  author={Rozovskii, Boris Lvovich},
  year={2012},
  publisher={Springer Science \& Business Media}
}

@book{flandoli2024regularity,
  title={Regularity Theory and Stochastic Flows for Parabolic ISPDES},
  author={Flandoli, Franco},
  year={2024},
  publisher={CRC Press}
}

@article{davies1974markovian,
  author  = {Davies, Brian E},
  title   = {Markovian Master Equations},
  journal = {Communications in Mathematical Physics},
  volume  = {39},
  pages   = {91--110},
  year    = {1974},
  doi     = {10.1007/BF01608389}
}

@article{gorini1976completely,
  author  = {Gorini, Vittorio and Kossakowski, Andrzej and Sudarshan, E. C. G.},
  title   = {Completely Positive Dynamical Semigroups of {$N$}-Level Systems},
  journal = {Journal of Mathematical Physics},
  volume  = {17},
  number  = {5},
  pages   = {821--825},
  year    = {1976},
  doi     = {10.1063/1.522979}
}

@article{lindblad1976generators,
  author  = {Lindblad, G{\"o}ran},
  title   = {On the Generators of Quantum Dynamical Semigroups},
  journal = {Communications in Mathematical Physics},
  volume  = {48},
  pages   = {119--130},
  year    = {1976},
  doi     = {10.1007/BF01608499}
}

@book{breuer2002theory,
  author    = {Breuer, Heinz-Peter and Petruccione, Francesco},
  title     = {The Theory of Open Quantum Systems},
  publisher = {Oxford University Press},
  address   = {Oxford},
  year      = {2002}
}

@book{barchielli2009quantum,
  author    = {Barchielli, Alberto and Gregoratti, Matteo},
  title     = {Quantum Trajectories and Measurements in Continuous Time: The Diffusive Case},
  series    = {Lecture Notes in Physics},
  volume    = {782},
  publisher = {Springer},
  address   = {Berlin, Heidelberg},
  year      = {2009},
  doi       = {10.1007/978-3-642-01298-3}
}

@article{khasminskii1968,
  author  = {Khasminskii, Rafail Z.},
  title   = {On the Principle of Averaging the {It{\^o}} Stochastic
             Differential Equations},
  journal = {Kybernetika},
  volume  = {4},
  number  = {3},
  pages   = {260--279},
  year    = {1968}
}

@article{Moh,
  title={On the 2{D} quantum tunneling},
  author={Mohammadpour, Hakimeh},
  journal={Acta Physica Polonica A},
  volume={130},
  number={3},
  pages={769--772},
  year={2016},
  publisher={Polska Akademia Nauk. Instytut Fizyki PAN}
}

@article{RakYus,
  title={Wave-packet rectification in graphene with alternating circular electrostatic potential barriers},
  author={Rakhimov, Khamdam and Yusupov, Hammid and Nurmatov, Shavkat and Chaves, Andrey and Berdiyorov, Golibjon},
  journal={Journal of Applied Physics},
  volume={137},
  number={14},
  year={2025},
  publisher={AIP Publishing}
}
\bibliographystyle{plain}

\end{document}